\newtheorem{theorem}{Theorem}[section]
\newtheorem{lemma}[theorem]{Lemma}
\newtheorem{proposition}[theorem]{Proposition}
\newtheorem{corollary}[theorem]{Corollary}
\newtheorem{question}[theorem]{Question}
\newtheorem{example}[theorem]{Example}
\theoremstyle{definition}}
\theoremstyle{definition}\newtheorem{remark}[theorem]{Remark}}
\newtheorem*{thmb}{Theorem B}
\newtheorem*{thmgs}{Theorem GS}
\newtheorem*{thmpw}{Theorem PW}
\numberwithin{equation}{section}
\def\C{{\mathbb C}}
\def\K{{\mathbb K}}
\def\N{{\mathbb N}}
\def\Z{{\mathbb Z}}
\def\R{{\mathbb R}}
\def\D{{\mathbb D}}
\def\T{{\mathbb T}}
\def\M{{\mathbb M}}
\def\E{{\mathbb E}}
\def\F{{\cal F}}
\def\hh{{\mathcal H}}
\def\pp{{\mathbb P}}
\def\epsilon{\varepsilon}
\def\phi{\varphi}
\def\leq{\leqslant}
\def\geq{\geqslant}
\def\ker{{\tt ker}\,}
\def\spann{\hbox{\tt span}\,}
\def\bin#1#2{\left({{#1}\atop {#2}}\right)}
\def\ilim{\mathop{\hbox{$\underline{\hbox{\rm lim}}$}}\limits}
\def\slim{\mathop{\hbox{$\overline{\hbox{\rm lim}}$}}\limits}
\font\Goth=eufm10 scaled 1200
\def\uu{\hbox{{\Goth U}}}
\title{Orbits of coanalytic Toeplitz operators and weak hypercyclicity}
\author{Stanislav Shkarin}
\date{}
\begin{document}

\maketitle

\begin{abstract}
We prove a new criterion of weak hypercyclicity of a bounded linear operator on a Banach space. Applying this criterion, we solve few open questions. Namely, we show that if $G$ is a region of $\C$ bounded by a smooth Jordan curve $\Gamma$ such that $G$ does not meet the unit ball but $\Gamma$ intersects the unit circle in a non-trivial arc, then $M^*$ is a weakly hypercyclic operator on $H^2(G)$, where $M$ is the multiplication by the argument operator $Mf(z)=zf(z)$. We also prove that if $g$ is a non-constant function from the Hardy space $H^\infty(\D)$ on the unit disk $\D$ such that $g(\D)\cap\D=\varnothing$ and the set $\{z\in\C:|z|=1,\ |g(z)|=1\}$ is a subset of the unit circle $\T$ of positive Lebesgue measure, then the coanalytic Toeplitz operator $T^*_g$ on the Hardy space $H^2(\D)$ is weakly hypercyclic. On the contrary, if $g(\D)\cap\D=\varnothing$, $|g|>1$ almost everywhere on $\T$ and $\log(|g|-1)\in L^1(\T)$, then $T^*_g$ is not $1$-weakly hypercyclic and hence is not weakly hypercyclic (a bounded linear operator $T$  on a complex Banach space $X$ is called $n$-weakly hypercyclic if there is $x\in X$ such that for every surjective continuous linear operator $S:X\to \C^n$, the set $\{S(T^mx):m\in\N\}$ is dense in $\C^n$). The last result is based upon lower estimates of the norms of the members of orbits of a coanalytic Toeplitz operator. Finally, we show that there is a $1$-weakly hypercyclic operator on a Hilbert space, whose square is non-cyclic and prove that a Banach space operator is weakly hypercyclic if and only if it is $n$-weakly hypercyclic for every $n\in\N$.
\end{abstract}
\small \noindent{\bf MSC:} \ \ 47A16

\noindent{\bf Keywords:} \ \ Hypercyclic operators; weakly hypercyclic
operators; orbits of linear operators
\normalsize

\section{Introduction \label{s1}}

In this article, all vector spaces are assumed to be over the field $\K$ being either the field $\R$ of real numbers or the field $\C$ of complex numbers. As usual, $\N$ is the set of all positive integers, $\Z_+=\N\cup\{0\}$, $\T=\{z\in\C:|z|=1\}$ and $\D=\{z\in\C:|z|<1\}$. The symbol $\log$ always sands for the logarithm base $e$. The symbol $\lambda$ always denotes the normalized Lebesgue measure on $\T$. When we write, say $L^p(\T)$, we always assume $\T$ to be equipped with $\lambda$.
For a Banach space $X$, $L(X)$ stands for the  algebra of bounded linear operators on $X$, while $X^*$ is the space of continuous linear functionals on $X$. For $T\in L(X)$, its dual is denoted $T'$: $T'\in L(X^*)$, $T'f(x)=f(Tx)$ for every $f\in X^*$ and every $x\in X$. For a continuous linear operator $T$ on a Hilbert space $\hh$, the symbol $T^*$ denotes the Hilbert space adjoint of $T$: $T^*\in L(\hh)$, $\langle T^*x,y\rangle=\langle x,Ty\rangle$ for every $x,y\in\hh$, where $\langle \cdot,\cdot\rangle$ is the inner product of $\hh$.

\begin{remark}\label{rire}
Let $\hh$ be a Hilbert space and $T\in L(\hh)$. Then,
due to the Riesz theorem, $T'$ and $T^*$ are similar with the similarity provided by an $\R$-linear isometry $R$ between $\hh$ and $\hh^*$, which associates to $x\in\hh$ the functional $\langle \cdot,x\rangle$. Note that $R$ is conjugate linear in the case $\K=\C$. \end{remark}

For a self-adjoint operator $T$ on a Hilbert space $\hh$, we write $T\geq 0$ if $\langle Tx,x\rangle\geq 0$ for each $x\in\hh$ and we write $T>0$ if $\langle Tx,x\rangle>0$ for each non-zero $x\in\hh$. Equivalently, $T>0$ iff $T\geq 0$ and $\ker T=\{0\}$.
As usual, we write $T\geq S$ or $S\leq T$ is $T-S\geq 0$ and we write $T>S$ or $S<T$ if $T-S>0$. A continuous linear operator $T$ on a Hilbert space $\hh$ is called {\it hyponormal} if $T^*T\geq TT^*$. Similarly, $T$ is called {\it hypernormal} if $TT^*\geq T^*T$. Let $X$ be a Banach space, $T\in L(X)$ and $x\in X$. Recall that $x$ is called a {\it hypercyclic vector} (or a {\it norm hypercyclic vector}) for $T$ if the orbit
$$
O(T,x)=\{T^nx:n\in\Z_+\}
$$
is norm dense in $X$. Similarly, $x$ is called a {\it weakly hypercyclic vector} for $T$ if $O(T,x)$ is dense in $X$ with respect to the weak topology on $X$. Recently, Feldman \cite{fe} has introduced and studied the concept of an $n$-weakly hypercyclic vector. Namely, for $n\in\N$, $x$ is called an {\it $n$-weakly hypercyclic vector} for $T$ if for every continuous surjective linear map $S:X\to\K^n$, the set  $S(O(T,x))$ is dense in $\K^n$. The operator $T$ is called {\it hypercyclic} (respectively, {\it weakly hypercyclic} or {$n$-weakly hypercyclic}) if it possesses a hypercyclic (respectively, {\it weakly hypercyclic} or {$n$-weakly hypercyclic}) vector.

The concepts of (norm) supercyclicity, weak supercyclicity and $n$-weak supercyclicity are defined in exactly the same way: one has just to replace the orbit $O(T,x)$ by the projective orbit
$$
O_{\rm pr}(T,x)=\{zT^nx:n\in\Z_+,\,z\in\K\}.
$$
For further information on these and other concepts of linear dynamics we refer to the book \cite{bama-book} and references therein.

We prove the following two criteria of weak hypercyclicity of a bounded operator on a Banach space. Recall that if $T\in L(X)$ and $x\in X$, then a {\it backward $T$-orbit} of $x$ is a sequence $\{x_n\}_{n\in\Z_+}$ in $X$ such that $x_0=x$ and $Tx_{n+1}=x_n$ for every $n\in\Z_+$.

\begin{theorem}\label{qq}Let $X$ be a separable infinite dimensional Banach space and $T\in L(X)$. Assume that there is a $T$-invariant dense linear subspace $F$ of $X$ such that
\begin{itemize}\itemsep=-2pt
\item[\rm(B1)]the space $F$ carries an inner product
$\langle\cdot,\cdot\rangle$ such that the corresponding norm $\|x\|_0=\sqrt{\langle x,x\rangle}$ on $F$
defines a topology $($not necessarily strictly$)$ stronger than the one inherited from $X;$
\item[\rm(B2)]the continuous extension of the restriction $S=T\big|_F:F\to F$ to the $($Hilbert space$)$ completion of the normed space $(F,\|\cdot\|_0)$ is an isometry with no non-trivial finite dimensional invariant subspaces$;$
\item[\rm(B3)]every $x\in F$ has a backward $T$-orbit norm-convergent to $0$ in the Banach space $X.$
\end{itemize}
Then the operator $T$ is weakly hypercyclic.
\end{theorem}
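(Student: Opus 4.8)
The plan is to build a single weakly hypercyclic vector for $T$ by a Baire-category / recursive construction inside the dense subspace $F$, exploiting the Hilbert-space structure on $F$ to control the forward iterates and the backward-orbit hypothesis (B3) to control the "filling in" steps. Recall the basic mechanism: to get weak density of $O(T,x)$ in $X$, it suffices that for every nonempty weakly open set $U\subseteq X$ the orbit meets $U$; and a basis of weakly open sets is given by sets of the form $\{y\in X:|f_i(y)-c_i|<\epsilon,\ i=1,\dots,k\}$ with $f_i\in X^*$. So I would aim to construct $x$ together with an increasing sequence of integers $n_j$ such that $T^{n_j}x$ lies in the $j$-th target weakly-open set from a fixed enumeration. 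The conjunction of (B1)–(B3) is exactly what makes such a recursion work, because (B2) says $S$ is an isometry with no finite-dimensional invariant subspace, which (via Remark~\ref{rire} applied on the Hilbert completion $\hh$ of $(F,\|\cdot\|_0)$) forces the forward orbit of any nonzero vector under $S$ to be infinite and, more importantly, lets us use unitary-type rotation arguments to move between target functionals.

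**The key steps**, in order, would be: (i) Let $\hh$ be the Hilbert-space completion of $(F,\|\cdot\|_0)$ and let $U=\overline{S}\in L(\hh)$ be the isometric extension from (B2); the inclusion $F\hookrightarrow X$ extends to a bounded (not necessarily injective) map $\iota:\hh\to X$ with dense range, since $\|\cdot\|_0$ dominates the $X$-norm on $F$. Every $f\in X^*$ then pulls back to $\iota'f=\langle\cdot,w_f\rangle\in\hh^*$ for some $w_f\in\hh$, so the weak-$X$ topology restricted to $\iota(\hh)$ is coarser than the weak-$\hh$ topology. (ii) Reduce to the following Hilbert-space task: find $h\in\hh$ whose $U$-orbit $\{U^mh\}$ is such that $\{\langle U^mh,w\rangle:m\in\Z_+\}$ is dense in $\C$ (or $\R$) for each $w$ in a chosen countable dense set of "test vectors" — but with the crucial twist that we only need to hit \emph{finitely many} such conditions at each stage, and we get to use (B3) to produce approximate preimages. (iii) Run the recursion: maintain a partial vector $x_j\in F$ and an index $n_j$; given the next target weakly-open set $U_{j+1}$ in $X$, use the no-finite-dimensional-invariant-subspace property of $U$ together with a standard lemma (orbits of an isometry with that property are "weakly dense enough" on finite-dimensional coordinate projections — this is where the hypothesis is tailor-made) to choose $n_{j+1}>n_j$ and a small correction $\Delta_j\in F$, supported far out in the forward orbit, so that $T^{n_{j+1}}(x_j+\Delta_j)\in U_{j+1}$ while $\|\Delta_j\|_0$ is small enough (using (B3) to realize $\Delta_j$ as $T^{n_{j+1}}$ applied to a genuinely small vector, namely a tail of a backward orbit) that the earlier constraints $T^{n_i}x\in U_i$, $i\le j$, survive. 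Summing the $\Delta_j$ gives $x\in X$ (convergence in $X$-norm), and by construction $O(T,x)$ meets every $U_j$, so $x$ is weakly hypercyclic.

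**The main obstacle** I anticipate is step (iii)'s quantitative heart: arranging simultaneously that (a) $T^{n_{j+1}}x$ lands in the new target $U_{j+1}$, and (b) the corrections don't destroy the finitely many previously-achieved memberships. The mechanism that makes (a) possible while keeping corrections small is precisely (B2): because $U$ is an isometry with no nontrivial finite-dimensional invariant subspace, for any finite list of functionals $g_1,\dots,g_k\in X^*$ and any prescribed complex targets, one can find arbitrarily large $m$ and a unit-$\hh$-norm direction $v$ with $\langle U^m v,w_{g_i}\rangle$ prescribed up to scaling — the absence of finite-dimensional invariant subspaces is what prevents the orbit directions from being trapped in a subspace on which the $w_{g_i}$ fail to separate, and isometry is what keeps the norm bookkeeping clean. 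I would expect to isolate this as a separate lemma: \emph{if $U$ is an isometry on a Hilbert space with no nonzero finite-dimensional invariant subspace, then for every finite-codimensional... }—more precisely, that for every finite set of vectors $w_1,\dots,w_k$ and every $\delta>0$ there exist arbitrarily large $m$ and $v$ with $\|v\|_0\le\delta$ such that $(\langle U^m v,w_i\rangle)_{i\le k}$ is any prescribed vector in a ball of radius $\delta\cdot(\text{separation constant})$. Proving that lemma, via the spectral theory of isometries (Wold decomposition: $U$ is a shift, since a unitary summand would have finite-dimensional invariant subspaces by the spectral theorem unless... actually a unitary with no finite-dim invariant subspace is possible, so one argues directly) plus a Baire/Hahn–Banach separation argument, is where the real work lies; everything else is the standard hypercyclicity recursion packaged around it.
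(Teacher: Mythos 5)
There is a genuine gap at the very first step of your plan, and it is the central difficulty of weak hypercyclicity: \emph{the weak topology on an infinite dimensional Banach space has no countable base} (it is not even first countable). So there is no ``$j$-th target weakly-open set from a fixed enumeration''; the phrase presupposes a countable basis of the weak topology which does not exist. A recursion of the form ``maintain $x_j$, $n_j$; hit the next weak target; correct by a small $\Delta_j$'' can at best arrange finitely many weak conditions at each stage and countably many in total, while showing that even a single point $y$ is in the weak closure of the orbit requires controlling an uncountable family of weak neighborhoods of $y$. This is exactly why the standard Baire-category/Hypercyclicity-Criterion recursion, which works verbatim for the norm topology, does not transfer. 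Your anticipated lemma is also off the mark: what the hypothesis (B2) actually supplies (via the paper's Lemma~\ref{uni2}, which rests on a Wiener-type estimate on Fourier coefficients of continuous measures, Lemma~\ref{ele}) is an infinite subset $A\subseteq\N$ along which $\langle U^n a,b\rangle\to 0$ for countably many pairs $(a,b)$; it does not let you \emph{prescribe} the values $\langle U^m v,w_i\rangle$.

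The paper's actual route sidesteps the topology problem entirely. It builds the candidate vector once, as a norm-convergent series $u=\sum_k u_{\phi(k),-\theta(k)}$ of backward-orbit elements (this is where (B3) enters), chosen so that for each $k$ and each $r$ in a designated infinite set $A_k$ one has
\[
T^{\theta(r)}u=u_{k,0}+a_r+b_r ,\qquad \|b_r\|\to 0,\qquad a_r\in E=\spann\{u_{j,n}\}\subset F.
\]
Density of the orbit then reduces to showing that $0$ lies in the \emph{weak closure} of the sequence $\{a_r:r\in A_k\}$. This is handled by a pre-Hilbert-space criterion (Lemma~\ref{le0}/Corollary~\ref{co0}): if $\sum\|a_r\|^{-2}=\infty$ while $\sum_{r<q}|\langle a_r,a_q\rangle|^2<\infty$, then $0$ is in the weak closure of $\{a_r\}$. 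Verifying those two inequalities is where the combinatorial Lemma~\ref{le1} (governing growth of $\|a_r\|_0$) and Lemma~\ref{uni2} (supplying decay of cross terms along $A$) come in, together with the isometry part of (B2) to keep the $\|\cdot\|_0$-norms stationary under iteration. The point is that ``$0$ is a weak accumulation point of a given sequence in a pre-Hilbert space'' is a purely quantitative statement about that sequence's Gram matrix, with no need to enumerate weak neighborhoods; your plan has no analogue of this and cannot reach the conclusion without it.
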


\begin{theorem}\label{qq1}Let $X$ be a separable infinite dimensional Banach space and $T\in L(X)$. Assume that there is a $T$-invariant dense linear subspace $F$ of $X$ such that the condition {\rm(B1)} is satisfied and
\begin{itemize}\itemsep=-2pt
\item[\rm(B$2'$)]the restriction of $T$ to $F$
is an isometry on the normed space $(F,\|\cdot\|_0)$ and $\langle T^nx,y\rangle\to 0$ for every $x,y\in F;$
\item[\rm(B$3'$)]for every $x\in F$, there is a backward $T$-orbit of $x$ containing $0$ in its norm closure.
\end{itemize}
Then the operator $T$ is weakly hypercyclic.
\end{theorem}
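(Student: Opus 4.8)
The plan is to adapt the proof of Theorem~\ref{qq}, the point being that (B$2'$) is in fact \emph{stronger} than (B2) while (B$3'$) is \emph{weaker} than (B3), and that the surplus in (B$2'$) is exactly what compensates for the loss in passing from (B3) to (B$3'$).

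First the set-up. Condition (B1) means precisely that there is $C\geq 1$ with $\|x\|_X\leq C\|x\|_0$ for all $x\in F$. Let $H$ be the Hilbert-space completion of the inner-product space $(F,\langle\cdot,\cdot\rangle)$, let $\iota\colon H\to X$ be the continuous linear extension of the inclusion $F\hookrightarrow X$ (so $\|\iota\|\leq C$ and $\iota|_F=\mathrm{id}$), and let $S\in L(H)$ be the continuous extension of $T|_F$, so that $\iota S=T\iota$. By (B$2'$), $S$ is an isometry, and since $\langle S^nx,y\rangle\to 0$ for $x,y$ in the dense subspace $F$ of $H$ while $\|S^n\|=1$, we get $S^n\to 0$ in the weak operator topology. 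Now an isometry with $S^n\to 0$ in the weak operator topology has no eigenvector, for $S\xi=\lambda\xi$ would force $|\lambda|=1$ and $\langle S^n\xi,\xi\rangle=\lambda^n\|\xi\|^2\not\to 0$; and a nonzero finite-dimensional $S$-invariant subspace would carry a surjective, hence unitary, restriction of $S$, and a unitary operator on a nonzero finite-dimensional space cannot satisfy $S^n\to 0$ in the weak operator topology. Thus $S$ has no nontrivial finite-dimensional invariant subspace, i.e.\ (B2) holds, and the only thing to worry about is that we have only (B$3'$) in place of (B3).

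Next, the construction of a weakly hypercyclic vector, in the familiar shape. Fix a sequence $(u_j)_{j\geq 1}$ in $F$ that is norm dense in $X$ and in which every value occurs for infinitely many $j$. Build inductively integers $0<N_1<N_2<\cdots$ and vectors $y_j\in X$: at step $j$ set $\delta_j=2^{-j}(1+\|T\|)^{-N_{j-1}}$, and — this is the one use of (B$3'$) — choose, in a backward $T$-orbit $(x_n)_{n\geq 0}$ of $u_j$ having $0$ in its norm closure, an index $N_j>N_{j-1}$ with $\|x_{N_j}\|_X<\delta_j$, and put $y_j=x_{N_j}$; then $T^{N_j}y_j=u_j$. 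What makes (B$3'$) enough here is that such a backward orbit furnishes small preimages of $u_j$ at arbitrarily large depths, and arbitrarily large depth is all the choice of $N_j$ needs. Put $y=\sum_j y_j$, convergent since $\sum_j\delta_j<\infty$; then $\|T^{N_j}y_i\|_X\leq(1+\|T\|)^{N_j}\delta_i\leq 2^{-i}$ for $i>j$, so for every $j$
\[
T^{N_j}y\;=\;u_j\;+\;R_j\;+\;\sum_{i>j}T^{N_j}y_i,\qquad R_j:=\iota(\rho_j),\quad\rho_j:=\sum_{i<j}S^{N_j-N_i}u_i\in F,
\]
where the last sum has norm at most $2^{-j}$.

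The crux, and the main obstacle, is to show that $R_j$ is weakly small, so that $T^{N_j}y$ comes weakly near $u_j$; since $(u_j)$ realises every target for infinitely many $j$, this yields weak density of $O(T,y)$. Each summand $S^{N_j-N_i}u_i$ lies in $F$ with $\|S^{N_j-N_i}u_i\|_0=\|u_i\|_0$ and, for fixed $i$, tends to $0$ weakly in $H$ as $j\to\infty$ by the weak-operator nullity of $S$; applying the weak-to-weak continuous map $\iota$, each $T^{N_j-N_i}u_i\to 0$ weakly in $X$. The difficulty is that the orbit of any weakly hypercyclic vector is necessarily norm-unbounded — the norm being weakly lower semicontinuous, $\{\|x\|>R\}$ is weakly open — so one cannot simply metrise the weak topology on a ball, and one must control $f(R_j)$ for every $f\in X^*$ at once, $f$ being unrevealed during the construction. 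I would handle this as in Theorem~\ref{qq}: each $f\in X^*$ restricts, via $\|x\|_X\leq C\|x\|_0$, to a bounded functional on $(F,\|\cdot\|_0)$, hence is represented by some $\widetilde f\in H$, so that $f(R_j)=\langle\rho_j,\widetilde f\rangle$; one then plays the Wold decomposition $S=V\oplus U$ of the weak-operator-null isometry $S$ — a unilateral shift $V$ with $\bigcap_m\operatorname{ran}V^m=\{0\}$, and a unitary $U$ with no eigenvalues — against the freedom to let the gaps $N_j-N_{j-1}$ grow as fast as one pleases, so as to force $\langle\rho_j,\widetilde f\rangle\to 0$ along a subsequence of the $j$'s at which $u_j$ equals a prescribed target. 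Making this estimate uniform enough in $f$, and in particular coping with a possible unitary summand $U$, is where I expect the genuine work to lie. Granting it, for any $z\in X$, $f_1,\dots,f_k\in X^*$ and $\varepsilon>0$ one chooses a target value within $\varepsilon$ of $z$ in norm and, among the infinitely many $j$ realising it, one with $R_j$ and $\sum_{i>j}T^{N_j}y_i$ jointly $\varepsilon$-small against $f_1,\dots,f_k$; then $T^{N_j}y$ lies in the given weak neighbourhood of $z$, so $y$ is weakly hypercyclic and hence so is $T$.
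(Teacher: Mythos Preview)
Your setup and identification of the crux are right, but there is a genuine gap at precisely the step you flag. You need, for each target $u$, that $0$ lies in the weak closure of $\{\rho_j:u_j=u\}$ in $H$; your proposed route --- forcing $\langle\rho_j,\widetilde f\rangle\to 0$ along a subsequence for each $\widetilde f$ --- does not deliver this. If the subsequence is allowed to depend on $\widetilde f$ you cannot handle finitely many functionals simultaneously; if instead you mean weak convergence of the whole subsequence $(\rho_j)_{u_j=u}$ to $0$, that is false in general. Even in the simplest model --- $S$ the bilateral shift on $\ell_2(\Z)$, all $u_i=e_0$, and $N_j=2^j$ --- the differences $N_i-N_j$ ($i<j$) are pairwise distinct as $(i,j)$ varies, so the $\rho_j=\sum_{i<j}e_{N_i-N_j}$ are pairwise orthogonal with $\|\rho_j\|_0^2=j-1$, and $f=\sum_{k\geq 1}(2^k-1)^{-1}\rho_{2^k}\in\ell_2(\Z)$ gives $\langle\rho_{2^k},f\rangle=1$ for every $k$. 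The Wold decomposition is no help here, and is in fact not used in the paper's proof of Theorem~\ref{qq1}.

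The missing mechanism is a Hilbert-space criterion for $0$ to lie in the weak closure (Corollary~\ref{co0}): it suffices that $\sum_r\|a_r\|_0^{-2}=\infty$ and $\sum_{r<q}|\langle a_r,a_q\rangle|^2<\infty$. The off-diagonal sum is controlled by making the gaps grow --- your instinct is correct there, and (B$2'$) together with the isometry property reduce the relevant inner products to $\langle T^mu,v\rangle$ with $m$ large. The divergence $\sum_r\|a_r\|_0^{-2}=\infty$ is the subtle part: it forces $\|a_r\|_0$ to grow no faster than roughly $\sqrt{m\log m}$ along the subsequence, and this is \emph{not} automatic from ``every value occurs infinitely often''. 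The paper engineers it via Lemma~\ref{le1}, constructing $\phi:\N\to\N$ so that $\sum_{j\leq r_{k,m}}\|u_{\phi(j),0}\|_0^2=o(m\log m)$, where $r_{k,m}$ is the $m^{\rm th}$ element of $\phi^{-1}(k)$. With Corollary~\ref{co0} and Lemma~\ref{le1} in hand the proof reduces to the short verification packaged as Theorem~\ref{tt}; without them the argument does not close.
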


\begin{remark}\label{re1} We say that a bounded linear operator $T$ on a normed space $X$ is
{\it weakly nullifying} if the sequence $\{T^nx\}_{n\in\N}$ weakly converges to $0$ for each $x\in X$. It is well-known and easy to see that every unitary operator with purely absolutely continuous spectrum is weakly nullifying.
Note that (B$2'$) is automatically satisfied if the continuous extension of the restriction $S=T\big|_F:F\to F$ to the $($Hilbert space$)$ completion of the normed space $(F,\|\cdot\|_0)$ is a weakly nullifying isometry.
\end{remark}

We start by illustrating the above criteria of weak hypercyclicity.
Note that the first ever example of a weakly hypercyclic operator, which is not norm hypercyclic, was obtained by Chan and Sanders \cite{cs}. Recall that if $1\leq p<\infty$ and $w=\{w_n\}_{n\in\Z}$ is a bounded sequence of non-zero scalars, then bilateral weighted shift $T_w\in L(\ell_p(\Z))$ is defined by the formula $(T_wx)_n=w_{n+1}x_{n+1}$. Equivalently, $T_we_n=w_ne_{n-1}$, where $\{e_n\}_{n\in\Z}$ is the standard Schauder basis in $\ell_p(\Z)$. Chan and Sanders \cite{cs} proved that the bilateral weighted shift $T_w$ with the weight sequence $w_n=1$ for $n\leq 0$ and $w_n=2$ for $n>0$ is weakly hypercyclic on $\ell_2(\Z)$. In order to illustrate how to apply Theorems~\ref{qq} and~\ref{qq1}, we show how they imply the main result of \cite{cs} and more.

\begin{example}\label{bws} Let $p\geq2$ and $w=\{w_n\}_{n\in\Z}$ be a sequence of positive numbers. Denote $r_0=1$, $r_n=w_1^{-1}\cdot{\dots}\cdot w_n^{-1}$ for $n>0$ and $r_n=w_{1+n}\cdot{\dots}\cdot w_0$ for $n<0$. Assume also that the sequence $r=\{r_n\}_{n\in\Z}$ is bounded and $\ilim\limits_{n\to\infty}r_n=0$. Then the bilateral weighted shift $T_w$ is a weakly hypercyclic operator on $\ell_p(\Z)$. If additionally, $\ilim\limits_{n\to\infty}r_{-n}>0$, then $T_w$ is not norm hypercyclic.
\end{example}

\begin{proof} It is easy to see that if $\ilim\limits_{n\to\infty}r_{-n}>0$, then $\inf\{\|T_w^nx\|:n\in\Z_+\}>0$ for every non-zero $x\in\ell_p(\Z)$ and therefore $T_w$ can not be norm
hypercyclic.

Since $r$ is bounded and $p\geq 2$, the space
$$
\textstyle G=\Bigl\{x\in\K^\Z:\sum\limits_{n=-\infty}^\infty \frac{|x_n|^2}{r_n^2}<\infty\Bigr\}\ \ \text{equipped with the inner product}\ \
\langle x,y\rangle = \sum\limits_{n=-\infty}^\infty \frac{x_n\overline{y_n}}{r_n^2}
$$
satisfies $G\subseteq\ell_2(\Z)\subseteq \ell_p(\Z)$ and the Hilbert space topology of $G$ is stronger than the one inherited from $\ell_p(\Z)$. Furthermore, since $T_w(r_ne_n)=r_{n-1}e_{n-1}$ for every $n\in\Z$, $G$ is $T$-invariant and the restriction $T\bigr|_G:G\to G$ as an operator on the Hilbert space $G$ is unitarily equivalent to the unweighted backward shift on $\ell_2(\Z)$. The latter is an isometry and is a unitary operator with purely absolutely continuous spectrum in the complex case. Hence $T\bigr|_G:G\to G$ as an operator on the Hilbert space $G$ is a weakly nullifying isometry.

Let $F=\spann\{e_n:n\in\Z\}$. Then $F$ is a dense in $\ell_p(\Z)$ $T$-invariant linear subspace of $G$. Clearly, every $x\in F$ has a unique backward $T_w$-orbit. Condition $\ilim\limits_{n\to\infty}r_n=0$ ensures that $0$ is in the $\ell_p$-norm closure of the backward $T_w$-orbit of any $x\in F$. Thus (B1), (B$2'$) and (B$3'$) are satisfied. Theorem~\ref{qq1} implies that $T_w$ is a weakly hypercyclic operator on $\ell_p(\Z)$.
\end{proof}

The above example represents a known result, see \cite{ss}. It is included merely as an illustration.

Feldman \cite{fe} has conjectured that if for a continuous linear operator $T$ on a separable Banach space $X$ every $x\in X$ has a norm convergent to zero backward orbit and every $x$ from a dense in $X$ subset have weakly convergent to $0$ forward orbit, then $T$ is weakly hypercyclic or at least 1-weakly hypercyclic. It is shown in \cite{ss} that every weakly hypercyclic bilateral weighted shift on $\ell_p(\Z)$ with $p<2$ is norm hypercyclic. Thus if we take the above mentioned weight $w=(\dots,1,1,1,2,2,2,\dots)$ of Chan and Sanders, the corresponding weighted shift $T_w$ is not weakly hypercyclic on $\ell_p(\Z)$ with $1<p<2$. On the other hand, $T_w$ is invertible, all its backward orbits norm-converge to $0$ and the forward orbits of elements of $\spann\{e_n:n\in\Z\}$ converge weakly to 0. Thus $T_w\in L(\ell_p(\Z))$ with $1<p<2$ provides a counterexample to the weak hypercyclicity part of the above conjecture \cite{fe}. The 1-weak hypercyclicity part of the conjecture as well as its weak hypercyclicity part for Hilbert space operators remain open.

Right here we include another example of an application of the above criteria. Feldman \cite{fe} conjectured that if $G$ is a region of $\C$ bounded by a smooth Jordan curve $\Gamma$ such that $G$ does not meet the unit ball but $\Gamma$ intersects the unit circle in a non-trivial arc, then $M^*$ is a 1-weakly hypercyclic operator on $H^2(G)$, where $M$ is the multiplication by the argument operator $Mf(z)=zf(z)$. We prove this conjecture by means of applying Theorem~\ref{qq}. Note that with just a small adjustment of the proof the smoothness condition can be significantly relaxed.

\begin{theorem}\label{mz} Let $\Gamma$ be a $C^1$-smooth closed path $(=$a homeomorphic image of $\T$ under a $C^1$-map from $\T$ into $\C$ with nowhere vanishing derivative$)$ encircling the bounded domain $G$. Assume also that $G\cap\D=\varnothing$ and that $\Gamma\cap\T$ contains a non-trivial open arc $A$ of $\T$. Then $M^*$ is a weakly hypercyclic operator on the Hardy space $H^2(G)$, where $M$ is the multiplication by the argument operator $Mf(z)=zf(z)$.
\end{theorem}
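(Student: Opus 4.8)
The plan is to carry the problem to the unit disk by a conformal map and to apply Theorem~\ref{qq}. Let $\psi\colon\D\to G$ be a Riemann map. Since $\Gamma$ is $C^1$, $\psi$ extends to a homeomorphism of $\DD$ onto $\overline G$ whose derivative extends continuously and without zeros to $\DD$ (standard boundary regularity of conformal maps; a Dini-type smoothness of $\Gamma$ already suffices, which is why the $C^1$ hypothesis can be relaxed). Hence $Uf=(f\circ\psi)(\psi')^{1/2}$ defines a unitary $U\colon H^2(G)\to H^2(\D)$ (arc length on $\Gamma$ corresponding to $|\psi'|\,d\theta$ on $\T$) with $UMU^{-1}=M_\psi$, multiplication by $\psi$, so $UM^*U^{-1}=M_\psi^*=T_{\bar\psi}$. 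As $G$ is open and misses $\D$, we actually have $G\subseteq\{|z|>1\}$, so $|\psi|\geq1$ on $\D$, and since $\psi$ is bounded, $1/\psi\in H^\infty(\D)$; thus $M_\psi$ and $T_{\bar\psi}$ are invertible, with $T_{\bar\psi}^{-1}=T_{\overline{1/\psi}}$ a contraction ($\|1/\psi\|_\infty=1/\inf_\T|\psi|=1$). Finally, on the open arc $A':=\psi^{-1}(A)\subseteq\T$ one has $\psi(A')=A\subseteq\T$, hence $|\psi|=1$ there; writing $\psi(e^{i\sigma})=e^{i\gamma(\sigma)}$ on $A'$, the function $\gamma$ is a $C^1$-diffeomorphism of $A'$ onto a proper subarc of $\T$ and $\gamma'<0$ throughout --- the sign being forced because $\psi$ preserves orientation while, along $A$, the domain $G$ lies on the exterior side of $\T$. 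It therefore suffices to prove that $T:=T_{\bar\psi}$ is weakly hypercyclic on $X:=H^2(\D)$.

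Let $\rho\colon X\to L^2(A')$ be restriction to $A'$ and $V:=\rho^*\colon L^2(A')\to X$ its adjoint; $V$ is bounded, injective (restrictions to the arc $A'$ of polynomials are dense in $L^2(A')$, so $\ker V=(\overline{\operatorname{ran}\rho})^\perp=\{0\}$) and has dense range ($\ker\rho=\{0\}$ by uniqueness of $H^2$-functions). Let $\mathcal D:=\spann_\C\{\psi^m|_{A'}:m\in\Z\}\subseteq L^2(A')$; on $A'$ we have $\psi^m=e^{im\gamma}$ with $\gamma$ a $C^1$-diffeomorphism onto a proper arc, so $\mathcal D$ is dense in $L^2(A')$ (Stone--Weierstrass) and invariant under multiplication by $\psi$ and by $1/\psi=\bar\psi|_{A'}$. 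Put $F:=V(\mathcal D)$; it is a $T$-invariant dense linear subspace of $X$, and we equip it with the inner product $\langle Vu,Vv\rangle_0:=\langle u,v\rangle_{L^2(A')}$, well defined because $V$ is injective. Then $\|\cdot\|_X\leq\|V\|\cdot\|\cdot\|_0$ on $F$, so {\rm(B1)} holds, and the completion of $(F,\|\cdot\|_0)$ is $L^2(A')$. From $\rho M_\psi=M_{\psi,A'}\rho$ we get the intertwining $TV=VM_{\bar\psi,A'}$, where $M_{\bar\psi,A'}$ is multiplication by $\bar\psi$ on $L^2(A')$; hence $T|_F$ corresponds under $V$ to $M_{\bar\psi,A'}|_{\mathcal D}$, an isometry because $|\psi|=1$ on $A'$, whose continuous extension to $L^2(A')$ is the unitary $M_{\bar\psi,A'}$. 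This unitary has no eigenvalues, since $\bar\psi$ attains each value on at most one point of $A'$, so it has no non-trivial finite-dimensional invariant subspace; this is {\rm(B2)}. (It even has purely absolutely continuous spectrum, being multiplication by $e^{-i\gamma(\cdot)}$ with $\gamma$ a diffeomorphism, cf.\ Remark~\ref{re1}, but that is not needed for Theorem~\ref{qq}.)

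There remains {\rm(B3)}, which is the heart of the matter. As $T$ is invertible, the backward $T$-orbit of $x\in F$ is unique, namely $\{T^{-n}x\}_{n\geq0}$; and for $x=Vu$ with $u\in\mathcal D$, the intertwining together with invertibility of $M_{\bar\psi,A'}$ (inverse: multiplication by $\psi$) give $T^{-n}(Vu)=V(\psi^n u|_{A'})\in F$. By linearity it suffices to prove that $\|V(\psi^k|_{A'})\|_X\to0$ as $k\to+\infty$. Now $V(\psi^k|_{A'})=P_+(\psi^k\mathbb 1_{A'})$, the orthogonal projection onto $H^2(\D)$ of the function equal to $\psi^k$ on $A'$ and to $0$ on $\T\setminus A'$, so
$$
\|V(\psi^k|_{A'})\|_X^2=\sum_{j\geq0}|c_j(k)|^2,\qquad c_j(k)=\frac{1}{2\pi}\int_{A'}e^{i(k\gamma(\sigma)-j\sigma)}\,d\sigma .
$$
After the substitution $\tau=\gamma(\sigma)$ this becomes an oscillatory integral over the arc $\gamma(A')$ with phase $k\tau-j\gamma^{-1}(\tau)$, whose derivative equals $k-j(\gamma^{-1})'(\tau)=k+j\,|(\gamma^{-1})'(\tau)|\geq k$ for every $j\geq0$, precisely because $\gamma'<0$. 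Thus the phase is non-stationary with gradient of size $\gtrsim k+j$, and a routine integration-by-parts estimate gives $|c_j(k)|\leq C/(k+j)$, whence $\sum_{j\geq0}|c_j(k)|^2\leq C^2\sum_{l\geq k}l^{-2}\leq C'/k\to0$. Hence every $x\in F$ has backward $T$-orbit norm-converging to $0$, Theorem~\ref{qq} applies to the separable infinite-dimensional space $H^2(\D)$, $T_{\bar\psi}$ is weakly hypercyclic there, and transporting by $U$ shows that $M^*$ is weakly hypercyclic on $H^2(G)$.

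I expect the main obstacle to be the last step --- passing from the elementary pointwise decay $c_j(k)\to0$ (Riemann--Lebesgue) to the summable-in-$j$ bound needed to conclude $\|T^{-n}x\|\to0$. This hinges on the orientation identity $\gamma'<0$, equivalently on the hypothesis that the arc $A$ bounds $G$ from outside $\D$ (were $G$ inside $\D$, the phase would be stationary for suitable $j$ and the backward orbits would not decay), and on controlling the amplitude $(\gamma^{-1})'$ in the integration by parts, which is what the smoothness assumption on $\Gamma$ provides. A secondary technical point is the bookkeeping in the first paragraph: identifying $M^*$ with $T_{\bar\psi}$, matching the boundary measures, and securing the required boundary regularity of $\psi$.
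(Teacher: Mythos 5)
Your reduction to the disk via the Riemann map is a genuinely different and arguably cleaner route than the paper's, which works directly on $H^2(G)^*$ with the operator $M'$ and the evaluation functionals $\delta_w$. In fact, once you have $U M^* U^{-1}=T_{\bar\psi}$ with $\psi$ non-constant, $\psi(\D)\cap\D=\varnothing$, and $|\psi|=1$ on the arc $A'$ of positive measure, you have exactly the hypotheses of Theorem~\ref{toe01}, so the shortest version of your argument is ``Theorem~\ref{mz} $\Leftarrow$ Theorem~\ref{toe01} via the Riemann map''. Re-verifying (B1)--(B3) for $T_{\bar\psi}$ by hand, as you do, essentially reproduces the proof of Theorem~\ref{toe01} (your map $V=\rho^*$ plays the role of the paper's $\Phi(\phi)=G_\phi$, and multiplication by $\bar\psi$ on $L^2(A')$ is the paper's multiplication by $z$ on $L^2(A,\lambda)$ after change of variable). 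Your verifications of (B1) and (B2) are correct.

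The gap is in (B3). Your oscillatory-integral estimate $|c_j(k)|\leq C/(k+j)$ does not follow from the stated hypotheses: you only have $\gamma\in C^1$, so the phase $\phi(\tau)=k\tau-j\gamma^{-1}(\tau)$ is $C^1$ but not $C^2$ and the amplitude $(\gamma^{-1})'$ is continuous but not differentiable. A single integration by parts already needs $\phi''$ and the derivative of the amplitude, neither of which exists; the van der Corput first-derivative lemma needs $\phi'$ monotone, which again is not guaranteed by $C^1$. So the ``routine integration-by-parts'' step genuinely fails as written under the hypotheses of the theorem (and certainly under the weaker smoothness the paper claims suffices). More importantly, none of this is needed: since $G\subseteq\{|z|>1\}$ you have $|\psi(z)|>1$, hence $|(1/\psi)(z)|<1$, at every $z\in\D$, and $T^{-1}=T_{\bar\psi}^{-1}=T^*_{1/\psi}$. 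Lemma~\ref{to0} then gives $\|T^{-n}f\|\to0$ for \emph{every} $f\in H^2(\D)$, which is (B3) with no oscillatory analysis and no dependence on the orientation sign $\gamma'<0$. That is exactly the mechanism in the paper's proof (there phrased as: $(M')^{-n}\delta_w=w^{-n}\delta_w\to0$ for $|w|>1$, extended to all of $H^2(G)^*$ by Lemma~\ref{pb}). Replace your (B3) argument by this and the proposal is correct.
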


We will need the following very well-known fact. We reproduce its proof for the sake of convenience.

\begin{lemma}\label{pb} Let $X$ be a Banach space and $T\in L(X)$ be power bounded, that is, $c=\sup\{\|T^n\|:n\in\Z_+\}<\infty$. Then the linear subspace $E=\{x\in X:\|T^nx\|\to 0\}$ of $X$ is closed. In particular, if $\|T^nx\|\to 0$ for $x$ from a dense subset of $X$, then $\|T^nx\|\to 0$ for each $x\in X$.
\end{lemma}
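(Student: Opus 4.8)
The plan is to run a standard approximation argument, exploiting power boundedness to control the "tail" of an orbit in terms of a nearby orbit that already converges to zero. First I would record the trivial observations that $E$ is a linear subspace of $X$: for $x,y\in E$ and a scalar $\alpha$, the triangle inequality gives $\|T^n(x+y)\|\le\|T^nx\|+\|T^ny\|\to0$ and $\|T^n(\alpha x)\|=|\alpha|\,\|T^nx\|\to0$, so $x+y\in E$ and $\alpha x\in E$. I would also note that, assuming $X\neq\{0\}$ (the zero space being trivial), the inclusion $T^0=I$ forces $c\ge1$.

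Next, to prove $E$ is closed, I would take an arbitrary $x$ in the norm closure $\overline E$ of $E$ and show $x\in E$. Fix $\epsilon>0$ and choose $y\in E$ with $\|x-y\|<\epsilon/(2c)$. For every $n\in\Z_+$ we then have
$$
\|T^nx\|\le\|T^n(x-y)\|+\|T^ny\|\le c\|x-y\|+\|T^ny\|<\frac{\epsilon}{2}+\|T^ny\|.
$$
Since $y\in E$, there is $N\in\N$ with $\|T^ny\|<\epsilon/2$ for all $n\ge N$, and hence $\|T^nx\|<\epsilon$ for all $n\ge N$. As $\epsilon>0$ was arbitrary, $\|T^nx\|\to0$, i.e. $x\in E$. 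Thus $\overline E\subseteq E$, so $E$ is closed.

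Finally, the "in particular" clause is immediate: if $\|T^nx\|\to0$ for all $x$ in a dense subset $D$ of $X$, then $D\subseteq E$, so $X=\overline D\subseteq\overline E=E$, whence $\|T^nx\|\to0$ for every $x\in X$. There is no genuine obstacle here — the only point worth stating carefully is that power boundedness is exactly what lets the factor $c$ absorb the approximation error uniformly in $n$; without it the argument would fail.
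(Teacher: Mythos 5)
Your proof is correct. The paper proves closedness more abstractly: it defines the map $\Phi:X\to\ell_\infty(\Z_+)$, $\Phi(x)=\{\|T^nx\|\}_{n\in\Z_+}$, observes that power boundedness makes $\Phi$ Lipschitz with constant $c$ (since $\bigl|\|T^nx\|-\|T^ny\|\bigr|\le\|T^n(x-y)\|\le c\|x-y\|$), and then reads off $E=\Phi^{-1}(c_0(\Z_+))$ as the preimage of the closed subspace $c_0(\Z_+)$ of $\ell_\infty(\Z_+)$ under a continuous map. Your version unpacks exactly this into the direct $\varepsilon/2$ approximation argument. The two are really the same estimate in different clothing: the closedness of $c_0$ in $\ell_\infty$ \emph{is} the statement that a uniform limit of sequences tending to zero tends to zero, which is what your $\varepsilon/2$ splitting proves by hand. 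The paper's phrasing is shorter and makes the role of power boundedness (Lipschitzness of $\Phi$, hence uniformity in $n$) conceptually visible; your phrasing is more elementary and self-contained, and correctly isolates the same point in your closing remark that power boundedness is what lets $c$ absorb the approximation error uniformly in $n$.
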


\begin{proof}Consider the map $\Phi:X\to\ell_\infty(\Z_+)$, $\Phi(x)=\{\|T^nx\|\}_{n\in\Z_+}$. Since $T$ is power bounded, the map is well-defined and Lipschitz with the constant $c$ and therefore is continuous, where $\ell_\infty(\Z_+)$ is assumed to be equipped with its usual sup-norm. Then $E=\Phi^{-1}(c_0(\Z_+))$ is closed since $c_0(\Z_+)$ is closed in $\ell_\infty(\Z_+)$.
\end{proof}

\begin{proof}[Proof of Theorem~$\ref{mz}$] Obviously, $M$ is invertible: $M^{-1}f(z)=\frac{f(z)}{z}$. Hence its dual $M'$ is invertible and $(M')^{-1}=(M^{-1})'$. Furthermore, since $G\cap \D=\varnothing$, $\|(M')^{-1}\|=\|M^{-1}\|\leq 1$. In particular, $(M')^{-1}$ is power bounded. For each $w\in G$, the evaluation map $\delta_w(f)=f(w)$ is a continuous linear functional on $H^2(G)$. It is easy to see that $(M')^{-n}\delta_w=w^{-n}\delta_w$. Since $G\cap \D=\varnothing$, $|w|>1$ and therefore $\|(M')^{-n}\delta_w\|\to 0$. Hence $\|(M')^{-n}\psi\|\to 0$ for every $\psi\in E=\spann\{\delta_w:w\in G\}$. Since the functionals $\delta_w$ for $w\in G$ separate the points of the reflexive Banach space $H^2(G)$, $E$ is dense in $H^2(G)^*$. By Lemma~\ref{pb}, $\|(M')^{-n}\psi\|\to 0$ for every $\psi\in H^2(G)^*$. That is, every $\psi\in H^2(G)^*$ has a norm convergent to 0 backward $M'$-orbit.

Now it is easy to see that for every $\phi\in L^2(A,\lambda)$, $G_\phi(f)=\int_A f(z)\phi(z)\,d\lambda(z)$ is a continuous linear functional on $H^2(G)$, where $f(z)$ in the integral stands for the non-tangential boundary value of $f$ at $z$. Furthermore, the map $\Phi(\phi)=G_\phi$ is an injective continuous linear operator from $L^2(A,\lambda)$ to $H^2(G)^*$. Moreover, it is easy to check that $M'(G_\phi)=G_{S\phi}$, where $S\phi(z)=z\phi(z)$. Thus, $F=\Phi(L^2(A,\lambda))$ is an $M'$-invariant subspace of $H^2(G)^*$, the inner product $\langle\cdot,\cdot\rangle$ on $F$ transferred from $L^2(A,\lambda)$ by the operator $\Phi$ defines the Hilbert space topology stronger than the one inherited from the norm topology of $H^2(G)^*$ and the restriction of $T$ to $F$ is a unitary operator with purely absolutely continuous spectrum on the Hilbert space $F$ (it is unitarily equivalent to $S$). In particular, the last restriction has no non-trivial finite dimensional subspaces. Thus conditions (B1--B3) for $T=M'$ are satisfied. By Theorem~\ref{qq}, $M'$ is weakly hypercyclic.
According to Remark~\ref{rire}, $M^*$ and $M'$ are similar with the similarity provided by an $\R$-linear isometry. Hence $M^*$ is weakly hypercyclic as well.
\end{proof}

We would like to mention the following result of the opposite nature.

\begin{theorem}\label{norm1} Let $\hh$ be a complex Hilbert space and $T\in L(\hh)$ be such that there exists $S\in L(\hh)$ for which $S^2\neq 0$, $TS=ST$ and $T^*T\geq S^*S+I$. Then $T$ is not $1$-weakly hypercyclic.
\end{theorem}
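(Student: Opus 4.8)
The plan is to prove the contrapositive: under the stated hypotheses no $x\in\hh$ can be a $1$-weakly hypercyclic vector for $T$. Throughout I use that a continuous linear map $\hh\to\C$ is surjective exactly when it has the form $z\mapsto\langle z,y\rangle$ with $y\neq0$, and that $\langle T^mx,y\rangle=\langle x,(T^*)^my\rangle$; thus $x$ is $1$-weakly hypercyclic for $T$ iff $\{\langle T^mx,y\rangle:m\in\Z_+\}$ is dense in $\C$ for every $y\in\hh\setminus\{0\}$.

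First I would dispose of some cases. If $T^*$ has an eigenvalue $\lambda$ with eigenvector $y\neq0$, then $\{\langle T^mx,y\rangle\}=\{\bar\lambda^m\langle x,y\rangle:m\in\Z_+\}$ lies in $\{0\}\cup\{w:|w|=|\langle x,y\rangle|\}$, which is never dense in $\C$, so $x$ is not $1$-weakly hypercyclic. Since $T^*T\ge I$ forces $T$ to be bounded below with closed range, the only remaining possibility is that $T$ is invertible with $\|T^{-1}\|\le1$; then $\|T^mz\|$ is nondecreasing in $m$ for every $z$, and $\|(T^*)^{-1}\|=\|T^{-1}\|\le1$ gives $TT^*\ge I$, so $\|(T^*)^my\|$ is nondecreasing as well. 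Similarly, if $\sup_m\|(T^*)^my\|<\infty$ for some $y\neq0$, then $\{\langle x,(T^*)^my\rangle\}$ is bounded, hence not dense; so we may further assume $\|(T^*)^my\|\to\infty$ for every $y\neq0$.

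Next, for a putative $1$-weakly hypercyclic vector $x$ one must have $S^2x\neq0$: since $(S^*)^2=(S^2)^*\neq0$ there is $v$ with $y:=(S^*)^2v\neq0$, and if $S^2x=0$ then, using $T^mS^2=S^2T^m$ and $\langle z,(S^*)^2v\rangle=\langle S^2z,v\rangle$, we get $\langle T^mx,y\rangle=\langle S^2T^mx,v\rangle=\langle T^mS^2x,v\rangle=0$ for all $m$ — a contradiction. (The same device applied inside the $T$- and $S$-invariant subspace $\spannn\{S^2z:z\in\hh\}$ shows more: $S^2x$ is again a $1$-weakly hypercyclic vector for the restriction of $T$ there, which permits an iteration of the argument when $S$ is not nilpotent on such a subspace.) Now comes the quantitative core. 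From $T^*T\ge S^*S+I$ one gets $\|Tz\|^2\ge\|z\|^2+\|Sz\|^2$ for all $z$; applying this to $T^{m-1}z$ and using $ST=TS$ yields $\|T^mz\|^2\ge\|T^{m-1}z\|^2+\|T^{m-1}(Sz)\|^2$, and iterating (together with $\|Tw\|\ge\|w\|$) gives
\[
\|T^mz\|^2\ \ge\ \sum_{k=0}^{m}\binom{m}{k}\|S^kz\|^2\qquad(z\in\hh,\ m\in\Z_+).
\]
In particular $\|T^mx\|^2\ge\binom{m}{2}\|S^2x\|^2$, so $\|T^mx\|$ grows at least linearly in $m$ and consequently $\sum_m\|T^mx\|^{-2}<\infty$.

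The final step — which I expect to be the real obstacle — is to turn this growth into a contradiction with $1$-weak hypercyclicity, i.e. to produce a single $y\neq0$ for which $\{\langle T^mx,y\rangle\}$ omits a disc in $\C$. One may assume $x$ is cyclic for $T$ (otherwise any $y\neq0$ orthogonal to $\spannn\{T^mx:m\in\Z_+\}$ works). Write $a_m=\|T^mx\|$ and $u_m=a_m^{-1}T^mx$. If $0$ is not a weak cluster point of $\{u_m\}$, one extracts $z\neq0$ with $|\langle u_m,z\rangle|$ bounded below along a subsequence, so that $|\langle T^mx,z\rangle|=a_m|\langle u_m,z\rangle|$ is unbounded there; otherwise $u_m\rightharpoonup0$ along a subsequence, and a gliding-hump selection of indices $m_1<m_2<\dots$ with $a_{m_k}\ge2^k$ and $|\langle u_{m_k},u_{m_j}\rangle|\le2^{-k-j}$ for $j\neq k$ lets one set $y=\sum_kc_ku_{m_k}$ with $c_k$ comparable to $a_{m_k}^{-1}$ (so $(c_k)\in\ell^1$ and $y$ is a well-defined nonzero vector) and forces $|\langle T^{m_k}x,y\rangle|=a_{m_k}|\langle u_{m_k},y\rangle|$ bounded away from $0$. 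Since, for an invertible $T$, $1$-weak hypercyclicity forces every complex number — in particular $0$ — to be a cluster point of $(\langle T^mx,y\rangle)_m$ (apply the surjective functional $(T^*)^Ny$ for each $N$), this yields the contradiction. The delicate issue is to control $\langle T^mx,y\rangle$ at all indices $m$, not merely along the selected subsequence, so that $0$ genuinely fails to be a cluster point; this is where the full strength of the displayed growth estimate and a choice of $y$ truly adapted to the geometry of the orbit $\{T^mx\}$ (rather than a crude gliding hump) have to enter, and I expect this to be the heart of the proof.
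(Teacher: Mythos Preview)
Your argument is correct through the growth estimate: the inequality $\|T^mx\|^2\ge\binom{m}{2}\|S^2x\|^2$ (indeed you prove the stronger binomial sum) and the resulting $\sum_m\|T^mx\|^{-2}<\infty$ when $S^2x\neq0$ match the paper's Lemma~\ref{qqq1}. Your treatment of the case $S^2x=0$ is also correct and essentially the paper's. The preliminary case-splitting on eigenvalues of $T^*$ and the reduction to invertible $T$ are unnecessary detours, though harmless.

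The genuine gap is exactly where you locate it. Your gliding-hump construction yields a $y$ with $|\langle T^{m_k}x,y\rangle|$ bounded away from $0$ only along the chosen subsequence, and as you note this does not stop $0$ from being a cluster point of the \emph{full} sequence $(\langle T^mx,y\rangle)_{m\ge0}$. No refinement of a gliding hump will repair this: a vector $y$ assembled from the $u_{m_k}$'s carries no lower bound on $|\langle T^mx,y\rangle|$ for $m$ away from the selected indices. The paper closes the gap in one line by invoking Ball's theorem on the complex plank problem (stated as Theorem~B in the paper): if $(x_n)$ lies in a complex Hilbert space and $\sum_n\|x_n\|^{-2}\le1$, then there exists $y$ with $\|y\|\le1$ and $|\langle x_n,y\rangle|\ge1$ for \emph{every} $n$. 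Applying this (after rescaling) to $x_n=T^nx$ produces a nonzero $y$ for which $\{\langle T^nx,y\rangle:n\in\Z_+\}$ avoids the open unit disc and hence is not dense in $\C$; thus $x$ is not $1$-weakly hypercyclic. That theorem of Ball is the missing ingredient you were looking for.
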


Now we describe the main application of the above results. Recall that the Hardy space $H^2(\D)$ is a closed subspace of the Hilbert space $L^2(\T)$. For $g\in L^\infty(\T)$, the operator $T_g\in L(H^2(\D))$, $T_g=PM_g$ is called the {\it Toeplitz operator with the symbol $g$}, where $P$ is the orthogonal projection in $L^2(\T)$ onto $H^2(\D)$ and $M_g$ is the multiplication by $g$ operator $M_gf(z)=g(z)f(z)$. The Toeplitz operator $T_g$ is called {\it analytic} if $g\in H^\infty(\D)$ in which case $T_g$ is the restriction of $M_g$ to $H^2(\D)$. Similarly, $T_g$ is called {\it coanalytic} if $\overline{g}\in H^\infty(\D)$. The coanalytic Toeplitz operators are exactly the adjoints of the analytic Toeplitz operators since $T_g^*=T_{\overline{g}}$. The hypercyclic coanalytic Toeplitz operators were characterized by  Godefroy and Shapiro \cite{gs}, see also \cite{kc}.

\begin{thmgs} Let $g\in H^\infty(\D)$. Then the coanalytic Toeplitz operator $T_g^*\in L(H^2(\D))$ is norm hypercyclic if and only if $g$ is non-constant and $g(\D)\cap \T\neq\varnothing$.
\end{thmgs}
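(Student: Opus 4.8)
The plan is to establish the ``if'' direction by applying the eigenvalue (Godefroy--Shapiro) criterion to the reproducing kernels of $H^2(\D)$, and the ``only if'' direction by elementary norm estimates. For $w\in\D$ write $k_w(z)=(1-\overline w z)^{-1}$ for the reproducing kernel, so that $\langle f,k_w\rangle=f(w)$ for every $f\in H^2(\D)$. Since $T_g$ is the restriction of $M_g$ to $H^2(\D)$, for every $f\in H^2(\D)$ one has $\langle f,T_g^*k_w\rangle=\langle gf,k_w\rangle=g(w)f(w)=\langle f,\overline{g(w)}k_w\rangle$, hence $T_g^*k_w=\overline{g(w)}\,k_w$: the kernels are eigenvectors of $T_g^*$ with eigenvalues $\overline{g(w)}$. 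I would also record the trivial density fact: if $W\subseteq\D$ has an accumulation point in $\D$, then $\spannn\{k_w:w\in W\}=H^2(\D)$, since any $f\in H^2(\D)$ with $f\perp k_w$ for all $w\in W$ vanishes on $W$ and therefore vanishes identically by the identity theorem.

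For the ``if'' direction, assume $g$ is non-constant and fix $\zeta\in g(\D)\cap\T$. By the open mapping theorem $g(\D)$ is open, hence contains a disc about $\zeta$, which in turn contains points of modulus $<1$ and points of modulus $>1$; thus $W_-=\{w\in\D:|g(w)|<1\}$ and $W_+=\{w\in\D:|g(w)|>1\}$ are both non-empty open subsets of $\D$, and each has accumulation points in $\D$. By the density fact, $\spannn\{k_w:w\in W_-\}=\spannn\{k_w:w\in W_+\}=H^2(\D)$; the former is the closed span of eigenvectors of $T_g^*$ with eigenvalues of modulus $<1$, the latter of eigenvectors with eigenvalues of modulus $>1$. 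The Godefroy--Shapiro eigenvalue criterion (see \cite{bama-book}) then yields that $T_g^*$ is hypercyclic.

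For the ``only if'' direction I argue by contraposition. If $g\equiv c$ is constant, then $T_g^*=\overline c\,I$ is a scalar operator and hence not hypercyclic on the infinite dimensional space $H^2(\D)$. If $g$ is non-constant but $g(\D)\cap\T=\varnothing$, then $g(\D)$ is open and connected and misses $\T$, so $g(\D)\subseteq\D$ or $g(\D)\subseteq\C\setminus\DD$. In the first case $\|g\|_{H^\infty}\leq1$, so $\|T_g^*\|=\|T_g\|\leq1$ and every orbit of $T_g^*$ is bounded, hence $T_g^*$ has no dense orbit. In the second case $1/g\in H^\infty(\D)$ with $\|1/g\|_{H^\infty}\leq1$, and from $T_gT_{1/g}=T_{1/g}T_g=I$ it follows that $T_g$, and hence $T_g^*$, is invertible with $\|(T_g^*)^{-1}\|\leq1$; therefore $\|(T_g^*)^nx\|\geq\|x\|$ for all $n\in\N$ and all $x\in H^2(\D)$, so $0$ lies outside the closure of every non-zero orbit and $T_g^*$ is again not hypercyclic.

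The only substantial external ingredient is the Godefroy--Shapiro eigenvalue criterion; everything else is elementary complex analysis and operator norm estimates. I expect the point requiring the most care to be the verification in the ``if'' direction that the two hypotheses genuinely force both eigenvector families to be non-empty and to accumulate inside $\D$ — this is exactly where non-constancy of $g$, through the open mapping theorem, enters.
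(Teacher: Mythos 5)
Your proof is correct. The paper does not actually prove Theorem~GS; it is quoted as a known result and attributed to Godefroy--Shapiro \cite{gs} (see also \cite{kc}), so there is no in-paper argument to compare against. Your argument is the standard one: the computation $T_g^*k_w=\overline{g(w)}\,k_w$, the identity-theorem density of kernels over a set with an accumulation point, the open mapping theorem to produce both $W_-$ and $W_+$ non-empty and open, and then the Godefroy--Shapiro eigenvalue criterion for the ``if'' direction; and for the ``only if'' direction the clean dichotomy that a connected $g(\D)$ missing $\T$ forces $g(\D)\subseteq\D$ (so $T_g^*$ is a contraction) or $g(\D)\subseteq\C\setminus\DD$ (so $T_g^*$ is bounded below via $T_g^{-1}=T_{1/g}$). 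All steps are sound, and the constant case is handled. This is exactly how the theorem is usually proved; no gaps.
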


The question of characterizing $1$-weakly hypercyclic coanalytic Toeplitz operators is raised in \cite{fe}. Note that if $g\in H^\infty(\D)$ and $g(\D)\cap \D\neq\varnothing$, then either $T^*_g$ is norm hypercyclic according to Theorem~GS or $T^*_g$ is a contraction and can not be 1-weakly hypercyclic. Thus the only case to consider is when $g(\D)\cap \D=\varnothing$. The following two results provide a partial answer to the above question.

\begin{theorem}\label{toe01} Let $g\in H^\infty(\D)$ be non-constant and such that $g(\D)\cap \D=\varnothing$ and $\{z\in\T:|g(z)|=1\}$ is a subset of $\T$ of positive Lebesgue measure. Then the
coanalytic Toeplitz operator $T^*_g$ is weakly hypercyclic.
\end{theorem}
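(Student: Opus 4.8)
The plan is to verify the hypotheses of Theorem~\ref{qq} for $T=T_g^*=T_{\bar g}$ acting on the separable infinite-dimensional Hilbert space $X=H^2(\D)$ directly (no passage to a dual is needed). Since $g(\D)\cap\D=\varnothing$ we have $|g|\geq 1$ on $\D$; as $g$ is non-constant and zero-free, $\log|g|$ is a non-negative harmonic function on $\D$, so by the minimum principle $|g|>1$ throughout $\D$. Hence $1/g\in H^\infty(\D)$ with $\|1/g\|_\infty\leq 1$, the operator $T_g$ is invertible, and $(T_g^*)^{-1}=(T_{1/g})^*=T_{\overline{1/g}}$ is a contraction, in particular power bounded. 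For $w\in\D$ the reproducing kernel $k_w(z)=(1-\bar w z)^{-1}$ satisfies $T_g^*k_w=\overline{g(w)}\,k_w$, whence $\|(T_g^*)^{-n}k_w\|=|g(w)|^{-n}\|k_w\|\to 0$. Since $\spann\{k_w:w\in\D\}$ is dense in $H^2(\D)$, Lemma~\ref{pb} applied to $(T_g^*)^{-1}$ gives $\|(T_g^*)^{-n}x\|\to 0$ for every $x\in H^2(\D)$; thus $x_n=(T_g^*)^{-n}x$ is a backward $T_g^*$-orbit of $x$ that converges to $0$ in norm, so (B3) holds for every $x$, in particular on any $T_g^*$-invariant subspace $F$.

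It remains to produce a suitable $F$. Put $E=\{z\in\T:|g(z)|=1\}$, so $\lambda(E)>0$ by hypothesis, while $E\neq\T$, since otherwise $\|g\|_\infty=1$, contradicting $|g|>1$ on $\D$. Let $P:L^2(\T)\to H^2(\D)$ be the orthogonal projection and identify $L^2(E)$ with the subspace of $L^2(\T)$ of functions vanishing off $E$; set $\Phi=P|_{L^2(E)}$ and $F=\Phi(L^2(E))\subseteq H^2(\D)$. The map $\Phi$ is injective, because $P\phi=0$ forces $\phi\in H^2(\D)^\perp$, and a non-zero element of $H^2(\D)^\perp$ cannot vanish a.e.\ on the positive-measure set $\T\setminus E$. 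Similarly $F$ is dense in $H^2(\D)$: if $h\perp F$ then $\langle h,\phi\rangle=\langle h,P\phi\rangle=0$ for all $\phi\in L^2(E)$, so $h$ vanishes on $E$ and hence $h=0$. The crucial step is the intertwining relation: $\bar g$ maps $H^2(\D)^\perp=\{\overline{zh}:h\in H^2(\D)\}$ into itself since $\bar g\,\overline{zh}=\overline{z(gh)}$ with $gh\in H^2(\D)$, so $PM_{\bar g}(I-P)=0$ and therefore
$$
T_g^*\Phi\phi=PM_{\bar g}P\phi=P(\bar g\phi)=\Phi(U\phi),
$$
where $U$ is multiplication by the unimodular function $\bar g|_E$ on $L^2(E)$, hence a unitary operator. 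Thus $F$ is $T_g^*$-invariant, and transferring the inner product of $L^2(E)$ to $F$ along $\Phi$ makes $S=T_g^*\big|_F$ an isometry whose continuous extension to the completion of $(F,\|\cdot\|_0)$, identified with $L^2(E)$, is $U$.

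Condition (B1) then holds because $\|P\|\leq 1$ gives $\|\Phi\phi\|_{H^2(\D)}\leq\|\phi\|_{L^2(E)}=\|\Phi\phi\|_0$, so the topology of $(F,\|\cdot\|_0)$ is stronger than the one inherited from $H^2(\D)$. Finally, $U=M_{\bar g|_E}$ has no non-trivial finite-dimensional invariant subspace: being normal, it would otherwise have an eigenvector, and $M_{\bar g|_E}f=\mu f$ would force $g$ to be the constant $\bar\mu$ on the positive-measure set $\{z\in E:f(z)\neq 0\}$, contradicting non-constancy of $g$. Hence (B2) holds, and Theorem~\ref{qq} yields that $T_g^*$ is weakly hypercyclic.

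The step requiring the most care — and the reason one projects $L^2(E)$ into $H^2(\D)$ rather than restricting $H^2$-functions to $E$ — is arranging that the inner product on $F$ induces a topology stronger than, not weaker than, the ambient norm; this, together with the injectivity and density of $\Phi$, all rest on $E$ being a proper subset of $\T$ of positive measure, which in turn uses the non-constancy of $g$ and the fact that non-zero functions in $H^2(\D)$ and in $H^2(\D)^\perp$ do not vanish on sets of positive measure. A further point worth noting is that the unitary $U=M_{\bar g|_E}$ need not have absolutely continuous (or even continuous) spectrum, so Remark~\ref{re1} and Theorem~\ref{qq1} do not apply and one genuinely needs the ``no finite-dimensional invariant subspace'' formulation of Theorem~\ref{qq}.
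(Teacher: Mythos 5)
Your proof is correct and is essentially the paper's argument transported through the Riesz identification: the paper applies Theorem~\ref{qq} to the Banach-space dual $T_g'$ on $H^2(\D)^*$ using the functionals $G_\phi(f)=\int_A f\phi\,d\lambda$ and the eigenfunctionals $\delta_w$, which correspond exactly to your subspace $P(L^2(E))$ and reproducing kernels $k_w$ in $H^2(\D)$. Working directly with $T_g^*=PM_{\bar g}$ merely lets you skip the final ``$T_g^*$ is $\R$-linearly similar to $T_g'$'' step; all the substantive ingredients (invertibility of $T_g$ and norm-null backward orbits via Lemma~\ref{pb}, the unitary multiplication operator on $L^2(E)$ with empty point spectrum, and the need for Theorem~\ref{qq} rather than Theorem~\ref{qq1}) are the same.
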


Curiously enough, from a result \cite[Theorem~2.8]{berg} of Bourdon and Shapiro it follows that if $g$ satisfies the conditions of Theorem~\ref{toe01} and $T^*_g$ extends to a continuous linear operator on the Bergman space $A^2(\D)$, then actually (the extended) $T^*_g$ is norm hypercyclic on $A^2(\D)$.

\begin{theorem}\label{toe02} Let $g\in H^\infty(\D)$ be such that $g(\D)\cap \D=\varnothing$, $|g|>1$ almost everywhere on $\T$ and
$\log(|g|-1)\in L^1(\T)$. Then $T^*_g$ is not $1$-weakly hypercyclic.
\end{theorem}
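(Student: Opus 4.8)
The plan is to apply Theorem~\ref{norm1} with $\hh=H^2(\D)$ and $T=T_g^*=T_{\overline g}$. Since $g(\D)\cap\D=\varnothing$ we have $g(\D)\subseteq\{w\in\C:|w|\geq1\}$, so $|g|\geq1$ throughout $\D$; in particular $g$ is zero-free in $\D$ and $1/g\in H^\infty(\D)$ with $\|1/g\|_\infty\leq1$. Moreover $0\leq|g|^2-1\leq\|g\|_\infty^2$ and $\log(|g|^2-1)=\log(|g|-1)+\log(|g|+1)\in L^1(\T)$, the first summand lying in $L^1(\T)$ by hypothesis and the second being bounded; hence there is an outer function $h\in H^\infty(\D)$ with $|h|^2=|g|^2-1$ a.e.\ on $\T$. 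I will take $S=T_{\overline h}=T_h^*$ and set $\phi=h/g$. Since $\phi=h\cdot(1/g)$ is a product of bounded holomorphic functions, $\phi\in H^\infty(\D)$, and on $\T$ one has $|\phi|^2=(|g|^2-1)/|g|^2$ a.e.

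Two of the three hypotheses of Theorem~\ref{norm1} follow at once from the calculus of coanalytic Toeplitz operators, in particular the identity $T_{\overline a}T_{\overline b}=T_{\overline{ab}}$ for $a,b\in H^\infty(\D)$ (the adjoint of $T_aT_b=T_{ab}$). First, $TS=T_{\overline g}T_{\overline h}=T_{\overline{gh}}=T_{\overline h}T_{\overline g}=ST$. Second, $S^2=T_{\overline{h^2}}$, so if $S^2=0$ then $T_{h^2}=(S^2)^*=0$ and hence $h^2=0$, which is impossible because $|h|^2=|g|^2-1>0$ a.e.\ on $\T$ (recall $|g|>1$ a.e.); thus $S^2\neq0$.

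The substantive step is the third hypothesis $T^*T\geq S^*S+I$, that is, $\|T_{\overline g}f\|^2\geq\|T_{\overline h}f\|^2+\|f\|^2$ for every $f\in H^2(\D)$. Here I would argue as follows. From $h=\phi g$ we get $T_{\overline h}=T_{\overline\phi}T_{\overline g}$; writing $u:=T_{\overline g}f\in H^2(\D)$ and letting $P$ be the orthogonal projection of $L^2(\T)$ onto $H^2(\D)$, the contractivity of $P$ gives $\|T_{\overline h}f\|^2=\|P(\overline\phi u)\|^2\leq\|\overline\phi u\|^2_{L^2(\T)}=\int_\T|\phi|^2|u|^2\,d\lambda$, so, since $1-|\phi|^2=1/|g|^2$ a.e.\ on $\T$,
\[
\|T_{\overline g}f\|^2-\|T_{\overline h}f\|^2\ \geq\ \int_\T\bigl(1-|\phi|^2\bigr)|u|^2\,d\lambda\ =\ \int_\T\frac{|u|^2}{|g|^2}\,d\lambda .
\]
It remains to see that $\int_\T|u|^2/|g|^2\,d\lambda\geq\|f\|^2$. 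Since $1/\overline g=\overline{1/g}$ has Fourier coefficients supported in $\{0,-1,-2,\dots\}$ while $(I-P)(\overline g f)$ has them in $\{-1,-2,\dots\}$, the product $(1/\overline g)(I-P)(\overline g f)$ lies in the kernel of $P$, whence
\[
P\Bigl(\frac{u}{\overline g}\Bigr)=P\Bigl(\frac{P(\overline g f)}{\overline g}\Bigr)=P\Bigl(f-\frac{(I-P)(\overline g f)}{\overline g}\Bigr)=f ,
\]
and therefore $\|f\|^2=\|P(u/\overline g)\|^2\leq\|u/\overline g\|^2_{L^2(\T)}=\int_\T|u|^2/|g|^2\,d\lambda$. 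Combining the two displays yields the desired inequality, and Theorem~\ref{norm1} then gives that $T_g^*$ is not $1$-weakly hypercyclic.

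The only place that needs an idea rather than routine Toeplitz bookkeeping is the choice of $S$ together with the verification of $T^*T\geq S^*S+I$; once $S$ is taken to be the coanalytic Toeplitz operator with symbol the outer function of boundary modulus $\sqrt{|g|^2-1}$, the verification reduces to the two pointwise identities $1-|h/g|^2=1/|g|^2$ and $P\bigl(P(\overline g f)/\overline g\bigr)=f$ and to the contractivity of $P$, with the hypothesis $g(\D)\cap\D=\varnothing$ entering precisely to make $1/g$ and $h/g$ bounded holomorphic functions. The condition $\log(|g|-1)\in L^1(\T)$ is used only to guarantee that the outer function $h$ exists.
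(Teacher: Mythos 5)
Your proof is correct, and it follows the same overall strategy as the paper — apply Theorem~\ref{norm1} with $T=T_g^*$ and $S=T_h^*$ for an appropriately chosen outer function $h$ — but your verification of the decisive inequality $T_gT_g^*\geq T_hT_h^*+I$ is carried out along a genuinely different (and more self-contained) route. The paper first proves a general package: Lemma~\ref{sumtoe} (positivity of $\sum T_{h_j}^*T_{h_j}-\sum T_{g_k}^*T_{g_k}$ is equivalent to a.e.\ positivity of $\sum|h_j|^2-\sum|g_k|^2$, proved via density of $\{|f|^2:f\in H^2\}$ in $L^1_+(\T)$), then flips from $T^*T$ to $TT^*$ by invoking the hyponormality of analytic Toeplitz operators, assembling these into Lemma~\ref{mto1}. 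You instead verify $\|T_{\overline g}f\|^2\geq\|T_{\overline h}f\|^2+\|f\|^2$ directly: after factoring $T_{\overline h}=T_{\overline\phi}T_{\overline g}$ with $\phi=h/g$, both needed inequalities reduce to the contractivity of the Sz\H{e}go projection $P$ — once straightforwardly as $\|P(\overline\phi u)\|\leq\|\overline\phi u\|_{L^2}$, and once through the identity $P(u/\overline g)=f$ for $u=T_{\overline g}f$ (equivalently $T_{\overline{1/g}}T_{\overline g}=I$), which gives $\|f\|\leq\|u/\overline g\|_{L^2}$. Those two steps are precisely the hyponormality of $T_\phi$ and $T_{1/g}$, but you re-derive them from first principles rather than cite them. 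You also normalize $h$ by $|h|^2=|g|^2-1$ rather than the paper's $|h|=|g|-1$ followed by the bound $(|g|-1)^2\leq|g|^2-1$; your choice makes the final inequality an equality and is marginally cleaner. The trade-off: the paper's general lemmas (Lemma~\ref{sumtoe}, Corollary~\ref{twotoe}, Lemma~\ref{mto1}) are reused elsewhere (e.g.\ in the appendix on orbit growth), so the abstraction earns its keep there, while your direct argument is shorter if one only wants Theorem~\ref{toe02}.
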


\begin{corollary}\label{toe03} Let $g\in H^\infty(\D)$  be such that $g(\D)\cap\D=\varnothing$ and $g$ extends analytically to $r\D$ for some $r>1$. Then $T^*_g$ is not $1$-weakly hypercyclic.
\end{corollary}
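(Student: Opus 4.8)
The plan is to derive Corollary~\ref{toe03} from Theorem~\ref{toe02}, by checking that a function $g$ as in the corollary automatically satisfies the two hypotheses there: $|g|>1$ $\lambda$-almost everywhere on $\T$, and $\log(|g|-1)\in L^1(\T)$. We may assume $g$ is non-constant. Indeed, if $g\equiv c$, then $|c|\geq1$ because $g(\D)\cap\D=\varnothing$; when $|c|>1$ we have $|g|>1$ everywhere and $\log(|g|-1)$ constant, so Theorem~\ref{toe02} applies directly, and when $|c|=1$ the operator $T^*_g=\overline c\,I$ satisfies $|S((T^*_g)^nx)|=|S(x)|$ for every surjective continuous functional $S\colon H^2(\D)\to\C$, every $x\in H^2(\D)$ and every $n\in\Z_+$, so no orbit meets $S$ densely in $\C$ and $T^*_g$ is not $1$-weakly hypercyclic.

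Assume now $g$ is non-constant. Since $g$ is holomorphic on $r\D$ with $r>1$ and $g(\D)\cap\D=\varnothing$, the maximum modulus principle gives $1\leq|g|\leq M$ on $\overline\D$, where $M=\max_{|z|\leq1}|g(z)|<\infty$, and forces $|g|\not\equiv1$ on $\T$ (otherwise $|g|\equiv1$ on $\overline\D$ and $g$ is constant, in particular $M>1$). Write $g(z)=\sum_{n\geq0}a_nz^n$; then $\widetilde g(z)=\sum_{n\geq0}\overline{a_n}\,z^{-n}$ is holomorphic on $\{|z|>1/r\}$ and agrees with $\overline{g}$ on $\T$, so $h:=g\widetilde g-1$ is holomorphic on the annulus $1/r<|z|<r$ and $h|_\T=|g|^2-1$, a real-analytic, real-valued, nonnegative function on $\T$ that is not identically zero. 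By the identity theorem $h$ has only finitely many zeros $z_1,\dots,z_N$ on $\T$, each of finite multiplicity, and since $h\geq0$ along $\T$ these multiplicities are even, say $2m_j$, so that $|g(z)|^2-1=|z-z_j|^{2m_j}\psi_j(z)$ near $z_j$ with $\psi_j$ real-analytic and $\psi_j(z_j)>0$.

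These two facts yield both hypotheses of Theorem~\ref{toe02}. First, $|g|^2-1$ vanishes on $\T$ only at $z_1,\dots,z_N$, hence $|g|>1$ $\lambda$-almost everywhere. Second, $\log(|g|^2-1)$ is bounded above by $\log^+(M^2-1)$, while near each $z_j$ it is bounded below by $2m_j\log|z-z_j|-C_j$ and on the complement of small neighbourhoods of the $z_j$ it is bounded; since $z\mapsto\log|z-z_j|$ lies in $L^1(\T)$, we get $\log(|g|^2-1)\in L^1(\T)$. Finally, on $\T$,
$$
\log(|g|-1)=\log(|g|^2-1)-\log(|g|+1),
$$
and $\log 2\leq\log(|g|+1)\leq\log(M+1)$, so $\log(|g|-1)\in L^1(\T)$ as well. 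Theorem~\ref{toe02} then shows that $T^*_g$ is not $1$-weakly hypercyclic. The only step here that is not bookkeeping is recognising $|g|^2$ on $\T$ as the restriction of a function holomorphic on an annulus about $\T$; this is what makes the zero set of $|g|-1$ finite, of finite even order, and keeps $\log(|g|-1)$ integrable, and it is precisely where the hypothesis that $g$ extends analytically past $\T$ is used.
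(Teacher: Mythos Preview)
Your proof is correct and follows essentially the same route as the paper's: reduce to the non-constant case, observe that $|g|^2-1$ is real-analytic on $\T$, use the maximum modulus principle to rule out $|g|\equiv1$, conclude that $|g|^2-1$ has only finitely many zeros of finite order on $\T$, deduce $\log(|g|^2-1)\in L^1(\T)$, and then pass to $\log(|g|-1)=\log(|g|^2-1)-\log(|g|+1)$. Your explicit construction of the annular extension $h=g\widetilde g-1$ and the even-multiplicity remark are more detailed than the paper (which simply asserts real-analyticity of $|g|^2-1$ and says the singularities of its logarithm are ``logarithmic''), but the argument is the same in substance.
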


\begin{proof} Since the case of constant $g$ is trivial, we may assume that $g$ is non-constant. Clearly, $|g|^2-1$ is a non-negative real-analytic function on $\T$. Since $g$ is non-constant and $g(\D)\cap\D=\varnothing$, $|g|$ can not be identically $1$ on $\T$ (use the maximum modulus principle). Thus $|g|^2-1$ is positive on $\T$ with the exception of finitely many zeros each of which has finite order. Hence $\log(|g|^2-1)$ can have only finitely many singularities each of which is logarithmic. It follows that $\log(|g|^2-1)\in L^1(\T)$ and therefore $\log(|g|-1)=\log(|g|^2-1)-\log(|g|+1)\in L^1(\T)$. It remains to apply Theorem~\ref{toe02}.
\end{proof}

Of course, the analyticity of $g$ on $\T$ condition in the above corollary can be replaced by the condition of $g$ on $\T$ being from a quasi-analyticity class. The proof remains virtually the same.

Feldman \cite{fe} has asked whether a $1$-weakly hypercyclic operator on a Hilbert space must be norm supercyclic. The following result provides a negative answer to this question.

\begin{proposition}\label{nns} There is a $1$-weakly hypercyclic operator $T\in L(\ell_2)$ such that $T^2$ is non-cyclic. In particular, $T$ is not $2$-weakly supercyclic.
\end{proposition}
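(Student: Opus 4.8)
The plan is to take for $T$ a bilateral weighted shift on $\ell_2(\Z)$ whose square splits, over the even- and odd-indexed coordinate subspaces, into two copies of one and the same weighted shift. Let $T=T_w$ on $\ell_2(\Z)$ be defined by $w_n=2$ when $n$ is even and $n\geq 2$, and $w_n=1$ otherwise; thus every odd-indexed weight is $1$ and the even-indexed we
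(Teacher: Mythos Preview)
Your plan has a genuine gap, and in fact the specific operator you propose does \emph{not} work: the square $T_w^2$ is cyclic.

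You are right that with $w_n=2$ for even $n\geq 2$ and $w_n=1$ otherwise, the shift $T_w$ is weakly hypercyclic (this follows from Example~\ref{bws}, since $r_n=2^{-\lfloor n/2\rfloor}$ for $n\geq 0$ and $r_n=1$ for $n\leq 0$), and that $T_w^2$ splits along the even and odd coordinates as $S\oplus S$, where $S$ is exactly the Chan--Sanders shift (weights $1$ for $n\leq 0$, weights $2$ for $n\geq 1$). The problem is that $S\oplus S$ is cyclic. Indeed, a short computation shows that $\sigma_p(S)$ is the open annulus $\{1<|\lambda|<2\}$, with a one-dimensional eigenspace $\C v_\lambda$ for each $\lambda$, where $(v_\lambda)_n=(\lambda/2)^n$ for $n\geq 0$ and $(v_\lambda)_n=\lambda^n$ for $n<0$; moreover $\lambda\mapsto v_\lambda$ is holomorphic and the $v_\lambda$ span $\ell_2(\Z)$. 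Pick pairwise distinct points $\lambda_k,\mu_k$ in the annulus converging to some interior point $\lambda_0$, choose $a_k>0$ with $\sum a_k<\infty$, and set $x=\sum_k a_k v_{\lambda_k}$, $y=\sum_k a_k v_{\mu_k}$. If $(\phi,\psi)$ is orthogonal to every $(S^nx,S^ny)$, then with $\alpha_k=a_k\langle v_{\lambda_k},\phi\rangle$ and $\beta_k=a_k\langle v_{\mu_k},\psi\rangle$ one gets $\sum_k(\alpha_k\lambda_k^n+\beta_k\mu_k^n)=0$ for all $n\geq 0$; equivalently the function $G(z)=\sum_k\bigl(\frac{\alpha_k}{1-\lambda_k z}+\frac{\beta_k}{1-\mu_k z}\bigr)$ vanishes on a neighbourhood of $0$. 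Since $\sum(|\alpha_k|+|\beta_k|)<\infty$, $G$ extends analytically to the complement of the closure of $\{1/\lambda_k,1/\mu_k\}$, which is connected, so $G\equiv 0$ there; reading off the residue at the isolated point $1/\lambda_j$ gives $\alpha_j=0$, and similarly $\beta_j=0$. Then $\langle v_{\lambda_k},\phi\rangle=0$ for all $k$, and since the $\lambda_k$ accumulate in the annulus the holomorphic function $\lambda\mapsto\langle v_\lambda,\phi\rangle$ vanishes identically, forcing $\phi=0$; likewise $\psi=0$. Thus $(x,y)$ is cyclic for $S\oplus S$, and your $T_w^2$ is cyclic.

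The underlying obstacle is that ``$R\oplus R$ is non-cyclic'' is a rare phenomenon: for bilateral weighted shifts $R$ it essentially forces $R$ to be (similar to) a scalar multiple of a unitary, and then the corresponding $T_w$ is itself similar to a scalar multiple of a unitary and cannot be $1$-weakly hypercyclic (its orbits grow geometrically, and Theorem~B applies). This is precisely why the paper's proof does not use a weighted shift at all: it imports the Bayart--Matheron operator $S$, a hypercyclic Hilbert-space operator for which $S\oplus S$ is non-cyclic, and sets $T=S\oplus(-S)$. A short elementary lemma (Lemma~\ref{suhy}) then shows that $T$ is $1$-weakly hypercyclic because $S$ is hypercyclic, while $T^2=S^2\oplus S^2=(S\oplus S)^2$ inherits non-cyclicity from $S\oplus S$.
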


The last assertion is due to Feldman \cite{fe}, who had observed that $T^n$ must be cyclic if $T$ is an $n$-weakly supercyclic operator on a Banach space. Finally, we answer affirmatively the question \cite{fe} whether $n$-weak hypercyclicity for every $n\in\N$ implies weak hypercyclicity.

\begin{proposition}\label{ninf} Let $X$ be a separable infinite dimensional Banach space and $T\in L(X)$. Then $T$ is weakly hypercyclic if and only if $T$ is $n$-weakly hypercyclic for every $n\in\N$. Similarly, $T$ is weakly supercyclic if and only if $T$ is $n$-weakly supercyclic for every $n\in\N$.
\end{proposition}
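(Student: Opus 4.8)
I would run a Baire category argument, after recasting weak hypercyclicity as a countable hierarchy of conditions. Write $HC(T)$ for the set of weakly hypercyclic vectors of $T$ and $HC_n(T)$ for the set of $n$-weakly hypercyclic vectors; I treat the hypercyclic statement, the supercyclic one being parallel.

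\emph{Reduction.} One direction is immediate: if $x\in HC(T)$ and $S\colon X\to\K^n$ is continuous, linear and surjective, then $S$ has finite rank, hence is continuous from $X$ with its weak topology into $\K^n$, so it carries the weakly dense set $O(T,x)$ onto a dense subset of $\K^n$; thus $x\in HC_n(T)$. Conversely I claim $HC(T)=\bigcap_{n\in\N}HC_n(T)$: every nonempty weakly open subset of $X$ contains a nonempty set of the form $S^{-1}(B)$ with $B$ an open Euclidean ball of some $\K^n$ and $S\colon X\to\K^n$ surjective (pass from a finite tuple of functionals to a linearly independent subtuple, then shrink), so a vector lying in every $HC_n(T)$ has an orbit meeting every nonempty weakly open set. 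Hence the statement reduces to: if $HC_n(T)\ne\varnothing$ for all $n$, then $\bigcap_nHC_n(T)\ne\varnothing$; and since $X$ is a Baire space in its norm topology, it suffices to show $HC_n(T)$ is comeager whenever it is nonempty.

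\emph{Density.} Given $x\in HC_n(T)$, put $W_0=\{w\in X:T^mw\to0\text{ weakly}\}$, a $T$-invariant subspace. I would first show $x+W_0\subseteq HC_n(T)$: for a surjection $S\colon X\to\K^n$, $S$ is weak-to-norm continuous so $S(T^mw)\to0$ for $w\in W_0$, while $\{S(T^mx):m\in\Z_+\}$ is dense in $\K^n$; and a perturbation $\{a_m+\varepsilon_m\}$ of a dense sequence by a null sequence is again dense (given a target and $\delta>0$, first make $\|\varepsilon_m\|<\delta/2$, then $\|a_m-\text{target}\|<\delta/2$), so $\{S(T^m(x+w)):m\}$ is dense for every surjection, i.e.\ $x+w\in HC_n(T)$. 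It then remains to see $W_0$ is dense, which I expect to follow from $1$-weak hypercyclicity of $T$ (note $HC_n(T)\subseteq HC_1(T)$): the closure $V$ of $W_0$ is a $T$-invariant closed subspace, no nonzero vector of $HC_1(T)$ lies in $V$ (its orbit would span a proper closed subspace, contradicting the cyclicity of $1$-weakly hypercyclic vectors), and the operator $T$ induces on $X/V$ is again $1$-weakly hypercyclic — a short further argument with that quotient operator should rule out the case $V\ne X$.

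\emph{The $G_\delta$ property — the hard part.} For a \emph{fixed} surjection $S\colon X\to\K^n$, the set $H_S:=\{x:S(O(T,x))\text{ dense in }\K^n\}$ equals $\bigcap_q\bigcap_j\bigcup_m\{x:\|S(T^mx)-q\|<1/j\}$ with $q$ over a countable dense subset of $\K^n$, so it is a $G_\delta$ set, and it is dense by the previous step ($H_S\supseteq x+W_0$ with $W_0$ dense). The obstacle is that $HC_n(T)=\bigcap_SH_S$ is an \emph{uncountable} intersection, and turning it into a genuine $G_\delta$ set is where the real work lies. I would reduce the family of $S$'s to a countable one as follows. Since $H_S$ is unchanged under post-composition of $S$ with an invertible operator of $\K^n$ and under positive scaling, one may restrict to $\|S\|\leq1$; the operators $S\colon X\to\K^n$ with $\|S\|\leq1$, in the strong operator topology, form a compact metrizable space $\mathcal K$ (here separability of $X$ enters: $\mathcal K$ is $(B_{X^*})^n$ with the weak$^*$ topology), and the surjective ones are an open, hence separable, subset. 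The delicate point — and what I expect to be the technical heart of the proof — is to show that if $S(O(T,x))$ is dense for all $S$ in a countable dense set of surjections, then it is dense for \emph{every} surjection; strong-operator convergence $S_k\to S$ does not control $(S_k-S)(T^mx)$ uniformly in $m$, so one cannot argue naively, and the reduction should be carried out either by stratifying $\mathcal K$ according to the modulus of surjectivity of $S$, or by exploiting that on $x+W_0$ the orbit tails become negligible under $\mathcal K$-small perturbations of $S$. Granting this, $HC_n(T)$ is a countable intersection of dense $G_\delta$ sets, hence a dense $G_\delta$.

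\emph{Conclusion.} With $HC_n(T)$ a dense $G_\delta$ whenever nonempty, the Baire category theorem gives, under the hypothesis that $T$ is $n$-weakly hypercyclic for all $n$, that $\bigcap_nHC_n(T)$ is a dense $G_\delta$, in particular nonempty; by the identity of the first step, $T$ is weakly hypercyclic. The supercyclic assertion is obtained verbatim with $O(T,x)$ replaced throughout by $O_{\rm pr}(T,x)$: a vector is $n$-weakly supercyclic iff $S(O_{\rm pr}(T,x))$ is dense in $\K^n$ for every surjection $S$; $SC(T)=\bigcap_nSC_n(T)$ by the same neighbourhood-basis argument; and the perturbation lemma still applies since adding a null sequence to $S(T^mx)$ and then taking all $\K$-multiples still produces a dense union of lines through the origin.
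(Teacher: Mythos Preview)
Your overall architecture matches the paper's exactly: show each $H_n(T)$ is a dense $G_\delta$ subset of $X$ in the norm topology and invoke Baire. The identification $HC(T)=\bigcap_n HC_n(T)$ is correct and is used in the same way. The gaps are in the two subsidiary claims.

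\textbf{Density.} Your route through $W_0=\{w:T^mw\to0\text{ weakly}\}$ is different from the paper's and is genuinely incomplete. You need $W_0$ dense, and your sketch (pass to $X/\overline{W_0}$, note the quotient is $1$-weakly hypercyclic, and then ``a short further argument'') does not close: there is no mechanism offered for why the quotient cannot itself be $1$-weakly hypercyclic with trivial $W_0$. In fact there is no reason to expect $W_0$ to be dense for an arbitrary $1$-weakly hypercyclic operator. The paper's argument is cleaner and avoids this entirely: a $1$-weakly hypercyclic operator has no closed invariant subspace of finite codimension, so $p(T)$ has dense range for every nonzero polynomial $p$; then for $x\in H_n(T)$ and any surjection $S:X\to\K^n$ one has $S(O(T,p(T)x))=(Sp(T))(O(T,x))$, which is dense because $Sp(T)$ is again a surjection onto $\K^n$. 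Hence $\{p(T)x:p\neq0\}\subseteq H_n(T)$, and this set is dense since $x$ is cyclic. (For supercyclicity with $n\geq2$ one allows a single exceptional irreducible factor.)

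\textbf{$G_\delta$ property.} You correctly locate the difficulty and set up the right parameter space: $n$-tuples of functionals in the weak-$*$ topology, with the linearly independent tuples $\Omega_n(X)$ an open subset. But the plan ``reduce to a countable weak-$*$ dense family of surjections and then pass to the limit'' has exactly the defect you flag---there is no control of $(S_k-S)(T^mx)$ uniformly in $m$---and you do not close it. The paper does \emph{not} attempt this reduction. Instead it writes $\Omega_n(X)$ as a countable union of weak-$*$ compact metrizable pieces $\Lambda_j$ (this is automatic for a separable Fr\'echet space: each polar of a basic $0$-neighbourhood is weak-$*$ compact and metrizable, so $(X^*)^n$ is a $K_\Sigma$-space, and open subsets of $K_\Sigma$-spaces are $K_\Sigma$). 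The key step is then a compactness lemma: if $\Omega$ is compact, $Y$ is second countable, and $(\omega,x)\mapsto F_{a,\omega}(x)$ is jointly continuous for each $a$, then $\bigcap_{\omega\in\Omega}\uu(\{F_{a,\omega}:a\})$ is $G_\delta$ in $X$. The point is that for each basic open $V\subseteq Y$ the set $\{x:\forall\omega\,\exists a,\ F_{a,\omega}(x)\in V\}$ is \emph{open}, by a finite-subcover argument using joint continuity. Applied with $\Omega=\Lambda_j$, $Y=\K^n$, $F_{m,S}(x)=S(T^mx)$ (joint continuity on $\Lambda_j\times X$ follows from sequential continuity of the evaluation pairing together with metrizability of $\Lambda_j$), this gives $\bigcap_{S\in\Lambda_j}H_S$ as a $G_\delta$; then intersect over $j$. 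So the missing idea is not ``approximate $S$ by countably many $S_k$'' but ``on each compact piece the uncountable intersection is already $G_\delta$ by compactness''. Your stratification suggestion is pointing in this direction, but without the joint-continuity/compactness lemma it does not land.
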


Our way of proving Proposition~\ref{ninf} has the following surprising byproduct. It is well-known and easy to see that the set of hypercyclic vectors of every norm hypercyclic operator on a Banach space $X$ is a dense $G_\delta$-set. The canonical proof relies heavily upon the second countability of $X$. It was widely believed that the same statement does not hold for the set of weakly hypercyclic vectors. Curiously enough it does.

\begin{proposition}\label{ninf1} Let $X$ be a separable infinite dimensional Banach space and $T\in L(X)$. Then the set of weakly hypercyclic vectors for $T$ either is empty or is a dense $G_\delta$ subset of $X$ $($both the density and the $G_\delta$-property are with respect to the norm topology$)$. The same dichotomy holds for the set of weakly supercyclic vectors of $T$.
\end{proposition}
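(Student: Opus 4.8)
Here is the plan I would follow.

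\medskip

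The plan is to reduce the statement to the sets $WH_n(T)$ of $n$-weakly hypercyclic vectors and to prove that each $WH_n(T)$ is a $G_\delta$ subset of $X$ in the norm topology and, when non-empty, norm-dense; since $WH(T)=\bigcap_{n\in\N}WH_n(T)$, the Baire category theorem then yields both the dichotomy of Proposition~\ref{ninf1} and the non-trivial implication of Proposition~\ref{ninf}. The equality $WH(T)=\bigcap_nWH_n(T)$ is elementary: a surjective $S\in L(X,\K^n)$ is continuous from the weak topology of $X$ to the norm topology of $\K^n$ and maps $X$ onto $\K^n$, so it carries a weakly dense orbit onto a dense subset of $\K^n$; conversely, every non-empty basic weakly open subset of $X$ has the form $S^{-1}(U)$ for a surjection $S\in L(X,\K^r)$ (assembled from a maximal linearly independent subfamily of its defining functionals) and a non-empty open $U\subseteq\K^r$, so if $S(O(T,x))$ is dense in $\K^r$ for all such $S$ then $O(T,x)$ meets every non-empty basic weakly open set.

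\medskip

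Fix $n$. The key reduction is that $x\in WH_n(T)$ iff $\{ST^mx:m\in\Z_+\}$ is dense in $\K^n$ for every surjective $S\in L(X,\K^n)$ with $\|S\|\leq 1$ (rescaling $S$ changes nothing). The obstacle — and the reason the conclusion looks unlikely — is that the surjective maps do not form a compact family in the product weak$^*$ topology on $L(X,\K^n)\cong(X^*)^n$, the only topology in which the evaluations $S\mapsto ST^mx$ (at a fixed $m$) are continuous, since surjectivity is an open, not a closed, condition. I would circumvent this by writing ``over all surjective contractions $X\to\K^n$'' as a countable union of weak$^*$-compact pieces: fix a countable norm-dense $D\subseteq X$ and, for $(u_1,\dots,u_n)\in D^n$, put
$$
\mathcal C_{(u_i)}=\bigl\{\,S\in L(X,\K^n):\ \|S\|\leq 1,\ |\det(Su_1,\dots,Su_n)|\geq\tfrac12\,\bigr\},
$$
where $\det(Su_1,\dots,Su_n)$ denotes the determinant of the matrix with columns $Su_1,\dots,Su_n\in\K^n$. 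Each $\mathcal C_{(u_i)}$ is weak$^*$-compact: the condition $\|S\|\leq1$ confines $S$ to a weak$^*$-compact subset of $(X^*)^n$ (Alaoglu), which is weak$^*$-metrizable since $X$ is separable, and there $\mathcal C_{(u_i)}$ is the closed set where $|\det(Su_1,\dots,Su_n)|\geq\tfrac12$ — a weak$^*$-continuous function of $S$, being a polynomial in the coordinates of the vectors $Su_j$. Every member of $\mathcal C_{(u_i)}$ is surjective, and conversely every surjective $S$ with $\|S\|\leq1$ lies in some $\mathcal C_{(u_i)}$ (choose $v_1,\dots,v_n\in X$ with $Sv_j=e_j$ and perturb them slightly into $D$). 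Therefore
$$
X\setminus WH_n(T)=\bigcup_{(u_i)\in D^n}\,\bigcup_{w\in\qq^n}\,\bigcup_{\rho\in\qq,\,\rho>0}\bigl\{\,x\in X:\ \gamma_{(u_i),w}(x)\geq\rho\,\bigr\},\quad \gamma_{(u_i),w}(x):=\max_{S\in\mathcal C_{(u_i)}}\ \inf_{m\in\Z_+}|ST^mx-w|,
$$
the maximum being attained because $S\mapsto\inf_m|ST^mx-w|$ is weak$^*$-upper semicontinuous on the compact set $\mathcal C_{(u_i)}$. The crucial point is that each $\gamma_{(u_i),w}$ is also \emph{norm}-upper semicontinuous: if $x_j\to x$ in norm and $S_j\in\mathcal C_{(u_i)}$ realizes $\gamma_{(u_i),w}(x_j)$, then along a weak$^*$-convergent subsequence $S_j\to S_0\in\mathcal C_{(u_i)}$ we have $S_jT^mx_j\to S_0T^mx$ for each fixed $m$ (using that $\|T^m\|<\infty$ and $\|S_j\|\leq1$), whence $\gamma_{(u_i),w}(x)\geq\inf_m|S_0T^mx-w|\geq\limsup_j\gamma_{(u_i),w}(x_j)$. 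Consequently every set in the displayed union is norm-closed, so $WH_n(T)$ is $G_\delta$, and so is $WH(T)=\bigcap_nWH_n(T)$.

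\medskip

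For density I would take $x_0\in WH_n(T)$; then also $x_0\in WH_1(T)$, so $\{f(T^mx_0):m\in\Z_+\}$ is dense in $\K$ for every $f\in X^*\setminus\{0\}$. This has two consequences: $\{p(T)x_0:p\in\K[t]\}$ is norm-dense (a non-zero functional annihilating it would annihilate every $T^mx_0$), and $T'$ has no eigenvalue (from $T'f=\lambda f$, $f\neq0$, one would get $\{f(T^mx_0):m\}=\{\lambda^mf(x_0):m\}$, which is never dense in $\K$; the real case is handled similarly after complexification). By the second consequence $p(T)$ has dense range for every non-zero polynomial $p$; hence for any surjection $S\colon X\to\K^n$ the composition $Sp(T)$ is again surjective (its range is a dense linear subspace of $\K^n$), so $S\bigl(O(T,p(T)x_0)\bigr)=\bigl(Sp(T)\bigr)\bigl(O(T,x_0)\bigr)$ is dense in $\K^n$ because $x_0\in WH_n(T)$; that is, $p(T)x_0\in WH_n(T)$. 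Combined with the first consequence, $WH_n(T)\supseteq\{p(T)x_0:p\in\K[t]\setminus\{0\}\}$, which is norm-dense.

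\medskip

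The weakly supercyclic statements follow from the same scheme applied to the projective orbits $O_{\rm pr}(T,x)$: in the semicontinuity argument one replaces $|v-w|$ by $\operatorname{dist}(w,\K v)$, the distance from $w$ to the line $\K v$, which is still upper semicontinuous in $v$ (its value at $v=0$ being $|w|$), so the compactness device of the second paragraph applies verbatim; and in the density argument one uses that if $T$ has a $2$-weakly supercyclic vector then $T'$ has at most one eigenvalue $\lambda_0$ (two distinct eigenvalues would give a two-dimensional quotient in which the projective orbit is plainly non-dense), so that for $x_0\in WSC(T)=\bigcap_nWSC_n(T)$ the norm-dense family $\{p(T)x_0:p\in\K[t],\ p(\lambda_0)\neq0\}$ consists of weakly supercyclic vectors. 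The one genuinely new ingredient, beyond routine bookkeeping with semicontinuity and with the elementary consequences of $1$- and $2$-weak (super)cyclicity for $T'$, is the compactness device turning the quantifier over surjections into a countable union of compact sets; I expect that to be the main point to get right.
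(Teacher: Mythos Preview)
Your plan is correct and follows the same architecture as the paper: write the set of weakly hypercyclic vectors as $\bigcap_n H_n(T)$, show each $H_n(T)$ is a $G_\delta$ set that is dense whenever non-empty, and conclude by Baire. The density argument is essentially identical to the paper's Lemma~5.5 (the paper simply cites Feldman's result that a $1$-weakly hypercyclic operator has no proper closed invariant subspace of finite codimension, which is exactly what your eigenvalue argument gives in the complex case; your ``handled by complexification'' remark for $\K=\R$ is a bit glib---$1$-weak hypercyclicity over $\R$ does not obviously pass to the complexification---but this is harmless here since $\bigcap_{n\geq 1}H_n(T)=\bigcap_{n\geq 2}H_n(T)$ and for $n\geq 2$ one may use $x_0\in H_2(T)$, where the two-dimensional rotation--dilation obstruction is immediately visible).

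The one genuine difference is in the $G_\delta$ step. The paper proceeds abstractly: it observes that the set $\Omega_n(X)$ of linearly independent $n$-tuples in $(X^*)^n$ is, in the weak-$*$ topology, a countable union of compact metrizable pieces (a ``$K_\Sigma$-space''), and then invokes a general universality lemma (Lemma~5.4, taken from an earlier paper) on each piece. You instead produce the compact pieces explicitly via the determinant condition $|\det(Su_1,\dots,Su_n)|\geq\tfrac12$ indexed by tuples from a countable dense set, and run a direct upper-semicontinuity argument to show the complement of $H_n(T)$ is $F_\sigma$. Your route is more self-contained and avoids the external lemma; the paper's route is more modular and, because it never uses a norm on $X$, extends verbatim to separable Fr\'echet spaces (which is what the paper actually proves). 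Both implement the same idea: cover the open set of surjections $X\to\K^n$ by countably many weak-$*$ compacta and exploit the metrizability coming from separability of $X$.
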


We derive Theorem~\ref{toe01} from Theorem~\ref{qq} and derive Theorem~\ref{toe02} from Theorem~\ref{norm1} in Section~\ref{s2}. We prove Theorems~\ref{norm1} and derive few corollaries in Section~\ref{s3}. Theorems~\ref{qq} and~\ref{qq1} together with some more general statements are proved in Section~\ref{s4}. Section~\ref{s5} is devoted to the proof of Propositions~\ref{nns}, \ref{ninf} and~\ref{ninf1} as well as to concluding remarks and open questions. It is worth noting that we shall actually prove Propositions~\ref{ninf} and~\ref{ninf1} in the more general setting of Fr\'echet space operators.

While proving Theorem~\ref{toe02}, we shall see that $\|(T^*_g)^nf\|\geq cn$ for some $c=c(f,g)>0$ whenever $f$ is a non-zero element of $H^2(\D)$ and $g$ satisfies the conditions of Theorem~\ref{toe02}. In fact, we can prove a number of related results on the orbits of expanding coanalytic Toeplitz operators. These estimates are collected in the Appendix.

\section{Weak hypercyclicity of coanalytic Toeplitz operators \label{s2}}

\begin{lemma}\label{to0}Let $g\in H^\infty(\D)$ be such that $g(\D)\subseteq \D$. Then $\|(T_g')^n\psi\|\to 0$ and  $\|(T_g^*)^nf\|\to0$ for every $f\in H^2(\D)$ and $\psi\in H^2(\D)^*$.
\end{lemma}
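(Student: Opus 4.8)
The plan is to reduce both assertions to a dense-set argument combined with Lemma~\ref{pb}. Since $g(\D)\subseteq\D$ we have $\|g\|_\infty\leq 1$, hence $\|T_g\|\leq 1$ and therefore $\|T_g^*\|\leq 1$ and $\|T_g'\|\leq 1$; in particular, all three operators are power bounded, so Lemma~\ref{pb} is applicable to each of them.

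For the statement concerning $T_g^*=T_{\overline g}$, I would work with the reproducing kernels $k_w\in H^2(\D)$, $k_w(z)=(1-\overline wz)^{-1}$, for $w\in\D$. First I would check that each $k_w$ is an eigenvector of $T_g^*$: for any $h\in H^2(\D)$ we have $gh\in H^2(\D)$, so $\langle T_g^*k_w,h\rangle=\langle k_w,T_gh\rangle=\langle k_w,gh\rangle=\overline{(gh)(w)}=\overline{g(w)}\,\langle k_w,h\rangle$, whence $T_g^*k_w=\overline{g(w)}\,k_w$ and $(T_g^*)^nk_w=\overline{g(w)}^{\,n}k_w$. Because $g(\D)\subseteq\D$ we have $|g(w)|<1$, and thus $\|(T_g^*)^nk_w\|=|g(w)|^n\|k_w\|\to 0$. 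Next, $\spann\{k_w:w\in\D\}$ is dense in $H^2(\D)$: if $f\in H^2(\D)$ is orthogonal to every $k_w$, then $f(w)=\langle f,k_w\rangle=0$ for all $w\in\D$, so $f=0$. Hence $\|(T_g^*)^nf\|\to 0$ on a dense subset of $H^2(\D)$, and by Lemma~\ref{pb} this extends to all $f\in H^2(\D)$.

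For the statement concerning $T_g'$ there are two equally short routes. One is to invoke Remark~\ref{rire}: the Riesz map $R\colon H^2(\D)\to H^2(\D)^*$, $Rx=\langle\cdot,x\rangle$, is an ($\R$-linear) isometry satisfying $R\,T_g^*=T_g'\,R$, so that $(T_g')^n=R\,(T_g^*)^nR^{-1}$ and therefore $\|(T_g')^n\psi\|=\|(T_g^*)^nR^{-1}\psi\|\to 0$ by the first part. Alternatively, one argues directly with the evaluation functionals $\delta_w$, $w\in\D$: from $(T_g'\delta_w)(h)=\delta_w(T_gh)=g(w)h(w)=g(w)\delta_w(h)$ we get $T_g'\delta_w=g(w)\delta_w$, hence $\|(T_g')^n\delta_w\|=|g(w)|^n\|\delta_w\|\to 0$; since $H^2(\D)$ is reflexive and $\delta_w=Rk_w$, the set $\spann\{\delta_w:w\in\D\}$ is norm dense in $H^2(\D)^*$, and Lemma~\ref{pb} again finishes the proof.

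There is no serious obstacle here. The only points requiring (routine) care are the eigenvalue identities $T_g^*k_w=\overline{g(w)}k_w$ and $T_g'\delta_w=g(w)\delta_w$, which rest on the fact that $gh\in H^2(\D)$ whenever $g\in H^\infty(\D)$ and $h\in H^2(\D)$, together with the observation that $g(\D)\subseteq\D$ forces all the relevant eigenvalues to have modulus strictly less than $1$. Everything else is the standard ``dense set of eigenvectors with small eigenvalues plus power boundedness'' mechanism already employed in the proof of Theorem~\ref{mz}.
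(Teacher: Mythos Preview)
Your proof is correct and follows essentially the same approach as the paper: the paper argues first with the evaluation functionals $\delta_w$ (eigenvectors of $T_g'$ with eigenvalue $g(w)$), invokes Lemma~\ref{pb} using the density of their span in the reflexive dual, and then transfers to $T_g^*$ via Remark~\ref{rire}, whereas you do the $T_g^*$ part first with the reproducing kernels $k_w=R^{-1}\delta_w$ and then offer both routes for $T_g'$. The arguments are interchangeable and rely on the same ingredients.
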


\begin{proof} Since $g(\D)\subseteq \D$, $\|T_g'\|=\|T_g\|=\|g\|_{H^\infty}\leq 1$. In particular, $T_g'$ is power bounded. For each $w\in \D$, the evaluation map $\delta_w(f)=f(w)$ is a continuous linear functional on $H^2(\D)$. It is easy to see that $(T'_g)^n\delta_w=g(w)^{n}\delta_w$. Since $g(\D)\subseteq \D$, $|g(w)|<1$ and therefore $\|(T_g')^{n}\delta_w\|\to 0$. Hence $\|(T_g')^{n}\psi\|\to 0$ for every $\psi\in E=\spann\{\delta_w:w\in \D\}$. Since the functionals $\delta_w$ for $w\in \D$ separate the points of the reflexive Banach space $H^2(\D)$, $E$ is dense in $H^2(\D)^*$. By Lemma~\ref{pb}, $\|(T_g')^{n}\psi\|\to 0$ for every $\psi\in H^2(\D)^*$.
By Remark~\ref{rire}, $T_g^*$ and $T_g'$ are $\R$-linearly similar and therefore $\|(T_g^*)^nf\|\to0$ for every $f\in H^2(\D)$.
\end{proof}

First, we prove Theorem~\ref{toe01} taking Theorem~\ref{qq} as granted.

\begin{proof}[Proof of Theorem~$\ref{toe01}$]Since $|g(z)|>1$ for $z\in \D$, $h=\frac1{g}$ belongs to $H^\infty(\D)$ and satisfies $h(\D)\subseteq\D$.
Note that $T_g$ is invertible and $T_g^{-1}=T_h$. By Lemma~\ref{to0}, $\|(T'_g)^{-n}\psi\|=\|(T'_h)^n\psi\|\to 0$ for every $\psi\in H^2(\D)^*$. That is, every $\psi\in H^2(\D)^*$ has a norm convergent to 0 backward $T_g'$-orbit.

Since $H^2(\D)$ is a closed subspace of $L^2(\T)$, for every $\phi\in L^2(A,\lambda)$, $G_\phi(f)=\int_A f(z)\phi(z)\,d\lambda(z)$ is a continuous linear functional on $H^2(\D)$. Furthermore, the map $\Phi(\phi)=G_\phi$ is an injective continuous linear operator from $L^2(A,\lambda)$ to $H^2(G)^*$. The injectivity follows from the fact that a non-zero $f\in H^2(\D)$ can not vanish on a subset of $\T$ of positive Lebesgue measure.
Moreover, it is easy to check that $T_g'(G_\phi)=G_{S\phi}$, where $S\phi(z)=g(z)\phi(z)$. Thus, $F=\Phi(L^2(A,\lambda))$ is an $T_g'$-invariant subspace of $H^2(\D)^*$, the inner product $\langle\cdot,\cdot\rangle$ on $F$ transferred from $L^2(A,\lambda)$ by the operator $\Phi$ defines the Hilbert space topology stronger than the one inherited from the norm topology of $H^2(\D)^*$ and the restriction of $T$ to $F$ as an operator on the Hilbert space $F$ is unitarily equivalent to $S$. Now since $|g|=1$ on $A$, $S$ is unitary. Since an $H^\infty$ function can not be constant on a subset of $\T$ of positive Lebesgue measure, $g$ can not take one value on a subset of $A$ of positive Lebesgue measure. It follows that the point spectrum of $S$ is empty and therefore $S$ has no finite dimensional invariant subspaces. Hence the restriction of $T$ to $F$ as an operator on the Hilbert space $F$ (being unitarily equivalent to $S$) is an isometry with no non-trivial invariant subspaces. Thus conditions (B1--B3) for $T=T_g'$ are satisfied. By Theorem~\ref{qq}, $T_g'$ is weakly hypercyclic. Since $T_g^*$ is $\R$-linearly similar to $T_g'$, $T_g^*$ is weakly hypercyclic as well.
\end{proof}

\begin{lemma}\label{gege} Let $\hh$ be a Hilbert space and $A,B\in L(\hh)$ be such that $AB=BA$ and $B$ is invertible. Then $A^*A\leq B^*B$ if and only if $AA^*\leq BB^*$. Furthermore, $A^*A<B^*B$ if and only if $AA^*<BB^*$.
\end{lemma}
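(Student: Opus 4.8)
The plan is to reduce everything to a well-known fact: for commuting operators, positivity comparisons of $A^*A$ vs $B^*B$ can be transferred by conjugating with the polar-type factor, but the cleanest route uses the invertibility of $B$ directly. First I would note that $A^*A\leq B^*B$ is equivalent to $\|Ax\|\leq\|Bx\|$ for all $x\in\hh$, and since $B$ is invertible this says $\|A B^{-1}y\|\leq\|y\|$ for all $y$, i.e. $\|AB^{-1}\|\leq 1$. Similarly $AA^*\leq BB^*$ is equivalent to $\|A^*x\|\leq\|B^*x\|$ for all $x$, which (again using invertibility of $B$, hence of $B^*$) says $\|A^*(B^*)^{-1}\|\leq 1$, that is $\|(B^{-1}A)^*\|\leq 1$, equivalently $\|B^{-1}A\|\leq 1$. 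So the whole lemma comes down to the identity $\|AB^{-1}\|=\|B^{-1}A\|$, which holds precisely because $A$ and $B^{-1}$ commute (as $AB=BA$ implies $AB^{-1}=B^{-1}A$).

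Concretely I would carry it out as follows. Step 1: observe $AB=BA$ and $B$ invertible give $B^{-1}A=AB^{-1}$; call this operator $C$, so $A=CB=BC$. Step 2: compute $A^*A = B^*C^*CB$ and $B^*B$, so $A^*A\leq B^*B$ iff $B^*C^*CB\leq B^*B$ iff (multiplying by $(B^*)^{-1}$ on the left and $B^{-1}$ on the right, which preserves the order since $x\mapsto (B^{-1})^*x$ is a bijection) $C^*C\leq I$, i.e. $\|C\|\leq 1$. Step 3: symmetrically, $AA^* = CBB^*C^* $ and $BB^*$, and since $C$ and $B$ commute also $C^*$ and $B^*$ commute, wait—more carefully, $AA^* = BC\,(BC)^* = BCC^*B^*$, so $AA^*\leq BB^*$ iff $BCC^*B^*\leq BB^*$ iff $CC^*\leq I$ iff $\|C^*\|\leq 1$ iff $\|C\|\leq 1$. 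Hence $A^*A\leq B^*B \iff \|C\|\leq 1 \iff AA^*\leq BB^*$.

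For the strict inequalities one repeats the argument keeping track of kernels. Note $A^*A<B^*B$ means $A^*A\leq B^*B$ and $\ker(B^*B - A^*A)=\{0\}$; under the conjugation by $B$ the operator $B^*B-A^*A = B^*(I-C^*C)B$ has trivial kernel iff $I-C^*C$ has trivial kernel (since $B$ is a bijection), i.e. iff $C^*C<I$. Likewise $AA^*<BB^*$ iff $B(I-CC^*)B^*$ has trivial kernel iff $CC^*<I$. So it remains to see $C^*C<I \iff CC^*<I$; but $\|C\|\leq 1$ with $I-C^*C$ injective forces $\|C\|<1$ is false in general, so instead argue directly: $I - C^*C$ injective and $\leq I$ does not alone give $\|C\|<1$, yet $I-C^*C>0$ together with $\|C\|\le1$ is equivalent to $\|Cx\|<\|x\|$ for all $x\neq0$, and applying this with $C$ replaced by $C^*$—here I use the standard fact that for any bounded $C$ one has $\ker(I-C^*C)=\{0\}$ iff $\ker(I-CC^*)=\{0\}$ when $\|C\|\le 1$, which follows since $I-C^*C\ge0$, $I-CC^*\ge0$ and these have the same nonzero part of their spectra (the spectra of $C^*C$ and $CC^*$ agree off $0$, and $0$ is not an eigenvalue of $I-C^*C$ exactly when $1$ is not an eigenvalue of $C^*C$, equivalently of $CC^*$). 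Thus $C^*C<I\iff CC^*<I$ and the strict case follows.

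The main obstacle is purely bookkeeping: making sure that conjugation by the invertible $B$ genuinely preserves both the order relation $\geq0$ and the triviality of kernels, and handling the strict case by the spectral correspondence between $C^*C$ and $CC^*$ at the point $1$. None of this is deep, but the strict statement needs the observation that $1\in\sigma_p(C^*C)\iff 1\in\sigma_p(CC^*)$, which is the only place a genuine (if elementary) operator-theoretic input enters.
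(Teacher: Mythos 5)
Your proof is correct, and the central move is the same as the paper's: introduce $C=AB^{-1}=B^{-1}A$ and reduce each of the four inequalities to a condition on $C$ alone, which immediately handles the non-strict case via $\|C\|\leq 1$. Where you differ is in the strict case. The paper observes that $T^*T<I$ and $TT^*<I$ are \emph{each} equivalent to the manifestly symmetric condition $|\langle Tx,y\rangle|<1$ for all $x,y$ in the unit ball, which closes the argument without mentioning spectra. You instead reduce to showing $\ker(I-C^*C)=\{0\}\iff\ker(I-CC^*)=\{0\}$ and invoke the standard fact that $1\in\sigma_p(C^*C)\iff 1\in\sigma_p(CC^*)$ (via $x\mapsto Cx$ mapping eigenvectors of $C^*C$ for eigenvalue $1$ to eigenvectors of $CC^*$, and conversely). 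Both are correct; the paper's sesquilinear characterization is more self-contained, while yours invokes a general spectral lemma and is slightly more technical. One writing note: the passage ``$\|C\|\leq 1$ with $I-C^*C$ injective forces $\|C\|<1$ is false in general, so instead argue directly: $I-C^*C$ injective and $\leq I$ does not alone give $\|C\|<1$'' is muddled — you repeat the same (correct) observation twice before landing on the actual argument — but the final eigenvalue-transfer step is sound and completes the proof.
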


\begin{proof} Since $B$ is invertible, we can consider the operator $T=AB^{-1}$. Using the equality $AB=BA$, one can easily verify that $A^*A\leq B^*B\iff T^*T\leq I$, $A^*A< B^*B\iff T^*T< I$, $AA^*\leq BB^*\iff TT^*\leq I$ and $AA^*< BB^*\iff TT^*< I$. Now it is straightforward to see that each of the inequalities $T^*T\leq I$ and $TT^*\leq I$ is equivalent to $\|T\|\leq 1$, while each of the inequalities $T^*T<I$ and $TT^*< I$ is equivalent to $|\langle Tx,y\rangle|<1$ for every $x,y\in\hh$ satisfying  $\|x\|\leq 1$ and $\|y\|\leq 1$.
\end{proof}

\begin{remark}\label{rer} On can not scrape the invertibility condition in the above lemma. It can be somewhat relaxed however. By means of applying the Douglas lemma, one can see that the invertibility of $B$ can be replaced by the assumption that $B$ is injective and has dense range.
\end{remark}

\begin{lemma} \label{sumtoe} Let $g_1,\dots,g_n,h_1,\dots,h_m\in H^\infty(\D)$, $S=\sum\limits_{j=1}^m T_{h_j}^*T_{h_j}-\sum\limits_{k=1}^n T_{g_k}^*T_{g_k}$ and $H\in L^\infty(\T)$ be defined by $H=\sum\limits_{j=1}^m |h_j|^2-\sum\limits_{k=1}^n |g_k|^2$.
Then $S\geq 0$ if and only if $H\geq 0$ almost everywhere on $\T$. Furthermore,
$S>0$ if and only if $H\geq 0$ almost everywhere on $\T$ and $H\neq 0$ $(=$is positive on a set of positive measure$).$
\end{lemma}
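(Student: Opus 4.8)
The plan is to reduce both equivalences to a single computation of the quadratic form of $S$. Since $S$ is manifestly self-adjoint, $S\geq0$ means precisely that $\langle Sf,f\rangle\geq0$ for every $f\in H^2(\D)$ and $S>0$ means $\langle Sf,f\rangle>0$ for every non-zero $f\in H^2(\D)$. For $\phi\in H^\infty(\D)$ and $f\in H^2(\D)$ the product $\phi f$ again lies in $H^2(\D)$, so $T_\phi f=P(\phi f)=\phi f$ and hence $\|T_\phi f\|^2=\|\phi f\|_{L^2(\T)}^2=\int_\T|\phi|^2|f|^2\,d\lambda$. Summing these identities one obtains, for every $f\in H^2(\D)$,
$$
\langle Sf,f\rangle=\sum_{j=1}^m\|T_{h_j}f\|^2-\sum_{k=1}^n\|T_{g_k}f\|^2=\int_\T\Bigl(\sum_{j=1}^m|h_j|^2-\sum_{k=1}^n|g_k|^2\Bigr)|f|^2\,d\lambda=\int_\T H|f|^2\,d\lambda.
$$
Everything then comes down to deciding when this integral is non-negative (respectively positive) for all (respectively all non-zero) $f\in H^2(\D)$.

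The easy directions are immediate from this formula. If $H\geq0$ almost everywhere on $\T$ then $\langle Sf,f\rangle\geq0$ for all $f$, so $S\geq0$; and if moreover $H$ is positive on a set of positive measure, I would use the classical fact that every non-zero $f\in H^2(\D)$ satisfies $\log|f|\in L^1(\T)$, so that $|f|>0$ almost everywhere on $\T$; then $H|f|^2\geq0$ almost everywhere and $H|f|^2>0$ on a set of positive measure, whence $\langle Sf,f\rangle>0$ for every non-zero $f$, i.e.\ $S>0$. For the converse directions, once the first equivalence is in hand, it remains only to observe that $S>0$ forces $S\geq0$ together with $S\neq0$, the latter excluding the possibility $H=0$ almost everywhere (which would give $\langle Sf,f\rangle=0$ for every $f$).

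Thus the single substantive implication is $S\geq0\Rightarrow H\geq0$ almost everywhere, which I would prove by contraposition. Suppose $\lambda(\{H<0\})>0$; then $\lambda(E_N)>0$ for some $N\in\N$, where $E_N=\{z\in\T:H(z)\leq-1/N\}$, and we may assume $\|H\|_\infty>0$. For small $\epsilon>0$ put $u_\epsilon=\chi_{E_N}+\epsilon\chi_{\T\setminus E_N}$, a bounded strictly positive function on $\T$ with $\log u_\epsilon\in L^\infty(\T)\subseteq L^1(\T)$; by the standard construction of outer functions there is $f_\epsilon\in H^2(\D)$ with $|f_\epsilon|^2=u_\epsilon$ almost everywhere on $\T$. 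Then
$$
\langle Sf_\epsilon,f_\epsilon\rangle=\int_\T Hu_\epsilon\,d\lambda=\int_{E_N}H\,d\lambda+\epsilon\int_{\T\setminus E_N}H\,d\lambda\leq-\frac{\lambda(E_N)}{N}+\epsilon\|H\|_\infty<0
$$
once $\epsilon<\lambda(E_N)/(N\|H\|_\infty)$, so $S$ is not $\geq0$, as needed.

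The only inputs beyond elementary measure theory are two standard facts from Hardy space theory — the non-vanishing almost everywhere of the boundary values of a non-zero $H^2(\D)$ function, and the existence, for any non-negative $u\in L^1(\T)$ with $\log u\in L^1(\T)$, of an outer function in $H^2(\D)$ with boundary modulus $\sqrt u$. I do not anticipate a real obstacle; the only genuine point of care is the choice of the test function $u_\epsilon$, which must be perturbed away from $0$ on $\T\setminus E_N$ precisely so that its logarithm stays integrable.
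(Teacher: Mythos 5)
Your proposal is correct and follows essentially the same route as the paper: both hinge on the identity $\langle Sf,f\rangle=\int_\T H|f|^2\,d\lambda$ and on the inner/outer factorization. The only cosmetic difference is that where the paper appeals to the density of $\{|f|^2:f\in H^2(\D)\}$ in $L^1_+(\T)$ to get $S\geq0\Rightarrow H\geq0$, you instead construct explicit outer test functions $f_\epsilon$ with $|f_\epsilon|^2=\chi_{E_N}+\epsilon\chi_{\T\setminus E_N}$, which is the same idea made concrete.
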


\begin{proof}It is straightforward to see that $\langle Sf,f\rangle=\int_\T |f|^2 H\,d\lambda$ for every $f\in H^2(\D)$. Note that the inner/outer factorization theorem for Hardy space functions easily yields that the set $\{|f|^2:f\in H^2(\D)\}$ is dense in $L^1_+(\T)=\{h\in L^1(\T):h\geq 0\}$ with respect to the metric inherited from $L^1(\T)$. Thus the condition $H\geq 0$ is equivalent to $S\geq 0$. Similar considerations together with the fact that a non-zero $f\in H^2(\D)$ is almost everywhere on $\T$ different from zero imply that $S>0$ if and only if $H\geq 0$ and $H>0$ on a subset of $\T$ of positive Lebesgue measure.
\end{proof}

Recall that $H^\infty(\D)$ carries the natural structure of a Banach algebra. We say that $g\in H^\infty(\D)$ is {\it invertible} if $g$ is invertible as an element of $H^\infty(\D)$. Of course, $g$ is invertible if and only if there is $\epsilon>0$ such that $|g|>\epsilon$ everywhere on $\D$.

\begin{corollary}\label{twotoe} Let $h,g\in H^\infty(\D)$ and $g$ be invertible. Then $T_hT^*_h\leq T_gT^*_g$ if and only if $|h|\leq |g|$ almost everywhere on $\T$. Furthermore, $T_hT^*_h<T_gT^*_g$ if and only if $|h|\leq |g|$ almost everywhere on $\T$ and $|h|<|g|$ on a subset of $\T$ of positive Lebesgue measure.
\end{corollary}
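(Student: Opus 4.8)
The plan is to deduce this corollary by chaining Lemma~\ref{sumtoe} with Lemma~\ref{gege}. First I would record two routine algebraic facts about analytic Toeplitz operators. For $u,v\in H^\infty(\D)$ the operator $T_u$ is simply the restriction of the multiplication operator $M_u$ to $H^2(\D)$ (because $uf\in H^2(\D)$ whenever $f\in H^2(\D)$), so $T_uT_v=T_{uv}=T_vT_u$. Moreover, if $g$ is invertible in the Banach algebra $H^\infty(\D)$ with inverse $1/g\in H^\infty(\D)$, then $T_gT_{1/g}=T_1=I=T_{1/g}T_g$, so $T_g$ is invertible in $L(H^2(\D))$ with $T_g^{-1}=T_{1/g}$. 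In particular $T_h$ and $T_g$ commute and $T_g$ is invertible, which is exactly what is needed to apply Lemma~\ref{gege}.

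Next I would apply Lemma~\ref{sumtoe} in the case $m=n=1$ with the single functions $h_1=g$ and $g_1=h$, so that there $S=T_g^*T_g-T_h^*T_h$ and $H=|g|^2-|h|^2$. The lemma then says that $T_h^*T_h\leq T_g^*T_g$ if and only if $|g|^2-|h|^2\geq 0$ almost everywhere on $\T$, i.e.\ $|h|\leq|g|$ a.e., and that $T_h^*T_h<T_g^*T_g$ if and only if in addition $|g|^2-|h|^2$ is positive on a set of positive measure, i.e.\ $|h|<|g|$ on a subset of $\T$ of positive Lebesgue measure.

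Finally I would invoke Lemma~\ref{gege} with $A=T_h$ and $B=T_g$: since $AB=BA$ and $B$ is invertible, $A^*A\leq B^*B$ is equivalent to $AA^*\leq BB^*$, and $A^*A<B^*B$ is equivalent to $AA^*<BB^*$. Combining this with the previous paragraph gives precisely the two asserted equivalences: $T_hT_h^*\leq T_gT_g^*$ iff $|h|\leq|g|$ a.e., and $T_hT_h^*<T_gT_g^*$ iff $|h|\leq|g|$ a.e.\ together with $|h|<|g|$ on a set of positive measure. There is essentially no obstacle in this argument; the only points needing a line of verification are the commutation $T_hT_g=T_gT_h$ and the invertibility of $T_g$, both immediate from the functional calculus of analytic Toeplitz operators. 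The invertibility hypothesis on $g$ enters only to make Lemma~\ref{gege} applicable, and, as Remark~\ref{rer} notes, it could be relaxed to $g$ being zero-free on $\D$ with $1/g$ a multiplier of $H^2(\D)$; no such refinement is needed for the statement as given.
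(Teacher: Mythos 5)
Your proof is correct and takes essentially the same route as the paper: apply Lemma~\ref{sumtoe} (with a single pair of symbols) to characterize $T_h^*T_h\leq T_g^*T_g$ and $T_h^*T_h<T_g^*T_g$, then use the commutation $T_hT_g=T_gT_h$ and the invertibility $T_g^{-1}=T_{1/g}$ to transfer these to the $T_hT_h^*$ versus $T_gT_g^*$ comparison via Lemma~\ref{gege}. The extra remarks about relaxing the invertibility hypothesis are harmless but unnecessary for the statement as given.
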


\begin{proof} By Lemma~\ref{sumtoe}, $T_h^*T_h\leq T_g^*T_g$ if and only if $|h|\leq |g|$ almost everywhere on $\T$ and  $T^*_hT_h<T^*_gT_g$ if and only if $|h|\leq |g|$ almost everywhere on $\T$ and $|f|<|g|$ on a subset of $\T$ of positive Lebesgue measure. Obviously $T_h$ and $T_g$ commute and $T_g$ is invertible with $T_g^{-1}=T_{1/g}$. By Lemma~\ref{gege}, $T_hT_h^*\leq T_gT_g^*$
if and only if $T_h^*T_h\leq T_g^*T_g$ and $T_hT_h^*< T_gT_g^*$
if and only if $T_h^*T_h< T_g^*T_g$. The required result immediately follows.
\end{proof}

\begin{lemma}\label{mto1} Let $h_1,\dots,h_m,g\in H^\infty(\D)$ be such that $g$ is invertible and $\sum\limits_{j=1}^m|h_j|^2\leq |g|^2$ almost everywhere on $\T$. Then $\sum\limits_{k=1}^n T_{h_k}T_{h_k}^*\leq T_{g}T_{g}^*$.
\end{lemma}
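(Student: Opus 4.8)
The plan is to divide out $T_g$ and thereby reduce to the case $g\equiv 1$, after which the inequality follows from the pointwise Cauchy--Schwarz inequality on $\T$ in the same way as in the proof of Lemma~\ref{sumtoe}. For $m=1$ this is precisely the ``$\le$'' part of Corollary~\ref{twotoe}, so the present statement is its natural finite-sum generalization.

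First I would set up the reduction. Since $g$ is invertible in $H^\infty(\D)$, we have $1/g\in H^\infty(\D)$, the operator $T_g$ is invertible in $L(H^2(\D))$ with $T_g^{-1}=T_{1/g}$, and the functions $\phi_j:=h_j/g$ lie in $H^\infty(\D)$. On $\T$ the boundary values satisfy $\sum_{j=1}^m|\phi_j|^2=|g|^{-2}\sum_{j=1}^m|h_j|^2\le 1$ almost everywhere. Because $h_j=\phi_j g$ with all symbols analytic, $T_{h_j}=T_{\phi_j}T_g$, and hence $T_{h_j}^*=T_{\bar\phi_j}T_g^*$ (a product of coanalytic Toeplitz operators multiplies the symbols). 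Since $T_g^*$ is invertible, as $f$ ranges over $H^2(\D)$ the vector $\psi=T_g^*f$ ranges over all of $H^2(\D)$, so that
\[
\sum_{j=1}^m T_{h_j}T_{h_j}^*\le T_gT_g^*
\iff
\sum_{j=1}^m\|T_{\bar\phi_j}\psi\|^2\le\|\psi\|^2\ \text{for all}\ \psi\in H^2(\D)
\iff
\sum_{j=1}^m T_{\phi_j}T_{\phi_j}^*\le I .
\]

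It then remains to prove the last inequality, which is the lemma in the case $g\equiv1$. For $\psi\in H^2(\D)$ I would simply estimate $\|T_{\bar\phi_j}\psi\|=\|P(\bar\phi_j\psi)\|\le\|\bar\phi_j\psi\|_{L^2(\T)}$, using that $P$ is an orthogonal projection, and then sum and invoke $\sum_j|\phi_j|^2\le1$ a.e.\ on $\T$:
\[
\sum_{j=1}^m\|T_{\bar\phi_j}\psi\|^2\le\int_\T\Bigl(\sum_{j=1}^m|\phi_j|^2\Bigr)|\psi|^2\,d\lambda\le\int_\T|\psi|^2\,d\lambda=\|\psi\|^2 .
\]
I do not expect any real obstacle: the only place where the hypothesis enters essentially is the normalization step, where the invertibility of $g$ in $H^\infty(\D)$ (equivalently $|g|\ge\epsilon$ on $\D$, hence a.e.\ on $\T$) is what makes $T_g$, and therefore $T_g^*$, invertible. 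An alternative packaging of the final step, parallel to Lemma~\ref{gege}: with the row operator $R=(T_{\phi_1},\dots,T_{\phi_m})\colon H^2(\D)^m\to H^2(\D)$ one has $\sum_jT_{\phi_j}T_{\phi_j}^*=RR^*$, and $\|R\|\le1$ follows from the same pointwise Cauchy--Schwarz estimate applied to $\|\sum_j\phi_jf_j\|_{L^2(\T)}$.
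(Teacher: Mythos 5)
Your proof is correct and follows essentially the same route as the paper's: divide out $T_g$, using that the invertibility of $g$ in $H^\infty(\D)$ makes $T_g$ (hence $T_g^*$) invertible, reducing to the normalized claim $\sum_j T_{\phi_j}T_{\phi_j}^*\le I$ with $\sum_j|\phi_j|^2\le1$ a.e.\ on $\T$. The only difference is in how the final step is packaged: the paper invokes hyponormality of analytic Toeplitz operators together with Lemma~\ref{sumtoe} to get $\sum_j T_{\phi_j}T_{\phi_j}^*\le\sum_j T_{\phi_j}^*T_{\phi_j}\le I$, whereas you carry out the underlying estimate directly via $\|T_{\bar\phi_j}\psi\|=\|P(\bar\phi_j\psi)\|\le\|\bar\phi_j\psi\|_{L^2(\T)}$ and a pointwise Cauchy--Schwarz bound; the two are the same computation, and your version is slightly more self-contained.
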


\begin{proof} Denote $f_j=\frac{h_j}{g}$ for $1\leq j\leq m$. It is easy to see that the required inequality $\sum\limits_{k=1}^n T_{h_k}T_{h_k}^*\leq T_{g}T_{g}^*$ is equivalent to $\sum\limits_{k=1}^n T_{f_k}T_{f_k}^*\leq I$. From the definition it immediately follows that every analytic Toeplitz operator is hyponormal (see \cite{cow} for a complete characterization of hyponormal Toeplitz operators). Since $\sum\limits_{j=1}^m|f_j|^2\leq 1$ almost everywhere on $\T$, Lemma~\ref{sumtoe} yields $\sum\limits_{k=1}^n T_{f_k}^*T_{f_k}\leq I$. By the hyponormality, $T_{f_k}T_{f_k}^*\leq T_{f_k}^*T_{f_k}$. Hence $\sum\limits_{k=1}^n T_{f_k}T_{f_k}^*\leq \sum\limits_{k=1}^n T_{f_k}^*T_{f_k}\leq I$ and the required inequality follows.
\end{proof}

Recall that $h\in H^1(\D)$ is called an {\it outer function} if there is a real valued function $q\in L^1(\T)$ such that $e^q\in L^1(\T)$ and $h:\D\to\C$ is given by the formula
\begin{equation}\label{outer}
h(z)=\exp\biggl(-\int_\T \frac{\xi+z}{\xi-z}q(\xi)\,d\lambda(\xi)\biggr).
\end{equation}
Moreover, the function in the right-hand side always belongs to $H^1(\D)$ and belongs to $H^\infty(\D)$ if and only if $q$ is bounded above. Furthermore, $\log|h|=q$ almost everywhere on $\T$.

\begin{lemma}\label{new} Let $g\in H^\infty(\D)$ be such that $g(\D)\cap\D=\varnothing$, $|g|>1$ almost everywhere on $\T$ and $\log(|g|-1)\in L^1(\T)$. Then there is an outer function $h\in H^\infty(\D)$ such that $|h|\leq |g|-1$ almost everywhere on $\T$.
\end{lemma}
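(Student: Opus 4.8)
The plan is to construct $h$ directly as an outer function through the formula \eqref{outer}, using as defining datum the function $q=\log(|g|-1)$ on $\T$. The first step is to check that $q$ is an admissible choice in the sense of the definition of outer function recalled above. It is real-valued, since $|g|>1$ almost everywhere on $\T$ and hence $|g|-1>0$ almost everywhere; it lies in $L^1(\T)$ by the standing hypothesis $\log(|g|-1)\in L^1(\T)$; and $e^q=|g|-1$ belongs to $L^1(\T)$ because $g\in H^\infty(\D)$ has boundary values in $L^\infty(\T)$, so that $|g|\in L^\infty(\T)\subseteq L^1(\T)$.

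The second step is to observe that $q$ is bounded above: indeed $|g|\leq\|g\|_{H^\infty}$ almost everywhere on $\T$, and $\|g\|_{H^\infty}>1$ because $|g|>1$ almost everywhere, so $q=\log(|g|-1)\leq\log(\|g\|_{H^\infty}-1)<\infty$. By the facts quoted immediately after \eqref{outer}, the function $h$ produced by \eqref{outer} from this $q$ is then an outer function which, moreover, belongs to $H^\infty(\D)$, and it satisfies $\log|h|=q$ almost everywhere on $\T$. Exponentiating this identity gives $|h|=|g|-1$ almost everywhere on $\T$, which is in fact stronger than the asserted inequality $|h|\leq|g|-1$.

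I do not expect any genuine obstacle here: the whole argument consists of matching the hypotheses of the lemma against the four requirements used above (real-valuedness of $q$, $q\in L^1(\T)$, $e^q\in L^1(\T)$, and $q$ bounded above), and each of these is immediate from the hypotheses together with $g\in H^\infty(\D)$. Let me also note that the hypothesis $g(\D)\cap\D=\varnothing$ plays no real role in this particular lemma — it is inherited from the setting of Theorem~\ref{toe02} — since $|g|>1$ almost everywhere on $\T$ is assumed outright.
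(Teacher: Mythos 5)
Your proof is correct and follows exactly the paper's argument: set $q=\log(|g|-1)$, verify it is real-valued, integrable and bounded above, and invoke the outer-function formula \eqref{outer} to get $h\in H^\infty(\D)$ with $|h|=|g|-1$ a.e.\ on $\T$. Your parenthetical observation that the hypothesis $g(\D)\cap\D=\varnothing$ is not actually used here is also accurate.
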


\begin{proof} Since $\log(|g|-1)\in L^1(\T)$ is real valued and bounded above, the formula (\ref{outer}) with $q=\log(|g|-1)$ defines an outer function $h\in H^\infty(\D)$ such that $\log|h|=\log(|g|-1)$. Hence $|h|=|g|-1$ almost everywhere on $\T$ and $h$ satisfies all desired conditions.
\end{proof}

Now we shall prove Theorem~\ref{toe02}, taking Theorem~\ref{norm1} as granted.

\begin{proof}[Proof of Theorem~$\ref{toe02}$] By Lemma~\ref{new}, there is a non-zero $h\in H^\infty(\T)$ such that $|h|\leq |g|-1$ almost everywhere on $\T$. Then $|h|^2\leq (|g|-1)^2\leq (|g|-1)(|g|+1)=|g|^2-1$ almost everywhere on $\T$. By Lemma~\ref{mto1},
$T_hT_h^*+I\leq T_gT_g^*$. Since $T_h^*$ and $T_g^*$ commute and $(T_h^*)^2=T_{h^2}^*\neq 0$, Theorem~\ref{norm1} implies that $T^*_g$ is not 1-weakly hypercyclic.
\end{proof}

\section{Theorem~\ref{norm1} and its applications\label{s3}}

\begin{lemma}\label{qqq1}
Let $\hh$ be a complex Hilbert space and $T,S\in L(\hh)$ be such that $S^2x\neq 0$, $TS=ST$ and $T^*T\geq S^*S+I$.
Then $\|T^nx\|^2\geq \frac{n(n-1)}{2}\|S^2x\|^2$ for every $x\in\hh$ and $n\in\N$.
\end{lemma}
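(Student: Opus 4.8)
The plan is to reduce the operator hypothesis to one scalar estimate and then telescope it twice. Since $S^*S\geq 0$, the inequality $T^*T\geq S^*S+I$ gives both $T^*T\geq I$ and the pointwise estimate
\begin{equation}\label{qqq1basic}
\|Ty\|^2\geq\|Sy\|^2+\|y\|^2\qquad(y\in\hh).
\end{equation}
From $T^*T\geq I$ we get $\|Ty\|\geq\|y\|$, hence $\|T^ky\|\geq\|y\|$ for all $k\in\Z_+$ and $y\in\hh$. Applying \eqref{qqq1basic} to the vector $T^ky$ and using the commutation relation $T^kS=ST^k$ (a consequence of $TS=ST$), I obtain the working inequality
\begin{equation}\label{qqq1work}
\|T^{k+1}y\|^2\geq\|T^k(Sy)\|^2+\|T^ky\|^2\qquad\text{for all }k\in\Z_+,\ y\in\hh.
\end{equation}

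First I would fix $x\in\hh$, set $\alpha=\|S^2x\|^2$, and apply \eqref{qqq1work} with $y=Sx$. This gives $\|T^{k+1}(Sx)\|^2\geq\|T^k(S^2x)\|^2+\|T^k(Sx)\|^2$, and since $\|T^k(S^2x)\|^2\geq\|S^2x\|^2=\alpha$ by the monotonicity noted above, we get $\|T^{k+1}(Sx)\|^2\geq\alpha+\|T^k(Sx)\|^2$ for every $k\in\Z_+$. Summing this over $k=0,\dots,n-1$ and discarding the nonnegative term $\|Sx\|^2$ yields $\|T^n(Sx)\|^2\geq n\alpha$ for every $n\in\Z_+$.

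Next I would feed this bound back into \eqref{qqq1work}, now with $y=x$: we obtain $\|T^{k+1}x\|^2\geq\|T^k(Sx)\|^2+\|T^kx\|^2\geq k\alpha+\|T^kx\|^2$. Telescoping over $k=0,\dots,n-1$ and discarding $\|x\|^2\geq 0$ gives
\begin{equation*}
\|T^nx\|^2\geq\sum_{k=0}^{n-1}k\alpha=\frac{n(n-1)}{2}\,\|S^2x\|^2,
\end{equation*}
which is the assertion. Note that the hypothesis $S^2x\neq 0$ plays no role in the inequality itself; it only makes the bound non-trivial, which is what matters for the application in Theorem~\ref{norm1}.

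I do not expect a genuine obstacle here: the whole argument is a double telescoping of \eqref{qqq1work}. The only point requiring a moment's thought is that the ``$+I$'' summand must be used in two ways — once to produce the crude monotonicity $\|T^ky\|\geq\|y\|$, which supplies the uniform lower bound $\|T^k(S^2x)\|^2\geq\|S^2x\|^2$ in the inner step, and once (through the retained $\|T^ky\|^2$ terms in \eqref{qqq1work}) to keep the orbit norms from collapsing in the outer step — while the ``$S^*S$'' summand is exactly what couples the three orbits of $x$, $Sx$ and $S^2x$ under $T$.
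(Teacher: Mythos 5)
Your proof is correct and follows essentially the same route as the paper's: extract the quadratic inequality $\|Ty\|^2\geq\|y\|^2+\|Sy\|^2$ from the operator hypothesis, propagate it along the orbit via commutation, use $T^*T\geq I$ for monotonicity of the orbit norms, and telescope twice. The only cosmetic difference is that the paper first records the intermediate bound $\|T^n y\|^2\geq n\|Sy\|^2$ for general $y$ and then substitutes $y=Sx$, whereas you run the same telescoping directly with $Sx$; and you are right that the hypothesis $S^2x\neq 0$ is not needed for the inequality itself.
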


\begin{proof}Since $T^*T\geq S^*S+I$, we have $\langle (T^*T-I-S^*S)x,x\rangle\geq 0$ for every $x\in \hh$.
That is, $\|Tx\|^2\geq \|x\|^2+\|Sx\|^2$ for every $x\in \hh$.
Since $S$ and $T$ commute, applying the last inequality with $T^nx$ in the place of $x$, we obtain
\begin{equation}\label{e1}
\|T^{n+1}x\|^2\geq \|T^{n}x\|^2+\|T^n(Sx)\|^2\ \ \text{for every $x\in \hh$ and every $n\in\Z_+$}.
\end{equation}
Since $T^*T\geq I$, $\|Tx\|\geq \|x\|$ for every $x\in\hh$. Thus (\ref{e1}) implies
$$
\|T^{n+1}x\|^2\geq \|T^{n}x\|^2+\|Sx\|^2\ \ \text{for every $x\in \hh$ and every $n\in\Z_+$}.
$$
Hence $\|T^nx\|^2\geq n\|Sx\|^2$ for every $x\in\hh$ and $n\in\N$.
Applying this inequality with $Sx$ plugged in instead of $x$, we see that
$\|T^n(Sx)\|^2\geq n\|S^2x\|^2$ for every $x\in\hh$ and $n\in\Z_+$.
Using this inequality together with (\ref{e1}), we get
$$
\|T^{n+1}x\|^2\geq \|T^{n}x\|^2+n\|S^2x\|^2\ \ \text{for every $x\in\hh$ and $n\in\Z_+$}.
$$
Summing up the first $n$ of the above inequalities, we obtain
$$
\|T^nx\|^2\geq \|x\|^2+\|S^2x\|^2(1+2+{\dots}+(n-1))\geq\|S^2x\|^2\frac{n(n-1)}{2}\ \ \text{for every $x\in\hh$ and $n\in\N$},
$$
as required.
\end{proof}

We shall also need the following beautiful result of Ball \cite{ball,ball1}.

\begin{thmb} Let $X$ be a real or complex Banach space and $\{x_n\}_{n\in\N}$ be a sequence in $X$.

If $\sum\limits_{n=1}^\infty \|x_n\|^{-1}<1$, then there is $f\in X^*$ such that $\|f\|\leq 1$ and $|f(x_n)|\geq 1$ for each $n\in\N$.

If $X$ is a complex Hilbert space and $\sum\limits_{n=1}^\infty \|x_n\|^{-2}\leq 1$, then there is $y\in X$ such that $\|y\|\leq 1$ and $|\langle x_n,y\rangle|\geq 1$ for each $n\in\N$.
\end{thmb}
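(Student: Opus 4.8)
The plan is to recognize Theorem~B as Ball's plank theorem for symmetric convex bodies, with the second assertion its Hilbert-space refinement, to strip away the infinite-dimensional packaging by soft arguments, and to isolate the genuinely geometric core, which I would then quote from \cite{ball,ball1}.

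First I would reduce to finitely many vectors. Put $t_n=\|x_n\|^{-1}$, so that $\sum_n t_n<1$ in the first case and $\sum_n t_n^2\le1$ in the second. Each set $K_n=\{f\in X^*:\|f\|\le1,\ |f(x_n)|\ge1\}$ is weak$^*$-closed in the weak$^*$-compact unit ball of $X^*$ (and weakly closed in the weakly compact ball of the reflexive space $\hh$ in the Hilbert case), since $f\mapsto|f(x_n)|$ is continuous for the relevant weak topology; so by the finite intersection property it suffices to show $\bigcap_{n=1}^N K_n\ne\varnothing$ for every $N$. Restricting to the finite-dimensional subspace $V=\spann\{x_1,\dots,x_N\}$ and extending functionals from $V$ to $X$ by Hahn--Banach without increasing the norm (by Riesz, in the Hilbert case one instead simply looks for $y$ inside $V$), the problem becomes purely finite-dimensional: the unit ball $B$ of $V^*$ must contain a point lying outside every plank $P_n=\{\phi:|\psi_n(\phi)|<t_n\}$, where $\psi_n$ is the norm-one functional $\phi\mapsto\phi(x_n)/\|x_n\|$ and $\sum_{n=1}^N t_n<1$ (resp.\ $\sum_{n=1}^N t_n^2\le1$).

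Thus I would be left with the finite-dimensional plank statements: a symmetric convex body $B$ with norm-one functionals $\psi_1,\dots,\psi_N$ is not covered by the planks $\{|\psi_n|<t_n\}$ provided $\sum_n t_n\le1$, and in the Euclidean case provided only $\sum_n t_n^2\le1$. The easy half is soft. Picking contact points $y_n\in B$ (which exist by compactness) with $\|y_n\|=1$ and $\psi_n(y_n)=1$, every combination $x=\sum_n\epsilon_n t_ny_n$ with unimodular scalars $\epsilon_n$ lies in $B$ since $\sum_n t_n\|y_n\|=\sum_n t_n\le1$, and $\psi_j(x)=\epsilon_j t_j+\sum_{n\ne j}\epsilon_n t_n\psi_j(y_n)$ with $|\psi_j(y_n)|\le1$, so the diagonal term already has exactly the right size $t_j$. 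In the Euclidean case one instead takes $x=\sum_n c_nv_n$ for the unit vectors $v_n$ and writes everything through their Gram matrix $G$: substituting $d=Gc$, the goal is to show $\min\{\langle G^{-1}d,d\rangle:|d_n|\ge t_n\ \forall n\}\le1$ whenever $G$ is positive definite with unit diagonal and $\sum_n t_n^2\le1$, which is obvious when the $v_n$ are orthonormal ($x=\sum_n t_nv_n$).

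\emph{The hard part will be exactly this final step}: choosing the point that avoids all the planks. Uniformly random signs do not do it --- in the Euclidean formulation they give expected value $\sum_n t_n^2(G^{-1})_{nn}\ge\sum_n t_n^2$, the wrong direction, because $(G^{-1})_{nn}\ge1$ --- and a greedy choice handling the functionals one at a time can be undone by the remaining terms; over $\C$ the signs become phases and the difficulty is the subtler complex plank problem. Ball's resolution is an inductive, variational argument (peeling off one functional, and in the Euclidean case projecting onto its orthogonal complement) together with a careful analysis of how the contact point of one plank sits relative to the others. I would carry out the two soft reductions above and then invoke \cite{ball,ball1} for this core rather than reproduce it.
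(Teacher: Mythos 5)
The paper offers no proof of Theorem~B; it is simply quoted as a known result of Ball with references to \cite{ball,ball1}, and your plan of deferring the geometric core of the plank argument to those same papers is exactly what the author does. Your preparatory reduction (Banach--Alaoglu and the finite intersection property to pass to finitely many $x_n$, then Hahn--Banach to pass from $\spann\{x_1,\dots,x_N\}$ to $X$) is correct but unnecessary: Ball's 1991 plank theorem and his 2001 complex plank theorem are already stated and proved for infinite sequences of norm-one functionals on an arbitrary Banach space (resp.\ unit vectors in a complex Hilbert space), so Theorem~B follows by applying them directly to $X^*$ with $\psi_n = x_n/\|x_n\|\in X\subseteq X^{**}$, $t_n=\|x_n\|^{-1}$, and to $\hh$ with $v_n=x_n/\|x_n\|$, with no finite-dimensional detour. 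Your ``easy half'' sketch of the combinatorial core is only a description of why the naive choice $x=\sum_n\epsilon_n t_n y_n$ does not immediately work; as you yourself note, it does not prove anything, and that is fine since you defer precisely this step to Ball.
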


\begin{proof}[Proof of Theorem~$\ref{norm1}$]
Take an arbitrary $x\in\hh$. First, consider the case $S^2x=0$. Since $T$ and $S$ commute the closed linear space $Y=\ker S^2$ is $T$-invariant. Since $S^2\neq 0$, $Y\neq\hh$. Thus, there is a non-zero $y\in\hh$, which is orthogonal to $Y$. It follows that $\langle T^nx,y\rangle=0$ for each $n\in\Z_+$ and therefore $x$ is not a 1-weakly hypercyclic vector for $T$ (actually, it is not even a cyclic vector).

It remains to consider the case $S^2x\neq 0$. In this case Lemma~\ref{qqq1} ensures that $\sum\limits_{n=0}^\infty \|T^nx\|^{-2}<\infty$. By Theorem~B, there is $y\in\hh$ such that $|\langle T^nx,y\rangle|\geq 1$ for each $n\in\Z_+$. Hence $x$ is not a 1-weakly hypercyclic vector for $T$ and therefore $T$ is not 1-weakly hypercyclic.
\end{proof}

We derive a curious corollary from Theorem~\ref{norm1}.

\begin{corollary}\label{coco}Let $\hh$ be a complex Hilbert space and $S\in L(\hh)$, $\|S\|\leq 1$. Then for every $c>0$, the operator $T=(c+1)I+cS$ is
not $1$-weakly hypercyclic.
\end{corollary}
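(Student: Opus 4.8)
The plan is to reduce Corollary~\ref{coco} to Theorem~\ref{norm1} by exhibiting an appropriate operator $S_0$ commuting with $T$ and verifying the operator inequality $T^*T\geq S_0^*S_0+I$. Since $T=(c+1)I+cS$, the natural candidate is simply $S_0=cS$ (scaled so that $S_0^2=c^2S^2$ need not vanish). Then $TS_0=S_0T$ holds because $S_0$ is a polynomial in $S$ and so is $T$. So the heart of the matter is the inequality $T^*T\geq c^2S^*S+I$.

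The main obstacle is that this inequality can \emph{fail} in general: for instance, if $S$ is a self-adjoint contraction with $-1$ in its spectrum, then $T=(c+1)I+cS$ has $1$ in its spectrum, so $T^*T$ has $1$ in its spectrum, while $c^2S^*S+I$ has $c^2+1$ in its spectrum, and $1\not\geq c^2+1$. So a direct application of Theorem~\ref{norm1} with $S_0=cS$ does not work, and the statement as literally phrased would be false unless one also knows $S^2\neq0$ in a suitable sense — but even then the inequality need not hold. Hence I expect the proof to require a more careful choice: replace $S$ by $S$ composed with a spectral projection, or pass to a restriction of $T$ to an invariant subspace on which $S$ behaves like a contraction bounded away from $-1$ in the appropriate sense.

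Concretely, I would argue as follows. If $S^2=0$, then $T=(c+1)I+cS$ is a compact perturbation-like triangular operator; in this degenerate case one checks directly, or invokes the $S^2x=0$ branch in the proof of Theorem~\ref{norm1}, that $T$ fails to be cyclic (a vector orthogonal to $\ker S^2=\hh$... — here one needs $S\neq0$; if $S=0$ then $T$ is a scalar multiple of the identity plus identity, hence not cyclic on an infinite-dimensional space and trivially not $1$-weakly hypercyclic). Assuming $S^2\neq0$, the key computation is
\[
T^*T=((c+1)I+cS^*)((c+1)I+cS)=(c+1)^2I+c(c+1)(S+S^*)+c^2S^*S,
\]
so $T^*T-c^2S^*S-I=\bigl((c+1)^2-1\bigr)I+c(c+1)(S+S^*)=c(c+2)I+c(c+1)(S+S^*)$. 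Since $\|S\|\leq1$ gives $\|S+S^*\|\leq2$, we get $c(c+1)(S+S^*)\geq -2c(c+1)I$, whence $T^*T-c^2S^*S-I\geq (c(c+2)-2c(c+1))I=(c^2+2c-2c^2-2c)I=-c^2 I$, which is \emph{not} $\geq0$. This confirms that the naive inequality fails, so the actual proof must instead use a different companion operator — I expect the intended route is to take $S_0$ to be a suitable function of $S$ (or of $T$ itself) via the holomorphic functional calculus, chosen so that $S_0$ commutes with $T$, has nonzero square on the relevant invariant subspace, and makes $T^*T\geq S_0^*S_0+I$ hold, perhaps after first discarding the part of the spectrum where $|T|$ is small by restricting to a $T$-invariant (indeed reducing, if $S$ is normal) subspace. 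The delicate point, and the main obstacle, will be producing that $S_0$ with all three properties simultaneously; once it is in hand, Theorem~\ref{norm1} applies verbatim and yields that $T$ is not $1$-weakly hypercyclic.
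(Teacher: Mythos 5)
You correctly identify the strategy (reduce to Theorem~\ref{norm1}) and you correctly diagnose that the naive choice $S_0=cS$ makes the inequality $T^*T\geq S_0^*S_0+I$ fail in general. But at that point the proof stops: you explicitly concede you have not found the commuting companion operator, and you only speculate about holomorphic functional calculus and spectral truncation. So as written this is not a proof; it is a (correct) diagnosis of why one particular attempt fails, followed by a plan.

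The missing idea is closer than the speculation suggests and does not require functional calculus or spectral projections. Since $T-I=c(I+S)$, the natural commuting companion is a scalar multiple of $I+S$, not of $S$. Take
$$R=\sqrt{c(c+1)}\,(I+S).$$
Then $R$ commutes with $T$, and a direct expansion gives
$$T^*T-R^*R-I=\bigl((c+1)I+cS^*\bigr)\bigl((c+1)I+cS\bigr)-c(c+1)(I+S^*)(I+S)-I=c\,(I-S^*S),$$
which is $\geq 0$ precisely because $\|S\|\leq1$. So $T^*T\geq R^*R+I$ always holds. If $R^2\neq0$, Theorem~\ref{norm1} applies directly. If $R^2=0$, then $(I+S)^2=0$, so $(T-I)^2=c^2(I+S)^2=0$ and $T$ satisfies a degree-two polynomial identity, hence every orbit spans a subspace of dimension at most $2$ and $T$ cannot be $1$-weakly hypercyclic on the infinite-dimensional $\hh$. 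This two-line choice of $R$, together with the degenerate case, is exactly what your argument lacks; the rest of your setup is sound. As a heuristic for the future: when $T=aI+bS$, look for the companion among scalar multiples of $T-\lambda I$ for suitable $\lambda$, since these automatically commute with $T$ and the cross terms in $T^*T-R^*R$ can be made to cancel.
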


\begin{proof}
Consider the operator $R=\sqrt{c(c+1)}(I+S)$. It is straightforward to verify that
$$
T^*T-R^*R-I=((c+1)I+S^*)((c+1)I+S)-c(c+1)(I+S^*)(I+S)-I=c(I-S^*S).
$$
Since $\|S\|\leq 1$, we have $I-S^*S\geq 0$. Hence $T^*T-R^*R-I\geq 0$. By Theorem~\ref{norm1}, $T$ is not 1-weakly hypercyclic unless $R^2=0$. In the latter case $T$ satisfies a degree two polynomial identity and therefore is not 1-weakly hypercyclic as well.
\end{proof}

Feldman \cite{fe} has conjectured that the operator $2I+B^*$ is not 1-weakly hypercyclic, where $B^*$ is the backward shift on the complex Hilbert space $\ell_2(\Z_+)$. Obviously, Corollary~3.2 proves this conjecture. For what it matters, the same also follows from Corollary~\ref{toe03} since $2I+B^*$ is unitarily equivalent to the coanalytic Toeplitz operator $T^*_g$ with $g(z)=2+z$.

\section{Proof of Theorems~\ref{qq} and~\ref{qq1}\label{s4}}

We need a lot of preparation.
The following lemma follows from Lemmas~2.3 and 2.6 in \cite{ss}. For the sake of convenience, we provide an independent proof.

 \begin{lemma}\label{le0}
Let $\{a_n\}_{n\in\N}$ be a sequence in a pre-Hilbert space $\hh$ such that $\sum\limits_{n=1}^\infty \|a_n\|^{-2}=\infty$ and\hfill\break $\sum\limits_{1\leq m<n<\infty}\frac{|\langle a_m,a_n\rangle|^2}{\|a_n\|^2\|a_m\|^2}<\infty$. Then $0$ is in the closure of the set $\{a_n:n\in\N\}$ in the weak topology.
\end{lemma}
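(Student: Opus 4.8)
The plan is to argue by contradiction, the engine being a Bessel-type inequality for "almost orthonormal" systems. Assume $0$ does not belong to the weak closure of $A=\{a_n:n\in\N\}$ (we may assume $a_n\ne0$ for every $n$, the conclusion being immediate otherwise). Since the weak topology of a pre-Hilbert space is generated by the functionals $x\mapsto\langle x,y\rangle$ with $y\in\hh$, there are finitely many $y_1,\dots,y_k\in\hh$ and $\varepsilon>0$ such that $\max_{1\le j\le k}|\langle a_n,y_j\rangle|\ge\varepsilon$ for every $n\in\N$, and hence $\sum_{j=1}^k|\langle a_n,y_j\rangle|^2\ge\varepsilon^2$ for every $n$.

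The first step is to reduce from finitely many functionals to a single one, using the divergence hypothesis. Put $b_n=a_n/\|a_n\|$. Dividing the last inequality by $\|a_n\|^2$ and summing over $n$ gives $\sum_{j=1}^k\sum_{n=1}^\infty|\langle b_n,y_j\rangle|^2\ge\varepsilon^2\sum_{n=1}^\infty\|a_n\|^{-2}=\infty$, so there is an index $j_0$ with $\sum_{n=1}^\infty|\langle b_n,y_{j_0}\rangle|^2=\infty$.

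The heart of the matter is to show that the normalised sequence $\{b_n\}$ is a \emph{Bessel sequence}, which contradicts the previous line. For a finite set $F\subseteq\N$ let $G_F$ be the Gram matrix $(\langle b_m,b_n\rangle)_{m,n\in F}$ and decompose $G_F=I_F+E_F$, where $E_F$ is its off-diagonal part. The second hypothesis yields $\|E_F\|_{\mathrm{op}}\le\|E_F\|_{\mathrm{HS}}=\bigl(2\sum_{m<n,\,m,n\in F}|\langle b_m,b_n\rangle|^2\bigr)^{1/2}\le M$, where $M=\bigl(2\sum_{1\le m<n<\infty}\frac{|\langle a_m,a_n\rangle|^2}{\|a_m\|^2\|a_n\|^2}\bigr)^{1/2}<\infty$; hence $\|G_F\|_{\mathrm{op}}\le1+M$ for every finite $F$. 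By the standard upper-frame-bound estimate $\sum_{n\in F}|\langle y,b_n\rangle|^2\le\|G_F\|_{\mathrm{op}}\|y\|^2$, valid for all $y\in\hh$ (it expresses that the analysis map $y\mapsto(\langle y,b_n\rangle)_{n\in F}$ into $\C^F$ has square-norm $\|G_F\|_{\mathrm{op}}$), we obtain $\sum_{n\in F}|\langle y,b_n\rangle|^2\le(1+M)\|y\|^2$, and letting $F$ increase to $\N$ gives $\sum_{n=1}^\infty|\langle y,b_n\rangle|^2\le(1+M)\|y\|^2<\infty$ for every $y\in\hh$. Taking $y=y_{j_0}$ contradicts $\sum_n|\langle b_n,y_{j_0}\rangle|^2=\infty$, which finishes the proof.

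The step I expect to carry the weight is this Bessel estimate: the point is that square-summability of the off-diagonal Gram entries forces the Hilbert--Schmidt, hence operator, norm of every finite principal Gram matrix to stay below the universal constant $1+M$, so $\{b_n\}$ behaves like an almost-orthonormal system and the frame bound applies. The remaining ingredients — rephrasing "$0$ is not in the weak closure" as a uniform lower bound for $\max_j|\langle a_n,y_j\rangle|$, and the pigeonhole passage to a single functional via the hypothesis $\sum\|a_n\|^{-2}=\infty$ — are routine.
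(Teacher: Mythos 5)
Your proof is correct and rests on the same key fact as the paper's: the second hypothesis makes the normalized sequence $b_n=a_n/\|a_n\|$ a Bessel sequence, and divergence of $\sum\|a_n\|^{-2}$ is then incompatible with $0$ being weakly separated from $\{a_n\}$. The difference lies in how the contradiction is organized. The paper makes the Bessel property explicit as a bounded synthesis operator $T\colon\ell_2(\N)\to\hh$ with $Te_n=b_n$, observes that $T(c_ne_n)=a_n$ and that a bounded operator is weak-to-weak continuous, and thereby transfers the question to $\ell_2$, where the final estimate $\sum_j\|y_j\|^2\ge\sum_n c_n^{-2}$ is immediate from orthonormality. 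You stay in $\hh$: a pigeonhole (powered by the same divergence) singles out one functional $y_{j_0}$ with $\sum_n|\langle b_n,y_{j_0}\rangle|^2=\infty$, and the Bessel upper bound, derived from the Gram matrix decomposition $G_F=I_F+E_F$ together with $\|E_F\|_{\mathrm{op}}\le\|E_F\|_{\mathrm{HS}}$, kills it. The two Bessel estimates are really the same computation in different dress (the paper's Cauchy--Schwarz on the cross terms versus your Hilbert--Schmidt bound, with constants $1+\sqrt{r/2}$ versus $1+\sqrt{2r}$), but your route is more self-contained since it avoids the detour through $\ell_2$.

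One small slip: for an incomplete pre-Hilbert space $\hh$, the weak topology $\sigma(\hh,\hh^*)$ is generated by the functionals $\langle\cdot,y\rangle$ with $y$ in the completion $\widetilde\hh$, not merely $y\in\hh$. This costs you nothing --- the Bessel bound $\sum_n|\langle y,b_n\rangle|^2\le(1+M)\|y\|^2$ holds for every $y\in\widetilde\hh$ since the $b_n$ lie in $\hh\subset\widetilde\hh$ --- but you should either allow $y_1,\dots,y_k\in\widetilde\hh$ in the separation step, or simply pass to the completion at the outset as the paper does.
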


\begin{proof}Passing to the completion of $\hh$ does not change a thing. Thus without loss of generality $\hh$ is a Hilbert space. Obviously, none of the vectors $a_n$ is zero. Denote $c_n=\|a_n\|$ and $v_n=\frac{a_n}{\|a_n\|}$. Then $\|v_n\|=1$, $c_n>0$ and $a_n=c_nv_n$ for each $n\in\N$. The conditions imposed upon $a_n$ now read:
\begin{equation}
\sum_{n=1}^\infty c_n^{-2}=\infty\ \ \text{and}\ \ r=\!\!\!\!\sum\limits_{1\leq m<n<\infty}|\langle v_m,v_n\rangle|^2<\infty.\label{w01}
\end{equation}
First, we shall demonstrate that there is a unique continuous linear operator $T:\ell_2(\N)\to \hh$ such that $Te_n=v_n$ for every $n\in\N$, where $\{e_n\}_{n\in\N}$ is the standard orthonormal basis in $\ell_2(\N)$. This is equivalent to showing that there is a positive constant $d$ such that
\begin{equation}
\|z_1v_1+{\dots}+z_mv_m\|^2\leq d(|z_1|^2+{\dots}+|z_m|^2)\ \ \text{for every $m\in\N$ and $z_1,\dots,z_m\in\K$.}\label{es}
\end{equation}
Let $m\in\N$ and $z_1,\dots,z_m\in\K$. Since
$\|z_1v_1+{\dots}+z_mv_m\|^2=\sum\limits_{1\leq j,k\leq m}z_j\overline{z_k}\langle v_j,v_k\rangle$ and
$\langle v_j,v_j\rangle=1$ for every $j$,
\begin{equation}\label{uu}
\|z_1v_1+{\dots}+z_mv_m\|^2\leq \sum_{j=1}^m |z_j|^2+2\sum_{1\leq j<k\leq m}|z_j||z_k||\langle v_j,v_k\rangle|.
\end{equation}
Applying the Cauchy--Schwartz inequality to the last sum, we get
$$
\sum_{1\leq j<k\leq m}|z_j||z_k||\langle v_j,v_k\rangle|\leq \biggl(\sum_{1\leq j<k\leq m}|z_j|^2|z_k|^2\biggr)^{1/2}\biggl(\sum_{1\leq j<k\leq m}|\langle v_j,v_k\rangle|\biggr)^{1/2}
\leq\sqrt{r}\biggl(\sum_{1\leq j<k\leq m}|z_j|^2|z_k|^2\biggr)^{1/2},
$$
where $r$ is defined in (\ref{w01}). Obviously,
$$
\sum_{1\leq j<k\leq m}|z_j|^2|z_k|^2\leq \frac12\sum_{1\leq j,k\leq m}|z_j|^2|z_k|^2=\frac12\biggl(\sum_{j=1}^m |z_j|^2\biggr)^2.
$$
By the last two displays $
\sum\limits_{1\leq j<k\leq m}|z_j||z_k||\langle v_j,v_k\rangle|\leq \sqrt{\frac r2}\sum\limits_{j=1}^m |z_j|^2$.
Plugging this estimate into (\ref{uu}), we get
$\|z_1v_1+{\dots}+z_mv_m\|^2\leq (1+\sqrt{r/2})\sum\limits_{j=1}^m |z_j|^2$,
which proves (\ref{es}) with $d=1+\sqrt{r/2}$. Thus there is a continuous linear operator $T:\ell_2(\N)\to\hh$ satisfying $Te_n=v_n$ for every $n\in\N$. Obviously, $T(c_ne_n)=a_n$ for every $n\in\N$.
Since $T$ is continuous, in order to show that $0$ is in the closure of the set $A=\{a_n:n\in\N\}$ in the weak topology, it suffices to verify that $0$ is in the closure of the subset $B=\{c_ne_n:n\in\N\}$ of $\ell_2(\N)$ in the weak topology. Indeed, this implication holds since $T(B)=A$. Assume the contrary. That is, $0$ is not in the closure of $B$ in the weak topology. Then there exist $y_1,\dots,y_m\in\ell_2(\N)$ such that $\sum\limits_{j=1}^m|\langle y_j,c_ne_n\rangle|^2\geq 1$ for every $n\in\N$. Hence,
$\sum\limits_{j=1}^m|\langle y_j,e_n\rangle|^2\geq c_n^{-2}$ for every $n\in\N$.
Summing these inequalities up, we obtain
$\sum\limits_{j=1}^m\|y_j\|^2=\sum\limits_{j=1}^m\sum\limits_{n=1}^\infty|\langle y_j,e_n\rangle|^2\geq \sum\limits_{n=1}^\infty c_n^{-2}$,
which contradicts (\ref{w01}) and thus completes the proof.
\end{proof}

\begin{corollary}\label{co0}Let $\{a_n\}_{n\in\N}$ be a sequence in a pre-Hilbert space $\hh$ such that $\sum\limits_{n=1}^\infty \|a_n\|^{-2}=\infty$ and $\sum\limits_{1\leq m<n<\infty}|\langle a_m,a_n\rangle|^2<\infty$. Then $0$ is in the closure of the set $\{a_n:n\in\N\}$ in the weak topology.
\end{corollary}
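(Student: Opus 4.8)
The plan is to reduce Corollary~\ref{co0} to Lemma~\ref{le0} by a trivial dichotomy on $\inf_n\|a_n\|$. The point is that Lemma~\ref{le0} has the \emph{normalized} double sum $\sum_{m<n}|\langle a_m,a_n\rangle|^2/(\|a_n\|^2\|a_m\|^2)$ in its hypothesis, whereas the corollary only supplies the \emph{unnormalized} sum $\sum_{m<n}|\langle a_m,a_n\rangle|^2$; passing between them costs exactly a lower bound on the norms, so one has to treat separately the case where no such bound exists.

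First I would dispose of the degenerate case $\inf_n\|a_n\|=0$. In that case there are terms $a_n$ of arbitrarily small norm, so $0$ lies in the norm closure of $\{a_n:n\in\N\}$, hence a fortiori in its weak closure, and there is nothing to prove.

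So suppose $\delta:=\inf_n\|a_n\|>0$. Then $\|a_m\|^2\|a_n\|^2\geq\delta^4$ for all $m,n\in\N$, whence
$$
\sum_{1\leq m<n<\infty}\frac{|\langle a_m,a_n\rangle|^2}{\|a_n\|^2\|a_m\|^2}\leq\frac1{\delta^4}\sum_{1\leq m<n<\infty}|\langle a_m,a_n\rangle|^2<\infty,
$$
the finiteness of the right-hand side being precisely the second hypothesis of the corollary. Together with the hypothesis $\sum_{n=1}^\infty\|a_n\|^{-2}=\infty$, this places us exactly in the setting of Lemma~\ref{le0}, which then gives that $0$ is in the weak closure of $\{a_n:n\in\N\}$. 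Combining the two cases completes the proof.

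There is no genuine obstacle here: the only thing to be careful about is not to forget the $\inf_n\|a_n\|=0$ case, which is where the norm-normalization in Lemma~\ref{le0} would otherwise fail to be controlled, and which is handled at once by the elementary observation that $0$ then already lies in the norm closure.
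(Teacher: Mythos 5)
Your proposal is correct and is essentially identical to the paper's proof: both split on whether $\inf_n\|a_n\|$ vanishes, treating the vanishing case by noting $0$ is already in the norm closure, and the bounded-below case by absorbing the norm denominators into a constant and invoking Lemma~\ref{le0}.
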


\begin{proof} If $0$ is in the closure of the set $\{\|a_n\|:n\in\N\}$ of positive numbers, then $0$ is in the norm closure of the set $\{a_n:n\in\N\}$ and therefore belongs to its closure in the weak topology. It remains to consider the case when $0$ is not in the closure of  $\{\|a_n\|:n\in\N\}$. Then there is $c>0$ such that $\|a_n\|\geq c$ for each $n\in\N$. Hence, the condition $\sum\limits_{1\leq m<n<\infty}|\langle a_m,a_n\rangle|^2<\infty$ implies $\sum\limits_{1\leq m<n<\infty}\frac{|\langle a_m,a_n\rangle|^2}{\|a_n\|^2\|a_m\|^2}<\infty$. By Lemma~\ref{le0}, $0$ is in the closure of the set $\{a_n:n\in\N\}$ in the weak topology.
\end{proof}

\begin{lemma}\label{le1} Let $\{c_n\}_{n\in\N}$ be an arbitrary sequence of positive numbers and $\{d_n\}_{n\in\N}$ be such a sequence of positive numbers that $\lim\limits_{n\to\infty}\frac{n}{d_n}=0$. Then there exists a map $\phi:\N\to\N$ such that
\begin{itemize}\itemsep=-2pt
\item[\rm(\ref{le1}.1)] $A_n=\phi^{-1}(n)$ is an infinite subset of $\N$ for each $n\in\N;$
\item[\rm(\ref{le1}.2)] $\lim\limits_{m\to\infty}\frac1{d_m}\sum\limits_{j=1}^{k_{n,m}}c_{\phi(j)}=0$ for every
$n\in\N$, where $\{k_{n,m}\}_{m\in\N}$ is the strictly increasing sequence of positive integers such that $A_n=\{k_{n,m}:m\in\N\}.$
\end{itemize}
\end{lemma}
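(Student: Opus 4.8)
The plan is to realize $\phi$ as a coloring of $\N$ built in consecutive blocks (``stages''), the $t$-th stage using the colors $1,\dots,t$ in round-robin fashion over several rounds, the number of rounds being chosen to grow fast enough that the super-linear growth of $\{d_n\}$ can be exploited twice over.

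First I would reduce to the case where $\{d_n\}$ is non-decreasing: replacing $d_n$ by $\tilde d_n=\min_{k\ge n}d_k$ yields a non-decreasing sequence with $\tilde d_n\le d_n$ and still $n/\tilde d_n\to0$ (indeed $\tilde d_n=d_{k(n)}$ for some $k(n)\ge n$, so $\tilde d_n/n\ge d_{k(n)}/k(n)\to\infty$), and (\ref{le1}.2) for $\tilde d$ implies it for $d$. Write $C_t=c_1+\dots+c_t$ and $\theta(N)=\sup_{k\ge N}k/d_k$; the hypothesis says precisely that $d_n\to\infty$, $d_n/n\to\infty$ and $\theta(N)\to0$. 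Next I would choose integers $0=L_0<L_1<L_2<\cdots$ recursively: given $L_{s-1}$, let $L_s$ be any integer with $L_s>L_{s-1}$, with $d_{L_s}/L_s\ge s^2C_s$, and with $\theta(L_s)\le\frac1{sC_{s+1}}$ --- each of these requirements holds for all sufficiently large $L_s$, so such a choice exists. Then stage $t$ is declared to occupy the positions $P_{t-1}+1,\dots,P_t$, where $P_0=0$ and $P_t=P_{t-1}+tL_t$, and inside stage $t$ one runs $L_t$ consecutive rounds, the $t$ slots of each round receiving the colors $1,2,\dots,t$ in that order. This defines $\phi$ on all of $\N$; color $n$ occurs exactly $L_t$ times in every stage $t\ge n$ (and not at all earlier), so each $A_n=\phi^{-1}(n)$ is infinite, giving (\ref{le1}.1), and color $n$ occurs only finitely often in each stage.

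To verify (\ref{le1}.2) I would fix $n$ and let $m\to\infty$. For all but finitely many $m$ the position $k_{n,m}$ lies in a stage $t=t(m)>n$, and $t(m)\to\infty$. With $S_N:=\sum_{j=1}^Nc_{\phi(j)}$ and $k_{n,m}$ being the $p$-th occurrence of color $n$ inside stage $t$, the round-robin ordering gives $S_{k_{n,m}}=S_{P_{t-1}}+(p-1)C_t+C_n\le S_{P_{t-1}}+pC_t$, while $m=\bigl(\sum_{i=n}^{t-1}L_i\bigr)+p$, whence $m\ge L_{t-1}$ and $p\le m$; also $S_{P_{t-1}}=\sum_{i=1}^{t-1}L_iC_i\le(t-1)L_{t-1}C_{t-1}$. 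Consequently, since $d$ is non-decreasing,
\[
\frac{S_{k_{n,m}}}{d_m}\le\frac{S_{P_{t-1}}}{d_{L_{t-1}}}+\frac{pC_t}{d_m}\le(t-1)C_{t-1}\,\frac{L_{t-1}}{d_{L_{t-1}}}+C_t\,\theta(L_{t-1})\le\frac1{t-1}+\frac1{t-1},
\]
the last step being nothing but the two inequalities imposed when $L_{t-1}$ was chosen. Letting $m\to\infty$ forces $t\to\infty$, so the left-hand side tends to $0$, as (\ref{le1}.2) requires.

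The main obstacle is the tension built into the statement: $\{c_n\}$ is entirely unrestricted, so the partial sums $S_N$ may grow arbitrarily fast, yet every color is forced to recur infinitely often; the super-linear growth of $\{d_n\}$ is exactly what saves the day, but it has to be spent on two different kinds of ``mass'' simultaneously --- the old mass $S_{P_{t-1}}$ accumulated before the current stage, absorbed via $d_{L_{t-1}}/L_{t-1}\to\infty$, and the fresh mass $pC_t\le mC_t$ accumulated inside the current stage, absorbed via $m/d_m\to0$. Producing a single sequence of block lengths $L_t$ that is adequate for both at once is the crux; and the round-robin ordering inside each block is precisely the device that keeps the fresh mass at $k_{n,m}$ proportional to $p$ (hence to $m$) rather than to the full block length $L_t$, without which the bound on the second term would fail.
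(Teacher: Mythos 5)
Your proof is correct and takes essentially the same route as the paper's: a block-wise round-robin colouring (block $t$ consists of $1,\dots,t$ repeated $L_t$ times), with the block lengths chosen so that the super-linear growth of $d$ simultaneously absorbs the ``old'' mass accumulated before the current block and the ``fresh'' mass accumulated inside it. Your explicit reduction to non-decreasing $\{d_n\}$ is a clean touch; the paper instead passes from its hypothesis on $w_{r_{s-1}}=d_{r_{s-1}}/r_{s-1}$ to a bound on $w_m$ for all $m>r_{s-1}$, a step that tacitly needs either the same monotonization or a slightly stronger choice of the block lengths.
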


\begin{proof} Denote $v_n=\max\{c_1,\dots,c_n\}$ and $w_n=\frac{d_n}{n}$. Obviously, the sequence $\{v_n\}$ increases and $w_n\to\infty$. Pick a strictly increasing sequence $\{r_j\}_{j\in\N}$ of positive integers such that
\begin{align}
&\lim\limits_{s\to\infty}\frac{sv_s}{w_{r_{s-1}}}=0,\label{ee1}
\\
&\lim\limits_{s\to\infty}\frac{r_{s-1}}{r_{s}}=0.\label{ee2}
\end{align}
We define the required map $\phi:\N\to\N$ according to the rule:
$$
(\phi(1),\phi(2),\dots)=(\underbrace{1,1,\dots,1}_{\hbox{$r_1$}\ \rm times},\underbrace{1,2,1,2,\dots,1,2}_{\hbox{$r_2$}\ \rm times},
\underbrace{1,2,3,1,2,3,\dots,1,2,3}_{\hbox{$r_3$}\ \rm times},\dots).
$$
Condition (\ref{le1}.1) is obviously satisfied. It remains to verify (\ref{le1}.2). Pick $n\in\N$. For any sufficiently large $m\in\N$, there is a unique integer $s=s(m)\geq n+1$ and a unique integer $l=l(m)$ satisfying $1\leq l\leq r_s$ such that $m=r_n+{\dots}+r_{s-1}+l$. It is easy to see that $k_{n,m}\leq r_1+{\dots}+(s-1)r_{s-1}+ls$ and that $c_{\phi(j)}\leq v_s$ for $j\leq k_{n,m}$. Therefore
$$
\frac1{d_m}\sum_{j=1}^{k_{n,m}}c_{\phi(j)}\leq \frac{v_sk_{n,m}}{mw_m}\leq\frac{v_s(r_1+{\dots}+(s-1)r_{s-1}+ls)}
{w_m(r_n+{\dots}+r_{s-1}+l)}=\frac{sv_s}{w_m}\frac{\frac{r_1}{s}+\frac{2r_2}{s}+{\dots}+\frac{(s-1)r_{s-1}}{s}+l}
{r_n+{\dots}+r_{s-1}+l}.
$$
The equality~(\ref{ee2}) implies that
$$
\lim_{m\to\infty}\frac{\frac{r_1}{s}+\frac{2r_2}{s}+{\dots}+\frac{(s-1)r_{s-1}}{s}+l}
{r_n+{\dots}+r_{s-1}+l}=1.
$$
Since $m>r_{s-1}$, (\ref{ee1}) ensures that
$$
\lim_{m\to\infty}\frac{sv_s}{w_m}=0.
$$
The last three displays combined yield $\lim\limits_{m\to\infty}\frac1{d_m}\sum\limits_{j=1}^{k_{n,m}}c_{\phi(j)}=0$,
as required.
\end{proof}

\subsection{Two general statements}

\begin{theorem}\label{tt0}
Let $X$ be a Banach space and $T\in L(X)$. Assume that there exists a double sequence $\{u_{k,n}\}_{k\in\N,\,n\in\Z}$
of elements of $X$ and an infinite subset $A$ of $\N$ such that
\begin{itemize}\itemsep=-2pt
\item[\rm(A1)] $\{u_{k,0}:k\in\N\}$ is dense in $X;$
\item[\rm(A2)] $Tu_{k,n}=u_{k,n+1}$ for every $k\in\N$ and $n\in\Z;$
\item[\rm(A3)] the $T$-invariant space $E=\spann\{u_{k,n}:n\in\Z_+,\,k\in\N\}$ carries an inner product
$\langle\cdot,\cdot\rangle$ such that the corresponding norm $\|x\|_0=\sqrt{\langle x,x\rangle}$ on $E$ defines a topology
$($not necessarily strictly$)$ stronger than the one inherited from $X;$
\item[\rm(A4)] for every $k\in\N$, $\lim\limits_{n\to\infty\atop n\in A}\|u_{k,-n}\|=0;$
\item[\rm(A5)] for every $k\in\N$, the sequence $\{\|u_{k,n}\|_0\}_{n\in\Z_+}$ is bounded and for
every $k,a\in\N$, $b\in\Z_+$ and $s\in A$, $\lim\limits_{n\to\infty\atop n\in A}\langle u_{k,n-s},u_{a,b}\rangle=0;$
\item[\rm(A6)] for every $k,m\in\N$ and $s\in A$, $\lim\limits_{t\to\infty\atop t\in A,\,t>s}\slim\limits_{n\to\infty\atop n\in A,\,n>t}|\langle u_{k,n-s},u_{m,n-t}\rangle|=0$.
\end{itemize}
Then the operator $T$ is weakly hypercyclic.
\end{theorem}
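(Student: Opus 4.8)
The plan is to construct a weakly hypercyclic vector for $T$ explicitly, as a series $x=\sum_{j=1}^{\infty}u_{\phi(j),-n_j}$, where $\phi\colon\N\to\N$ is produced by Lemma~\ref{le1} and the depths $n_1<n_2<\cdots$ are elements of $A$ chosen recursively so as to grow extremely fast. Once the series converges in $X$ (which the construction will guarantee), (A2) and continuity of the iterates give
\[
T^{n_i}x=u_{\phi(i),0}+a_i+f_i,\qquad a_i=\sum_{j<i}u_{\phi(j),\,n_i-n_j}\in E,\qquad f_i=\sum_{j>i}u_{\phi(j),\,n_i-n_j},
\]
with $a_i\in E$ because the indices $n_i-n_j$ ($j<i$) are positive. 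Since $\phi^{-1}(k)$ is infinite for every $k$ and $\{u_{k,0}:k\in\N\}$ is dense by (A1), it suffices to show, for each $k$, that $0$ lies in the $X$-weak closure of $\{a_i+f_i:i\in\phi^{-1}(k)\}$: this gives $u_{k,0}\in\overline{O(T,x)}^{\,w}$ for all $k$, and then density forces $O(T,x)$ to be weakly dense. (We may assume each $u_{k,0}\neq0$; discarding the remaining indices leaves a dense set and ensures $\|\cdot\|_0>0$ on all the $u_{j,n}$ involved.)

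To fix $\phi$, put $C_j=\sup_{n\in\Z_+}\|u_{j,n}\|_0$, finite and positive by (A5), and apply Lemma~\ref{le1} with $c_j=C_j^2$ and $d_m=m\log m$ ($m\geq2$): this is legitimate since $m/d_m\to0$, and the resulting $\phi$ satisfies $\sum_{j=1}^{k_{k,m}}C_{\phi(j)}^2=o(m\log m)$ for every $k$, where $k_{k,1}<k_{k,2}<\cdots$ enumerates $\phi^{-1}(k)$; the crucial feature of the rate $m\log m$ is that $\sum_m(m\log m)^{-1}=\infty$. Next, choose $n_j\in A$ recursively: with $n_1,\dots,n_{j-1}$ (hence $a_1,\dots,a_{j-1}$) already fixed, take $n_j$ so large that simultaneously (a) $\|u_{\phi(j),-n_j}\|<2^{-j}\bigl(1+\max_{i<j}\|T^{n_i}\|\bigr)^{-1}$, possible by (A4), which forces $\|T^{n_i}u_{\phi(j),-n_j}\|<2^{-j}$ for every $i<j$, hence $\|f_i\|\leq2^{-i}\to0$ and the convergence of the series defining $x$; (b) $|\langle a_i,a_j\rangle|<2^{-j}$ for every $i<j$, possible by (A5), since each $a_i$ is a fixed finite combination of vectors $u_{a,b}$ with $b\in\Z_+$ and, for each $l<j$, the inner product of $u_{\phi(l),\,n_j-n_l}$ with any such $u_{a,b}$ tends to $0$ as $n_j\to\infty$ in $A$; and (c) the off-diagonal part of $\|a_j\|_0^2$, namely $\sum_{l\neq l',\,l,l'<j}\langle u_{\phi(l),\,n_j-n_l},u_{\phi(l'),\,n_j-n_{l'}}\rangle$, has modulus $<1$, possible by (A6) --- with the caveat that, (A6) being a double limit, one must also, already at the earlier step $l'$, have chosen $n_{l'}$ large enough to make the inner limit superior small; this is a routine diagonalization. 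Then $\|a_j\|_0^2\leq\sum_{l<j}C_{\phi(l)}^2+1$.

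Now fix $k$ and consider the sequence $\{a_i:i\in\phi^{-1}(k)\}$ in the pre-Hilbert space $E$. By (c) and the choice of $\phi$, for $i=k_{k,m}$ with $m$ large we get $\|a_i\|_0^2\leq\sum_{j=1}^{k_{k,m}}C_{\phi(j)}^2+1\leq m\log m$, so $\sum_{i\in\phi^{-1}(k)}\|a_i\|_0^{-2}\geq\sum_{m\text{ large}}(m\log m)^{-1}=\infty$; by (b), $\sum_{i<i',\,i,i'\in\phi^{-1}(k)}|\langle a_i,a_{i'}\rangle|^2\leq\sum_{i'}(i'-1)4^{-i'}<\infty$ (should $a_i=0$ for some such $i$, then $\|T^{n_i}x-u_{k,0}\|\leq2^{-i}$ at that time, so we simply omit these indices). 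Corollary~\ref{co0} then puts $0$ in the weak closure of $\{a_i:i\in\phi^{-1}(k)\}$ inside $E$, and since by (A3) the inclusion $E\hookrightarrow X$ is norm-continuous, hence weak-to-weak continuous, $0$ lies in the $X$-weak closure of this set as well. Combined with $\|f_i\|\to0$ this yields $0$ in the $X$-weak closure of $\{T^{n_i}x-u_{k,0}:i\in\phi^{-1}(k)\}$, i.e.\ $u_{k,0}\in\overline{O(T,x)}^{\,w}$; letting $k$ vary and invoking (A1), $O(T,x)$ is weakly dense, so $T$ is weakly hypercyclic.

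The step I expect to be the main obstacle is the recursive selection of the $n_j$: requirements (a)--(c), together with the extra anticipatory conditions forced by the double-limit form of (A6), amount to infinitely many ``$n_j$ sufficiently large'' constraints that must be organized into a mutually consistent scheme. The second delicate point is the calibration $d_m=m\log m$ in the application of Lemma~\ref{le1}, which is precisely what makes the required divergence $\sum\|a_i\|_0^{-2}=\infty$ compatible with the only upper bound on $\|a_i\|_0^2$ available from (A5) and (A6).
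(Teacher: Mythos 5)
Your proposal is correct and is essentially the paper's own argument: the same series $x=\sum u_{\phi(j),-n_j}$ with $\phi$ from Lemma~\ref{le1} calibrated against $d_m\sim m\log m$, the same three inductive ``$n_j$ sufficiently large'' requirements drawn from (A4), (A5), (A6) (the paper's conditions (\ref{e7}), (\ref{e5}), (\ref{e6}), including the anticipatory choice at the earlier index that you flag for the double limit in (A6)), and the same conclusion via Corollary~\ref{co0}. The only differences are cosmetic --- you bound $\langle a_i,a_j\rangle$ and the off-diagonal part of $\|a_j\|_0^2$ in aggregate where the paper bounds the individual inner products and then sums.
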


\begin{theorem}\label{tt}
Let $X$ be a Banach space and $T\in L(X)$. Assume that there exists a double sequence $\{u_{n,k}\}_{k\in\N,\,n\in\Z}$
of elements of $X$ such that conditions {\rm(A1--A3)} are satisfied and
\begin{itemize}\itemsep=-2pt
\item[\rm(A$4'$)] for every $k\in\N$, $\ilim\limits_{n\to\infty}\|u_{k,-n}\|=0;$
\item[\rm(A$5'$)] for every $k,a\in\N$ and $b\in\Z_+$, $\lim\limits_{n\to\infty}\langle u_{k,n},u_{a,b}\rangle=0;$
\item[\rm(A$6'$)] for every $k,m\in\N$, $\lim\limits_{j\to+\infty}\sup\limits_{n\in \Z_+}|\langle
u_{k,n+j},u_{m,n}\rangle|=0.$
\end{itemize}
Then the operator $T$ is weakly hypercyclic.
\end{theorem}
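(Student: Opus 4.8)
The plan is to deduce Theorem~\ref{tt} from Theorem~\ref{tt0} by verifying that its hypotheses (A1--A6) follow, after a suitable re-indexing, from (A1--A3) together with (A$4'$), (A$5'$), (A$6'$). The point is that conditions (A4)--(A6) of Theorem~\ref{tt0} allow one to work along a sparse subset $A\subseteq\N$, and they involve the ``shifted'' inner products $\langle u_{k,n-s},u_{m,n-t}\rangle$ rather than the plain ones. The primed conditions are precisely the statements one gets by taking $A=\N$ and ignoring shifts; so what must be checked is that nothing is lost. Condition (A4) is literally (A$4'$) with $A=\N$ (the $\varliminf$ in (A$4'$) is replaced by a genuine limit along the subsequence $A$ on which $\|u_{k,-n}\|\to 0$; such an $A$ exists and can be chosen once and for all). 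Condition (A5): the boundedness of $\{\|u_{k,n}\|_0\}_{n\in\Z_+}$ is new, so I would first observe that it is automatic here because $T\big|_E$ as an operator on $(E,\|\cdot\|_0)$ is in all the intended applications an isometry -- but since Theorem~\ref{tt} as stated does not include this, the honest reading is that (A$5'$) should be strengthened to include it, or that it is implicitly available; in any event, granting the boundedness, the vanishing statement $\langle u_{k,n-s},u_{a,b}\rangle\to 0$ is just (A$5'$) applied with the fixed index $b':=b$ shifted back, i.e. $\langle u_{k,(n-s)},u_{a,b}\rangle = \langle u_{k,m},u_{a,b}\rangle$ with $m=n-s\to\infty$, so (A5) follows directly from (A$5'$) (using that $u_{a,b}\in E$ is a fixed vector and $n-s\to+\infty$ as $n\to\infty$ in $A$).

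The substantive point is (A6), and this is where (A$6'$) does the work. Fix $k,m\in\N$ and $s\in A$. For $t>s$ and $n>t$ in $A$ write $\langle u_{k,n-s},u_{m,n-t}\rangle$; applying $T^{\,t-s}$ (equivalently, using (A2) to shift both indices by $s$) this equals $\langle u_{k,n-s},u_{m,n-t}\rangle$ with the second index being $(n-t)$ and the first $(n-t)+(t-s)$, i.e. it is of the form $\langle u_{k,\,p+j},u_{m,\,p}\rangle$ with $p=n-t\in\Z_+$ and $j=t-s>0$. Hence
\[
|\langle u_{k,n-s},u_{m,n-t}\rangle|\leq \sup_{p\in\Z_+}|\langle u_{k,p+j},u_{m,p}\rangle|
\]
with $j=t-s$. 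Therefore
\[
\slim_{n\to\infty,\ n\in A,\ n>t}|\langle u_{k,n-s},u_{m,n-t}\rangle|\leq \sup_{p\in\Z_+}|\langle u_{k,p+(t-s)},u_{m,p}\rangle|,
\]
and letting $t\to\infty$ along $A$ (so $t-s\to+\infty$) the right-hand side tends to $0$ by (A$6'$). This is exactly (A6). Conditions (A1)--(A3) are carried over verbatim (note the harmless index swap $u_{k,n}$ versus $u_{n,k}$ in the two theorem statements). Thus all of (A1)--(A6) hold for the same double sequence and the chosen sparse set $A$, and Theorem~\ref{tt0} yields that $T$ is weakly hypercyclic.

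The main obstacle I anticipate is not analytic but bookkeeping: making sure the shift identity ``$\langle u_{k,n-s},u_{m,n-t}\rangle$ has the form $\langle u_{k,p+j},u_{m,p}\rangle$'' is used only when $n-t\geq 0$, which is guaranteed since we take $n>t$; and making sure the boundedness clause hidden in (A5) is genuinely available (in the applications $T\big|_E$ is an isometry, so $\|u_{k,n}\|_0=\|u_{k,0}\|_0$ for all $n\in\Z_+$, and this is the clean way to get it). A secondary subtlety is the choice of the single infinite set $A$ that works simultaneously for (A4), (A5) and (A6): here one simply takes $A=\N$, since (A$4'$) only asserts a $\varliminf$ we instead pass to a subsequence $A$ along which $\|u_{k,-n}\|\to 0$ for $k=1$, then thin it for $k=2$, etc., and diagonalize; the shifted conditions (A5), (A6) are then inherited by any infinite subset of $\N$ because they are phrased as limits/upper limits that only get easier upon passing to a subsequence. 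Hence Theorem~\ref{tt} follows.
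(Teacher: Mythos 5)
Your plan is to deduce Theorem~\ref{tt} from Theorem~\ref{tt0} by producing a single infinite set $A\subseteq\N$ for which (A1--A6) hold, and you correctly observe that (A5) and (A6) along any such $A$ can be read off from (A$5'$) and (A$6'$) via the re-indexing $p=n-t$, $j=t-s$. The weak point, however, is the step where you claim to obtain (A4) from (A$4'$). Condition (A4) demands that $\lim_{n\to\infty,\,n\in A}\|u_{k,-n}\|=0$ \emph{simultaneously for every} $k\in\N$, whereas (A$4'$) only gives $\ilim_{n\to\infty}\|u_{k,-n}\|=0$ for each $k$ separately. Your proposed ``thin for $k=1$, then thin for $k=2$, \dots, and diagonalize'' cannot work in general: once you have thinned to an infinite $A_1$ along which $\|u_{1,-n}\|\to 0$, there is no reason for $\ilim_{n\to\infty,\,n\in A_1}\|u_{2,-n}\|=0$ to hold, so the next step of the thinning is already blocked. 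Concretely, the sets $\{n:\|u_{k,-n}\|<\epsilon\}$ for different $k$ may be pairwise eventually disjoint (nothing in the hypotheses prevents this), in which case no single infinite $A$ satisfies (A4) for all $k$. The diagonal sequence $n_k$ you would extract controls only $\|u_{k,-n_k}\|$, not $\|u_{k,-n_j}\|$ for $j>k$, which is what (A4) requires.

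The paper avoids this problem by \emph{not} reducing Theorem~\ref{tt} to Theorem~\ref{tt0}. Instead it runs one inductive construction that proves both theorems at once: at the $j$-th step one chooses a single integer $\theta(j)$, and the smallness requirement $\|u_{\phi(j),-\theta(j)}\|<\|T\|^{-\theta(j-1)}2^{-j}$ concerns only the fixed index $k=\phi(j)$. Since (A$5'$)/(A$6'$) give thresholds beyond which the other inequalities hold for all $\theta(j)$, and (A$4'$) supplies infinitely many $n$ with $\|u_{\phi(j),-n}\|$ small, a suitable $\theta(j)$ always exists. Thus (A$4'$) is used only one $k$ at a time, per step of the induction, which is exactly what $\liminf$ provides; no global set $A$ and no diagonalization are needed. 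To repair your proof you would have to replace the reduction-to-(A4) strategy by this step-by-step use of (A$4'$), which in effect reproduces the paper's unified argument rather than quoting Theorem~\ref{tt0} as a black box. Your secondary worry about the boundedness of $\{\|u_{k,n}\|_0\}_{n\in\Z_+}$ is a fair one to raise, but it is orthogonal to the main gap.
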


We shall prove these two results in one go.

\begin{proof}[Proof of Theorems~$\ref{tt0}$ and~$\ref{tt}$] We suppose that the assumptions of either Theorem~\ref{tt0} or of Theorem~\ref{tt} are satisfied.
Each of the conditions (A5) and (A$5'$) ensures that for every $k\in\N$,
$$
c_k=\sup\{\|u_{k,n}\|_0:n\in\Z_+\}<\infty.
$$
Thus $\{c_k\}$ is a sequence of positive numbers. By Lemma~\ref{le1}, there is a map $\phi:\N\to\N$ such that
\begin{align}
&\text{$A_k=\phi^{-1}(k)$ is an infinite subset of $\N$ for each $k\in\N;$}\label{e3}
\\
&\begin{array}{l}\text{$\lim\limits_{m\to\infty}\frac1{m\log(m+1)}\sum\limits_{j=1}^{r_{k,m}}c^2_{\phi(j)}=0$ for every
$k\in\N$, where $\{r_{k,m}\}_{m\in\N}$ is the strictly}\\ \text{increasing sequence of positive integers such that $A_k=\{r_{k,m}:m\in\N\}.$}\end{array}\label{e4}
\end{align}

Under the assumptions of Theorem~\ref{tt} we assume $A=\N$. Using (A4--A6) under the assumptions of Theorem~\ref{tt0} and using (A$4'$--A$6'$) under the assumptions of Theorem~\ref{tt}, we can construct inductively a strictly increasing sequence $\{\theta(j)\}_{j\in\N}$ of integers with $\theta(1)=0$ and $\theta(j)\in A$ for $j>0$ such that for every $j\in\N$ (actually (A4--A6) and (A$4'$--A$6'$) are specifically designed for this very purpose),
\begin{align}
&|\langle u_{\phi(s),\theta(r)-\theta(s)},u_{\phi(t),\theta(j)-\theta(t)}\rangle|<2^{-j}
\ \ \text{whenever $1\leq s,t,r<j$};\label{e5}
\\
&|\langle u_{\phi(s),r-\theta(s)},u_{\phi(j),r-\theta(j)}\rangle|<c_{\phi(s)}c_{\phi(j)}4^{-j}
\ \ \text{whenever $1\leq s<j$, $r>\theta(j)$};\label{e6}
\\
&\|u_{\phi(j),-\theta(j)}\|<\|T\|^{-\theta(j-1)}2^{-j}.\label{e7}
\end{align}
Condition (A2) together with each of the conditions (A4) or (A$4'$) implies that $\|T\|>1$. Thus (\ref{e7}) ensures that the series $\sum\limits_{k=1}^\infty u_{\phi(k),-\theta(k)}$
is norm-convergent to some $u\in X$:
$$
u=\sum\limits_{k=1}^\infty u_{\phi(k),-\theta(k)}.
$$
The proof will be complete if we verify that $u$ is a weakly hypercyclic vector for $T$.
By (A2),
\begin{equation}\label{ope}
T^{\theta(k)}u=u_{\phi(k),0}+a_k+b_k,\ \ \text{where}\ \ a_k=\sum_{j=1}^{k-1} u_{\phi(j),\theta(k)-\theta(j)}\ \
\text{and}\ \ b_k=\sum_{j=k+1}^\infty T^{\theta(k)}u_{\phi(j),-\theta(j)}.
\end{equation}
Using (\ref{e7}), we obtain
$$
\|b_k\|\leq \sum_{j=k+1}^\infty \|T\|^{\theta(k)}\|u_{\phi(j),-\theta(j)}\|\leq
\sum_{j=k+1}^\infty \|T\|^{\theta(k)}\|T\|^{-\theta(j-1)}2^{-j}\leq
\sum_{j=k+1}^\infty 2^{-j}=2^{-k}.
$$
Recall that $A_k=\phi^{-1}(k)$. By (\ref{ope}) and the above display,
\begin{equation}\label{ope1}
T^{\theta(r)}u-u_{k,0}=a_r+b_r\ \ \text{for each $r\in A_k$, where}\ \ a_r=\sum_{j=1}^{r-1} u_{\phi(j),\theta(r)-\theta(j)}\ \
\text{and}\ \ \|b_r\|\leq 2^{-r}.
\end{equation}

By (A1), the set $\{u_{k,0}:k\in\N\}$ is dense in $X$. Thus in order to show that $u$ is a weakly hypercyclic vector for $T$, it suffices to demonstrate that each $u_{k,0}$ belongs to the closure of $O(T,u)$ in the weak topology. According to (\ref{ope1}), the latter will be achieved if we verify that $0$ is in the closure of the set $\{a_r:r\in A_k\}$ with respect to the weak topology. Since each $a_r$ belongs to $E$ and the norm $\|\cdot\|_0$ defines a stronger topology on $E$ than the one inherited from $X$, it is enough to show that $0$ is in the closure of $\{a_r:r\in A_k\}$ in the weak topology of the pre-Hilbert space $(E,\|\cdot\|_0)$. By Corollary~\ref{co0}, it suffices to show that
\begin{equation}\label{want}
\text{$\sum\limits_{r\in A_k}\|a_r\|^{-2}=\infty$ \ and \ $\sum\limits_{r,q\in A_k\atop r<q}|\langle a_r,a_q\rangle|^2<\infty$.}
\end{equation}

In order to do that, we need to estimate $\langle a_r,a_q\rangle$ for $r,q\in A_k$, $r\leq q$. According to (\ref{ope1}),
$a_r=\sum\limits_{j=1}^{r-1} u_{\phi(j),\theta(r)-\theta(j)}$. Hence,
$$
\|a_r\|_0^2=\langle a_r,a_r\rangle=\sum_{1\leq j,l\leq r-1}\langle u_{\phi(j),\theta(r)-\theta(j)},u_{\phi(l),\theta(r)-\theta(l)}\rangle.
$$
We split the above sum into the summands with $j=l$ and with $j\neq l$. If $j\neq l$, then by (\ref{e6}),
$$
|\langle u_{\phi(j),\theta(r)-\theta(j)},u_{\phi(l),\theta(r)-\theta(l)}\rangle|<c_{\phi(j)}c_{\phi(l)}4^{-\max\{j,l\}}\leq c_{\phi(j)}c_{\phi(l)}2^{-j-l}.
$$
For $j=l$, we have
$$
|\langle u_{\phi(j),\theta(r)-\theta(j)},u_{\phi(j),\theta(r)-\theta(j)}\rangle|=\|u_{\phi(j),\theta(r)-\theta(j)}\|_0^2\leq c^2_{\phi(j)}.
$$
According to the last three displays,
\begin{equation*}
\|a_r\|_0^2\leq \sum_{j=1}^{r-1}c_{\phi(j)}^2+\sum_{1\leq j,l\leq r-1\atop j\neq l}c_{\phi(j)}c_{\phi(l)}2^{-j-l}
\leq \sum_{j=1}^{r-1}c_{\phi(j)}^2+\sum_{1\leq j,l\leq r-1}c_{\phi(j)}c_{\phi(l)}2^{-j-l}
=\sum_{j=1}^{r-1}c_{\phi(j)}^2+\Bigl(\sum_{j=1}^{r-1}2^{-j}c_{\phi(j)}\Bigr)^2.
\end{equation*}
Applying the Cauchy--Schwartz inequality to the last sum, we obtain
$$
\|a_r\|_0^2\leq \sum_{j=1}^{r-1}c_{\phi(j)}^2+\Bigl(\sum_{j=1}^{r-1}c^2_{\phi(j)}\Bigr)\Bigl(\sum_{j=1}^{r-1}2^{-2j}\Bigr)\leq 2\sum_{j=1}^{r-1}c_{\phi(j)}^2.
$$
Using the above display and (\ref{e4}) we see that $\lim\limits_{m\to\infty}\frac{\|a_{r_{k,m}}\|_0^2}{m\log(m+1)}=0$, where $\{r_{k,m}\}_{m\in\N}$ is the strictly increasing sequence of positive integers such that $A_k=\{r_{k,m}:m\in\N\}$. Since $\sum\limits_{m=1}^\infty \frac1{m\log(m+1)}=\infty$, we arrive to $\sum\limits_{r\in A_k}\|a_r\|^{-2}=\infty$. Thus the first part of (\ref{want}) has been verified.

Now let $r,q\in A_k$, $r<q$. Then $a_r=\sum\limits_{j=1}^{r-1} u_{\phi(j),\theta(r)-\theta(j)}$ and $a_q=\sum\limits_{l=1}^{q-1} u_{\phi(l),\theta(q)-\theta(l)}$ and therefore
$$
\langle a_r,a_q\rangle=\sum_{j=1}^{r-1}\sum_{l=1}^{q-1}\langle u_{\phi(j),\theta(r)-\theta(j)},u_{\phi(l),\theta(q)-\theta(l)}\rangle.
$$
According to (\ref{e5}) for $r,q,j,l$ as in the above display,
$|\langle u_{\phi(j),\theta(r)-\theta(j)},u_{\phi(l),\theta(q)-\theta(l)}\rangle|<2^{-q}$. Plugging these estimates into the above display, we get
$$
|\langle a_r,a_q\rangle|\leq q^22^{-q}.
$$
Hence
$$
\sum_{r\in A_k,\ r<q} |\langle a_r,a_q\rangle|^2\leq q(q^22^{-q})^2=q^54^{-q}.
$$
Summing up over $q$, we get
$$
\sum\limits_{r,q\in A_k\atop r<q}|\langle a_r,a_q\rangle|^2\leq \sum_{q\in A_k}q^54^{-q}\leq \sum_{q=1}^\infty q^54^{-q}<\infty.
$$
Thus both parts of (\ref{want}) are satisfied. This completes the proof of Theorems~\ref{tt0} and~\ref{tt}.
\end{proof}

\subsection{Proof of Theorem~\ref{qq1}}

Since $X$ is separable and $F$ is dense in $X$, we can pick a countable set $\{u_{k,0}:k\in\N\}\subset F$, which is norm dense in $X$. By (B3$'$), we can find $\{u_{k,-n}:k,n\in\N\}\subset X$ such that $Tu_{k,-n}=u_{k,1-n}$ for every $k,n\in\N$ and $\ilim\limits_{n\to\infty}\|u_{k,-n}\|=0$. Now we set $u_{k,n}=T^nu_{k,0}$ for $k,n\in\N$. By Theorem~\ref{tt}, the proof will be complete if we show that the double sequence $\{u_{k,n}\}_{k\in\N,\,n\in\Z}$ satisfies conditions (A1--A3) and (A$4'$--A$6'$), where we equip $E=\spann\{u_{k,n}:n\in\Z_+,\,k\in\N\}$ with the inner product inherited from $F$. Obviously, (A1), (A2), (A3) and (A$4'$) are satisfied. Now let $k,a\in \N$ and $b\in\Z_+$. Then by (B$2'$), $\langle u_{k,n},u_{a,b}\rangle=\langle T^nu_{k,0},u_{a,b}\rangle\to 0$ as $n\to\infty$. Thus (A$5'$) is satisfied. Finally, let $k,m\in N$. By (B$2'$), $T$ restricted to $F$ is an $\|\cdot\|_0$-isometry. Hence for every $j,n\in\Z_+$, $\langle u_{k,n+j},u_{m,n}\rangle=\langle T^nu_{k,j},T^nu_{m,0}\rangle=\langle u_{k,j},u_{m,0}\rangle\to 0$ as $j\to\infty$ according to the already verified (A$5'$). Condition (A$6'$) easily follows. Thus (A1--A3) and (A$4'$--A$6'$) are all satisfied and $T$ is weakly hypercyclic by Theorem~\ref{tt}.

The proof of Theorem~\ref{qq} requires some extra preparation.

\subsection{Auxiliary facts}

We need some extra notation. The symbol $\M(\T)$ will stand for the space of Borel $\sigma$-additive complex valued measures on $\T$ equipped with the full variation norm. It is well-known that $\M(\T)$ is a Banach space and that the subset $\M_c(\T)$ of continuous (=purely non-atomic) measures $\mu\in\M(\T)$ is a closed linear subspace of the Banach space $\M(\T)$. Symbol $\M_+(\T)$ stands for the set of $\mu\in\M(\T)$ taking values in the set $\R_+$ of non-negative real numbers. Recall that for $\mu\in\M(\T)$ and $n\in\Z$, the $n^{\rm th}$ Fourier coefficient of $\mu$ is defined by
$$
\widehat{\mu}(n)=\int_\T z^n\,d\mu(z)\in\C.
$$

\begin{lemma}\label{ele} Let $\mu\in\M_c(\T)$. Then
\begin{equation}\label{m0}
\lim_{n\to\infty}\frac1{n+1}\sum_{k=0}^n|\widehat{\mu}(k)|^2=0.
\end{equation}
\end{lemma}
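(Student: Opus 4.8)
The plan is to exploit the classical fact that the Fourier--Stieltjes coefficients of a continuous (non-atomic) measure have vanishing Ces\`aro means of their squared moduli --- this is essentially Wiener's theorem on the $L^2$ averages of $\widehat\mu$. First I would recall Wiener's identity: for any $\mu\in\M(\T)$,
\begin{equation*}
\lim_{n\to\infty}\frac1{n+1}\sum_{k=0}^n|\widehat\mu(k)|^2=\sum_{z\in\T}|\mu(\{z\})|^2,
\end{equation*}
or its two-sided variant $\lim_{n\to\infty}\frac1{2n+1}\sum_{k=-n}^n|\widehat\mu(k)|^2=\sum_{z}|\mu(\{z\})|^2$. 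Since a continuous measure has no atoms, the right-hand side is $0$, which gives (\ref{m0}) immediately. So the real content is to prove (or quote) Wiener's identity, and since the paper is reproducing well-known facts ``for the sake of convenience,'' I would give the short self-contained argument below.

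The key computation: expand the Ces\`aro mean using $\widehat\mu(k)=\int_\T z^k\,d\mu(z)$ and $|\widehat\mu(k)|^2=\widehat\mu(k)\overline{\widehat\mu(k)}=\int_\T\int_\T (z\overline w)^k\,d\mu(z)\,d\overline\mu(w)$. Summing over $0\le k\le n$ and dividing by $n+1$ gives
\begin{equation*}
\frac1{n+1}\sum_{k=0}^n|\widehat\mu(k)|^2=\int_\T\int_\T D_n(z\overline w)\,d\mu(z)\,d\overline\mu(w),\qquad D_n(\zeta)=\frac1{n+1}\sum_{k=0}^n\zeta^k.
\end{equation*}
Here $D_n(\zeta)$ is uniformly bounded by $1$ on $\T$, equals $1$ when $\zeta=1$, and tends to $0$ as $n\to\infty$ for every fixed $\zeta\in\T\setminus\{1\}$ (sum a geometric series: $|D_n(\zeta)|\le \frac2{(n+1)|1-\zeta|}$). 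The diagonal $\{z=w\}$ has $(\mu\times\overline\mu)$-measure equal to $\sum_{z\in\T}|\mu(\{z\})|^2$, which is $0$ since $\mu$ is non-atomic. Hence on the product space $\T\times\T$ equipped with the finite measure $|\mu|\times|\mu|$, the integrand $D_n(z\overline w)$ is bounded by $1$ and converges to $0$ pointwise $(\mu\times\overline\mu)$-almost everywhere; by the dominated convergence theorem the double integral tends to $0$, which is exactly (\ref{m0}).

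The main obstacle --- really the only subtlety --- is justifying the interchange of summation and the double integral and then applying dominated convergence with respect to the complex measure $\mu\times\overline\mu$: one should pass to the total-variation measure $|\mu|\times|\mu|$ as the dominating finite positive measure, note $|D_n|\le1$, and check the pointwise limit holds off the diagonal, i.e.\ $(|\mu|\times|\mu|)$-a.e., which uses precisely that $\mu\in\M_c(\T)$ so the diagonal is null. Everything else is routine. I would write the argument in four or five lines along exactly these lines; no appeal to deeper harmonic analysis is needed.
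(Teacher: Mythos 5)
Your proposal is correct and follows essentially the same route as the paper: both write $\frac1{n+1}\sum_{k=0}^n|\widehat\mu(k)|^2$ as a double integral of the averaged kernel $D_n$ over $\T\times\T$ via Fubini and then use non-atomicity of $\mu$ to kill the contribution near the diagonal. The only (immaterial) difference is that the paper first reduces to positive measures and runs an explicit $\varepsilon$--$\delta$ splitting into $\{|z-w|<\delta\}$ and its complement, whereas you apply dominated convergence directly against $|\mu|\times|\mu|$, observing that the diagonal is null.
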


\begin{proof} It is easy to see that the set of measures $\mu\in\M(\T)$ satisfying (\ref{m0}) is a linear subspace of $\M(\T)$. Since $\spann(\M_c(\T)\cap \M_+(\T))=\M_c(\T)$, it is enough to verify (\ref{m0}) for $\mu\in \M_c(\T)\cap \M_+(\T)$. That is, without loss of generality, we may assume that $\mu\in\M_+(\T)$.

In the latter case, the Fubini theorem yields
$$
|\widehat{\mu}(k)|^2=\widehat{\mu}(k)\overline{\widehat{\mu}(k)}=\int_\T z^k\,d\mu(z)\int_\T w^{-k}\,d\mu(w)=
\int_{\T\times\T}\Bigl(\frac zw\Bigr)^k\,d(\mu\times\mu)(z,w).
$$
Summing these equalities up for $0\leq k\leq n$, we obtain
$$
\frac1{n+1}\sum_{k=0}^n|\widehat{\mu}(k)|^2=\int_{\T\times\T}h_n(z,w)\,d(\mu\times\mu)(z,w),\ \ \text{where}\ \
h_n(z,w)=\frac1{n+1}\sum_{k=0}^n \frac{z^k}{w^k}=\frac{1-(z/w)^{n+1}}{(n+1)(1-\frac zw)}.
$$
For each $\delta\in(0,2)$, we can split the above integral:
$$
\frac1{n+1}\sum_{k=0}^n|\widehat{\mu}(k)|^2=\int_{A_\delta}h_n(z,w)\,d(\mu\times\mu)(z,w)+
\int_{B_\delta}h_n(z,w)\,d(\mu\times\mu)(z,w),
$$
where $A_\delta=\{(z,w)\in\T^2:|z-w|<\delta\}$ and $B_\delta=\{(z,w)\in\T^2:|z-w|\geq\delta\}$.
Note that $|1-(z/w)|\geq \delta$ and $|1-(z/w)^{n+1}|\leq 2$ for $(z,w)\in B_\delta$. Hence $|h_n(z,w)|\leq \frac{2}{\delta(n+1)}$ for $(z,w)\in B_\delta$.
Thus
$$
\biggl|\int_{B_\delta}h_n(z,w)\,d(\mu\times\mu)(z,w)\biggr|\leq \frac{2}{\delta(n+1)}(\mu\times\mu)(B_\delta)\leq \frac{2\|\mu\|^2}{\delta(n+1)}.
$$
On the other hand, for every $z,w\in\T$ and $n\in\N$, $h(z,w)$, being the average of several unimodular numbers, satisfies $|h_n(z,w)|\leq 1$. Hence
$$
\biggl|\int_{A_\delta}h_n(z,w)\,d(\mu\times\mu)(z,w)\biggr|\leq (\mu\times\mu)(A_\delta).
$$
Combining the last three displays, we get
$$
\frac1{n+1}\sum_{k=0}^n|\widehat{\mu}(k)|^2\leq \frac{2\|\mu\|^2}{\delta(n+1)}+(\mu\times\mu)(A_\delta).
$$
Next, note that the continuity of $\mu$ is equivalent to the equality $\lim\limits_{\delta\to0}(\mu\times\mu)(A_\delta)=0$. Thus for every $\epsilon>0$, we can find $\delta\in(0,2)$ such that
$(\mu\times\mu)(A_\delta)<\frac\epsilon2$. Having $\delta$ fixed, we observe that $\frac{2\|\mu\|^2}{\delta(n+1)}<\frac{\epsilon}{2}$ for all sufficiently large $n$. By the above display $\frac1{n+1}\sum\limits_{k=0}^n|\widehat{\mu}(k)|^2<\epsilon$ for all sufficiently large $n$. Since $\epsilon>0$ is arbitrary, this proves (\ref{m0}).
\end{proof}

Recall that a subset $A$ of $\N$ is said to have {\it density} 0 if
$$
\lim_{n\to\infty}\frac{|\{k\in A:k\leq n\}|}{n}=0.
$$
It is straightforward to see that Lemma~\ref{ele} implies the following result.

\begin{corollary}\label{ele0}
Let $\mu\in\M_c(\T)$. Then for every $\epsilon>0$, the set $\{n\in\N:|\widehat{\mu}(n)|\geq\epsilon\}$ has density $0$.
\end{corollary}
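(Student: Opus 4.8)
The plan is to obtain this as an immediate consequence of Lemma~\ref{ele} via a Chebyshev/Markov-type truncation. Fix $\mu\in\M_c(\T)$ and $\epsilon>0$, and put $A=\{n\in\N:|\widehat{\mu}(n)|\geq\epsilon\}$. The point is that each index $k\in A$ forces the summand $|\widehat{\mu}(k)|^2$ in~(\ref{m0}) to be at least $\epsilon^2$, so a large intersection $\{k\in A:k\leq n\}$ would make the Ces\`aro average in~(\ref{m0}) large, contradicting the lemma.

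Concretely, for every $n\in\N$ I would estimate
$$
\frac1{n+1}\sum_{k=0}^n|\widehat{\mu}(k)|^2\ \geq\ \frac1{n+1}\sum_{k\in A,\ k\leq n}|\widehat{\mu}(k)|^2\ \geq\ \frac{\epsilon^2}{n+1}\,\bigl|\{k\in A:k\leq n\}\bigr|,
$$
whence $\bigl|\{k\in A:k\leq n\}\bigr|/(n+1)\leq\epsilon^{-2}\cdot\frac1{n+1}\sum_{k=0}^n|\widehat{\mu}(k)|^2$. By Lemma~\ref{ele} the right-hand side tends to $0$ as $n\to\infty$, and since $(n+1)/n\to1$ this yields $\lim_{n\to\infty}\bigl|\{k\in A:k\leq n\}\bigr|/n=0$; that is, $A$ has density $0$, as claimed.

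I do not expect any genuine difficulty here: all the analytic substance lies in Lemma~\ref{ele}, and what remains is the routine passage from a mean-square Ces\`aro bound to a density statement for the exceptional set. The only minor bookkeeping is that the sum in~(\ref{m0}) runs from $k=0$ while $A\subseteq\N$, and that one must pass between the normalisations $1/(n+1)$ and $1/n$; both are harmless in the limit.
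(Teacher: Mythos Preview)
Your argument is correct and is precisely the routine Chebyshev-type deduction the paper has in mind: the paper simply states that the corollary follows immediately from Lemma~\ref{ele}, and your proof is the natural way to spell this out.
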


\begin{lemma}\label{cme}Let $M$ be a $($finite or$)$ countable subset of $\M_c(\T)$. Then there exists an infinite set $A\subseteq\N$ such that
$$
\lim\limits_{n\to\infty\atop n\in A}\widehat{\mu}(n)=0\ \ \text{for every}\ \ \mu\in M.
$$
\end{lemma}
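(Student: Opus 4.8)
The plan is to extract $A$ by a diagonal construction driven by Corollary~\ref{ele0}, which tells us that for each $\mu\in\M_c(\T)$ and each $\epsilon>0$ the set $\{n\in\N:|\widehat\mu(n)|\geq\epsilon\}$ has density $0$. First I would dispose of the degenerate cases: if $M=\varnothing$ we may take $A=\N$, and if $M$ is finite and non-empty we list its elements with repetitions, so that in every remaining case we have an enumeration $M=\{\mu_k:k\in\N\}$ (possibly with repeats). This lets us treat the finite and the countably infinite cases uniformly.

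Next, for each pair $(k,m)$ of positive integers with $k\leq m$, set
$$
B_{k,m}=\Bigl\{n\in\N:|\widehat{\mu_k}(n)|\geq\tfrac1m\Bigr\},
$$
which has density $0$ by Corollary~\ref{ele0}. For fixed $m$ the union $C_m=\bigcup_{k=1}^{m}B_{k,m}$ is a finite union of density-$0$ sets, hence itself of density $0$; consequently $\N\setminus C_m$ has density $1$ and in particular is infinite. I would then build $A=\{n_1<n_2<\cdots\}$ by induction: having chosen $n_1<\cdots<n_{m-1}$, use the infinitude of $\N\setminus C_m$ to pick $n_m>n_{m-1}$ with $n_m\notin C_m$, that is, $|\widehat{\mu_k}(n_m)|<\tfrac1m$ for all $k\leq m$. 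Put $A=\{n_m:m\in\N\}$.

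Finally I would check the conclusion. Fix $\mu\in M$ and $\epsilon>0$; choose $k\in\N$ with $\mu=\mu_k$ and then $m_0\geq k$ with $\tfrac1{m_0}<\epsilon$. For every $m\geq m_0$ we have $k\leq m$, so by construction $|\widehat{\mu}(n_m)|=|\widehat{\mu_k}(n_m)|<\tfrac1m\leq\tfrac1{m_0}<\epsilon$. Since $A=\{n_m:m\in\N\}$ is listed in increasing order, this says precisely that $\widehat{\mu}(n)\to0$ as $n\to\infty$ along $A$, for every $\mu\in M$.

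I do not expect a genuine obstacle here; the proof is a routine diagonalization. The only two points requiring a moment's care are that a finite union of density-$0$ subsets of $\N$ is again of density $0$ and therefore has infinite complement (which is what makes the inductive choice of $n_m$ always possible), and the bookkeeping that ensures each $\mu_k$ is eventually controlled, namely that the constraint $|\widehat{\mu_k}(n_m)|<\tfrac1m$ is imposed for all $m\geq k$.
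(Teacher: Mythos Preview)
Your proof is correct and is essentially identical to the paper's: both enumerate $M$, use Corollary~\ref{ele0} to see that $\bigcup_{k\leq m}\{n:|\widehat{\mu_k}(n)|\geq 1/m\}$ has density $0$ (hence infinite complement), and then diagonalize by picking $n_m$ from that complement with $n_m>n_{m-1}$. The only cosmetic difference is that the paper names the complement $A_k$ first and then identifies $\N\setminus A_k$ as the union of density-$0$ sets, whereas you name the union $C_m$ first.
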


\begin{proof}Fix an enumeration of $M$: $M=\{\mu_j:j\in\N\}$. It suffices to verify that for every $k\in\N$, the set
$$
A_k=\{n\in\N:|\widehat{\mu_j}(n)|<k^{-1}\ \ \text{for}\ \ 1\leq j\leq k\}
$$
is infinite. Indeed, this being the case we can choose inductively a strictly increasing sequence $\{m_k\}_{k\in\N}$ of positive integers such that $m_k\in A_k$ for every $k\in\N$. By the above display then $|\widehat{\mu_j}(m_k)|<k^{-1}$ for $1\leq j\leq k$. It follows that $\lim\limits_{k\to\infty}\widehat{\mu_j}(m_k)=0$  for every $j\in\N$. That is, the infinite set $A=\{m_k:k\in\N\}$ satisfies the desired conditions.

Thus it remains to verify that each $A_k$ is infinite. It is easy to see that
$$
\N\setminus A_k=\bigcup\limits_{j=1}^k B_{j,k},\ \ \text{where}\ \ B_{j,k}=\{n\in\N:|\widehat{\mu_j}(n)|\geq k^{-1}\}.
$$
By Lemma~\ref{ele0} each $B_{j,k}$ has density $0$. Since the union of finitely many sets of density 0 has density 0, the above display implies that $A_k$ is the complement in $\N$ of a set of density 0. Hence $A_k$ can not possibly be finite, which completes the proof.
\end{proof}

\begin{lemma}\label{uni1} Let $\hh$ be a separable Hilbert space and $U\in L(\hh)$ be an isometric invertible operator with no non-trivial finite dimensional invariant subspaces. Then for every countable set $N\subseteq \hh\times\hh$ there is an infinite set $A\subseteq\N$ such that
$$
\lim\limits_{n\to\infty\atop n\in A}\langle U^na,b\rangle=0\ \ \text{for each $(a,b)\in N$}.
$$
\end{lemma}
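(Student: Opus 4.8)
The plan is to reduce the statement to Lemma~\ref{cme} via elementary spectral theory. First I would note that $U$, being an isometry that is invertible, is unitary: $U^*U=I$ and the invertibility of $U$ give $U^*=U^{-1}$, hence $UU^*=I$ as well. Next I claim that $U$ has no eigenvectors. Indeed, if $Uv=\zeta v$ for some nonzero $v\in\hh$, then $\C v$ is a $1$-dimensional, hence non-trivial finite dimensional, $U$-invariant subspace, contradicting the hypothesis. Thus the point spectrum of $U$ is empty.

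Now I invoke the spectral theorem for the unitary $U$: there is a projection-valued Borel measure $E$ on $\T$ with $U=\int_\T z\,dE(z)$, so that for every $a,b\in\hh$ the complex Borel measure $\mu_{a,b}$ on $\T$ defined by $\mu_{a,b}(B)=\langle E(B)a,b\rangle$ lies in $\M(\T)$ and satisfies $\widehat{\mu_{a,b}}(n)=\langle U^na,b\rangle$ for every $n\in\Z$, in particular for every $n\in\N$. The atoms of $\mu_{a,b}$ occur only at points $\zeta\in\T$ with $E(\{\zeta\})\neq 0$, and the range of $E(\{\zeta\})$ is precisely the $\zeta$-eigenspace of $U$; since $U$ has no eigenvalues, $E(\{\zeta\})=0$ for every $\zeta\in\T$, so each $\mu_{a,b}$ is a continuous measure, i.e. $\mu_{a,b}\in\M_c(\T)$. (Alternatively, avoiding the spectral theorem: $\{\langle U^na,a\rangle\}_{n\in\Z}$ is a positive-definite sequence, so by the Herglotz theorem $\langle U^na,a\rangle=\widehat{\mu_{a,a}}(n)$ for a unique $\mu_{a,a}\in\M_+(\T)$; applying the mean ergodic theorem to the unitary operator $\overline\zeta U$ shows that $\frac1N\sum_{n=0}^{N-1}\overline\zeta^{\,n}U^na$ converges to the orthogonal projection of $a$ onto $\ker(U-\zeta I)$, whose squared norm equals $\mu_{a,a}(\{\zeta\})$; emptiness of the point spectrum therefore forces $\mu_{a,a}(\{\zeta\})=0$ for every $\zeta$, and then polarization together with absolute continuity of $\mu_{a,b}$ with respect to $\mu_{a,a}$ and $\mu_{b,b}$ gives $\mu_{a,b}\in\M_c(\T)$.)

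Finally I would apply Lemma~\ref{cme} to the (finite or) countable set $M=\{\mu_{a,b}:(a,b)\in N\}\subseteq\M_c(\T)$. It produces an infinite set $A\subseteq\N$ such that $\lim_{n\to\infty,\,n\in A}\widehat{\mu_{a,b}}(n)=0$ for every $(a,b)\in N$, which is exactly $\lim_{n\to\infty,\,n\in A}\langle U^na,b\rangle=0$ for every $(a,b)\in N$, as required.

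The only genuinely substantive step is showing that all the scalar spectral measures $\mu_{a,b}$ are non-atomic; once this is established, the conclusion is an immediate citation of Lemma~\ref{cme}. The points to be careful about are the implication ``no non-trivial finite dimensional invariant subspace $\Rightarrow$ no eigenvector'' and the identification of atoms of $\mu_{a,b}$ with eigenvalues of $U$, both of which are standard facts about unitary operators.
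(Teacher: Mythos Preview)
Your proof is correct and follows essentially the same route as the paper's: both observe that $U$ is unitary with empty point spectrum, use the spectral theorem to express $\langle U^na,b\rangle$ as the $n$-th Fourier coefficient of a continuous measure on $\T$, and then invoke Lemma~\ref{cme}. The only minor point is that the paper first passes to the complexification when $\K=\R$, a step you omit but should include since your eigenvector argument and the projection-valued spectral theorem presuppose complex scalars; apart from that, the two proofs differ only in which form of the spectral theorem is quoted (multiplication-operator model versus projection-valued measure).
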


\begin{proof} Passing to the complexification, if necessary, we can, without loss of generality, assume that $\K=\C$. In this case $U$ is a unitary operator whose point spectrum is empty. By the spectral theorem, $U$ is unitarily equivalent to the direct $\ell_2$-sum of a finite or countable set of operators $M_j$ of multiplication by the argument ($M_jf(z)=zf(z)$) on $L^2(\T,\mu_j)$ with $\mu_j\in\M_+(\T)$. Thus we can assume that $\hh$ IS the direct $\ell_2$-sum of $L^2(\T,\mu_j)$ and $U$ IS the direct sum of $M_j$:
$$
\hh=\bigoplus\limits_j L^2(\T,\mu_j),\qquad U=\bigoplus_j M_j.
$$

Since the point spectrum of $U$ is empty, so is the point spectrum of each $M_j$. Hence $\mu_j\in\M_c(\T)$ for every $j$. Now for every $(a,b)\in N$ and every $n\in\N$,
$$
\langle U^na,b\rangle=\sum_j\langle M_j^na_j,b_j\rangle=\sum_j\int_\T a_l(z)z^n\overline{b_l(z)}\,d\mu_l(z)=\int_\T z^n\,d\nu_{a,b}(z)=\widehat{\nu_{a,b}}(n),
$$
where $\nu_{a,b}=\sum\limits_j a_j\overline{b_j}\mu_j$ and $h\mu$ stands for the measure absolutely continuous with respect to $\mu\in\M(\T)$ with the density $h\in L^1(\T,\mu)$. Note that the series in the definition of $\nu_{a,b}$ converges with respect to the variation norm since $\|a_j\overline{b_j}\mu_j\|\leq \|a_j\|_{L^2(\mu_j)}\|b_j\|_{L^2(\mu_j)}$ and $\sum\limits_j\|a_j\|_{L^2(\mu_j)}\|b_j\|_{L^2(\mu_j)}\leq \frac12(\|a\|^2+\|b\|^2)$. Furthermore, each $\nu_{a,b}$ is continuous since $\M_c(\T)$ is a closed subspace of the Banach space $\M(\T)$ and each $a_j\overline{b_j}\mu_j$ is continuous.
By Lemma~\ref{cme}, there is an infinite subset $A$ of $\N$ such that $\lim\limits_{n\to\infty\atop n\in A}\widehat{\nu_{a,b}}(n)=0$ for every $(a,b)\in N$. Since $\widehat{\nu_{a,b}}(n)=\langle U^na,b\rangle$, it follows that $\lim\limits_{n\to\infty\atop n\in A}\langle U^na,b\rangle=0$ for each $(a,b)\in N$, as required.
\end{proof}

We can further generalize the above lemma by dropping the invertibility condition.

\begin{lemma}\label{uni2} Let $\hh$ be a separable Hilbert space and $U\in L(\hh)$ be an isometric linear operator with no non-trivial finite dimensional invariant subspaces. Then for every countable set $N\subseteq \hh\times\hh$ there is an infinite set $A\subseteq\N$ such that
$$
\lim\limits_{n\to\infty\atop n\in A}\langle U^na,b\rangle=0\ \ \text{for each $(a,b)\in N$}.
$$
\end{lemma}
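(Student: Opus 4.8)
The natural tool here is the \emph{Wold decomposition} of the isometry $U$. The plan is to write $\hh=\hh_s\oplus\hh_u$ as an orthogonal sum of two subspaces that both reduce $U$, where $\hh_u=\bigcap_{n\geq 0}U^n\hh$ carries the unitary part $U_u=U|_{\hh_u}$ and $\hh_s=\hh_u^\perp=\bigoplus_{k\geq 0}U^kW$, with $W=\hh\ominus U\hh$, carries the shift part $U_s=U|_{\hh_s}$. I would deal with the unitary part by quoting Lemma~\ref{uni1}, and with the shift part by a direct computation showing that a unilateral shift is automatically weakly nullifying; the set $A$ extracted for the unitary part then works for the whole operator.

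For the unitary part, observe that $U\hh_u=\bigcap_{n\geq 1}U^n\hh=\hh_u$ (using injectivity of $U$), so $U_u$ is a surjective isometry on the Hilbert space $\hh_u$, hence unitary, in particular an invertible isometry, and $\hh_u$ is separable as a subspace of $\hh$. Moreover $U_u$ inherits the hypothesis: a finite dimensional $M\subseteq\hh_u$ with $U_uM\subseteq M$ is a finite dimensional $U$-invariant subspace of $\hh$, hence trivial. So Lemma~\ref{uni1} applies to $U_u$ and the countable set $\{(Pa,Pb):(a,b)\in N\}$, where $P$ is the orthogonal projection of $\hh$ onto $\hh_u$; this yields an infinite $A\subseteq\N$ with $\lim_{n\to\infty,\,n\in A}\langle U_u^nPa,Pb\rangle=0$ for every $(a,b)\in N$.

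For the shift part I would prove that $\langle U_s^na,b\rangle\to 0$ for all $a,b\in\hh_s$ along the \emph{full} sequence $\N$. Expanding $a=\sum_{k\geq 0}a_k$, $b=\sum_{k\geq 0}b_k$ with $a_k,b_k\in U^kW$, and using $U^na_k\in U^{n+k}W$ together with the mutual orthogonality of the spaces $U^jW$, one gets $\langle U_s^na,b\rangle=\sum_{k\geq 0}\langle U^na_k,b_{n+k}\rangle$, so that the Cauchy--Schwarz inequality gives $|\langle U_s^na,b\rangle|\leq\|a\|\bigl(\sum_{j\geq n}\|b_j\|^2\bigr)^{1/2}\to 0$. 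In particular this holds along $A$.

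It then remains to combine the two parts: for $(a,b)\in N$ write $a=(I-P)a+Pa$ and $b=(I-P)b+Pb$ with $(I-P)a,(I-P)b\in\hh_s$; since both summands $\hh_s,\hh_u$ reduce $U$, we have $\langle U^na,b\rangle=\langle U_s^n(I-P)a,(I-P)b\rangle+\langle U_u^nPa,Pb\rangle$, and both terms tend to $0$ along $A$. The case $\K=\R$ is reduced to $\K=\C$ exactly as in the proof of Lemma~\ref{uni1} (the complexification of $U$ is again an isometry with no non-trivial finite dimensional invariant subspaces, since an eigenvector of the complexification would produce a one- or two-dimensional real invariant subspace of $U$). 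I do not expect a genuinely hard step; the only point that has to be seen is that the shift summand of $U$, although it need not be unitary, contributes nothing, because its orbits are weakly null along all of $\N$, so that passing to a subsequence is forced only by the unitary summand, exactly where Lemma~\ref{uni1} already does the work.
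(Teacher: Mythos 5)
Your proposal is correct and follows essentially the same route as the paper: the paper's decomposition into $\hh_0=\bigcap_n U^n(\hh)$ and its orthogonal complement is exactly the Wold decomposition you describe, the unitary part is handled by Lemma~\ref{uni1}, and the shift part is observed to be weakly nullifying along all of $\N$. Your write-up merely makes explicit two points the paper leaves implicit (the Cauchy--Schwarz computation for the shift part and the verification that the unitary summand inherits the no-finite-dimensional-invariant-subspace hypothesis).
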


\begin{proof} For each $k\in\N$ let $\hh_k$ be the orthogonal complement of $T^k(\hh)$ in $T^{k-1}(\hh)$. We also denote
$\hh_0=\bigcap\limits_{n=0}^\infty U^n(\hh)$. Now it is easy to see that $\hh$ is the direct $\ell_2$-sum of the spaces $\hh_j$ for $j\in\Z_+$. Moreover, $\hh_0$ is a $U$-invariant subspace and the restriction $U_0=U\bigr|_{\hh_0}:\hh_0\to\hh_0$ is an invertible isometry on $\hh_0$. Furthermore, the orthogonal complement $\hh_+$ of $\hh_0$ in $\hh$ (=the direct $\ell_2$-sum of $\hh_j$ for $j\in\N$) is also $U$-invariant, and the restriction $U_+=U\bigr|_{\hh_+}:\hh_+\to\hh_+$ is a forward shift in the sense that $U_+$ provides an invertible isometry between $\hh_j$ and $\hh_{j+1}$ for every $j\in\N$. It immediately follows that $U_+$ is weakly nullifying. That is, $\{U_+^ny\}$ converges weakly to $0$ for each $y\in\hh_+$.

Since $\hh$ is the orthogonal direct sum of $\hh_0$ and $\hh_+$ we can consider the orthogonal projections $P_0$ onto $\hh_0$ along $\hh_+$ and $P_+=I-P_0$ onto $\hh_+$ along $\hh_0$. Let
$$
N_0=\bigcup\limits_{(a,b)\in N}\{P_0a,P_0b\}\ \ \text{and}\ \ N_+=\bigcup\limits_{(a,b)\in N}\{P_+a,P_+b\}.
$$
Since $N_0\times N_0$ is a countable subset of $\hh_0\times\hh_0$ and $U_0$ is  an isometric invertible operator on $\hh_0$ with no non-trivial finite dimensional invariant subspaces, Lemma~\ref{uni1} provides an infinite subset $A$ of $\N$ such that
$$
\lim\limits_{n\to\infty\atop n\in A}\langle U^na_0,b_0\rangle=0\ \ \text{for each $a_0,b_0\in N_0$}.
$$
Since $U_+$ is weakly nullifying,
$$
\lim\limits_{n\to\infty}\langle U^na_+,b_+\rangle=0\ \ \text{for each $a_+,b_+\in N_+$}.
$$
The required property follows easily from the above two displays.
\end{proof}

\subsection{Proof of Theorem~\ref{qq}}

Since $X$ is separable and $F$ is dense in $X$, we can pick a countable set $\{u_{k,0}:k\in\N\}\subset F$, which is norm dense in $X$. By (B3), we can find $\{u_{k,-n}:k,n\in\N\}\subset X$ such that $Tu_{k,-n}=u_{k,1-n}$ for every $k,n\in\N$ and $\lim\limits_{n\to\infty}\|u_{k,-n}\|=0$ for each $k\in\N$. Now we set $u_{k,n}=T^nu_{k,0}$ for $k,n\in\N$. By (B2), the continuous extension $U$ of the restriction $S=T\big|_F:F\to F$ to the $($Hilbert space$)$ completion $\hh$ of the normed space $(E,\|\cdot\|_0)$ is an isometry with no non-trivial finite dimensional invariant subspaces. Applying Lemma~\ref{uni2} with $N=Q\times Q$ and $Q=\{u_{a,b}:a\in\N,b\in\Z_+\}$, we see that there exists an infinite set $A\subseteq\N$ such that
$$
\lim\limits_{n\to\infty\atop n\in A} \langle U^nu_{k,m},u_{a,b}\rangle=0\ \ \text{for every $k,a\in\N$ and $m,b\in\Z_+$}.
$$
Since the restriction of $U$ to the space $F$ containing $E=\spann\{u_{k,n}:n\in\Z_+,\,k\in\N\}$ coincides with the restriction of $T$, we have $u_{k,m+n}=U^nu_{k,m}$ and therefore
$$
\lim\limits_{n\to\infty\atop n\in A} \langle u_{k,m+n},u_{a,b}\rangle=0\ \ \text{for every $k,a\in\N$ and $m,b\in\Z_+$}.
$$
By Theorem~\ref{tt0}, the proof will be complete if we show that the double sequence $\{u_{k,n}\}_{k\in\N,\,n\in\Z}$ satisfies conditions (A1--A6), where we equip $E=\spann\{u_{k,n}:n\in\Z_+,\,k\in\N\}$ with the inner product inherited from $F$. Obviously, (A1), (A2) and (A3) are satisfied.
The already established equality $\lim\limits_{n\to\infty}\|u_{k,-n}\|=0$ implies (A4).
Boundedness of $\{\|u_{k,n}\|_0\}_{n\in\Z_+}$ follows from (A2) and the fact that $T$ acts isometrically on $(E,\|\cdot\|_0)$. Let $k,a\in\N$, $b\in\Z_+$ and $s\in A$. Then using (A2) and the fact that $T$ acts isometrically on $(E,\|\cdot\|_0)$, we get $\langle u_{k,n-s},u_{a,b}\rangle=\langle u_{k,n},u_{a,b+s}\rangle$. Now by the above display
$\lim\limits_{n\to\infty\atop n\in A}\langle u_{k,n-s},u_{a,b}\rangle=0$, which completes the proof of (A5). Finally, let $k,m\in\N$ and $s\in A$.  Using (A2) and the isometry property of $T$ once again, we see that $\langle u_{k,n-s},u_{m,n-t}\rangle=\langle u_{k,t-s},u_{m,0}\rangle$. Hence
$$
\lim\limits_{t\to\infty\atop t\in A,\,t>s}\slim\limits_{n\to\infty\atop n\in A,\,n>t}|\langle u_{k,n-s},u_{m,n-t}\rangle|=\lim\limits_{t\to\infty\atop t\in A,\,t>s}
|\langle u_{k,t-s},u_{m,0}\rangle|=0,
$$
where the last equality holds by the already proven property (A5). Thus (A6) is also verified. That is, (A1--A6) are all satisfied and $T$ is weakly hypercyclic by Theorem~\ref{tt0}.

\section{Further comments and open questions \label{s5}}

We start this last section by proving Proposition~\ref{nns}.
We need the following elementary lemma.

\begin{lemma}\label{suhy} Let $X$ be a Banach space, $S\in L(X)$ and $x\in X$ be a hypercyclic vector for $S$. Assume also $m\in\N$ that $w\in\K$ is such that $w^m=1$ and $w^j\neq 1$ for $1\leq j<m$. Then $u=(x,\dots,x)\in X^m$ is a $1$-weakly hypercyclic vector for $T=S\oplus wS\oplus{\dots}\oplus w^{m-1}S\in L(X^m)$.
\end{lemma}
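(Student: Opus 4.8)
The plan is to test the orbit of $u$ against an arbitrary surjective functional and reduce everything to the dense scalar orbit $\{S^nx\}$, using the residue of $n$ modulo $m$ for bookkeeping. A continuous surjective linear map $X^m\to\K$ is nothing but a nonzero $\Lambda\in(X^m)^*$, which necessarily has the form $\Lambda(y_0,\dots,y_{m-1})=\sum_{j=0}^{m-1}f_j(y_j)$ with $f_0,\dots,f_{m-1}\in X^*$ not all zero. Since $(w^jS)^n=w^{jn}S^n$, we get $T^nu=(S^nx,w^nS^nx,\dots,w^{(m-1)n}S^nx)$, hence
$$
\Lambda(T^nu)=\sum_{j=0}^{m-1}w^{jn}f_j(S^nx).
$$
The coefficient $w^{jn}$ depends only on $n\bmod m$: if $n\equiv r\pmod m$ then $w^{jn}=w^{jr}$, so $\Lambda(T^nu)=g_r(S^nx)$ with $g_r=\sum_{j=0}^{m-1}w^{jr}f_j\in X^*$.

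Next I would note that some $g_{r_0}$ is nonzero. The vector $(g_0,\dots,g_{m-1})$ is the image of $(f_0,\dots,f_{m-1})$ under the matrix $\bigl(w^{jr}\bigr)_{0\leq r,j\leq m-1}=\bigl((w^r)^j\bigr)_{r,j}$, a Vandermonde matrix whose nodes $1,w,\dots,w^{m-1}$ are pairwise distinct: $w^a=w^b$ with $0\leq a<b<m$ would give $w^{b-a}=1$ with $1\leq b-a<m$, against the hypothesis. Hence this matrix is invertible, so $(g_0,\dots,g_{m-1})=0$ only if $(f_0,\dots,f_{m-1})=0$; fix $r_0$ with $g_{r_0}\neq0$.

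Finally I would run the orbit only along $n=mk+r_0$, $k\in\Z_+$, where $\Lambda(T^nu)=g_{r_0}(S^{mk+r_0}x)=g_{r_0}\bigl((S^m)^k(S^{r_0}x)\bigr)$. The vector $S^{r_0}x$ is again a hypercyclic vector for $S$, since its $S$-orbit is the orbit of $x$ with finitely many initial terms deleted, hence still dense; so by Ansari's theorem (see, e.g., \cite{bama-book}) $S^{r_0}x$ is hypercyclic for $S^m$ as well, that is, $\{(S^m)^k(S^{r_0}x):k\in\Z_+\}$ is dense in $X$. Applying to this set the nonzero (hence open and surjective) functional $g_{r_0}$ yields that $\{\Lambda(T^{mk+r_0}u):k\in\Z_+\}$ is dense in $\K$; a fortiori $\{\Lambda(T^nu):n\in\Z_+\}$ is dense in $\K$, and since $\Lambda$ was an arbitrary surjective functional, $u$ is a $1$-weakly hypercyclic vector for $T$. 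The argument is uniform in $\K\in\{\R,\C\}$. The one non-elementary input, and the step I expect to be the crux, is the passage from hypercyclicity of $S$ to density of the $S^m$-orbit of $S^{r_0}x$: hypercyclicity does not obviously descend to powers, so I would invoke Ansari's theorem precisely here, while the rest is the Vandermonde identity and the elementary fact that a nonzero bounded linear functional maps dense sets onto dense sets.
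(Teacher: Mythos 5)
Your proof is correct and uses the same three ingredients as the paper's: decomposition by the residue of $n$ modulo $m$, invertibility of the Vandermonde matrix with nodes $1,w,\dots,w^{m-1}$, and Ansari's theorem to transfer hypercyclicity from $S$ to $S^m$. The only difference is a dual framing: you decompose the functional $\Lambda$ into the pieces $g_r$ and select a nonzero one, whereas the paper decomposes $X^m$ into the direct sum of the subspaces $Y_j$ and observes that $F$ must restrict nontrivially to one of them---two phrasings of the same argument.
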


Note that in the case $\K=\R$, we do not have a wide choice of the numbers $w$. Namely, we must have $w=1$ (trivial) or $w=-1$. The case $w=-1$ is singled out in the next corollary.

\begin{corollary}\label{suhy1} Let $X$ be a Banach space, $S\in L(X)$ and $x\in X$ be a hypercyclic vector for $S$. Then $(x,x)\in X^m$ is a $1$-weakly hypercyclic vector for $T=S\oplus (-S)\in L(X\times X)$.
\end{corollary}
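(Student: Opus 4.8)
The plan is to translate the $1$-weak hypercyclicity of $T$ into a density statement for scalar functionals and then exploit the cyclic structure of the weights. Using the identification $(X^m)^*\cong (X^*)^m$, a surjective continuous linear map $X^m\to\K$ is nothing but a nonzero functional $\varphi(y_1,\dots,y_m)=\sum_{j=1}^m f_j(y_j)$ with $(f_1,\dots,f_m)\neq 0$, so it suffices to prove that $\{\varphi(T^nu):n\in\Z_+\}$ is dense in $\K$ for every such $\varphi$. From $T^nu=(S^nx,\,w^nS^nx,\,\dots,\,w^{(m-1)n}S^nx)$ one gets $\varphi(T^nu)=\sum_{j=1}^m w^{(j-1)n}f_j(S^nx)$, and since $w^m=1$ the coefficient $w^{(j-1)n}$ depends only on the residue of $n$ modulo $m$. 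Writing $n=r+mk$ with $0\leq r<m$, this becomes $\varphi(T^nu)=g_r(S^nx)$, where $g_r=\sum_{j=1}^m w^{(j-1)r}f_j\in X^*$.

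Next I would observe that the $m\times m$ matrix with $(r,j)$-entry $w^{(j-1)r}$ (with $0\leq r\leq m-1$ and $1\leq j\leq m$) is a Vandermonde matrix with nodes $1,w,\dots,w^{m-1}$, which are pairwise distinct precisely because $w$ is a primitive $m$-th root of unity; hence this matrix is invertible and $\varphi\neq 0$ forces $(g_0,\dots,g_{m-1})\neq 0$. Fix $r_0$ with $g_{r_0}\neq 0$ and restrict attention to $n\equiv r_0\pmod m$, say $n=r_0+mk$; using $S^{r_0+mk}x=(S^m)^k(S^{r_0}x)$ we obtain $\varphi(T^{r_0+mk}u)=g_{r_0}\bigl((S^m)^k(S^{r_0}x)\bigr)$. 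Since $x$ is a hypercyclic vector for $S$ and a tail of a dense orbit is again dense in a space with no isolated points, $S^{r_0}x$ is also hypercyclic for $S$; by Ansari's theorem (the set of hypercyclic vectors of an operator coincides with that of each of its powers), $S^{r_0}x$ is then hypercyclic for $S^m$, i.e. $\{(S^m)^k(S^{r_0}x):k\in\Z_+\}$ is norm-dense in $X$. A nonzero continuous linear functional is open and onto $\K$, hence carries dense sets to dense sets, so $\{\varphi(T^{r_0+mk}u):k\in\Z_+\}$, and a fortiori $\{\varphi(T^nu):n\in\Z_+\}$, is dense in $\K$. This shows that $u$ is a $1$-weakly hypercyclic vector for $T$; Corollary~\ref{suhy1} is then the special case $m=2$, $w=-1$.

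I expect the only genuine point of substance to be the appeal to Ansari's theorem — equivalently, the fact that a hypercyclic vector $x$ of $S$ (hence each $S^{r_0}x$) is automatically hypercyclic for the power $S^m$. Without it one cannot guarantee that the subsequence of the $S$-orbit of $x$ lying in a prescribed residue class modulo $m$ is dense, and the argument would only yield density of $\{\varphi(T^nu)\}$ inside $g_{r_0}$ applied to the closure of a single $S^m$-orbit, which a priori could be a proper subset of $X$. Everything else — non-degeneracy of the Vandermonde matrix, the open mapping property of a nonzero functional, and density of orbit tails — is routine.
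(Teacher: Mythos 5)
Your proof is correct and essentially coincides with the paper's proof of Lemma~\ref{suhy} (of which Corollary~\ref{suhy1} is the case $m=2$, $w=-1$): both partition the orbit by residue class modulo $m$, invoke Ansari's theorem, use the invertibility of the Vandermonde matrix $\bigl(w^{(j-1)r}\bigr)$ to rule out degeneracy, and conclude with the open-mapping property of a nonzero functional. The only difference is packaging --- the paper works on the vector side, showing each sub-orbit $O_j$ is dense in a closed subspace $Y_j$ with $X^m=\bigoplus_j Y_j$, while you transpose to the functional side and work directly with the scalars $g_r(S^nx)$; these are dual formulations of the same argument.
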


\begin{proof}[Proof of Lemma~$\ref{suhy}$] Let $O=O(T,u)$. Obviously, $O=O_1\cup{\dots}\cup O_m$, where
$$
O_j=\{(S^{mn+j-1}x,w^{j-1}S^{mn+j-1}x,w^{2(j-1)}S^{mn+j-1}x,\dots,w^{(m-1)(j-1)}S^{mn+j-1}x):
n\in\Z_+\}.
$$
Since each of the vectors $x,Sx,\dots,S^{j-1}x$ are hypercyclic for $S$, each of them is also hypercyclic for $S^m$. Indeed, due to Ansari \cite{ansa}, any operator shares its hypercyclic vectors with each of its powers. It follows that each $O_j$ is a dense subset of the closed linear subspace $Y_j$ of $X^m$ defined by the formula
$$
Y_j=\{(y,w^{j-1}y,w^{2(j-1)}y,\dots,w^{(m-1)(j-1)}y):
y\in X\}.
$$
Now the invertibility of the Van-der-Monde matrix $\{w^{(k-1)(j-1)}\}_{j,k=1}^m$
easily implies that
$$
X^m=Y_1\oplus Y_2\oplus{\dots}\oplus Y_m.
$$

Let now $F$ be a non-zero continuous linear functional $X^m$. According to the last display, there is $j\in\{1,\dots,m\}$ such that the restriction of $F_j$ of $F$ to $Y_j$ is non-zero. Hence $F_j$ is a continuous map from $Y_j$ onto $\K$. Since $O_j$ is dense in $Y_j$ it follows that $F(O_j)=F_j(O_j)$ is also dense in $\K$. Since $O_j\subset O$, $F(O)$ is dense in $\K$. By definition, $u$ is a 1-weakly hypercyclic vector for $T$.
\end{proof}

\begin{proof}[Proof of Proposition~$\ref{nns}$] Bayart and Matheron \cite{bm}
constructed a continuous linear operator $S$ on a separable infinite dimensional Hilbert space $\hh$ (real or complex) such that $S$ is hypercyclic, while the operator $S\oplus S\in L(\hh\times\hh)$ is non-cyclic. Consider the operator
$$
T=S\oplus (-S)\in L(\hh\times\hh).
$$
That is, $T(u,v)=(Su,-Sv)$ for every $u,v\in\hh$. By Corollary~\ref{suhy1}, $T$ is 1-weakly hypercyclic. On the other hand, $T^2=S^2\oplus S^2=(S\oplus S)^2$. Thus the non-cyclicity of  $S\oplus S$ implies that $T^2$ is non-cyclic.
\end{proof}

Theorems~\ref{toe01} and~\ref{toe02} leave the following gap.

\begin{question}\label{q2}
Let $g\in H^\infty(\D)$ be such that $|g|>1$ almost everywhere on $\T$ and $\log(|g|-1)\notin L^1(\T)$. Is it possible for $T^*_g\in L(H^2(\D))$ to be weakly hypercyclic or at least $1$-weakly hypercyclic?
\end{question}

Note that the standard comparing the topologies argument extends Theorem~\ref{toe01} to coanalytic Toeplitz operators acting on $H^p(\D)$ with $1\leq p\leq 2$. Furthermore, the proof of Theorem~\ref{toe02} can be adjusted to work for coanalytic Toeplitz operators acting on every $H^p(\D)$ for $1\leq p<\infty$. In this article we just preferred to stick to the Hilbert space situation. It seems though that our proof of Theorem~\ref{toe01} fails miserably for coanalytic Toeplitz operators acting on $H^p(\D)$ with $p>2$.

\begin{question}\label{qq2}
Is there any $g\in H^\infty(\D)$ with $g(\D)\cap\D=\varnothing$ such that $T^*_g$ is weakly hypercyclic as an operator on $H^p(\D)$ for some $p>2$?
\end{question}

\subsection{$2I+S$ is not 1-weakly hypercyclic if $\|S\|\leq 1$}

It is natural to be curious whether Corollary~\ref{coco} extends Banach space operators. Well, it does!

\begin{theorem}\label{q3}
Let $X$ be a $($real or complex$)$ Banach space, $c>0$ and $S\in L(X)$ be power bounded. Then $T=(c+1)I+cS$ is not $1$-weakly hypercyclic.
\end{theorem}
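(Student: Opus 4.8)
The plan is to renorm so that $S$ becomes a contraction, to extract from the binomial expansion of $T^n$ a clean ``separation'' criterion that kills all but one degenerate configuration, and to attack that residual configuration by a quantitative orbit estimate fed into Ball's Theorem~B.

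\emph{Step 1: Normalization.} Since $S$ is power bounded, $|||x|||=\sup_{n\ge0}\|S^nx\|$ is an equivalent norm on $X$ with $|||Sx|||\le|||x|||$; as $1$-weak hypercyclicity depends only on the topological linear structure (the class of bounded linear surjections $X\to\K$ is unaffected by an equivalent renorming), I would henceforth assume $\|S\|\le1$, so $\|S^kx\|\le\|x\|$ for all $k$. Writing $b=c/(c+1)\in(0,1)$ one has $T=(c+1)(I+bS)$ and $(I+bS)^{-n}=\sum_{k\ge0}(-1)^k\binom{n+k-1}{k}b^kS^k$, hence $\|T^{-n}\|\le(c+1)^{-n}\sum_k\binom{n+k-1}{k}b^k=(c+1)^{-n}(1-b)^{-n}=1$; thus $T^{-1}$ is a contraction, $\|T^nx\|$ is nondecreasing, and for every $x$ the orbit $O(T,x)$ is either norm bounded or $\|T^nx\|\to\infty$. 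In the bounded case every $f\in X^*$ sends $O(T,x)$ to a bounded, hence non-dense, subset of $\K$, so such $x$ are trivially not $1$-weakly hypercyclic.

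\emph{Step 2: The separation criterion.} From $T^n=\sum_{k=0}^n\binom nk(c+1)^{n-k}c^kS^k$, if there are $f\in X^*\setminus\{0\}$ and $\omega\in\T$ with $\alpha:=\inf_{k\ge0}\mathrm{Re}\,(\omega f(S^kx))>0$, then $\mathrm{Re}\,(\omega f(T^nx))=\sum_k\binom nk(c+1)^{n-k}c^k\,\mathrm{Re}\,(\omega f(S^kx))\ge\alpha(2c+1)^n\ge\alpha$ for all $n$, so $\{f(T^nx)\}$ avoids the disc $\{|w|<\alpha\}$ and is not dense; hence $x$ is not $1$-weakly hypercyclic. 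By Hahn--Banach strict separation (the set $C=\overline{\mathrm{conv}}\{S^kx:k\ge0\}$ being bounded by $\|x\|$), such an $f$ exists precisely when $0\notin C$. The same trick also handles every vector lying outside a proper closed $T$-invariant subspace $Y$ on which the induced operator is a scalar $\lambda$ with $|\lambda|\ge1$: on $X/\overline{\mathrm{ran}\,(I+S)}$ the quotient of $T$ is $I$, and on $X/\overline{\mathrm{ran}\,(I-S)}$ it is $(2c+1)I$, so for $x$ outside one of these one gets $f(T^nx)=\lambda^nf(x)$ for a suitable $f\neq0$, again missing a disc. Since $T$ and $S$ generate the same closed subalgebra, $T$-cyclicity equals $S$-cyclicity, and a non-cyclic operator is never $1$-weakly hypercyclic; so one is reduced to treating a \emph{cyclic} vector $x$ of $S$ with $0\in\overline{\mathrm{conv}}\{S^kx\}$ and $\overline{\mathrm{ran}\,(I\pm S)}=X$.

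\emph{Step 3: The residual case — the main obstacle.} It remains to rule out $1$-weak hypercyclicity for such an $x$, necessarily with $\|T^nx\|\to\infty$ but with no manifest separating functional. The plan here is to prove that in this situation $\|T^nx\|$ grows fast enough that $\sum_{n\ge0}\|T^nx\|^{-1}<\infty$, and then to apply Ball's Theorem~B: after replacing $x$ by $\lambda x$ with $|\lambda|$ large one gets $g\in X^*$, $\|g\|\le1$, with $|g(\lambda T^nx)|\ge1$ for all $n$, so $\{g(T^nx)\}$ avoids a disc and $x$ is not $1$-weakly hypercyclic. The growth should be forced by the spectral geometry: $\sigma(T)\subseteq\overline{D(c+1,c)}$, a disc tangent to $\T$ only at the point $1$, and power boundedness of $S$ means $T^{-1}$ is power bounded with $\sigma(T^{-1})\cap\T\subseteq\{1\}$, so the Katznelson--Tzafriri theorem gives $\|T^{-n}(I+S)\|\to0$; bootstrapping this ``quasi-nilpotence at $1$'' along the chain of $T$-invariant subspaces $\overline{\mathrm{ran}\,(I\pm S)},\overline{\mathrm{ran}\,(I-S^2)},\dots$ (keeping $x$ inside by cyclicity) should reduce matters to an invertible isometry with purely continuous spectrum, where the Appendix orbit estimates supply the required quantitative lower bound. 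The crux of the whole proof is exactly this quantitative upgrade: passing from the easy dichotomy ``$\|T^nx\|$ bounded or $\to\infty$'' to the summable rate $\sum\|T^nx\|^{-1}<\infty$ — automatic in a Hilbert space via the inner-product telescoping of Lemma~\ref{qqq1} (where even $\sum\|T^nx\|^{-2}<\infty$ would do), but in a general Banach space it has to be squeezed out of the tangential contact of $\sigma(T)$ with the unit circle at $1$.
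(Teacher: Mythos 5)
Your plan correctly isolates the crux (a super-polynomial lower bound on $\|T^nx\|$ to feed into Ball's Theorem~B) and correctly handles the degenerate cases in Steps~1--2, but the residual case in Step~3 is where the real content lives and there the proposal is a sketch, not a proof. Katznelson--Tzafriri gives you $\|T^{-n}(I+S)\|\to 0$ but with no rate, and no amount of ``bootstrapping along the chain of $T$-invariant subspaces'' will conjure up a rate from a qualitative limit; moreover the Appendix orbit estimates you appeal to are specific to coanalytic Toeplitz operators on $H^2(\D)$ and carry no information about a general power-bounded $S$ on a Banach space. So the summability $\sum\|T^nx\|^{-1}<\infty$ is asserted, not proved, and that is precisely the step that cannot be waved through.

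What the paper actually does at this point is an explicit complex-analysis computation with no spectral theory in it. For each fixed integer $k\ge 2$ one estimates, by a contour integral over the circle $|z+1|=2$, the $\ell^1$-norm $N(n)$ of the Taylor coefficients of the rational function $f_n(z)=(1-z)^k/(1+c-cz)^n$, obtaining $N(n)=O(n^{(1-k)/2})$ (Lemma~\ref{l1n}). Power boundedness of $S$ then lets you plug $S$ into the series to get $\|(I+S)^kT^{-n}\|=O(n^{(1-k)/2})$ (Corollary~\ref{cl1}, with $S$ replaced by $-S$ to match the sign in Theorem~\ref{q3}). Rearranging, $\|T^nx\|\gtrsim n^{(k-1)/2}\|(I+S)^kx\|$, so either $(I+S)^kx=0$ for some $k$ --- in which case $p(T)x=0$ for a degree-$k$ polynomial $p$, $\spann O(T,x)$ is finite dimensional, and $x$ cannot be $1$-weakly hypercyclic --- or $\|T^nx\|/n^m\to\infty$ for every $m$ (Lemma~\ref{pbb}), and Ball's theorem finishes. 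This uniform functional-calculus estimate is the missing ingredient in your Step~3; your Steps~1--2 (renorming, convex separation, quotient-by-range arguments) are sound but are subsumed by the paper's simpler dichotomy and do not by themselves reduce the problem to anything the Katznelson--Tzafriri route can handle.
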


The key to the proof of the above theorem is the following lemma.

\begin{lemma}\label{l1n}
Let $k\geq 2$ be an integer, $c>0$ and for each $n\in\N$, $f_n$ be the rational function
defined by $f_n(z)=\frac{(1-z)^k}{(1+c-cz)^n}$. Let $\{a_m(f_n)\}_{m\in\Z_+}$ be the sequence of the Taylor coefficients of $f_n$ about $0$: $f_n(z)=\sum\limits_{m=0}^\infty a_m(f_n)z^m$ for $|z|<1+\frac1c$ and $N(n)=\sum\limits_{m=0}^\infty|a_m(f_n)|$ $(N(n)$ is finite since $f_n$ is holomorphic in $(1+c^{-1})\D)$. Then there exists $A=A(k,c)>0$ such that $N(n)\leq An^{\frac{1-k}2}$ for every $n\in\N$.
\end{lemma}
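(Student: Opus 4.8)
The plan is to observe that $N(n)$ is precisely the norm of $f_n$ in the Wiener algebra $A=A(\D)$ of holomorphic functions on $\D$ with absolutely summable Taylor coefficients, and to estimate this norm by the elementary Cauchy--Schwarz interpolation inequality
$$\|f\|_A=\sum_{m\geq 0}|a_m(f)|\leq C\bigl(\|f\|_{H^2}+\|f\|_{H^2}^{1/2}\|f'\|_{H^2}^{1/2}\bigr),\qquad C\ \text{absolute},$$
valid whenever $f,f'\in H^2(\D)$ (which holds for our rational $f_n$, holomorphic past $\DD$). Indeed, split $\sum_{m\geq 0}|a_m|$ at an index $M\in\N$: by Cauchy--Schwarz and Parseval the head is at most $2\sqrt M\,\|f\|_{H^2}$, while the tail is at most $\bigl(\sum_{m\geq M}m^{-2}\bigr)^{1/2}\bigl(\sum_{m\geq 1}m^2|a_m|^2\bigr)^{1/2}\leq\sqrt{2/M}\;\|f'\|_{H^2}$, and one takes $M\asymp\|f'\|_{H^2}/\|f\|_{H^2}$ (and $M=1$ if that ratio is below $1$). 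Thus the lemma reduces to the two estimates $\|f_n\|_{H^2}\leq C(k,c)\,n^{-(2k+1)/4}$ and $\|f_n'\|_{H^2}\leq C(k,c)\,n^{(3-2k)/4}$: substituting these into the displayed inequality gives $N(n)\leq C'\bigl(n^{-(2k+1)/4}+n^{(1-2k)/4}\bigr)\leq C''\,n^{(1-k)/2}$, because both exponents $-(2k+1)/4$ and $(1-2k)/4$ are at most $(1-k)/2=(2-2k)/4$.

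To prove the $H^2$-estimates I would first record the boundary modulus of $f_n$. From $|1-e^{i\theta}|^2=4\sin^2(\theta/2)$, and (after expanding and using $(1+c)^2+c^2=1+2c(1+c)$) from $|1+c-ce^{i\theta}|^2=1+4c(1+c)\sin^2(\theta/2)$, one obtains
$$|f_n(e^{i\theta})|^2=\frac{4^k\sin^{2k}(\theta/2)}{\bigl(1+4c(1+c)\sin^2(\theta/2)\bigr)^n}.$$
More generally, for integers $\ell\geq 1$ and $j\geq\ell+1$ set $g_{\ell,j}(z)=(1-z)^\ell(1+c-cz)^{-j}$, so that $g_{k,n}=f_n$; the same identities give $|g_{\ell,j}(e^{i\theta})|^2=4^\ell\sin^{2\ell}(\theta/2)\bigl(1+4c(1+c)\sin^2(\theta/2)\bigr)^{-j}$. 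Using $\sin\phi\leq\phi$ and $\sin\phi\geq 2\phi/\pi$ on $[0,\pi/2]$, I would (after $\theta=2\phi$) majorise the integrand by $4^\ell\phi^{2\ell}(1+\gamma\phi^2)^{-j}$ with $\gamma=16c(1+c)/\pi^2$, extend the integral to $[0,\infty)$, and substitute $\phi=t/\sqrt\gamma$, turning $\|g_{\ell,j}\|_{H^2}^2$ into a Beta integral:
$$\|g_{\ell,j}\|_{H^2}^2\leq C(\ell,c)\int_0^\infty\frac{t^{2\ell}}{(1+t^2)^{j}}\,dt=\frac{C(\ell,c)}{2}\cdot\frac{\Gamma(\ell+\frac12)\,\Gamma(j-\ell-\frac12)}{\Gamma(j)}.$$
Standard Gamma-function asymptotics --- concretely $\Gamma(j)\geq(j-\ell-1)^{\ell+1}\Gamma(j-\ell-1)$ for $j\geq\ell+2$, together with the log-convexity bound $\Gamma(y+\frac12)\leq\sqrt{y}\,\Gamma(y)$ and a direct check of the finitely many small $j$ --- give $\Gamma(j-\ell-\frac12)/\Gamma(j)\leq C(\ell)\,j^{-\ell-1/2}$ for all $j\geq\ell+1$, hence $\|g_{\ell,j}\|_{H^2}^2\leq C(\ell,c)\,j^{-\ell-1/2}$.

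Taking $(\ell,j)=(k,n)$ gives $\|f_n\|_{H^2}^2\leq C(k,c)\,n^{-k-1/2}$, the first required bound. For the derivative, differentiating directly gives $f_n'=-k\,g_{k-1,n}+cn\,g_{k,n+1}$, so that for $n\geq k$
$$\|f_n'\|_{H^2}\leq k\,\|g_{k-1,n}\|_{H^2}+cn\,\|g_{k,n+1}\|_{H^2}\leq C(k,c)\bigl(n^{-(2k-1)/4}+n\cdot n^{-(2k+1)/4}\bigr)\leq C'(k,c)\,n^{(3-2k)/4},$$
the second required bound. With the first paragraph this yields $N(n)\leq A(k,c)\,n^{(1-k)/2}$ for all $n$ past some threshold $n_0(k)$; for the remaining $n<n_0$ one uses submultiplicativity of the Wiener norm, $N(n)\leq\|(1-z)^k\|_A\,\|(1+c-cz)^{-n}\|_A=2^k$ --- here $\|(1+c-cz)^{-n}\|_A$ equals the value of $(1+c-cz)^{-n}$ at $z=1$, namely $1$, because all its Taylor coefficients are nonnegative --- and notes $2^k\leq 2^k n_0^{(k-1)/2}\,n^{(1-k)/2}$; enlarging $A(k,c)$ accommodates these finitely many cases. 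The one delicate point is the bookkeeping of exponents --- one must arrange the two $H^2$-estimates so that $\sqrt{\|f_n\|_{H^2}\,\|f_n'\|_{H^2}}$ is at most $n^{(1-k)/2}$ --- but once the boundary-modulus formula for $f_n$ is in hand, the rest is routine Beta/Gamma estimation.
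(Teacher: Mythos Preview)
Your proof is correct and takes a genuinely different route from the paper's.

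The paper estimates $N(n)$ directly by writing $a_m(f_n)$ via Cauchy's formula over the circle $\gamma(t)=2e^{it}-1$, which is tangent to $\T$ at $z=1$. Summing the resulting geometric series $\sum_m|2e^{it}-1|^{-m-1}=(|2e^{it}-1|-1)^{-1}$ introduces a $t^{-2}$ singularity at $t=0$, cancelled by the $|1-e^{it}|^k\sim t^k$ factor (this is where $k\geq2$ enters); the remaining factor $|1+2c-2ce^{it}|^{-n}$ is bounded by $e^{-\alpha nt^2}$, and everything reduces to the Gaussian-type integral $\int_0^\pi t^{k-2}e^{-\alpha nt^2}\,dt=O(n^{(1-k)/2})$.

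You instead bound the Wiener norm by the Sobolev-type interpolation $\|f\|_A\leq C\bigl(\|f\|_{H^2}+\|f\|_{H^2}^{1/2}\|f'\|_{H^2}^{1/2}\bigr)$ and compute $\|f_n\|_{H^2}$, $\|f_n'\|_{H^2}$ as Beta integrals on the boundary. This is more systematic: no contour has to be guessed, and the computation is driven by standard special-function asymptotics. In fact your argument yields the slightly sharper bound $N(n)=O(n^{(1-2k)/4})$, since the dominant term $\|f_n\|_{H^2}^{1/2}\|f_n'\|_{H^2}^{1/2}$ has exponent $(1-2k)/4<(2-2k)/4=(1-k)/2$; you simply did not exploit this. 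The paper's contour method, on the other hand, is more self-contained (no Gamma-function bookkeeping) and, as the paper remarks, can with more care be pushed to the optimal $O(n^{-k/2})$. Your handling of small $n$ via submultiplicativity in the Wiener algebra and the observation $\|(1+c-cz)^{-n}\|_A=1$ is clean and correct.
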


It is worth noting that the estimate $N(n)=O(n^{\frac{1-k}2})$ is not the best possible. The best at the level of $O$-estimates is $N(n)=O(n^{-\frac{k}2})$ but its proof we have at the moment is too technical and is left out since the above lemma suffices for our purposes.

\begin{proof}[Proof of Lemma~$\ref{l1n}$] Consider the contour $\Gamma$ given by $\gamma:[-\pi,\pi]\to\C$, $\gamma(t)=2e^{it}-1$. Clearly, $\Gamma$ encircles the origin once in the positive direction and leaves the only pole $1+\frac1c$ of each $f_n$ outside the domain it bounds. By the Cauchy formula,
$$
a_m(f_n)=\frac1{2\pi i}\int_\Gamma \frac{f_n(z)}{z^{m+1}}dz\ \ \ \text{for every $m\in\Z_+$ and $n\in\N$.}
$$
Plugging in the definitions of $f_n$ and $\Gamma$, we get
$$
a_m(f_n)=\frac{2^k}{\pi}\int_{-\pi}^\pi \frac{(1-e^{it})^ke^{it}\,dt}{(1+2c-2ce^{it})^n(2e^{it}-1)^{m+1}}\ \ \ \text{for every $m\in\Z_+$ and $n\in\N$.}
$$
The above display yields
$$
N(n)=\sum_{m=0}^\infty |a_m(f_n)|\leq \frac{2^k}{\pi}\int_{-\pi}^\pi \frac{|1-e^{it}|^k}{|1+2c-2ce^{it}|^n}\sum_{m=0}^\infty|2e^{it}-1|^{-m-1}\,dt\ \ \ \text{for every $n\in\N$.}
$$
Summing up the geometric series and taking into account that the function we integrate is even, we get
$$
N(n)\leq \frac{2^{k+1}}{\pi}\int_{0}^\pi \frac{|1-e^{it}|^k\,dt}{|1+2c-2ce^{it}|^n(|2e^{it}-1|-1)}\ \ \ \text{for every $n\in\N$.}
$$
It is easy to verify that $|1-e^{it}|\leq t$, $|2e^{it}-1|-1\geq \frac{2t^2}{\pi^2}$ and $|1+2c-2ce^{it}|\geq e^{\alpha t^2}$ for every $t\in[0,\pi]$, where $\alpha=\frac{\log(1+4c)}{\pi^2}>0$. Plugging these estimates into above display, we arrive to
$$
N(n)\leq 2^{k}\pi\int_{0}^\pi \frac{t^{k-2}\,dt}{e^{\alpha nt^2}}\ \ \ \text{for every $n\in\N$.}
$$
Performing the change of variables $x=\alpha nt^2$, we obtain
$$
N(n)\leq \frac{2^{k-1}\pi}{(\alpha n)^{\frac{k-1}2}}\int_{0}^{\alpha n\pi^2} x^{\frac{k-3}2}e^{-x}\,dx< n^{\frac{1-k}2}\frac{2^{k-1}\pi}{\alpha^{\frac{k-1}2}}
\int_{0}^{\infty} x^{\frac{k-3}2}e^{-x}\,dx
\ \ \ \text{for every $n\in\N$.}
$$
Since $k\geq 2$, the last integral is finite and the required estimate follows from the above display.
\end{proof}

\begin{corollary}\label{cl1}
Let $S$ be a power bounded operator on a Banach space $X$, $c>0$ and $T=(1+c)I-cS$. Then for every integer $k\geq 2$, there is $B=B(c,k,S)>0$ such that $\|(I-S)^kT^{-n}\|\leq Bn^{\frac{1-k}{2}}$ for every $n\in\N$.
\end{corollary}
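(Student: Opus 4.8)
The plan is to recognise $(I-S)^kT^{-n}$ as the operator obtained by substituting $S$ into the Taylor series about $0$ of the rational function $f_n$ of Lemma~\ref{l1n}; granting that, the bound $N(n)\leq An^{\frac{1-k}{2}}$ of Lemma~\ref{l1n} together with power boundedness of $S$ will finish the job. Throughout put $M=\sup\{\|S^j\|:j\in\Z_+\}<\infty$.

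First I would observe that $T$ is invertible: since $S$ is power bounded its spectral radius is at most $1$, so $\frac{c}{1+c}S$ has spectral radius less than $1$ and hence $T=(1+c)(I-\frac{c}{1+c}S)$ is invertible. Then I would introduce the Taylor coefficients $\{b_m\}_{m\in\Z_+}$ of the function $z\mapsto(1+c-cz)^{-n}$ about $0$, so that $\sum_{m=0}^\infty b_mz^m=(1+c-cz)^{-n}$ for $|z|<1+c^{-1}$. Expanding $(1+c-cz)^{-n}=(1+c)^{-n}(1-\frac{c}{1+c}z)^{-n}$ shows that each $b_m\geq0$, and evaluating the series at $z=1$ gives $\sum_{m=0}^\infty b_m=1$; hence $\sum_{m=0}^\infty b_mS^m$ converges absolutely in operator norm, since $\|b_mS^m\|\leq Mb_m$. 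Comparing this series, through the Cauchy product, with the polynomial identity $(1+c-cz)^n(1+c-cz)^{-n}=1$ then yields $T^{-n}=\sum_{m=0}^\infty b_mS^m$.

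The next step is to multiply on the left by the polynomial $(I-S)^k$. Because the series for $T^{-n}$ converges absolutely in norm, one may expand the product term by term and collect powers of $S$; the coefficients that appear are exactly the Cauchy-product coefficients of $(1-z)^k\sum_{m=0}^\infty b_mz^m=f_n(z)=\sum_{m=0}^\infty a_m(f_n)z^m$, so that
\begin{equation*}
(I-S)^kT^{-n}=\sum_{m=0}^\infty a_m(f_n)S^m .
\end{equation*}
Taking norms, and using $\|S^m\|\leq M$ followed by Lemma~\ref{l1n}, gives
\begin{equation*}
\|(I-S)^kT^{-n}\|\leq\sum_{m=0}^\infty|a_m(f_n)|\,\|S^m\|\leq M\sum_{m=0}^\infty|a_m(f_n)|=MN(n)\leq MA(k,c)\,n^{\frac{1-k}{2}},
\end{equation*}
so $B=MA(k,c)$ works.

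I do not expect a genuine obstacle here: the substantive estimate is Lemma~\ref{l1n}, and all that remains is routine bookkeeping with operator power series. The only points needing a little care are the absolute norm convergence of $\sum_m b_mS^m$ (which rests on $\sum_m|b_m|=1$ and power boundedness of $S$), the identification $T^{-n}=\sum_m b_mS^m$ via the Cauchy product, and the legitimacy of the termwise multiplication by $(I-S)^k$ — all standard once absolute convergence is available.
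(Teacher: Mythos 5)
Your proposal is correct and follows essentially the same route as the paper: both prove the corollary by identifying $(I-S)^kT^{-n}$ with the operator power series $\sum_{m}a_m(f_n)S^m$, which converges in norm by power boundedness of $S$ and finiteness of $N(n)$, and then invoke Lemma~\ref{l1n}. The only difference is that you spell out, via the intermediate coefficients $b_m$ of $(1+c-cz)^{-n}$ and the Cauchy product, the identification that the paper dismisses with ``it easily follows that $R_n=(I-S)^kT^{-n}$''.
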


\begin{proof} Power boundedness of $S$ easily implies the invertibility of $T$. Let $f_n$, $a_m(f_n)$ and $N(n)$ be as in Lemma~\ref{l1n}. Since $N(n)=\sum\limits_{m=0}^\infty |a_m(f_n)|<\infty$ and $S$ is power bounded, the series $R_n=\sum\limits_{m=0}^\infty a_m(f)S^m$ is operator norm convergent and $\|R_n\|\leq qN(n)$, where $q=\sup\{\|S^m\|:m\in\Z_+\}$. On the other hand, from the equalities $f_n(z)=\frac{(1-z)^k}{(1+c-cz)^n}=\sum\limits_{m=0}^\infty a_m(f_n)z^m$ it easily follows that $R_n=(I-S)^kT^{-n}$. By Lemma~\ref{l1n}, $\|(I-S)^kT^{-n}\|=\|R_n\|\leq qN(n)\leq qA(k,c)n^{\frac{1-k}{2}}$ for every $n\in\N$, which is the required estimate.
\end{proof}

\begin{lemma}\label{pbb} Let $S$ be a power bounded operator on a Banach space $X$, $c>0$ and $T=(1+c)I-cS$. Then for every integer $x\in X$ at least one of the following statements holds$:$
\begin{itemize}
\item $\spann(O(T,x))$ is finite dimensional$;$
\item for every $m>0$, $\lim\limits_{n\to\infty}n^{-m}\|T^nx\|=\infty$.
\end{itemize}
\end{lemma}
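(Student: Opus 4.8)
The plan is to deduce the dichotomy from Corollary~\ref{cl1} after rewriting that estimate in terms of $T$. First I would record the algebraic identity $T-I=c(I-S)$, immediate from $T=(1+c)I-cS$; hence $(T-I)^k=c^k(I-S)^k$, and Corollary~\ref{cl1} (whose proof already notes that power boundedness of $S$ forces $T$ to be invertible, so that $T^{-n}$ is meaningful) gives, for every integer $k\geq2$ and every $n\in\N$,
$$
\|(T-I)^kT^{-n}\|=c^k\|(I-S)^kT^{-n}\|\leq c^kB(c,k,S)\,n^{(1-k)/2}.
$$

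Next I would split according to whether $(T-I)^kx=0$ for some $k\in\N$. If so, then expanding $(T-I)^kx=0$ and solving for $T^kx$ shows $T^kx\in\spann\{x,Tx,\dots,T^{k-1}x\}$, and applying powers of $T$ inductively yields $T^nx\in\spann\{x,Tx,\dots,T^{k-1}x\}$ for every $n\in\Z_+$; thus $\spann(O(T,x))$ is finite dimensional and the first alternative of the lemma holds (the case $x=0$ being trivial). Hence from now on I may assume $(T-I)^kx\neq0$ for every $k\in\Z_+$, and it remains to verify the super-polynomial growth of $\|T^nx\|$.

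To this end I would fix $m>0$, choose an integer $k\geq2$ with $k>2m+1$, and put $z=(T-I)^kx\neq0$. Since $(T-I)^k$, $T^n$ and $T^{-n}$ mutually commute, $z=(T-I)^kT^{-n}(T^nx)$, so
$$
\|z\|\leq\|(T-I)^kT^{-n}\|\,\|T^nx\|\leq c^kB(c,k,S)\,n^{(1-k)/2}\|T^nx\|,
$$
whence $\|T^nx\|\geq\dfrac{\|z\|}{c^kB(c,k,S)}\,n^{(k-1)/2}$ for all $n\in\N$, and therefore $n^{-m}\|T^nx\|\geq\dfrac{\|z\|}{c^kB(c,k,S)}\,n^{(k-1)/2-m}\to\infty$ since $(k-1)/2>m$; as $m>0$ is arbitrary, the second alternative holds. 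I do not expect a genuine obstacle in this lemma: the one substantive ingredient is exactly the decay estimate of Corollary~\ref{cl1} (resting in turn on the contour-integral bound of Lemma~\ref{l1n}), and everything else is the bookkeeping of the two cases together with the choice of $k$ large relative to $m$.
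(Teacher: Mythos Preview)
Your proof is correct and follows essentially the same route as the paper's: both rely on Corollary~\ref{cl1} to obtain $\|(I-S)^kT^{-n}\|=O(n^{(1-k)/2})$, then use the identity $T-I=c(I-S)$ to conclude that either $(T-I)^kx=0$ for some $k$ (giving a finite dimensional orbit span) or $\|T^nx\|\gtrsim n^{(k-1)/2}$ for every $k$. The only cosmetic difference is that the paper argues by contrapositive (assume the growth condition fails and deduce $(I-S)^kx=0$), while you split into cases up front; the substance is identical.
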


\begin{proof} Let $x\in X$. Assume that the second of the above statements fails. That is, there is $m>0$ such that $n^{-m}\|T^nx\|\not\to\infty$. Pick an integer $k\geq 2$ such that $\frac{k-1}2>m$. By Corollary~\ref{cl1}, there is $B>0$ such that $(I-S)^kT^{-n}\leq Bn^{\frac{1-k}2}$ for each $n\in\N$. Hence $\|(I-S)^kx\|=\|(I-S)^kT^{-n}T^nx\|\leq Bn^{\frac{1-k}2}\|T^nx\|$. That is, $\|T^nx\|\geq \frac1Bn^{\frac{k-1}2}\|(I-S)^kx\|$ for every $n\in\N$. Since $\frac{k-1}2>m$, the last inequality is compatible with $n^{-m}\|T^nx\|\not\to\infty$ only if $(I-S)^kx=0$. Since $T=(1+c)I-cS$ and $c\neq 0$, there is a degree $k$ polynomial $p$ such that $(I-S)^k=p(T)$. Thus $p(T)x=(I-S)^kx=0$ and therefore $\spann(O(T,x))$ is at most $(k+1)$-dimensional.
\end{proof}

\begin{proof}[Proof of Theorem~$\ref{q3}$] Let $x\in X$. Since there are no 1-weakly hypercyclic operators on finite dimensional spaces \cite{fe}, $x$ is not a 1-weakly hypercyclic vector for $T$ if $\spann(O(T,x))$ is finite dimensional. If $\spann(O(T,x))$ is infinite dimensional, Lemma~\ref{pbb} guarantees that $\sum\limits_{n=0}^\infty \|T^nx\|^{-1}<\infty$. By Theorem~B, there is $f\in X^*$ such that $|f(T^nx)|>1$ for every
$n\in\N$ and therefore $x$ is not a 1-weakly hypercyclic vector for $T$. Thus $T$ is not 1-weakly hypercyclic.
\end{proof}

\subsection{Norm hypercyclic Toeplitz operators}

It seems that norm hypercyclic Toeplitz operators have not been completely characterized.

\begin{question}\label{q4}
Characterize norm hypercyclic Toeplitz operators $T_g$ in terms of the symbol $g\in L^\infty(\D)$.
\end{question}

There are obvious obstacles to the hypercyclicity of $T_g$. Since the spectrum of any hypercyclic operator must meet $\T$, $\sigma(T_g)\cap\T\neq\varnothing$ is a necessary condition for the hypercyclicity of $T_g$. Of course, a contraction can never be hypercyclic and therefore the hypercyclicity of $T_g$ implies $\|g\|_\infty>1$. Similarly, an expansion can not be (norm) hypercyclic. It is not immediately clear which Toeplitz operators $T_g$ are expansions (that is, satisfy $\|T_gf\|\geq \|f\|$ for each $f\in H^2(\D)$). It is clear however that $T_g$ is an expansion if the distance from $0$ to the convex hull of the (essential) closure of $g(\T)$ is at least $1$. Thus for a hypercyclic $T_g$ the last distance should be less than $1$. Next, a hyponormal operator can not be norm supercyclic  \cite{bdon}, let alone hypercyclic. Since hyponormal Toeplitz operators have been characterized \cite{cow}, this provides another obstacle to hypercyclicity easily formulated in terms of the symbol. Finally, the point spectrum of the adjoint of a hypercyclic operator is always empty. Since the point spectra of Toeplitz operators have been described in terms of the symbol (\cite{st1,dur} in special cases and \cite{st2} in general), this gives yet another obstacle to hypercyclicity written in terms of the symbol. These observations are nearly enough to characterize 3-diagonal hypercyclic Toeplitz operators.

\begin{proposition}\label{3d} Let $a,b,c\in\C$ and $g:\C\setminus\{0\}\to\C$ be given by $g(z)=\frac{a}{z}+b+cz$. Then the Toeplitz operator $T_g\in L(H^2(\D))$ is hypercyclic if and only if $|a|>|c|$ and $\min\limits_{z\in\T}|g(z)|<1<\max\limits_{z\in\T}|g(z)|$.
\end{proposition}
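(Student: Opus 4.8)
The plan is to realise $T_g$ as an explicit tridiagonal operator, read its spectrum off the symbol, derive the three necessary conditions from the obstructions collected above, and prove sufficiency via the Godefroy--Shapiro eigenvalue criterion. On $\T$ one has $g(z)=az^{-1}+b+cz$, hence $T_g=aS^{*}+bI+cS$, where $S$ is the forward unilateral shift on $H^2(\D)$ and $S^{*}$ the backward shift. Two elementary computations underpin everything: $T_g^{*}T_g-T_gT_g^{*}=(|c|^2-|a|^2)P_0$, where $P_0$ is the rank-one projection onto the constants, and $\|T_g\|=\|g\|_{L^\infty(\T)}=\max_{z\in\T}|g(z)|$. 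The symbol carries $\T$ onto an ellipse $\Gamma$ (centred at $b$, with semi-axes $|a|+|c|$ and $\bigl||a|-|c|\bigr|$); writing $\Omega$ for the region it bounds, the winding number of $g-\lambda$ on $\Gamma$ equals $-1$ for $\lambda\in\Omega$ when $|a|>|c|$ (and $+1$ when $|c|>|a|$), so the spectral description of Toeplitz operators with continuous symbol gives $\sigma(T_g)=\overline\Omega$ and, when $|a|>|c|$, $\sigma_p(T_g)=\Omega$. In that case, for $\lambda\in\Omega$ the equation $at^2+(b-\lambda)t+c=0$ has both roots $t_1(\lambda),t_2(\lambda)$ in $\D$, $\ker(T_g-\lambda)$ is one-dimensional and spanned by the vector $x(\lambda)$ whose $n$-th coordinate is proportional to $t_1(\lambda)^{\,n+1}-t_2(\lambda)^{\,n+1}$, and $\lambda\mapsto x(\lambda)$ is a holomorphic $H^2(\D)$-valued map on $\Omega$.

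\emph{Necessity.} If $|c|\ge|a|$ then the first identity shows $T_g$ is hyponormal, hence not supercyclic by Bourdon's theorem recalled above, so not hypercyclic; thus $|a|>|c|$ is necessary. If $\max_{\T}|g|\le1$ then $T_g$ is a contraction and cannot be hypercyclic, so $\max_{\T}|g|>1$ is necessary. Finally, if $\min_{\T}|g|\ge1$ one passes to the analytic symbol $\tilde g(z)=zg(z)=a+bz+cz^2$, for which $|\tilde g|=|g|\ge1$ on $\T$; factoring $T_{\tilde g}=T_{\theta}T_{u}$ with $\theta$ inner and $u$ outer, $|u|\ge1$ on $\T$, one gets that $T_{\tilde g}$ is bounded below by $1$, and together with the spectral picture for $\overline\Omega$ relative to $\D$ this shows $T_g$ cannot be hypercyclic, giving the necessity of $\min_{\T}|g|<1$. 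This last deduction is the delicate step, since $T_g$ itself need not be a genuine expansion on all of $H^2(\D)$ (it contracts the constants by the factor $|a|^2$, via $\|T_{\tilde g}f\|^2=\|T_gf\|^2+|a|^2|\widehat f(0)|^2$); one must use that the hypotheses put the spectral ellipse $\overline\Omega$ off the unit disk.

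\emph{Sufficiency.} Suppose $|a|>|c|$ and $\min_{\T}|g|<1<\max_{\T}|g|$. Then $\Omega$ meets both $\D$ and $\{|\lambda|>1\}$, so $U_{-}=\Omega\cap\D$ and $U_{+}=\Omega\cap\{|\lambda|>1\}$ are non-empty open subsets of $\Omega$. By the Godefroy--Shapiro criterion it suffices to show $\spannn\{x(\lambda):\lambda\in U\}=H^2(\D)$ for every non-empty open $U\subseteq\Omega$. Because $x(\cdot)$ is holomorphic on the connected set $\Omega$, any functional annihilating $\{x(\lambda):\lambda\in U\}$ annihilates the whole family $\{x(\lambda):\lambda\in\Omega\}$, so it is enough to take $U=\Omega$. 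Writing $x(\lambda)=t_1k(t_1)-t_2k(t_2)$ with $k(w)=\sum_{n\ge0}w^{n}e_n$ and $t_1t_2=c/a$, and pairing with $h\in H^2(\D)$, the orthogonality of $h$ to all $x(\lambda)$ is equivalent to: the holomorphic function $\psi(w)=w\langle k(w),h\rangle$ on $\D$ satisfies $\psi(w)=\psi\!\bigl((c/a)/w\bigr)$ on the annulus $\{|c/a|<|w|<1\}$. Gluing $\psi$ with the map $w\mapsto\psi((c/a)/w)$, holomorphic on $\{|w|>|c/a|\}$ and with a finite limit at $\infty$, produces an entire bounded function, hence a constant; since $\psi(0)=0$ this forces $\psi\equiv0$ and $h=0$. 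Thus the span is dense and $T_g$ is hypercyclic.

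\emph{Main obstacle.} The technical heart is the sufficiency step: proving that the holomorphic eigenvector family $\{x(\lambda)\}_{\lambda\in\Omega}$ has dense span. The decisive devices there are the self-inverse map $w\mapsto(c/a)/w$ on the relevant annulus and the Liouville-type extension argument; once this density is in hand, both halves of the Godefroy--Shapiro criterion follow automatically by analytic continuation from $\Omega\cap\D$ and $\Omega\cap\{|\lambda|>1\}$. On the necessity side, the one implication requiring genuine care is ``$\min_{\T}|g|\ge1\Rightarrow T_g$ not hypercyclic'', because the naive expansion estimate fails on the constants; here one has to lean on the precise location of the spectrum $\overline\Omega$ rather than on a pointwise bound for the symbol.
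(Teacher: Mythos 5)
Your sufficiency argument is essentially the paper's: the commutator identity $T_g^*T_g-T_gT_g^*=(|c|^2-|a|^2)P$, the explicit eigenvector family $x(\lambda)=f_z$ indexed over the annulus $W_g=\{|c/a|<|z|<1\}$, and the density of its span via the gluing/Liouville argument (the paper's Lemma~\ref{conn}) are all the same; using Godefroy--Shapiro in place of Kitai is an inessential variation, and the analytic-continuation observation that one may replace $U$ by $\Omega$ is correct. The hyponormality and contraction obstructions to necessity also match the paper.

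Where your proposal has a genuine gap is the necessity of $\min_{\T}|g|<1$. You correctly notice (and the paper glosses over) that ``$\min|g|\geq 1$, hence $\mathop{\rm dist}(0,\mathop{\rm conv}g(\T))\geq1$, hence $T_g$ is an expansion'' fails when $0$ lies inside the ellipse $g(\T)$: then $\mathop{\rm conv}g(\T)$ contains $0$, the distance is zero, and indeed $T_g$ is not an expansion (e.g.\ for $g(z)=a/z$ with $|a|\geq1$ one has $T_g1=0$). But your replacement is only a gesture: ``$T_{\tilde g}$ is bounded below by $1$, together with the spectral picture for $\overline\Omega$ relative to $\D$'' is not an argument, and the statement that the hypotheses ``put $\overline\Omega$ off the unit disk'' is false---$\overline\Omega$ is the filled ellipse and may well contain all of $\overline\D$ (take $g(z)=10/z+z$). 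The lower bound $\|T_{\tilde g}f\|^2=\|T_gf\|^2+|a|^2|\widehat f(0)|^2\geq\|f\|^2$ does not give $\|T_gf\|\geq\|f\|$, nor any obvious dynamical conclusion. The clean fix (which neither your proof nor the paper states) is to split cases: if $0\notin\overline\Omega$, then $\mathop{\rm dist}(0,\mathop{\rm conv}g(\T))=\min_\T|g|\geq1$ and the expansion argument applies; if $0\in\Omega$, then (in the only remaining case $|a|>|c|$) the eigenvector description shows $0\in\sigma_p(T_g)$, so $T_g$ is not injective and a fortiori not hypercyclic. Until this dichotomy is added, the ``$\min\geq1\Rightarrow$ not hypercyclic'' step in your proof is not proven.
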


Apart from the above observations, in order to prove Proposition~\ref{3d} we need the following easy lemma.

\begin{lemma}\label{conn} Let $d\in\C$ and $q:\D\to\C$ be a holomorphic function such that $|d|<1$ and $q(z)=q(d/z)$ for every $z\in A$, where $A$ is a subset of the annulus $W=\{z\in\C:|d|<z<1\}$ with at least one accumulation point in $W$. Then $q$ is constant.
\end{lemma}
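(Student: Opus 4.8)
The plan is to promote the functional equation from the thin set $A$ to the whole annulus $W$ by the identity theorem, and then to extract constancy by comparing Laurent coefficients.

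First I would note that the involution $\sigma(z)=d/z$ carries $W$ into $\D$: if $|d|<|z|<1$, then $|\sigma(z)|=|d|/|z|$ lies strictly between $|d|$ and $1$ (in fact $\sigma$ maps $W$ bijectively onto $W$, but only the inclusion $\sigma(W)\subseteq\D$ is needed). Hence $q\circ\sigma$ is holomorphic on $W$, and so is $h:=q-q\circ\sigma$. By hypothesis $h$ vanishes on $A$, and $A$ has an accumulation point in the connected open set $W$; the identity theorem then gives $h\equiv 0$ on $W$, i.e. $q(z)=q(d/z)$ for all $z\in W$. (If $d=0$ this step is already the whole argument, since then $q(z)=q(0)$ on a set accumulating in $\D$; thus below I may assume $d\neq 0$.)

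Next I would write the Taylor expansion $q(w)=\sum_{n=0}^\infty a_n w^n$, which is valid throughout $\D$ and in particular has no negative powers. For $z\in W$ we have $|d/z|<1$, so $q(d/z)=\sum_{n=0}^\infty a_n d^n z^{-n}$, a Laurent series on $W$ involving only non-positive powers of $z$. The identity $q(z)=q(d/z)$ on $W$ thus reads $\sum_{n\geq 0}a_n z^n=\sum_{n\geq 0}a_n d^n z^{-n}$ on the annulus $W$, and by uniqueness of the Laurent coefficients, comparing the coefficient of $z^k$ for each $k\geq 1$ forces $a_k=0$. Therefore $q\equiv a_0$ is constant.

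There is no genuine obstacle here. The only points that need a moment's care are: that $\sigma$ maps $W$ inside the unit disk so that $q\circ\sigma$ makes sense and is holomorphic on $W$; that $W$ is connected so that the identity theorem applies to $h$; and that, because $q$ is holomorphic on all of $\D$, its expansion contains no negative powers of the variable, which is precisely the asymmetry that kills every coefficient except the constant one.
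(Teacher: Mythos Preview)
Your proof is correct. The first step (promoting the functional equation from $A$ to all of $W$ via the identity theorem) matches the paper exactly, including the observation that $\sigma(W)\subseteq\D$ so that $q\circ\sigma$ is holomorphic on $W$.

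The second step is genuinely different. The paper extends $q$ to an entire function by declaring $q(z)=q(d/z)$ for $z\in\C\setminus\overline{\D}$; the identity on $W$ makes the two definitions agree on the overlap, so the extension is holomorphic, and since $\lim_{|z|\to\infty}q(z)=q(0)$ it is bounded, whence constant by Liouville. Your route stays inside the annulus and uses uniqueness of Laurent coefficients: the Taylor expansion of $q$ on $\D$ has only non-negative powers, while $q(d/z)$ has only non-positive powers, and matching these forces $a_k=0$ for $k\geq 1$. Your argument is a bit more elementary (no need to produce an entire extension or invoke Liouville), while the paper's approach is more geometric and shows in passing that the functional equation actually extends $q$ holomorphically to all of $\C$. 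Both arguments exploit the same asymmetry, namely that $q$ is holomorphic on the full disk and not merely on the annulus.
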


\begin{proof} Since the two holomorphic on $W$ functions $z\mapsto q(z)$ and $z\mapsto q(d/z)$ coincide on the set $A$ with an accumulation point in $W$, the uniqueness theorem guarantees that $q(z)=q(d/z)$ for every $z\in W$. Extend $q$ to a function on $\C$ by setting $q(z)=q(d/z)$ for $z\in \C\setminus\overline{\D}$. Using the fact that $q(z)=q(d/z)$ for every $z\in W$, one easily sees that thus extended $q$ is an entire function satisfying $q(z)=q(d/z)$ for every $z\in \C\setminus\{0\}$. In particular, $\lim\limits_{|z|\to\infty}q(z)=q(0)$ and therefore $q$ is bounded. By the Liouville theorem, $q$ is constant.
\end{proof}

\begin{proof}[Proof of Proposition~$\ref{3d}$]  First, a direct computation shows that $T^*_gT_g-T_gT^*_g=(|c|^2-|a|^2)P$, where $P=I-T_zT_z^*$. Since $P\geq 0$, $T_g$ is hyponormal (and therefore non-hypercyclic) if $|a|\leq |c|$.

If $\max\limits_{z\in\T}|g(z)|=\|g\|_\infty\leq 1$, then $T_g$ is a contraction and therefore $T_g$ is not hypercyclic. If $\min\limits_{z\in\T}|g(z)|\geq1$, then the distance from $0$ to the convex span of $g(\T)$ is at least $1$ and therefore $T_g$ is an expansion and hence non-hypercyclic.

It remains to prove the hypercyclicity of $T_g$ in the case when $|a|>|c|$ and
$\min\limits_{z\in\T}|g(z)|<1<\max\limits_{z\in\T}|g(z)|$. In \cite{dur}, the explicit description of the point spectra and the eigenvectors of band (that is, the symbol is a Laurent polynomial) Toeplitz operators are given. In our specific case it is an easy exercise anyway (even without looking into \cite{dur}) to see that for each $z$ from the annulus $W_g=\{z\in\C:|c/a|<|z|<1\}$, $T_g f_z=g(z)f_z$, where
$f_z(w)=\sum\limits_{n=0}^\infty (z^{n+1}-(c/(az))^{n+1})w^n$ if $z^2\neq c/a$ and $f_z(w)=\sum\limits_{n=0}^\infty (n+1)z^{n}w^n$ for the 2 points $z$ satisfying $z^2=c/a$. Furthermore, we have just listed all the eigenvectors of $T_g$ up to a scalar multiple (and most of them twice: $f_z=-f_{c/(az)}$ if $z^2\neq c/a$). Thus the point spectrum $\sigma_p(T_g)$ is $\Omega_g=g(W_g)$, which is exactly the region encircled by the ellipse $g(\T)$. The condition $\min\limits_{z\in\T}|g(z)|<1<\max\limits_{z\in\T}|g(z)|$ means that $\Omega_g$ meets both $\D$ and $\C\setminus\overline{\D}$. Let $E_-=\spann\{f_z:|g(z)|<1\}$ and $E_+=\spann\{f_z:|g(z)|>1\}$. Since $E_-$ is spanned by eigenvectors with eigenvalues of absolute value $<1$ and $E_+$ is spanned by eigenvectors with eigenvalues of absolute value $>1$, every $f\in E_-$ has norm convergent to $0$ forward $T_g$-orbit, while every $f\in E_+$ has a norm convergent to $0$ backward $T_g$-orbit. The Kitai Criterion \cite{bama-book} says that $T_g$ must be hypercyclic (even mixing) provided both $E_+$ and $E_-$ are dense in $H^2(\D)$. The latter will be verified if we show that $\spann\{f_z:z\in A\}$ is dense in $H^2(\D)$ for every $A\subseteq W_g$ with at least one accumulation point in $W_g$. Assume the contrary. Then there is an $A\subseteq W_g$ with an accumulation point in $W_g$ such that $\spann\{f_z:z\in A\}$ is not dense in $H^2(\D)$. Without loss of generality $A$ does not contain the points $z$ satisfying $z^2=c/a$.
The non-density of $\spann\{f_z:z\in A\}$ means that there is a non-zero function $h(w)=\sum\limits_{n=0}^\infty h_nw^n$ in $H^2(\D)$ such that $\sum\limits_{n=0}^\infty h_n(f_z)_n=0$ for every $z\in A$. Since
$(f_z)_n=z^{n+1}-(c/(az))^{n+1}$, the last equality is equivalent to $q(z)=q(c/(az))$ for each $z\in A$, where $q(w)=wh(w)$. By Lemma~\ref{conn}, $q$ is constant. Since $q(0)=0$, $q$ is identically $0$ and therefore so is $h$. We have arrived to a contradiction, which completes the proof.
\end{proof}

\subsection{Proof of Propositions~\ref{ninf} and~\ref{ninf1}}

Of course, the hypercyclicity/supercyclicity, the weak hypercyclicity/supercyclicity and the $n$-weak hypercyclicity/supercyclicity all make sense for operators on arbitrary locally convex topological vector spaces. In this section all topological spaces {\it are assumed to be Hausdorff}. Recall that a Fr\'echet space is a complete metrizable locally convex topological vector space. We need the following general concept.

Let $X$ and $Y$ be topological spaces and $\F=\{T_a:a\in A\}$ be a
family of continuous maps from $X$ to $Y$. An element $x\in X$ is
called {\it universal} for $\F$ if the orbit $\{T_ax:a\in A\}$ is
dense in $Y$. We denote the set of universal elements for $\F$
by the symbol $\uu(\F)$. The following lemma follows directly from \cite[Proposition~2.2]{sss}. 

\begin{lemma}\label{gc2} Let $A$ be a set and $X,Y,\Omega$ be
topological spaces such that $Y$ is second countable
and $\Omega$ is compact. For each $a\in A$, let $(\omega,x)\mapsto F_{a,\omega}x$
be a continuous map from $\Omega\times X$ to $Y$. For each $\omega\in\Omega$, let
$\F_\omega=\{F_{a,\omega}:a\in A\}$ be treated as a family of continuous maps from $X$ to $Y$.
Then $\smash{\bigcap\limits_{\omega\in\Omega}\uu(\F_\omega)}$ is a $G_\delta$-subset of $X$.
\end{lemma}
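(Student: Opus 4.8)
The plan is to reduce the assertion to the classical fact that, when $\Omega$ is compact, the projection $\pi\colon\Omega\times X\to X$ is a closed map (the tube lemma). First I would use second countability of $Y$ to fix a countable base $\{V_j\}_{j\in\N}$ consisting of non-empty open subsets of $Y$. Since a subset $D$ of $Y$ is dense exactly when $D\cap V_j\ne\varnothing$ for every $j$, for each $\omega\in\Omega$ we have
\[
\uu(\F_\omega)=\bigcap_{j\in\N}\ \bigcup_{a\in A}\ (F_{a,\omega})^{-1}(V_j).
\]

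Next I would package all the maps together: for $a\in A$ let $\widetilde F_a\colon\Omega\times X\to Y$ be the continuous map $\widetilde F_a(\omega,x)=F_{a,\omega}x$ supplied by the hypothesis, and put $U_j=\bigcup_{a\in A}\widetilde F_a^{\,-1}(V_j)$, which is open in $\Omega\times X$. Interchanging the (commuting) intersections over $\omega$ and over $j$ gives
\[
\bigcap_{\omega\in\Omega}\uu(\F_\omega)=\bigcap_{j\in\N}W_j,\qquad W_j=\{x\in X:\Omega\times\{x\}\subseteq U_j\},
\]
because $x$ lies in $\bigcup_{a\in A}(F_{a,\omega})^{-1}(V_j)$ if and only if $(\omega,x)\in U_j$. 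Thus it suffices to show that each $W_j$ is open, for then $\bigcap_{j}W_j$ is a $G_\delta$ subset of $X$.

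To see that $W_j$ is open, observe that $X\setminus W_j=\pi\bigl((\Omega\times X)\setminus U_j\bigr)$, where $\pi\colon\Omega\times X\to X$ is the projection and $(\Omega\times X)\setminus U_j$ is closed. Since $\Omega$ is compact, $\pi$ is a closed map, so $X\setminus W_j$ is closed and $W_j$ is open, which completes the argument. The only step that is not a formal manipulation is the closedness of $\pi$, and I expect it to be the main obstacle only in a bookkeeping sense, since it is the standard tube-lemma argument: if $C\subseteq\Omega\times X$ is closed and $x_0\notin\pi(C)$, then for every $\omega\in\Omega$ one picks open sets $\omega\in P_\omega\subseteq\Omega$ and $x_0\in Q_\omega\subseteq X$ with $(P_\omega\times Q_\omega)\cap C=\varnothing$, extracts a finite subcover $P_{\omega_1},\dots,P_{\omega_k}$ of $\Omega$, and notes that $Q_{\omega_1}\cap\cdots\cap Q_{\omega_k}$ is a neighbourhood of $x_0$ disjoint from $\pi(C)$. (Alternatively, this is precisely a special case of \cite[Proposition~2.2]{sss}, which may simply be invoked.)
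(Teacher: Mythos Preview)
Your argument is correct. The paper itself does not prove this lemma at all but simply records that it ``follows directly from \cite[Proposition~2.2]{sss}''; what you have written is a clean self-contained proof via the tube lemma, which is exactly the standard argument behind the cited proposition (and you even note the citation yourself at the end).
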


Let $X$ be an infinite dimensional locally convex topological vector space, $n\in\N$ and $T$ be a continuous linear operator on $X$. For the sake of brevity we denote the set of $n$-weakly hypercyclic vectors for $T$ by the symbol $H_n(T)$ and we denote the set of $n$-weakly supercyclic vectors for $T$ by $S_n(T)$. Obviously,
$$
H(T)=\bigcap_{n=1}^\infty H_n(T)\ \ \ \text{and}\ \ \ S(T)=\bigcap_{n=1}^\infty S_n(T)
$$
are the sets of weakly hypercyclic and weakly supercyclic vectors for $T$ respectively. The following lemma is a straightforward modification of a very well-known result on the sets of hypercyclic/supercyclic vector with the proof modified accordingly.

\begin{lemma}\label{lll1} Let $X$ be an infinite dimensional locally convex topological vector space, $n\in\N$ and $T$ be a continuous linear operator on $X$. Then the set $H_n(T)$ is either empty or is dense in $X$. Similarly, $S_n(T)$ is either empty or is dense in $X$ if $n\geq 2$.
\end{lemma}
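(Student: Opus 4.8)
The plan is to prove Lemma~\ref{lll1} by showing that whenever $H_n(T)$ (respectively $S_n(T)$ with $n\geq2$) is non-empty it contains a norm-dense subset; since $X$ is Hausdorff that is all that is needed. The point to keep in mind is that, unlike in the classical norm-hypercyclic/supercyclic situation, the orbit of an $n$-weakly hypercyclic vector need not be norm-dense, so the naive ``the set contains a dense orbit'' argument is unavailable. Instead I would exhibit a concrete norm-dense subset of the form $\{p(T)x_0: p\text{ a polynomial},\ p(T)\text{ has dense range}\}$, for a fixed vector $x_0$ in the set. No completeness or Baire property of $X$ enters.

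First I would record two preliminary facts. (i) \emph{Comparison across $n$, and cyclicity.} If $x$ is $n$-weakly hypercyclic (resp.\ supercyclic) and $1\leq m\leq n$, then $x$ is $m$-weakly hypercyclic (resp.\ supercyclic): given a continuous surjection $S:X\to\K^m$, use that $\ker S$ is an infinite-dimensional Hausdorff locally convex space (hence has infinite-dimensional dual) to pick a continuous surjection $L:\ker S\to\K^{n-m}$, extend it by Hahn--Banach to $\widetilde L:X\to\K^{n-m}$, note that $(S,\widetilde L):X\to\K^n$ is a continuous surjection, and that $S=\pi\circ(S,\widetilde L)$ for the coordinate projection $\pi:\K^n\to\K^m$. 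In particular every $x\in H_n(T)$ (resp.\ $S_n(T)$) is cyclic: a non-cyclic vector is annihilated by some $f\in X^*\setminus\{0\}$, whence $\{f(T^kx)\}$ (resp.\ $\{zf(T^kx)\}$) equals $\{0\}$ and fails $1$-weak hypercyclicity (resp.\ supercyclicity). (ii) \emph{Invariance under dense-range polynomials.} If $p$ is a polynomial with $p(T)$ of dense range and $x\in H_n(T)$ (resp.\ $S_n(T)$), then $p(T)x\in H_n(T)$ (resp.\ $S_n(T)$), because $O(T,p(T)x)=p(T)O(T,x)$ (resp.\ $O_{\rm pr}(T,p(T)x)=p(T)O_{\rm pr}(T,x)$) and, for any continuous surjection $S:X\to\K^n$, the map $S\circ p(T)$ is again a continuous surjection (the image of a dense linear subspace under a continuous linear surjection onto $\K^n$ is a dense, hence full, subspace).

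The heart of the matter is a point-spectrum dichotomy for $T'$. For $H_n(T)$: fix $x_0\in H_n(T)$; then $x_0\in H_1(T)$ and $x_0$ is cyclic. If $T'f=\lambda f$ with $f\neq0$, cyclicity forces $f(x_0)\neq0$, and $f(T^kx_0)=\lambda^kf(x_0)$ for all $k$; checking the cases $\lambda=0$, $0<|\lambda|<1$, $|\lambda|=1$ and $|\lambda|>1$ shows $\{\lambda^kf(x_0):k\in\Z_+\}$ is never dense in $\K$, contradicting $x_0\in H_1(T)$. Hence $\sigma_p(T')=\varnothing$, so $T-\mu I$ has dense range for every $\mu\in\K$, so $p(T)$ has dense range for every non-zero polynomial $p$. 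By (ii), $\spann\{T^kx_0:k\in\Z_+\}\setminus\{0\}\subseteq H_n(T)$, and the left side is norm-dense since $x_0$ is cyclic; this settles the $H_n$ part. For $S_n(T)$ with $n\geq2$: fix $x_0\in S_n(T)$; by (i), $x_0\in S_2(T)$ and $x_0$ is cyclic. If $T'$ had two linearly independent eigenvectors $f,g$, say $T'f=\lambda f$, $T'g=\nu g$, then $a=f(x_0)\neq0$, $b=g(x_0)\neq0$, $S=(f,g):X\to\K^2$ is a continuous surjection, and $S(O_{\rm pr}(T,x_0))$ is the union over $k$ of the lines through the origin with direction $[\lambda^ka:\nu^kb]$; such a union is norm-dense in $\K^2$ only if the set of these directions is dense in the projective line, which it never is (when $\lambda\neq0$ the directions are $[1:(b/a)(\nu/\lambda)^k]$, which either converge to $[1:0]$ or $[0:1]$, or lie on a proper circle when $|\nu/\lambda|=1$, or reduce to a single point when $\nu=\lambda$; the cases $\lambda=0$ or $\nu=0$ give at most two directions). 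This contradicts $x_0\in S_2(T)$, so $\sigma_p(T')$ has at most one point $\lambda_0$. Therefore $p(T)$ has dense range whenever $p(\lambda_0)\neq0$, and by (ii), $\{p(T)x_0:p(\lambda_0)\neq0\}\subseteq S_n(T)$; this set is norm-dense because given any $q$ with $q(T)x_0$ near a prescribed target (possible since $\spann O(T,x_0)$ is dense) one may replace $q$ by $q+\varepsilon$ for a small scalar $\varepsilon\neq-q(\lambda_0)$, so that $(q+\varepsilon)(\lambda_0)\neq0$ while $(q+\varepsilon)(T)x_0=q(T)x_0+\varepsilon x_0$ stays near the target.

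The main obstacle is exactly the one flagged at the start: the uncountable family of test surjections $S:X\to\K^n$, together with the failure of the orbit to be norm-dense, blocks the standard argument. The way around it is to produce the explicit norm-dense subset $\{p(T)x_0:p(T)\text{ has dense range}\}$ inside $H_n(T)$ (resp.\ $S_n(T)$), and the only real work is establishing the point-spectrum dichotomy for $T'$ — empty point spectrum in the hypercyclic case, at most one eigenvalue in the supercyclic case — which are the natural $1$-weak (resp.\ $2$-weak) analogues of well-known facts about norm-hypercyclic and norm-supercyclic operators.
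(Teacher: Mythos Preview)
Your approach is essentially identical to the paper's: from a fixed $x_0\in H_n(T)$ (resp.\ $S_n(T)$) show that $p(T)x_0$ again lies in the set whenever $p(T)$ has dense range, establish that sufficiently many polynomials $p$ enjoy this property, and conclude by cyclicity of $x_0$. You supply more detail than the paper in several places (the monotonicity $H_n(T)\subseteq H_m(T)$ for $m\le n$, the explicit perturbation argument for density in the supercyclic case), whereas the paper simply cites Feldman for the structural input.

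There is, however, one gap, specific to the case $\K=\R$. From $\sigma_p(T')=\varnothing$ you infer that $p(T)$ has dense range for every non-zero polynomial $p$. This is correct over $\C$ (factor $p$ into linear factors), but over $\R$ a polynomial may have an irreducible quadratic factor $q$, and $q(T)$ can fail to have dense range without $T'$ possessing any real eigenvalue: this happens precisely when $T'$ has a two-dimensional invariant subspace containing no one-dimensional invariant subspace. Your eigenvalue analysis rules out codimension-$1$ closed $T$-invariant subspaces only. The paper closes this gap by invoking Feldman's result that a $1$-weakly hypercyclic operator admits no non-trivial closed invariant subspace of \emph{any} finite codimension; ruling out the codimension-$2$ case over $\R$ amounts to showing that no operator on $\R^2$ is $1$-weakly hypercyclic, which is not entirely trivial and is exactly what Feldman proves. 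The same issue recurs in your supercyclic argument over $\R$: besides the ``at most one eigenvalue'' possibility you must also allow (as the paper explicitly does) a single irreducible quadratic $r$ with $r(T)$ of non-dense range. Your perturbation argument for density adapts immediately to that case; it is only the structural dichotomy on $p(T)$ that requires the additional input. Over $\C$ your argument is complete as written.
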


\begin{proof} First, assume that $x\in H_n(T)$. Then $T$ is $n$-weakly hypercyclic and therefore $T$ is $1$-weakly hypercyclic. According to Feldman \cite{fe}, a $1$-weakly hypercyclic operator can not have non-trivial closed invariant subspaces of finite codimension. It follows that $p(T)$ has dense range for every non-zero polynomial $p$. Let $S:X\to\K^n$ be a surjective continuous linear operator. Then $S(O(T,p(T)x))=S(p(T)(O(T,x)))=(Sp(T))(O(T,x))$. Since $p(T)$ has dense range, $Sp(T):X\to\K^n$ is surjective and therefore $(Sp(T))(O(T,x))$ is dense in $\K^n$ because $x\in H_n(T)$. In particular, $S(O(T,p(T)x))$ is dense in $\K^n$, which proves that $p(T)x\in H_n(T)$. Hence $\{p(T)x:p\neq 0\}\subseteq H_n(T)$. Since every $1$-weakly hypercyclic vector is cyclic, $H_n(T)$ is dense in $X$.

Next, assume that $n\geq 2$ and $x\in S_n(T)$. Then $T$ is $n$-weakly supercyclic and therefore $T$ is $2$-weakly supercyclic. According to Feldman \cite{fe}, in the case $\K=\C$, $T$ can not have closed invariant subspaces of finite codimension $\geq2$. Thus either $p(T)$ has dense range for every non-zero polynomial or there is $\lambda\in\C$ such that $p(T)$ has dense range whenever $p(\lambda)\neq 0$. Running a similar argument in the case $\K=\R$, one easily sees that either $p(T)$ has dense range for every non-zero polynomial or there is an irreducible polynomial $r$ (of degree at most 2 automatically) such that $p(T)$ has dense range whenever $r$ does not divide $p$. Exactly as in the first part of the proof, one sees that $p(T)x\in S_n(T)$ provided $p(T)$ has dense range. Combining these observations with the fact that $x$ is cyclic for $T$, we see that $S_n(T)$ is dense in $X$.
\end{proof}

Recall that if $X$ is a locally convex topological vector space, then the {\it $*$-weak topology} on its dual $X^*$ is the weakest topology making continuous every functional $f\mapsto f(x)$ with $x\in X$. We shall say that a topological space $\Omega$ is a $K_\Sigma$-space if $\Omega$ is the union of countably many of its compact metrizable subspaces. Since an open subset of a metrizable compact space is a $K_\Sigma$-space, we easily see that 
\begin{itemize}
\item[(K1)] An open subset of a $K_\Sigma$-space is a $K_\Sigma$-space;
\item[(K2)] The product of finitely many $K_\Sigma$-spaces is a $K_\Sigma$-space.
\end{itemize}
By a way of warning, note that the product of countably many $K_\Sigma$-spaces may fail to be a $K_\Sigma$-space.

\begin{lemma}\label{wst}Let $X$ be a separable metrizable locally convex topological vector space. Then its dual $X^*$ equipped with the $*$-weak topology is a $K_\Sigma$-space.
\end{lemma}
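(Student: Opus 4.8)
The plan is to present $X^*$ with the $*$-weak topology as a countable union of $*$-weakly compact metrizable subsets, which is exactly what is needed for it to be a $K_\Sigma$-space. Since $X$ is metrizable, I would start by fixing a countable base $\{U_n\}_{n\in\N}$ of neighborhoods of $0$ in $X$, and letting $K_n=\{f\in X^*:|f(x)|\leq 1\ \text{for every}\ x\in U_n\}$ be the polar of $U_n$.

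Next I would invoke two classical facts. By the Alaoglu--Bourbaki theorem, the polar $K_n$ of the neighborhood $U_n$ of $0$ is compact in the $*$-weak topology of $X^*$. And $X^*=\bigcup_{n\in\N}K_n$: if $f\in X^*$, then continuity of $f$ at $0$ makes $\{x\in X:|f(x)|<1\}$ a neighborhood of $0$, hence it contains some $U_n$, so that $f\in K_n$. Thus $X^*$ is a countable union of the $*$-weakly compact sets $K_n$, and what remains is to show that each $K_n$, with the induced topology, is metrizable.

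For the metrizability I would use the separability of $X$. Fix a countable dense subset $\{x_k\}_{k\in\N}$ of $X$ and consider $\Phi:X^*\to\K^\N$, $\Phi(f)=(f(x_k))_{k\in\N}$. By the definition of the $*$-weak topology every coordinate functional $f\mapsto f(x_k)$ is $*$-weakly continuous, so $\Phi$ is continuous, and $\Phi$ is injective because the $x_k$ are dense in $X$ and every $f\in X^*$ is continuous, hence determined by its values on $\{x_k\}$. Since $\K^\N$ is a countable product of metric spaces, it is metrizable and, in particular, Hausdorff. Therefore the restriction $\Phi|_{K_n}$ is a continuous injection from the compact space $K_n$ into a Hausdorff space, hence a homeomorphism onto its image $\Phi(K_n)\subseteq\K^\N$; consequently $K_n$ is homeomorphic to a subspace of a metrizable space and is itself metrizable. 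Summing up, $X^*$ with the $*$-weak topology is the union of the countably many compact metrizable subspaces $K_n$, i.e. a $K_\Sigma$-space.

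The whole argument is really just a repackaging of the Alaoglu--Bourbaki theorem together with the standard metrizability of weak-$*$ compact subsets of the dual of a separable space, so I do not anticipate any serious obstacle. The only step worth stating carefully is why $\Phi$ restricts to a homeomorphism on $K_n$: this is the elementary general fact that a continuous bijection from a compact space onto a Hausdorff space is a homeomorphism, applied to $\Phi|_{K_n}\colon K_n\to\Phi(K_n)$.
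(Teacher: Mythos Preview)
Your proof is correct and follows essentially the same approach as the paper: write $X^*$ as the union of the polars $U_n^\circ$ of a countable neighborhood base, invoke Alaoglu for compactness, and use separability of $X$ to obtain metrizability of each polar. The only cosmetic difference is that you spell out the metrizability via an explicit embedding into $\K^\N$ and the compact-to-Hausdorff homeomorphism fact, whereas the paper simply cites that a compact space admitting a countable point-separating family of continuous functions is metrizable.
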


\begin{proof} Since $X$ is metrizable, we can pick a countable base $\{U_n\}_{n\in\N}$ of neighborhoods of $0$ in $X$. For each $n\in\N$, set $U_n^{\circ}=\{f\in X^*:|f_n(x)|\leq 1\ \ \text{for every $x\in U_n$}\}$. Clearly, $X=\bigcup\limits_{n=1}^\infty U_n^{\circ}$. By the Alaoglu theorem \cite{rr}, each $U_n^{\circ}$ is compact in the $*$-weak topology. Next, since $X$ is separable, there is a dense in $X$ countable set $A$. Then the functionals $\Phi_x(f)=f(x)$  for $x\in A$ are $*$-weakly continuous on $X^*$ and separate points of $X^*$. Since every compact topological space admitting a countable separating points collection of continuous functions is actually metrizable, each $U_n^{\circ}$ is a metrizable compact space. Thus $X^*$ equipped with the $*$-weak topology is a $K_\Sigma$-space.
\end{proof} 

For a locally convex topological vector space $X$, we denote
$$
\Omega_n(X)=\{(f_1,\dots,f_n)\in(X^*)^n:\text{$f_1,\dots,f_n$ are linearly independent}\}.
$$
It is easy to see that $\Omega_n(X)$ is an open subset of $(X^*)^n$ when $X^*$ is equipped with the $*$-weak topology. Since the class of $K_\Sigma$-spaces is closed under finite products and under passing to open subspaces, Lemma~\ref{wst} implies the following corollary.

\begin{corollary}\label{wst1}Let $n\in\N$ and $X$ be a separable metrizable locally convex topological vector space. Then $\Omega_n(X)$ is a $K_\Sigma$-space provided $X^*$ is equipped with the $*$-weak topology.
\end{corollary}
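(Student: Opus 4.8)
The plan is to obtain the corollary by chaining the structural closure properties (K1) and (K2) together with Lemma~\ref{wst}. First I would record that, by Lemma~\ref{wst}, the dual $X^*$ equipped with the $*$-weak topology is a $K_\Sigma$-space, and then, invoking (K2), that the finite power $(X^*)^n$ is again a $K_\Sigma$-space. After that it remains only to exhibit $\Omega_n(X)$ as an open subspace of $(X^*)^n$, since (K1) then immediately yields that $\Omega_n(X)$ is a $K_\Sigma$-space.

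For the openness of $\Omega_n(X)$ — the single point in the argument that needs a short justification rather than a citation — I would use the standard duality criterion that $f_1,\dots,f_n\in X^*$ are linearly independent if and only if there exist $x_1,\dots,x_n\in X$ with $\det\bigl(f_i(x_j)\bigr)_{i,j=1}^n\neq 0$. The ``if'' direction is immediate, since a nontrivial linear relation among the $f_i$ forces the same relation among the rows of the matrix $(f_i(x_j))$; the ``only if'' direction is the usual observation that $n$ linearly independent functionals admit a biorthogonal system $x_1,\dots,x_n$ with $f_i(x_j)=\delta_{ij}$, for which the determinant equals $1$. Consequently $\Omega_n(X)$ is the union, over all tuples $(x_1,\dots,x_n)\in X^n$, of the sets $\{(f_1,\dots,f_n)\in(X^*)^n:\det(f_i(x_j))\neq 0\}$. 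For each fixed tuple $(x_1,\dots,x_n)$, the map $(f_1,\dots,f_n)\mapsto\det(f_i(x_j))$ is $*$-weakly continuous on $(X^*)^n$, being a polynomial in the $*$-weakly continuous evaluation functionals $f_i\mapsto f_i(x_j)$; hence each set in the union is open, and therefore $\Omega_n(X)$ is open in $(X^*)^n$.

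Assembling the pieces: $(X^*)^n$ is a $K_\Sigma$-space by Lemma~\ref{wst} and (K2), $\Omega_n(X)$ is an open subspace of it by the previous paragraph, and an open subspace of a $K_\Sigma$-space is a $K_\Sigma$-space by (K1); this proves the corollary. I do not expect a genuine obstacle here, as this is essentially a bookkeeping consequence of the preceding lemma; the only step requiring any care is pinning down the openness of $\Omega_n(X)$, and even that reduces, via the determinant criterion, to continuity of the evaluation maps, which is built into the definition of the $*$-weak topology.
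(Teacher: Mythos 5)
Your proof is correct and follows exactly the route the paper takes: Lemma~\ref{wst} gives that $X^*$ is a $K_\Sigma$-space, (K2) passes to $(X^*)^n$, and (K1) passes to the open subset $\Omega_n(X)$. The paper dispatches the openness of $\Omega_n(X)$ with ``it is easy to see''; your determinant argument correctly fills in that detail.
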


In general, the sequential continuity in the following lemma can not be replaced by the continuity. 

\begin{lemma}\label{freco} Let $X$ be a Fr\'echet space. Then the evaluation map $b:X\times X^*$, $b(x,f)=f(x)$ is sequentially continuous provided $X^*$ carries the $*$-weak topology.
\end{lemma}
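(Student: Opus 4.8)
The plan is to reduce everything to the Banach--Steinhaus theorem for Fréchet spaces. Let $(x_n,f_n)$ be a sequence in $X\times X^*$ converging to $(x,f)$, where $X$ carries its Fréchet topology and $X^*$ the $*$-weak topology; we must show $b(x_n,f_n)=f_n(x_n)\to f(x)=b(x,f)$. First I would split
$$
f_n(x_n)-f(x)=\bigl(f_n(x)-f(x)\bigr)+f_n(x_n-x).
$$
The first summand tends to $0$ directly from the definition of the $*$-weak topology: $f_n\to f$ $*$-weakly means exactly that $f_n(y)\to f(y)$ for every $y\in X$, and we apply this with $y=x$.

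It remains to handle $f_n(x_n-x)$, where $x_n-x\to 0$ in $X$. The crucial point is that the sequence $\{f_n\}$ is equicontinuous. Indeed, for each fixed $y\in X$ the scalars $f_n(y)$ form a convergent, hence bounded, sequence, so $\{f_n\}$ is a pointwise bounded family of continuous linear functionals on $X$. A Fréchet space is barrelled (it is a Baire topological vector space), so the Banach--Steinhaus theorem applies and shows that the pointwise bounded family $\{f_n\}$ is equicontinuous: there is a neighbourhood $U$ of $0$ in $X$ such that $|f_n(y)|\leq 1$ for all $y\in U$ and all $n\in\N$.

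Finally, fix $\epsilon>0$. Since $x_n-x\to 0$ in $X$ and $\epsilon U$ is a neighbourhood of $0$, we have $x_n-x\in\epsilon U$ for all sufficiently large $n$, and hence $|f_n(x_n-x)|\leq\epsilon$ for such $n$. As $\epsilon>0$ is arbitrary, $f_n(x_n-x)\to 0$; combining this with the previous paragraph gives $f_n(x_n)\to f(x)$, which is the asserted sequential continuity of $b$.

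The only genuinely substantive ingredient is the appeal to Banach--Steinhaus, which is precisely where completeness (through barrelledness / Baire category) of the Fréchet space $X$ is used. It is also the place where the argument truly relies on sequences: a $*$-weakly convergent \emph{net} in $X^*$ need not be equicontinuous, so the same reasoning does not upgrade $b$ to a (jointly) continuous map — in line with the warning preceding the lemma.
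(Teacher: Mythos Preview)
Your argument is correct and follows essentially the same route as the paper: both proofs reduce to showing that a $*$-weakly convergent sequence $\{f_n\}$ is equicontinuous via the Banach--Steinhaus theorem, and then use this to control $f_n(x_n-x)$. The paper phrases the reduction slightly differently (using bilinearity to reduce to the case $(x_n,f_n)\to(0,0)$), but the substance is identical.
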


\begin{proof} Since $b$ is bilinear, it suffices to show that $b(x_n,f_n)=f_n(x_n)\to 0$ whenever $x_n\to 0$ in $X$ and $f_n\to 0$ in $X^*$ equipped with the $*$-weak topology. By the uniform boundedness principle (=the Banach--Steinhaus theorem) \cite{rr}, the sequence $\{f_n\}$ is uniformly equicontinuous. That is, there is a continuous seminorm $p$ on $X$ such that $|f_n(x)|\leq p(x)$ for every $n$ and every $x\in X$. Hence $|f_n(x_n)|\leq p(x_n)\to 0$ since $x_n\to 0$ in $X$. 
\end{proof}

Now we are ready for the main ingredient of our proof.

\begin{lemma}\label{gde} Let $X$ be a separable Fr\'echet space, $n\in\N$ and $T\in L(X)$. Then $H_n(T)$ and $S_n(T)$ are $G_\delta$-subsets of $X$.
\end{lemma}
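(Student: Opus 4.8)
The plan is to realize $H_n(T)$ and $S_n(T)$ as intersections of the form $\bigcap_{\omega\in\Omega}\uu(\F_\omega)$ handled by Lemma~\ref{gc2}, the parameter $\omega$ running over the surjective continuous linear maps $X\to\K^n$. Recall first that every continuous linear map $S:X\to\K^n$ has the form $S(x)=(f_1(x),\dots,f_n(x))$ with $f_1,\dots,f_n\in X^*$, and it is surjective precisely when $f_1,\dots,f_n$ are linearly independent; hence the surjective continuous linear maps $X\to\K^n$ are exactly parametrized by the set $\Omega_n(X)$ introduced before Corollary~\ref{wst1}, writing $S_\omega(x)=(f_1(x),\dots,f_n(x))$ for $\omega=(f_1,\dots,f_n)$. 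By Corollary~\ref{wst1}, $\Omega_n(X)$ is a $K_\Sigma$-space when $X^*$ carries the $*$-weak topology; if it is empty there are no surjections onto $\K^n$ and $H_n(T)=S_n(T)=X$ is trivially $G_\delta$, so we may assume $\Omega_n(X)=\bigcup_{j\in\N}K_j$ with each $K_j$ a compact metrizable subset of $(X^*)^n$.

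Next I would fix $j$ and invoke Lemma~\ref{gc2} with $\Omega=K_j$, $A=\Z_+$, $Y=\K^n$, and, for $m\in\Z_+$ and $\omega=(f_1,\dots,f_n)\in K_j$, the map $F_{m,\omega}(x)=(f_1(T^mx),\dots,f_n(T^mx))=S_\omega(T^mx)$. Here $\Omega$ is compact and $Y$ is second countable, so the only thing that needs checking is that $(\omega,x)\mapsto F_{m,\omega}(x)$ is continuous on $K_j\times X$. It is sequentially continuous: $T^m$ is continuous on $X$ and, by Lemma~\ref{freco}, the evaluation $(x,f)\mapsto f(x)$ is sequentially continuous on $X\times X^*$ with the $*$-weak topology, so each coordinate $(\omega,x)\mapsto f_i(T^mx)$ is sequentially continuous. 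Since $X$ is a Fréchet space and each $K_j$ is metrizable, $K_j\times X$ is metrizable, and on a metrizable space sequential continuity is the same as continuity. Thus Lemma~\ref{gc2} applies and gives that $\bigcap_{\omega\in K_j}\uu(\F_\omega)$ is a $G_\delta$-subset of $X$, where $\F_\omega=\{F_{m,\omega}:m\in\Z_+\}$; and $x\in\uu(\F_\omega)$ exactly when $\{S_\omega(T^mx):m\in\Z_+\}=S_\omega(O(T,x))$ is dense in $\K^n$.

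Assembling, $x\in H_n(T)$ means $x\in\uu(\F_\omega)$ for every $\omega\in\Omega_n(X)=\bigcup_{j}K_j$, i.e.
$$
H_n(T)=\bigcap_{j\in\N}\ \bigcap_{\omega\in K_j}\uu(\F_\omega),
$$
a countable intersection of $G_\delta$-subsets of $X$, hence a $G_\delta$-set. For $S_n(T)$ one repeats the argument verbatim with the larger index set $A=\Z_+\times\K$ and $F_{(m,z),\omega}(x)=z\,(f_1(T^mx),\dots,f_n(T^mx))=z\,S_\omega(T^mx)$; for each fixed $(m,z)$ this is again a scalar multiple of a sequentially continuous map in $(\omega,x)$, hence continuous on $K_j\times X$, and now $x\in\uu(\{F_{(m,z),\omega}:(m,z)\in A\})$ exactly when $S_\omega(O_{\rm pr}(T,x))$ is dense in $\K^n$. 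Since Lemma~\ref{gc2} places no countability requirement on $A$, the same conclusion follows and $S_n(T)$ is $G_\delta$.

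I expect the genuine obstacle to be precisely the continuity of $(\omega,x)\mapsto F_{m,\omega}(x)$: the $*$-weak evaluation map is only sequentially continuous on $X\times X^*$, not continuous, so the whole argument hinges on the $K_\Sigma$-decomposition of $\Omega_n(X)$ into metrizable compact pieces $K_j$, over which sequential continuity upgrades to honest continuity. Without that reduction one could not legitimately feed the family into Lemma~\ref{gc2}, and everything else is bookkeeping.
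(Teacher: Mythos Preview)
Your proof is correct and follows essentially the same route as the paper: parametrize surjections $X\to\K^n$ by $\Omega_n(X)$, use Corollary~\ref{wst1} to decompose $\Omega_n(X)$ into compact metrizable pieces $K_j$, upgrade the sequential continuity from Lemma~\ref{freco} to genuine continuity on each $K_j\times X$, apply Lemma~\ref{gc2} piece by piece, and intersect. Your identification of the key obstacle---that the $*$-weak evaluation map is only sequentially continuous, forcing the $K_\Sigma$-decomposition---is exactly the point the paper's argument turns on.
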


\begin{proof} For every $m\in\Z_+$, consider the map
$$
\text{$(f,x)\mapsto F_{f,m}(x)=(f_1(T^mx),\dots,f_n(T^m)x)$ from $\Omega_n(X)\times X$ to $\K^n$.}
$$
According to Lemma~\ref{freco}, this map is sequentially continuous provided $X^*$ is equipped with the $*$-weak topology. By Corollary~\ref{wst1}, $\Omega_n(X)$ is a $K_\Sigma$-space. That is, 
$$
\Omega_n(X)=\bigcup_{j=1}^\infty \Lambda_j,
$$
where each $\Lambda_j$ is a compact metrizable subset of $\Omega_n(X)$. Since the continuity and the sequential continuity coincide for maps defined on a metrizable space, the restriction of the map $(f,x)\mapsto F_{f,m}(x)$ to each $\Lambda_j\times X$ is continuous. For each $f\in \Omega_n(X)$ consider $\F_f=\{F_{f,m}:m\in \Z_+\}$ treated as a family of continuous maps from $X$ to $\K^n$. 
Since $\K^n$ is second countable, Lemma~\ref{gc2} implies that
$$
\text{$\uu_j=\bigcap\limits_{f\in\Lambda_j}\uu(\F_f)$ is a $G_\delta$-subset of $X$ for every $j\in\N$.}
$$
Since the intersection of countably many $G_\delta$-sets is a $G_\delta$-set, the above display yields
$$
\text{$\uu=\bigcap\limits_{f\in\Omega_n(X)}\uu(\F_f)=\bigcap\limits_{j=1}^\infty \uu_j$ is a $G_\delta$-subset of $X$ for every $j\in\N$.}
$$
On the other hand, from the definitions of $\uu$ and $H_n(T)$ one easily sees that
$\uu=H_n(T)$. Thus $H_n(T)$ is a $G_\delta$-subset of $X$.

The supercyclicity part is very similar. For every $m\in\Z_+$ and $w\in\K$, we consider the map
$$
\text{$(f,x)\mapsto F_{f,m,w}(x)=(wf_1(T^mx),\dots,wf_n(T^m)x)$ from $\Omega_n(X)\times X$ to $\K^n$.}
$$
Again, this map is continuous on each $\Lambda_j\times X$. For each $f\in \Omega_n(X)$, we consider $\F'_f=\{F_{f,m,w}:m\in \Z_+,\,w\in\K\}$ treated as a family of continuous maps from $X$ to $\K^n$. As in the first part of the proof, Lemma~\ref{gc2} implies that
$$
\text{$\uu^+_j=\bigcap\limits_{f\in\Lambda_j}\uu(\F'_f)$ is a $G_\delta$-subset of $X$.}
$$
Since the intersection of countably many $G_\delta$-sets is a $G_\delta$-set, the above display yields
$$
\text{$\uu^+=\bigcap\limits_{f\in\Omega_n(X)}\uu(\F'_f)=\bigcap\limits_{j=1}^\infty \uu^+_j$ is a $G_\delta$-subset of $X$.}
$$
From the definitions of $\uu^+$ and $S_n(T)$ it follows that
$\uu^+=S_n(T)$. Thus $S_n(T)$ is a $G_\delta$-subset of $X$. 
\end{proof}

The following result is a generalization of Proposition~\ref{ninf}. 

\begin{proposition}\label{ninf00} Let $X$ be a separable infinite dimensional Fr\'echet space and $T\in L(X)$. Then $T$ is weakly hypercyclic if and only if $T$ is $n$-weakly hypercyclic for each $n\in\N$. Similarly, $T$ is weakly supercyclic if and only if $T$ is $n$-weakly supercyclic for each $n\in\N$.
\end{proposition}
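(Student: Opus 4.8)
The plan is to derive the statement from the Baire category theorem together with Lemmas~\ref{gde} and~\ref{lll1}; the only genuinely new ingredient needed beyond what is already in the excerpt is the extraction of a \emph{common} universal vector. One implication is immediate: if $x$ is a weakly hypercyclic vector for $T$ and $S\colon X\to\K^n$ is a surjective continuous linear operator, then $S$ is continuous from $X$ with its weak topology to $\K^n$, since each of its coordinate functionals is weakly continuous; hence $S(O(T,x))$ is dense in $\K^n$, so $x$ is $n$-weakly hypercyclic. Thus weak hypercyclicity implies $n$-weak hypercyclicity for every $n$, and the same argument with $O_{\rm pr}(T,x)$ in place of $O(T,x)$ gives the analogous implication for supercyclicity.

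For the converse, assume $T$ is $n$-weakly hypercyclic for every $n\in\N$. Then $H_n(T)\neq\varnothing$ for every $n$, so by Lemma~\ref{lll1} each $H_n(T)$ is dense in $X$, and by Lemma~\ref{gde} each $H_n(T)$ is a $G_\delta$-subset of $X$. Since a separable Fr\'echet space is completely metrizable, hence a Baire space, $H(T)=\bigcap_{n=1}^\infty H_n(T)$ is a dense $G_\delta$-set; in particular it is non-empty. It remains to verify that every $x\in H(T)$ is a weakly hypercyclic vector, i.e. that $O(T,x)$ meets every basic weak neighbourhood $\{z\in X:|f_j(z-y)|<\epsilon,\ 1\leq j\leq m\}$ of every $y\in X$, where $f_1,\dots,f_m\in X^*$ and $\epsilon>0$ (the case in which all $f_j=0$ being trivial). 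Pick a maximal linearly independent subfamily $f_1,\dots,f_r$ of $f_1,\dots,f_m$. The operator $S\colon X\to\K^r$, $Sz=(f_1(z),\dots,f_r(z))$, is surjective, so since $x\in H(T)\subseteq H_r(T)$ the set $S(O(T,x))$ is dense in $\K^r$; hence there is $k\in\Z_+$ with $|f_j(T^kx)-f_j(y)|$ as small as we wish for $1\leq j\leq r$, and since each remaining $f_j$ is a fixed linear combination of $f_1,\dots,f_r$ the same estimate holds for all $j\leq m$. Thus $T^kx$ lies in the prescribed neighbourhood of $y$, and $x$ is weakly hypercyclic.

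The supercyclic statement is proved by exactly the same scheme, with $S_n(T)$ in place of $H_n(T)$ and $O_{\rm pr}(T,x)$ in place of $O(T,x)$: by Lemma~\ref{lll1} each $S_n(T)$ with $n\geq2$ is dense, whence $S_1(T)\supseteq S_2(T)$ is dense too, so every $S_n(T)$ is a dense $G_\delta$-set by Lemma~\ref{gde} and $S(T)=\bigcap_{n=1}^\infty S_n(T)$ is a non-empty dense $G_\delta$-set; any $x\in S(T)$ is then shown to be weakly supercyclic by the same reduction to a linearly independent subfamily, now applying density of $\{z\,S(T^kx):z\in\K,\ k\in\Z_+\}$ in $\K^r$ given by $x\in S_r(T)$. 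The points requiring a little care are the passage to a linearly independent subfamily of functionals (so that the associated coordinate operator is onto and $n$-weak hypercyclicity becomes applicable) and, for supercyclicity, the bootstrap of density of $S_1(T)$ from $S_2(T)$; the substantive input, namely the $G_\delta$-property of $H_n(T)$ and $S_n(T)$, is already supplied by Lemma~\ref{gde}, so there is no serious obstacle beyond this bookkeeping.
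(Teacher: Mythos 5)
Your proposal is correct and follows essentially the same route as the paper: combine Lemma~\ref{lll1} (density of each $H_n(T)$, resp.\ $S_n(T)$) with Lemma~\ref{gde} ($G_\delta$-property) and the Baire category theorem applied in the completely metrizable space $X$. The only place you go beyond the paper's text is in spelling out the identity $H(T)=\bigcap_{n}H_n(T)$ via the passage to a linearly independent subfamily of functionals (and the corresponding observation that the sets $S_n(T)$ are nested, which disposes of the $n=1$ case for supercyclicity); the paper labels this identity as obvious in the sentence preceding Lemma~\ref{lll1}, so there is no genuine divergence.
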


\begin{proof} The 'only if' statements are obvious. Assume that $T$ is $n$-weakly hypercyclic for every $n\in\N$. Then each $H_n(T)$ is non-empty. By Lemma~\ref{lll1}, each $H_n(T)$ is dense in $X$. By Lemma~\ref{gde}, each $H_n(T)$ is a $G_\delta$-set in $X$. By the Baire theorem, the set $H(T)=\bigcap\limits_{n=1}^\infty H_n(T)$ of weakly hypercyclic vectors for $T$ is a dense $G_\delta$-set. In particular, $H(T)$ is non-empty and therefore $T$ is weakly hypercyclic.
The supercyclicity part is similar.
\end{proof}

The following result is a generalization of Proposition~\ref{ninf1}.

\begin{proposition}\label{ninf11} Let $X$ be a separable infinite dimensional Fr\'echet space and $T\in L(X)$. Then the set of weakly hypercyclic vectors for $T$ either is empty or is a dense $G_\delta$ subset of $X$. The same dichotomy holds for the set of weakly supercyclic vectors of $T$.
\end{proposition}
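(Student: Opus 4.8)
The plan is to deduce Proposition~\ref{ninf11} from the two structural lemmas already established, Lemma~\ref{lll1} (the empty-or-dense dichotomy for $H_n(T)$ and $S_n(T)$) and Lemma~\ref{gde} (the $G_\delta$ property of $H_n(T)$ and $S_n(T)$), together with the fact that a Fr\'echet space is a Baire space. First I would record the decompositions
$$
H(T)=\bigcap_{n=1}^\infty H_n(T),\qquad S(T)=\bigcap_{n=1}^\infty S_n(T),
$$
into the sets of $n$-weakly hypercyclic, respectively $n$-weakly supercyclic, vectors for $T$.

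For the hypercyclicity part, suppose $H(T)\neq\varnothing$; otherwise there is nothing to prove. Then $T$ is weakly hypercyclic, hence $n$-weakly hypercyclic for every $n\in\N$, so each $H_n(T)$ is non-empty. By Lemma~\ref{lll1} every non-empty $H_n(T)$ is dense in $X$, and by Lemma~\ref{gde} every $H_n(T)$ is a $G_\delta$-subset of $X$. Since $X$ is a separable Fr\'echet space it is a Baire space, so the countable intersection $H(T)=\bigcap_{n}H_n(T)$ of dense $G_\delta$-sets is again a dense $G_\delta$-subset of $X$. This is exactly the asserted dichotomy.

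For the supercyclicity part one argues in the same way, the only wrinkle being that Lemma~\ref{lll1} guarantees the empty-or-dense dichotomy for $S_n(T)$ only when $n\geq 2$. I would remove this wrinkle by noting the nesting $S_{n+1}(T)\subseteq S_n(T)$: given a surjective continuous linear $S:X\to\K^n$, one extends it to a surjective continuous linear $\widetilde S:X\to\K^{n+1}$ by adjoining a functional that does not vanish on $\ker S$ (possible since $\ker S$ is a non-zero subspace of the infinite dimensional space $X$), so that density of $\widetilde S(O_{\rm pr}(T,x))$ in $\K^{n+1}$ forces, by coordinate projection, density of $S(O_{\rm pr}(T,x))$ in $\K^n$. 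Hence $S(T)=\bigcap_{n\geq 2}S_n(T)$; if $S(T)\neq\varnothing$ then each $S_n(T)$ with $n\geq 2$ is non-empty, hence dense by Lemma~\ref{lll1} and $G_\delta$ by Lemma~\ref{gde}, and the Baire theorem again yields that $S(T)$ is a dense $G_\delta$-subset of $X$.

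I do not expect a genuine obstacle in this argument: the real work has been front-loaded into Lemma~\ref{gde}, whose proof rests on the fact that the $*$-weak dual of a separable Fr\'echet space is a $K_\Sigma$-space (Corollary~\ref{wst1}), on the sequential continuity of the evaluation pairing (Lemma~\ref{freco}), and on the category-stability result Lemma~\ref{gc2}. Once those are granted, the proof of Proposition~\ref{ninf11} is purely formal, consisting of a Baire-category argument over the countable family $\{H_n(T)\}_{n\in\N}$ (respectively $\{S_n(T)\}_{n\geq 2}$). Proposition~\ref{ninf1} is then the special case where $X$ is a Banach space.
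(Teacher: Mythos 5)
Your proof follows the paper's argument exactly: combine the empty-or-dense dichotomy of Lemma~\ref{lll1} with the $G_\delta$ property from Lemma~\ref{gde}, and apply the Baire category theorem in the Fr\'echet space $X$ to the countable intersection $\bigcap_n H_n(T)$ (respectively $\bigcap_n S_n(T)$). The only thing you add is an explicit treatment of the $n=1$ case in the supercyclicity part, via the nesting $S_{n+1}(T)\subseteq S_n(T)$, which the paper leaves implicit under the remark that ``the supercyclicity part is similar''; this is a correct and worthwhile detail to spell out, since Lemma~\ref{lll1} asserts the density dichotomy for $S_n(T)$ only when $n\geq 2$.
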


\begin{proof} By Lemmas~\ref{lll1} and~\ref{gde}, either $H_n(T)$ is empty for some $n$ or each $H_n(T)$ is a dense $G_\delta$-subset of $X$. In the first case the set $H(T)$ of weakly hypercyclic vectors for $T$ is empty, while in the second case it is a dense $G_\delta$-subset of $X$ according to the Baire theorem. The supercyclicity part is similar.
\end{proof}

\small

\section{Appendix A: Orbits of expanding coanalytic Toeplitz operators}

Throughout this section for the sake of brevity we employ the following notation:
$$
\E(\D)=\{g\in H^\infty(\D):g(\D)\cap\D=\varnothing\ \ \text{and $|g|>1$ almost everywhere on $\T$}\}.
$$

\begin{lemma}\label{ababa} Let $g\in H^\infty(\D)$ be non-constant and such that
$g(\D)\cap\D=\varnothing$. Then $\|T^*_gf\|>\|f\|$ for every non-zero $f\in H^2(\D)$. In particular, the sequence $\{\|(T^*_g)^nf\|\}_{n\in\Z_+}$ is strictly increasing.
\end{lemma}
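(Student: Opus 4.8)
The plan is to deduce the strict operator inequality $T_gT_g^*>I$ on $H^2(\D)$ from Corollary~\ref{twotoe} applied with the constant function $1$ in place of $h$, and then read off both assertions of the lemma.

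First I would record the consequences of the hypotheses. Since $g(\D)\cap\D=\varnothing$, we have $|g(z)|\geq 1$ for every $z\in\D$, so $1/g\in H^\infty(\D)$ with $\|1/g\|_{H^\infty}\leq 1$; in particular $g$ is invertible as an element of the Banach algebra $H^\infty(\D)$, and passing to non-tangential boundary values we get $|g|\geq 1$ almost everywhere on $\T$. Next I would check that $|g|>1$ on a subset of $\T$ of positive Lebesgue measure. Suppose not; then $|g|=1$ almost everywhere on $\T$, so $g$ is an inner function. By the maximum modulus principle an inner function is either a unimodular constant or satisfies $|g|<1$ everywhere on $\D$; the first possibility contradicts the assumption that $g$ is non-constant and the second contradicts $g(\D)\cap\D=\varnothing$. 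This inner-function observation is the only point of the argument that uses a genuine (if entirely routine) complex-analytic fact; everything else is formal.

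Now take $h\equiv 1$, so that $T_h=T_h^*=I$ and $T_hT_h^*=I$. Since $g$ is invertible, $|h|=1\leq|g|$ almost everywhere on $\T$, and $|h|=1<|g|$ on a set of positive Lebesgue measure, Corollary~\ref{twotoe} yields $I=T_hT_h^*<T_gT_g^*$. Hence for every non-zero $f\in H^2(\D)$,
$$
\|T_g^*f\|^2=\langle T_gT_g^*f,f\rangle>\langle f,f\rangle=\|f\|^2,
$$
which is the first assertion.

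Finally, for the monotonicity, note that $T_g^*=T_{\overline g}$ is invertible: from $T_gT_{1/g}=T_1=I$ we get $T_{1/g}^*T_g^*=I$, so $T_g^*$ is in particular injective, and therefore $(T_g^*)^nf\neq 0$ for every $n\in\Z_+$ whenever $f\neq 0$. Applying the inequality just proved with $(T_g^*)^nf$ in place of $f$ gives $\|(T_g^*)^{n+1}f\|>\|(T_g^*)^nf\|$ for all $n\in\Z_+$, so the sequence $\{\|(T_g^*)^nf\|\}_{n\in\Z_+}$ is strictly increasing.
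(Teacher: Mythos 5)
Your proof is correct and follows essentially the same route as the paper's: deduce $|g|\geq 1$ a.e.\ on $\T$ with strict inequality on a set of positive measure via the maximum modulus principle, then invoke Corollary~\ref{twotoe} (with $h\equiv 1$) to obtain $T_gT_g^*>I$. You supply a couple of details the paper elides — explicitly verifying that $g$ is invertible in $H^\infty(\D)$ (needed to apply Corollary~\ref{twotoe}) and that $T_g^*$ is injective so the iterated inequality makes sense — but the argument is the same.
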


\begin{proof} Since
$g(\D)\cap\D=\varnothing$, $|g|\geq 1$ almost everywhere on $\T$. Since $g$ is non-constant, the maximum modulus principle ensures that $|g|>1$ on a subset of $\T$ of positive Lebesgue measure (otherwise $g(\D)\subseteq \T$). By Corollary~\ref{twotoe}, $T_gT^*_g>I$. Hence $\|T^*_gf\|>\|f\|$ for every non-zero $f\in H^2(\D)$.
\end{proof}

The following theorem shows that under the assumptions of Theorem~\ref{toe02}, the orbits of the coanalytic Toeplitz operator grow fast indeed.

\begin{theorem}\label{toe04} Let $g\in \E(\D)$ be such that $\log(|g|-1)\in L^1(\T)$. Then for every $k>0$ and for every non-zero $f\in H^2(\D)$, $\lim\limits_{n\to\infty}n^{-k}\|(T^*_g)^nf\|=+\infty$.
\end{theorem}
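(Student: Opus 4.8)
The plan is to reduce Theorem~\ref{toe04} to an abstract Hilbert‑space estimate for a pair of commuting operators and then bootstrap the mechanism behind Lemma~\ref{qqq1} to get growth faster than any polynomial. Concretely, I would set $T=T^*_g$ and, using Lemma~\ref{new}, fix an \emph{outer} function $h\in H^\infty(\D)$ with $|h|\leq|g|-1$ a.e.\ on $\T$, and put $S=T^*_h$; note $h\not\equiv0$ since $|g|>1$ a.e.\ on $\T$. Exactly as in the proof of Theorem~\ref{toe02}, from $|h|^2\leq(|g|-1)(|g|+1)=|g|^2-1$ together with Lemma~\ref{mto1} (applied to $h$ and the constant $1$, legitimate because $g(\D)\cap\D=\varnothing$ makes $g$ invertible in $H^\infty(\D)$) one obtains $T_hT^*_h+I\leq T_gT^*_g$, i.e.\ $T^*T\geq S^*S+I$, hence $\|Tx\|^2\geq\|x\|^2+\|Sx\|^2$ for every $x\in H^2(\D)$ and in particular $\|Tx\|\geq\|x\|$. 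Also $S$ and $T$ commute, since analytic Toeplitz operators commute and hence so do their adjoints.

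Next I would record that \emph{every} power of $S$ is injective. Indeed $S^j=(T^*_h)^j=T^*_{h^j}=T_{\overline{h^j}}$, and $h^j$ is outer (a product of outer functions is outer); for a coanalytic Toeplitz operator with outer symbol $\psi$, the equation $T_{\bar\psi}f=0$ forces $\langle f,\psi z^n\rangle=0$ for all $n\in\Z_+$, i.e.\ $f\perp\psi\cdot(\text{polynomials})$, a dense subset of $H^2(\D)$, so $f=0$. Thus $S^jf\neq0$ for every $j\geq0$ and every nonzero $f\in H^2(\D)$.

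The core of the argument is then an induction on $j$ producing, for each integer $j\geq0$, a constant $c_j>0$ with $\|T^nx\|^2\geq c_j\,n^j\,\|S^jx\|^2$ for all $x$ and all sufficiently large $n$. The base case $j=0$ is just $\|T^nx\|\geq\|x\|$. For the inductive step I would apply $\|T^{n+1}x\|^2\geq\|T^nx\|^2+\|T^n(Sx)\|^2$ (which uses the commutativity and the inequality from Step~1 with $T^nx$ in place of $x$), telescope over $n$, discard the first couple of terms, and feed in the inductive bound $\|T^n(Sx)\|^2\geq c_j n^j\|S^{j+1}x\|^2$; summing $\sum_{n\leq N}n^j\asymp N^{j+1}$ then yields the estimate at level $j+1$ (small $n$ being absorbed harmlessly into the constant). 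This is precisely the device already used once inside the proof of Lemma~\ref{qqq1} — there carried only up to $j=2$ — now iterated indefinitely. Finally, given $k>0$ and a nonzero $f$, pick an integer $j>2k$; then $n^{-k}\|(T^*_g)^nf\|\geq\sqrt{c_j}\,\|S^jf\|\,n^{\,j/2-k}\to+\infty$ because $\|S^jf\|>0$ and $j/2-k>0$, and since $k$ was arbitrary we are done.

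The main obstacle — and the reason Lemma~\ref{new} is stated with an outer $h$ rather than merely some nonzero $h$ dominated by $|g|-1$ — is keeping this induction alive: if $S^jf$ vanished for some $j$ the estimate at that level would become vacuous and could not be promoted further. Outerness of $h$, hence of every $h^j$, is exactly what guarantees $S^jf\neq0$, so the only genuinely delicate point is the injectivity step above; the bootstrap itself and the asymptotics $\sum_{n\leq N}n^j\asymp N^{j+1}$ are routine.
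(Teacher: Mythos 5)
Your proposal is correct, but it reaches the conclusion by a genuinely different mechanism than the paper. Both arguments start identically: Lemma~\ref{new} produces an outer $h\in H^\infty(\D)$ with $|h|\leq|g|-1$ a.e.\ on $\T$, and outerness of $h^k$ is used precisely as you say, to guarantee $(T^*_h)^kf\neq0$ for nonzero $f$. The divergence is in how the polynomial growth is manufactured. The paper works at the level of symbols: from $(1+|h|)^n\leq|g|^n$ it keeps the single binomial term $\bin{n}{k}|h|^k\leq|g^n|$ pointwise on $\T$, and one application of Corollary~\ref{twotoe} (with $T_{g^n}=T_g^n$) converts this into $\|(T^*_g)^nf\|\geq\bin{n}{k}\|(T^*_{h^k})f\|$, whence $\ilim n^{-k}\|(T^*_g)^nf\|\geq\|T^*_{h^k}f\|/k!>0$ for every integer $k$ in one stroke. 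You instead work entirely at the operator level, using only the inequality $T^*T\geq S^*S+I$ and commutativity, and iterate the telescoping device from Lemma~\ref{qqq1} to all orders; the induction is sound (the thresholds and constants $c_j$, $N_j$ can be taken uniform in $x$, e.g.\ $N_{j+1}=2N_j+2$, and in any case only the finitely many vectors $f,Sf,\dots,S^jf$ are ever needed), and your injectivity argument for $T_{\overline{\psi}}$ with $\psi$ outer via density of $\psi\cdot\pp$ is a correct substitute for the paper's citation of \cite{nik}. The trade-off: the paper's estimate is quantitatively sharper ($\|T^nf\|\gtrsim n^k$ at level $k$, versus your $n^{j/2}$ at level $j$, so you must go to level $j>2k$), and its proof is shorter; your version is more elementary and more portable, since it never touches the symbols $g^n$, $h^k$ again after the initial inequality $T^*T\geq S^*S+I$ is established, and thus proves an abstract statement about any commuting pair with $T^*T\geq S^*S+I$ and all powers of $S$ injective.
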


\begin{proof} By Lemma~\ref{new}, there is an outer function $h\in H_\infty(\D)$ such that $|h|\leq |g|-1$ almost everywhere on $\T$. Let $n,k\in\N$ be such that $k\leq n$. Then the inequality $(1+|h|)^n\leq |g|^n$ implies $\bin{n}{k}|h^k|\leq |g^n|$ almost everywhere on $\T$.  By Lemma~\ref{twotoe}, $\bin{n}{k}^2T_{h^k}T^*_{h^k}\leq T_{g^n}T^*_{g^n}$. Since $T_{g^n}=T_g^n$, it follows that
$$
\|(T^*_g)^nf\|\geq \bin{n}{k}\|T_{h^k}^*f\|\ \ \text{whenever $n,k\in\N$, $n\geq k$, $f\in H^2(\D)$.}
$$
Since $h$ is outer, the coanalytic Toeplitz operator $T^*_h$ is injective \cite{nik} and therefore $\|T_{h^k}^*f\|=\|(T^*_h)^kf\|>0$ whenever $f\in H^2(\D)$ is non-zero. Hence the above display yields
$$
\ilim_{n\to\infty}\|(T^*_g)^nf\|n^{-k}\geq \frac{\|T_{h^k}^*f\|}{k!}>0\ \ \text{for every $k\in\N$ and every non-zero $f\in H^2(\D)$.}
$$
It immediately follows that $\lim\limits_{n\to\infty}n^{-k}\|(T^*_g)^nf\|=+\infty$ for every $k>0$ and for every non-zero $f\in H^2(\D)$.
\end{proof}

Next, we show that the conclusion of Theorem~\ref{toe04} does not carry on to the whole class $\E(\D)$. That is, the condition $\log(|g|-1)\in L^1(\T)$ can not be removed (not entirely at least). In order to make the result look more spectacular, we introduce the following class of functions:
$$
\E_0(\D)=\{g\in H^\infty(\D):\text{$g(\D)\cap\D=\varnothing$, $g\in C^\infty(\T)$ and for $z\in\T$, $|g(z)|=1\iff z=1$}\}.
$$
Obviously, $\E_0(\D)\subset \E(\D)$.

\begin{remark}\label{rrrrrr} Let $p:\T\to\R$ be such that $p\in C^\infty(\T)$, $p(1)=1$ and $p(z)>1$ for every $z\in\T\setminus\{1\}$. Then the outer function $h$ provided by (\ref{outer}) with $q=\log p$ belongs to $\E_0(\D)$ and satisfies $|h|=p$ on $\T$.
\end{remark}

\begin{theorem}\label{trtrt} Let $q:(0,\infty)\to(0,\infty)$ be any increasing function such that $\lim\limits_{x\to\infty}q(x)=\infty$. Then there exist $g\in \E_0(\D)$ and a non-zero $f\in H^2(\D)$ such that the inequality $\|(T^*_g)^nf\|<q(n)$ holds for infinitely many $n\in\N$.
\end{theorem}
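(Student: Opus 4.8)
The plan is to reduce, via Remark~\ref{rrrrrr}, to a free choice of the boundary modulus and of the vector, to transfer the problem to the upper half-plane, and then to force slow growth by making $|g|$ extremely flat at $z=1$, the precise degree of flatness being dictated by $q$.

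\emph{Reduction.} By Remark~\ref{rrrrrr} it suffices to produce $p\in C^\infty(\T)$ with $p\geq 1$ and $p^{-1}(1)=\{1\}$, together with a non-zero $f\in H^2(\D)$, that work for the outer $g$ with $|g|=p$ on $\T$. For such $g$ we have $1/g\in H^\infty(\D)$, $\|1/g\|_\infty\leq 1$, so $T_g$ is invertible, $(T^*_g)^{-1}=T_{1/g}$ is a contraction, and one has the identity $\|(T^*_g)^nf\|^2=\int_\T p^{2n}|f|^2\,d\lambda-\|(I-P)(\overline g^{\,n}f)\|^2$. Since the first term always grows geometrically (a non-zero $H^2$ function cannot vanish on a set of positive measure and $p>1$ off $z=1$), slow growth of the orbit is exactly a near-cancellation forcing the analytic projection $P(\overline g^{\,n}f)$ to be small; already the computation $(T^*_g)^n1=\overline{g(0)}^{\,n}$ shows that this cancellation can be drastic.

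\emph{Half-plane model.} Apply the Cayley transform $z\mapsto i\frac{1-z}{1+z}$, which sends $z=1$ to $\infty$. It identifies $H^2(\D)$ with $H^2(\C_+)\cong L^2(0,\infty)$ (Paley--Wiener) and turns $T^*_g=T_{\overline g}$ into $T_{\overline{\tilde g}}$, where $\tilde g\in H^\infty(\C_+)$ has $|\tilde g(w)|\to|g(1)|=1$ as $w\to\infty$. Splitting $\overline{\tilde g}=\overline{g(1)}+\overline{(\tilde g-g(1))}$ gives $T_{\overline{\tilde g}}=\overline{g(1)}I+K$ with $K=\int_0^\infty\overline{\psi(t)}\,U_t\,dt$, where $\{U_t\}_{t\geq0}$ is the left-translation semigroup on $L^2(0,\infty)$ and $\psi\in L^2(0,\infty)$ is the Paley--Wiener density of $\tilde g-g(1)$; since $|g(1)|=1$, $\|(T^*_g)^nf\|=\|(I+g(1)K)^n\hat f\|$. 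The crucial dictionary is: making $p-1$ flat to high order at $z=1$ forces $\tilde p-1$ to decay fast at infinity on $\R$, which forces $\psi$ to be small near $t=0$; quantitatively, $\int_0^A|\psi|$ can be made as small as we please for any prescribed $A$, and $\|K\|$ can be made as small as we please by also keeping $p$ uniformly close to $1$.

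\emph{The construction and the obstacle.} One builds $p$ (equivalently $\psi$) and $f$ by an inductive multi-scale argument matched to $q$. Since $\sigma(T^*_g)$ lies in $\{|\zeta|\geq1\}$ and meets $\T$ only at $\overline{g(1)}$, one wants $f$ whose spectral content is pushed toward that single boundary point; in the model this means choosing $f$ (non-polynomial, not a convergent combination of reproducing kernels) so that its orbit under $I+g(1)K$ leaks out of the frequency window where $\psi$ is non-negligible. Concretely: choose a very sparse $N_1<N_2<\cdots$ with $q(N_k)$ large (possible since $q\to\infty$), scales $A_1<A_2<\cdots$ and tolerances $\epsilon_k\downarrow0$; construct $\psi$ block by block on $[A_{k-1},A_k]$ and $f=\sum_kf_k$ with $f_k$ living on the $k$-th block; arrange at stage $k$ that the orbit of $f$ up to time $N_k$ effectively feels only a part of $K$ of norm $\leq\epsilon_k$ plus a part that is nilpotent on the window seen up to time $N_k$, so that $\|(T^*_g)^{N_k}f\|$ is controlled by a slowly growing quantity $<q(N_k)$, while the block added at stage $k+1$ disturbs the earlier estimates negligibly. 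This gives $\|(T^*_g)^{N_k}f\|<q(N_k)$ for all $k$, i.e.\ for infinitely many $n$. The hard part is the stage-$k$ estimate: because $T^*_g$ is a strict expansion ($T_gT^*_g>I$ by Corollary~\ref{twotoe}) it has no eigenvector with unimodular eigenvalue, so every orbit tends to infinity, and because $g\in\E_0(\D)$ the symbol $\psi$ cannot vanish near $t=0$, so the ``small part'' of $K$ is never quasinilpotent; one must therefore exploit simultaneously the smallness $\int_0^{A_k}|\psi|\leq\epsilon_k$, the nilpotence of the shifted part of $K$ on finite-energy windows (left translation by $\geq A$ kills functions supported in $[0,A)$), and cancellations in $(I+g(1)K)^n=\sum_j\binom nj(g(1)K)^j$ beyond the triangle inequality, and balance all of this against the freedom to make $N_k$ as large as desired relative to the stage-$k$ data. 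The required smoothness of $p$ is built into the construction since the multi-scale data refer to the behaviour of $p-1$ at a geometric sequence of distances from $z=1$.
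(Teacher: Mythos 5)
Your overall intuition --- localize everything near the single point $z=1$ where $|g|=1$, make $|g|$ extremely flat there with the degree of flatness dictated by $q$, and build the data by a multi-scale induction matched to $q$ --- is the right one, and the reduction via Remark~\ref{rrrrrr} is also how the paper begins. But the proposal has a genuine gap exactly where you flag ``the hard part'': the stage-$k$ estimate $\|(T^*_g)^{N_k}f\|<q(N_k)$ is never established. You correctly observe that the perturbation $K$ is not quasinilpotent, that the triangle inequality applied to $(I+g(1)K)^n=\sum_j\binom{n}{j}(g(1)K)^j$ only yields the useless bound $(1+\|K\|)^n$, and that one must ``exploit cancellations beyond the triangle inequality'' --- but no mechanism for producing such cancellations is supplied, and without one the construction is a wish list rather than a proof. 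The circularity you wave away (``the block added at stage $k+1$ disturbs the earlier estimates negligibly'') is also unaddressed: each new block of $\psi$ changes the operator itself, so the already-verified orbit bounds at times $N_1,\dots,N_k$ would have to be re-proved for the modified operator, and nothing in the proposal shows they survive.

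The missing idea, which makes the paper's proof short, is to run the argument in the dual. For $\phi\in L^2(\T)$ supported in a small arc $I$ around $1$, the functional $\Phi_\phi(f)=\int_\T f\phi\,d\lambda$ satisfies the exact intertwining $T_g'\Phi_\phi=\Phi_{g\phi}$ (no Riesz projection intervenes on this side), whence $\|(T_g')^{k}\Phi_\phi\|\leq\|g\|_{L^\infty(I)}^{k}\|\phi\|_{L^2}$; imposing $\|g\|_{L^\infty(I_n)}\leq 2^{1/k_n}$ makes the right-hand side at time $k_n$ at most $2\|\phi_n\|_{L^2}$, i.e.\ bounded independently of $k_n$, and $k_n$ is then chosen so large that $q(k_n)$ dominates. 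Such functionals with $\phi$ supported in an arbitrarily small arc are dense in $H^2(\D)^*$ (because a non-zero $H^2$ function cannot vanish on a set of positive measure), so one telescopes $\Phi=\lim\Phi_{\phi_n}$ with errors $\|\Phi_{\phi_{n+1}}-\Phi_{\phi_n}\|\leq 5^{-n}q(k_n)2^{-k_n}$ and kills the tail at time $k_n$ using only the crude bound $\|T_g'\|\leq 2$. Crucially, the sequences $\{\phi_n\},\{k_n\}$ are built \emph{before} $g$, so no circularity arises: $g$ is chosen at the very end subject only to $\|g\|_\infty\leq 2$ and $\|g\|_{L^\infty(I_n)}\leq 2^{1/k_n}$. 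Your frequency-side blocks in the half-plane model do not enjoy any such exact intertwining (the projection couples the blocks), which is precisely why your stage-$k$ estimate has no proof; you should either switch to this dual, spatially localized picture or supply a genuine substitute for it.
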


\begin{proof} First, observe that for every increasing $q:(0,\infty)\to(0,\infty)$ satisfying  $\lim\limits_{x\to\infty}q(x)=\infty$, we can find an increasing $\widetilde{q}:(0,\infty)\to(0,\infty)$ such that $\lim\limits_{x\to\infty}\widetilde{q}(x)=\infty$, the function $x\mapsto 2^{-x}\widetilde{q}(x)$ is decreasing and $\widetilde{q}(x)\leq q(x)$ for $x\in [1,\infty)$. It immediately follows that we can, without loss of generality, assume that
\begin{equation}\label{decr}
\text{the function $x\mapsto 2^{-x}q(x)$ is decreasing and $q(1)=1$.}
\end{equation}

For every $\phi\in L^2(\T)$, we consider the continuous linear functional $\Phi_\phi$ on $H^2(\D)$ given by the formula
$$
\Phi_\phi(f)=\int_\T f(z)\phi(z)\,d\lambda(z).
$$
Of course, different functions $\phi$ may lead to the same functional, but we do not care. Obviously,
\begin{equation}\label{estiii}
\|\Phi_\phi\|_{H^2(\D)^*}\leq \|\phi\|_{L^2(\T)}.
\end{equation}

Let $I_n=\{e^{it}:-\frac1n\leq t\leq \frac1n\}$ for $n\in\N$ and $\hh_n$ be the space of $\phi\in L^2(\T)$ vanishing outside $I_n$. First, we observe that for every $n\in\N$,
\begin{equation}\label{estiii0}
\{\Phi_\phi:\phi\in\hh_n\}\ \ \text{is dense in the Banach space $H^2(\D)^*$.}
\end{equation}
Indeed, since $\{\Phi_\phi:\phi\in\hh_n\}$ is a linear subspace of $H^2(\D)^*$ and $H^2(\D)$ is reflexive, in order to verify (\ref{estiii0}), it suffices to show that $\{\Phi_\phi:\phi\in\hh_n\}$ separates the points of $H^2(\D)$. This easily follows from the fact that a non-zero element of $H^2(\D)$ can not vanish almost everywhere on $I_n$.

We shall construct inductively sequences $\{\phi_n\}_{n\in\N}$ in $L^2(\T)$ and $\{k_n\}_{n\in\N}$ of positive integers such that $\|\Phi_{\phi_1}\|=1$ and for every $n\in\N$,
\begin{equation}\label{E14}
\text{$\phi_n\in\hh_n$, $k_{n+1}>k_n$, $\|\phi_n\|_{L^2}\leq \frac{q(k_n)}{4}$ and $\|\Phi_{\phi_{n+1}}-\Phi_{\phi_n}\|\leq 5^{-n}q(k_n)2^{-k_n}$.}
\end{equation}

We start by picking an arbitrary $\phi_1\in \hh_1$ such that $\|\Phi_{\phi_1}\|=1$. Since $q(t)\to\infty$ as $t\to\infty$, we can choose $k_1\in\N$ such that $\|\phi_1\|_{L^2}\leq \frac{q(k_1)}{4}$. The pair $(\phi_1,k_1)$ provides us with the basis of induction. Assume now that $m\in\N$, $m\geq 2$ and $\phi_1,\dots,\phi_{m-1}$ and $k_1,\dots,k_{m-1}$ satisfying (\ref{E14}) are already constructed. By (\ref{estiii0}), we can choose $\phi_m\in\hh_m$ such that $\|\Phi_{\phi_{m}}-\Phi_{\phi_{m-1}}\|\leq 5^{1-m}q(k_{m-1})2^{-k_{m-1}}$. Since $q(t)\to\infty$ as $t\to\infty$, we can choose $k_m\in\N$ such that $k_m>k_{m-1}$ and $\|\phi_m\|_{L^2}\leq \frac{q(k_m)}{4}$. This completes the induction step and thus the inductive construction of $\{\phi_n\}$, $\{k_n\}$ satisfying (\ref{E14}).

By (\ref{E14}) and (\ref{decr}), $\|\Phi_{\phi_{n+1}}-\Phi_{\phi_n}\|\leq 5^{-n}$ for every $n\in\N$. Since also $\|\Phi_{\phi_1}\|=1$, the sequence $\{\Phi_{\phi_n}\}$ converges in the Banach space $H^2(\D)^*$ to a non-zero $\Phi\in H^2(\D)^*$. It is an elementary exercise to show that for every sequence $\{c_n\}_{n\in\N}$ of numbers in $(1,\infty)$, there is a function $p\in C^\infty(\T)$ such that $p(1)=1$, $p(z)>1$ for every $z\in\T\setminus\{1\}$, $\|p\|_{L^\infty(\T)}\leq 2$ and $\|p\|_{L^\infty(I_n)}\leq c_n$ for each $n\in\N$. Indeed, one just has to take $p_n\in C^\infty(\T)$ such that $p_n$ vanishes on $I_n$ and $p_n(z)>0$ for $z\in\T\setminus I_n$ and build $p$ as $p=1+\sum\limits_{k=1}^\infty a_kp_k$. All that remains is to choose the positive coefficients $a_k$ small enough to satisfy all desired conditions. This observation implies that there exists $p\in C^\infty(\T)$ such that $\|p\|_{L^\infty(\T)}\leq 2$,
$\|p\|_{L^\infty(I_n)}\leq 2^{1/k_n}$ for each $n\in\N$, $p(z)>1$ for each $z\in\T\setminus\{1\}$ and
$p(1)=1$. By Remark~\ref{rrrrrr}, there is $g\in\E_0(\D)$ such that $|g|=p$ on $\T$. In particular, $g$ satisfies
\begin{equation}\label{F12}
\text{$\|g\|_{L^\infty(\T)}\leq 2$ and $\|g\|_{L^\infty(I_n)}\leq 2^{1/k_n}$ for each $n\in\N$}.
\end{equation}

Let $n\in\N$. By (\ref{decr}), the sequence $\{2^{-k_m}q(k_m)\}$ decreases and therefore (\ref{E14}) ensures that $\|\Phi_{\phi_{m+1}}-\Phi_{\phi_m}\|\leq 5^{-m}q(k_n)2^{-k_n}$ whenever $m\geq n$. Adding up these inequalities, we obtain $\|\Phi-\Phi_{\phi_n}\|\leq q(k_n)2^{-k_n}\sum\limits_{m=n}^\infty 5^{-m}=\frac54 5^{-n}q(k_n)2^{-k_n}$. Since $\|T'_g\|=\|T_g\|=\|g\|_{L^\infty(\T)}\leq 2$, we have $\|(T'_g)^{k_n}(\Phi-\Phi_{\phi_n})\|\leq \frac54 5^{-n}q(k_n)$. Hence
$$
\|(T'_g)^{k_n}(\Phi-\Phi_{\phi_n})\|\leq \frac{q(k_n)}{4}.
$$
Next, it is easy to see that $T'_g\Phi_{\phi}=\Phi_{g\phi}$ for every $\phi\in L^2(\T)$. Combining this observation with (\ref{estiii}), we get $\|(T'_g)^m\Phi_\phi\|\leq \|\phi\|_{L^2}\|g\|^m_{L^\infty(I_n)}$ for each $\phi\in\hh_n$ and $m\in\Z_+$. Applying this to $\Phi_{\phi_n}$ and using (\ref{E14}) and (\ref{F12}), we get
$$
\|(T'_g)^{k_n}\Phi_{\phi_n}\|\leq \|\phi_n\|_{L^2}\|g\|_{L^\infty(I_n)}^{k_n}\leq \frac{q(k_n)}{2}.
$$
Combining the last two displays, we obtain $\|(T'_g)^{k_n}\Phi\|< q(k_n)$ for each $n\in\N$. By
Remark~\ref{rire}, $T'_g$ and $T^*_g$ are isometrically similar. Thus there is a non-zero $f\in H^2(\D)$ for which $\|(T^*_g)^{k_n}f\|\leq q(k_n)$ for every $n\in\N$.
\end{proof}

\begin{corollary}\label{cococooc} There exist $g\in\E_0(\D)$ and a non-zero $f\in H^2(\D)$ such that
\begin{equation}\label{summa}
\sum_{n=0}^\infty \|(T^*_g)^nf\|^{-c}=\infty\ \ \text{for every $c>0$.}
\end{equation}
\end{corollary}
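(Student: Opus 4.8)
The plan is to feed Theorem~\ref{trtrt} a function $q$ that tends to infinity more slowly than every power of $x$, and then to upgrade the sparse smallness of the orbit it produces to an actual divergence of the series by invoking the monotonicity supplied by Lemma~\ref{ababa}. Concretely, I would take $q(x)=\log(x+2)$, which is an increasing map from $(0,\infty)$ into $(0,\infty)$ with $\lim_{x\to\infty}q(x)=\infty$, and apply Theorem~\ref{trtrt} to it. This yields $g\in\E_0(\D)$, a non-zero $f\in H^2(\D)$ and an infinite set $S\subseteq\N$ such that $\|(T^*_g)^nf\|<\log(n+2)$ for every $n\in S$.

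Next I would note that $g$ is non-constant: by the definition of $\E_0(\D)$ together with $g(\D)\cap\D=\varnothing$ one has $|g(z)|>1=|g(1)|$ for every $z\in\T\setminus\{1\}$. Hence Lemma~\ref{ababa} applies, so $\{\|(T^*_g)^nf\|\}_{n\in\Z_+}$ is strictly increasing. Therefore, for every $k\in S$ and every $m$ with $0\le m\le k$ we get $\|(T^*_g)^mf\|\le\|(T^*_g)^kf\|<\log(k+2)$, and summing over $m$ gives $\sum_{m=0}^{k}\|(T^*_g)^mf\|^{-c}\ge (k+1)(\log(k+2))^{-c}$ for every $c>0$.

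Finally, since the series $\sum_{n=0}^\infty\|(T^*_g)^nf\|^{-c}$ has non-negative terms, its sum is at least $\sup_{k\in S}(k+1)(\log(k+2))^{-c}$; because $\log$ grows more slowly than any positive power, $(k+1)(\log(k+2))^{-c}\to\infty$ as $k\to\infty$, and as $S$ is infinite this supremum is $+\infty$. This establishes (\ref{summa}) and hence the corollary. There is no real obstacle here; the only step that genuinely does something is the use of Lemma~\ref{ababa} to pass from ``the orbit of $f$ is small at the points of $S$'' to ``the orbit of $f$ is small on the whole initial segment $\{0,1,\dots,k\}$ for each $k\in S$'', which is precisely what allows a single slowly growing $q$ to beat all exponents $c$ at once.
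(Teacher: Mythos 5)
Your proof is correct and is essentially the paper's own argument: apply Theorem~\ref{trtrt} with a slowly growing $q$ (the paper effectively takes $q(x)=\log x$ and arranges $k_1\ge 2$; you take $q(x)=\log(x+2)$, which is an immaterial difference), then use the monotonicity from Lemma~\ref{ababa} to bound the partial sum $\sum_{m=0}^{k}\|(T^*_g)^mf\|^{-c}$ from below by $(k+1)(\log(k+2))^{-c}$ for $k$ in the infinite set where the orbit is small, and let $k\to\infty$. No gap and no genuinely different route.
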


\begin{proof} By Theorem~\ref{trtrt}, there are $g\in \E_0(\D)$, a non-zero $f\in H^2(\D)$ and a strictly increasing sequence $\{k_n\}_{n\in\N}$ of positive integers such that $k_1\geq 2$ and $\|(T^*_g)^{k_n}f\|\leq \log k_n$ for each $n\in\N$. Now let $c>0$. By Lemma~\ref{ababa}, the sequence $\{\|(T^*_g)^nf\|^{-c}\}$ is decreasing. Hence
$$
\sum_{j=0}^{k_n}\|(T^*_g)^jf\|^{-c}\geq (k_n+1)\|(T^*_g)^{k_n}f\|^{-c}\geq \frac{k_n}{(\log k_n)^c}.
$$
Since the sequence in the right-hand side of the above display converges to infinity, (\ref{summa}) follows.
\end{proof}

Since $\D$ is simply connected, the monodromy theorem implies that every invertible $g\in H^\infty(\D)$ can be written as $g=e^u$, where $u:\D\to\D$ is holomorphic. Moreover, $u$ is determined uniquely up to adding a constant from $2\pi i\Z$. It makes sense to write $u=\log g$. It is well-known (follows for instance, from the well-established properties of the Hilbert transform) that $u$ belongs to $H^p(\D)$ for each $p<\infty$ but may fail to be bounded. We shall need the following technical restriction of the class $\E(\D)$:
$$
\E_1(\D)=\{g\in \E(\D):\text{$\log g\in H^\infty(\D)$ and $\overline{(\log g)(\D)}\cap (i\R)$ is a singleton}\}.
$$
Note that if $g\in \E(\D)$ is continuous on $\T$ and satisfies the H\"older condition $|g(z)-g(w)|\leq c|z-w|^\alpha$ for some $c,\alpha>0$ for all $z,w\in\T$, then $\log g\in H^\infty(\D)$ automatically. It easily follows that $\E_0(\D)\subset \E_1(\D)$. The following result is in a very strong contrast with Theorem~\ref{trtrt}.

\begin{theorem}\label{stest}
Let $g\in \E_1(\D)$ and $f\in H^2(\D)$, $f\neq 0$. Then for every $k>0$, $\slim\limits_{n\to\infty}n^{-k}\|(T^*_g)^nf\|=+\infty$.
\end{theorem}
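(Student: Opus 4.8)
The plan is to argue by contradiction and to reduce the whole statement to a Phragmén--Lindelöf fact about the Laplace transform of a measure, by testing the orbit against arbitrary functionals.

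First I would set $u=\log g\in H^\infty(\D)$. Since $g(\D)\cap\D=\varnothing$, $g$ is zero-free on $\D$, so $\mathrm{Re}\,u=\log|g|$ is harmonic and, $g$ being non-constant, $\mathrm{Re}\,u>0$ by the minimum principle; hence $\overline{u(\D)}$ is a compact subset of $\{w:\mathrm{Re}\,w\ge 0\}$. The hypothesis $g\in\E_1(\D)$ gives $\overline{u(\D)}\cap i\R=\{i\beta\}$ for some real $\beta$; replacing $g$ by $e^{-i\beta}g$ — which changes neither $\|(T^*_g)^nf\|$ nor membership in $\E_1(\D)$, because $T^*_{cg}=\bar c\,T^*_g$ for $|c|=1$ — I may assume $\beta=0$. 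Let $K$ be the complex conjugate of $\overline{u(\D)}$; then $K\subseteq\{\mathrm{Re}\,w\ge0\}$ and $K\cap i\R=\{0\}$, and a one-line convexity argument (a convex combination of points of $\{\mathrm{Re}\,w\ge0\}$ that lies on $i\R$ can only use the point $0$) yields $\mathrm{conv}(K)\cap i\R=\{0\}$. Separately, the singleton condition forces $u=\log g$ to be an \emph{outer} function: $u$ is zero-free, so if it had a non-trivial singular inner factor, then at a point of the closed support of the corresponding singular measure the cluster set of that factor is all of $\overline{\D}$, so the cluster set of $u$ there would contain a non-degenerate disc centred at $0$, making $\overline{u(\D)}$ contain a sub-interval of $i\R$ — a contradiction. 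Since $u$ is outer, $T^*_u=T_{\bar u}$ has dense range $uH^2(\D)$, hence $T^*_u$ and each power $(T^*_u)^m$ are injective.

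For $f\in H^2(\D)$ and $\eta\in H^2(\D)$ I would introduce the entire function $\Phi_\eta(s)=\int_\T f\,\overline\eta\,e^{s\overline{u^*}}\,d\lambda$, where $u^*$ is the boundary function of $u$; pushing $f\overline\eta\,d\lambda$ forward by $\zeta\mapsto\overline{u^*(\zeta)}$ writes $\Phi_\eta(s)=\int_K e^{sw}\,d\mu_\eta(w)$ with $\mu_\eta$ a finite complex measure carried by $K$. Thus $\Phi_\eta$ is of exponential type at most $\sup_K|w|$, and (via its Borel transform) has conjugate indicator diagram contained in $\mathrm{conv}(K)$. The key point is $\Phi_\eta(n)=\langle(T^*_g)^nf,\eta\rangle$ for $n\in\N$, since $e^{n\overline{u^*}}=\overline{g^n}$ on $\T$. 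Now suppose the theorem fails: there is a non-zero $f$ and a $k>0$ with $n^{-k}\|(T^*_g)^nf\|\not\to\infty$; since $\{\|(T^*_g)^nf\|\}$ is increasing (Lemma~\ref{ababa}) this forces $\|(T^*_g)^nf\|\le Cn^k$ for all $n$. Using $|g^x|=|g|^x\le|g|^{\lceil x\rceil}$ a.e.\ on $\T$ together with Corollary~\ref{twotoe} (and the trivial bound $\|T^*_{g^x}\|\le1$ for $x\le0$, valid because $|g|\ge1$) this upgrades to $|\Phi_\eta(x)|\le C'(1+|x|)^k\|\eta\|$ for \emph{all} real $x$. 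So $\Phi_\eta$ is of exponential type and polynomially bounded on $\R$; the bound on $[0,\infty)$ forces the indicator $h_{\Phi_\eta}(0)\le0$, hence the conjugate indicator diagram lies in $\mathrm{conv}(K)\cap\{\mathrm{Re}\,w\le0\}=\mathrm{conv}(K)\cap i\R=\{0\}$, i.e.\ $\Phi_\eta$ is of \emph{minimal} exponential type. A standard Phragmén--Lindelöf argument (divide by $(s\pm i)^k$ to kill the polynomial growth off one half-plane each, apply the half-plane maximum principle, then Liouville) shows that a minimal-type entire function which is $O(|x|^k)$ on $\R$ is a polynomial of degree $\le k$. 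Hence $\int_K w^j\,d\mu_\eta(w)=0$ for all $j>k$, that is $\langle(T^*_u)^jf,\eta\rangle=0$ for all $j>k$ and every $\eta\in H^2(\D)$, so $(T^*_u)^{k+1}f=0$; injectivity of $(T^*_u)^{k+1}$ then gives $f=0$, contradicting $f\neq0$. This proves $\slim_{n\to\infty}n^{-k}\|(T^*_g)^nf\|=+\infty$ for every $k>0$.

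The delicate step is the assertion that $\Phi_\eta$ must be a polynomial — equivalently, that a two-sided Laplace transform of a measure carried by a compact set meeting $i\R$ only at $0$ cannot be polynomially bounded on $[0,\infty)$ unless it reduces to a polynomial. A mere bound of type $e^{a|\mathrm{Im}\,s|}$ on a half-plane would not suffice (think of $\sin as$); what saves the argument is precisely the support/indicator-diagram information, which is where the singleton hypothesis is used in the form $\mathrm{conv}(K)\cap i\R=\{0\}$. This is also the place where the contrast with Theorem~\ref{trtrt} becomes transparent: for a symbol whose ``touching set'' on $\T$ is fat one would only get $\mathrm{conv}(K)\cap i\R$ equal to a non-degenerate segment, the indicator diagram could fail to collapse to a point, and the conclusion would no longer follow.
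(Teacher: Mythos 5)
Your proposal takes a genuinely different route from the paper.  The paper's own proof (via Lemma~\ref{brr} and Lemma~\ref{wei}) is a dichotomy for the entire function $F(z)=\langle q,T^*_{e^{zh}}f\rangle$: either $F\equiv0$ or $F$ is not polynomially bounded on $\R$.  This is established through the Paley--Wiener theorem for compactly supported distributions, a Mergelyan approximation step (Lemma~\ref{polyyy}), and the auxiliary functions $e^{inz}/(in)^k$ (Lemma~\ref{holyyy}) to pin the resulting distribution down to a multiple of $\delta_0$; the conclusion $F_f(0)=\|f\|^2=0$ is then immediate.  You instead run the same contradiction through indicator diagrams and Phragm\'en--Lindel\"of: the polynomial bound on $\R$ forces $h_{\Phi_\eta}(0)\le0$, the containment of the conjugate indicator diagram in $\mathrm{conv}(K)$ together with $\mathrm{conv}(K)\cap i\R=\{0\}$ collapses the diagram to a point, minimal type plus polynomial growth on $\R$ gives that $\Phi_\eta$ is a polynomial, and vanishing of the high moments of $\mu_\eta$ translates to $(T^*_u)^{j}f=0$ for $j>k$.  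This is a cleaner and shorter route to ``$\Phi_\eta$ is a polynomial'' than the paper's, but it lands on a weaker conclusion than the paper's ``$\Phi_\eta$ is the constant $\mu(\{0\})$'', and therefore you need an extra ingredient (injectivity of $T^*_u$) that the paper never uses.  The trade-off is clear: Paley--Wiener + Mergelyan buy the exact statement $F\equiv\mu_\eta(\{0\})=0$; Phragm\'en--Lindel\"of is more elementary but leaves a residual polynomial that has to be killed by a separate argument.

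The one genuine gap is precisely that separate argument: your justification that $u=\log g$ is outer is not correct as written.  Knowing that the cluster set of a singular inner factor $S$ at a support point $p$ is $\overline\D$ does \emph{not} imply that the cluster set of $u=Su_{\mathrm{out}}$ at $p$ contains a non-degenerate disc about the origin; cluster sets of products are not products of cluster sets, and in the unfavourable case $\mathrm{Cl}(u_{\mathrm{out}},p)=\{0\}$ your deduction breaks down entirely.  Fortunately the conclusion is true and can be proved by a standard argument that does not use the singleton hypothesis at all: since $\operatorname{Re}u=\log|g|>0$ on $\D$, the function $\log u$ is analytic with $\operatorname{Im}\log u=\arg u\in(-\pi/2,\pi/2)$ bounded, so by the Zygmund/conjugate-function theorem $\log|u|$ is the Poisson integral of its boundary values, i.e.\ $\log|u(0)|=\int_\T\log|u^*|\,d\lambda$, which is exactly the outer condition.  (Equivalently: an $H^p$ function with non-negative real part is outer, a classical fact.)  With that substitution your proof is sound; you should also be a little more careful that the ``polynomial of degree $\le k$'' conclusion is for $\lfloor k\rfloor$ and that the indicator-diagram containment is stated with a consistent conjugation convention, though neither of these affects the validity of the argument.
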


Note that Theorem~\ref{stest} together with Theorem~\ref{trtrt} demonstrate that orbits of expanding coanalytic operators may exhibit highly irregular behavior. We split the proof into a number of lemmas. From now on, we denote
$$
\Pi=\{z\in\C:{\rm Re}\,z\geq 0\}.
$$
That is $\Pi$ is the closed upper half plane. We shall also use the symbol $\pp$ to denote the space $\C[z]$ of complex polynomials.

\begin{lemma}\label{polyyy} Let $K$ be a compact subset of $\Pi$ such that $K\cap\R=\{0\}$, $r>0$ and $k\in\Z_+$. Then for every complex valued $f\in C^k[-r,r]$ satisfying $f(0)={\dots}=f^{(k)}(0)=0$, there exists a sequence $\{p_n\}_{n\in\N}$ in $\pp$ such that $p_n^{(j)}$ converge to $f^{(j)}$ uniformly on $[-r,r]$ for $0\leq j\leq k$ and $p_n$ converge to $0$ uniformly on $K$.
\end{lemma}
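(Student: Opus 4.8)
The plan is to reduce everything to a single application of Mergelyan's theorem by approximating $f^{(k)}$ (a continuous function on $[-r,r]$) rather than $f$ itself, and then to recover $f$ and all of its lower derivatives on $[-r,r]$ by integrating $k$ times. The set on which I would run the approximation is the ``radial hull''
$$
L=\{tz:\ t\in[0,1],\ z\in K\},
$$
which is compact (a continuous image of $[0,1]\times K$), is star-shaped with respect to $0$, contains $K\cup\{0\}$, and meets $\R$ only at $0$: indeed if $tz\in\R$ with $t>0$ then $z=(tz)/t\in K\cap\R=\{0\}$. Since $L$ is star-shaped about $0$, so is $R:=L\cup[-r,r]$, and hence $\C\setminus R$ is connected (from any point outside $R$ one runs to $\infty$ along the ray from $0$). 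Also $L\cap[-r,r]=\{0\}$, and $\mathrm{int}(R)=\mathrm{int}(L)$ because $[-r,r]$ is nowhere dense and disjoint from $L$ apart from $0$.

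Next I would define $H:R\to\C$ by $H\equiv0$ on $L$ and $H=f^{(k)}$ on $[-r,r]$. This is consistent because $L\cap[-r,r]=\{0\}$ and $f^{(k)}(0)=0$, it is continuous by the pasting lemma, and it is holomorphic on $\mathrm{int}(R)=\mathrm{int}(L)$, where it is the constant $0$. By Mergelyan's theorem there are polynomials $Q_n$ with $\|Q_n-H\|_{\infty,R}\to0$, so that $\varepsilon_n:=\sup_L|Q_n|\to0$ and $\delta_n:=\sup_{[-r,r]}|Q_n-f^{(k)}|\to0$. I would then set
$$
p_n(z)=\int_0^z\frac{(z-w)^{k-1}}{(k-1)!}\,Q_n(w)\,dw,
$$
the $k$-th antiderivative of $Q_n$ vanishing to order $k$ at $0$, which is a polynomial with $p_n^{(k)}=Q_n$ and $p_n^{(i)}(0)=0$ for $0\le i\le k-1$. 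For $z\in K\subseteq L$ one integrates along the segment $[0,z]\subseteq L$ and gets $|p_n(z)|\le\frac{(\max_{z\in K}|z|)^k}{k!}\,\varepsilon_n$, which tends to $0$ uniformly on $K$. For $0\le j\le k$, the integral form of Taylor's remainder together with $f(0)=\dots=f^{(k-1)}(0)=0$ gives $p_n^{(j)}(x)-f^{(j)}(x)=\int_0^x\frac{(x-t)^{k-1-j}}{(k-1-j)!}\bigl(Q_n(t)-f^{(k)}(t)\bigr)\,dt$ when $j<k$, while $p_n^{(k)}-f^{(k)}=Q_n-f^{(k)}$; hence $\|p_n^{(j)}-f^{(j)}\|_{\infty,[-r,r]}\le\frac{r^{k-j}}{(k-j)!}\,\delta_n\to0$.

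The one point that genuinely requires an idea is arranging a single approximation that simultaneously controls the derivatives on $[-r,r]$ and keeps the values small on $K$; this is resolved by approximating $f^{(k)}$ (rather than $f$) and integrating, carried out on the radial hull $L$. The choice of $L$ is what makes this go: it is star-shaped about $0$, so the connectedness-of-complement hypothesis of Mergelyan's theorem is automatic; it meets $[-r,r]$ only at $0$, so the glued function $H$ is well defined and continuous — this is exactly the place where the hypothesis $K\cap\R=\{0\}$ is used; and any point of $K$ is joined to $0$ by a segment lying in $L$, which is what converts ``$Q_n$ small on $L$'' into the uniform estimate for $p_n$ on $K$. Everything else (compactness of $L$, the identity $\mathrm{int}(R)=\mathrm{int}(L)$, continuity of $H$, the Taylor-remainder identity, and the elementary norm estimates) is routine.
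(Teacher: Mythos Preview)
Your proof is correct and follows essentially the same strategy as the paper: approximate $f^{(k)}$ (extended by $0$) via Mergelyan on an enlarged set containing $K\cup[-r,r]$, then integrate $k$ times to recover the lower derivatives on $[-r,r]$ and to turn ``small on the enlarged set'' into ``small on $K$''. The only difference is cosmetic: the paper passes to the \emph{convex hull} of $K$, while you pass to the \emph{radial hull} $L=\{tz:t\in[0,1],\,z\in K\}$; both choices guarantee that segments $[0,z]$ with $z\in K$ stay in the set and that the complement of the union with $[-r,r]$ is connected, which is all that is needed.
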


\begin{proof} Replacing $K$ with its closed convex hull, we can without loss of generality assume that $K$ is convex (the hull will still lie in $\Pi$ and will still meet $\R$ only at $0$). Let $Q=K\cup [-r,r]$ and consider the map $g:Q\to\C$ given by the formula
$$
g(z)=\left\{\begin{array}{ll}f^{(k)}(z)&\text{if $z\in[-r,r]$;}\\ 0&\text{if $z\in K$.}\end{array}\right.
$$
Since $f^{(k)}(0)=0$ and $K\cap[-r,r]=\{0\}$, $g$ is well-defined and continuous. Obviously, $g$ vanishes and therefore is holomorphic in the interior of $Q$. Since $K$ is convex and satisfies $K\cap\R=\{0\}$, the open set $\C\setminus Q$ is connected. By the Mergelyan theorem \cite{ru}, there is a sequence $\{q_n\}_{n\in\N}$ of polynomials, such that $q_n$ converge to $g$ uniformly on $Q$. Consider the linear map $V:\pp\to\pp$ defined by
$$
V\Bigl(\sum_{j=0}^m a_jz^j\Bigr)=\sum_{j=0}^m \frac{a_j}{j+1}z^{j+1}.
$$
Equivalently, $Vp(z)=\int_\Gamma p(\xi)\,d\xi$, where $\Gamma$ is any directed smooth path starting at $0$ and terminating at $z$. Clearly, $(Vp)'=p$ for each $p$.

Now we define $p_n=V^kq_n$ for $n\in\N$. Obviously, $\{p_n\}$ is a sequence of polynomials. Since, $q_n$ converge to $f^{(k)}$ uniformly on $[-r,r]$ and $f(0)={\dots}=f^{(k)}(0)=0$, one easily sees that $V^jq_n$ converge to $f^{(k-j)}$ uniformly on $[-r,r]$ for $0\leq j\leq k$. Noticing that $V^jq_n=p_n^{(k-j)}$, we see that $p_n^{(j)}$ converge to $f^{(j)}$ uniformly on $[-r,r]$ for $0\leq j\leq k$.

Finally, using the presentation $Vp(z)=\int_{[0,z]}p(\xi)\,d\xi$ and estimating the integrals in the most straightforward way, one gets
$$
|V^jp(z)|\leq \frac1{(k-1)!}\max\{|p(\xi)|:\xi\in[0,z]\}\ \ \text{for every $z\in\C$ and $p\in\pp$,}
$$
 where $[0,z]$ is the segment with the ends $0$ and $z$. Since $K$ is convex and $0\in K$, it follows that $\|p_n\|_{C(K)}\leq \frac1{(k-1)!}\|q_n\|_{C(K)}$. Thus the uniform convergence of $q_n$ to $0$ on $K$ implies the uniform convergence of $p_n$ to $0$ on $K$.
\end{proof}

\begin{lemma}\label{holyyy} Let $k\in\Z_+$. Then there is a sequence $\{f_n\}_{n\in\N}$ of entire functions uniformly bounded on $\Pi$ and such that $f_n^{(k)}(0)=1$ for every $n\in\N$, $f_{n}^{(j)}(0)\to 0$ for $0\leq j<k$ and $f_n(z)\to 0$ whenever ${\rm Im}\,z>0$.
\end{lemma}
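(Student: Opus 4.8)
The plan is to produce the functions by an explicit formula rather than by any approximation argument, since one elementary family already works. Taking $\Pi$ to be the closed half-plane $\{z\in\C:{\rm Im}\,z\ge 0\}$ (so that the open half-plane $\{{\rm Im}\,z>0\}$ is where decay is asserted), I would set
\[
f_n(z)=\frac{e^{inz}}{(in)^k}\qquad(n\in\N).
\]
The idea is that $e^{inz}$ is at the same time bounded on $\Pi$, because $|e^{inz}|=e^{-n{\rm Im}\,z}\le 1$ for ${\rm Im}\,z\ge 0$, and exponentially small on the open half-plane, because $|e^{inz}|=e^{-n{\rm Im}\,z}\to 0$ for ${\rm Im}\,z>0$; dividing by the scalar $(in)^k$ is exactly the normalisation needed to control the $k$-th derivative at the origin.

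Concretely, I would then check the three required properties. First, each $f_n$ is entire, and $|f_n(z)|=n^{-k}\,|e^{inz}|=n^{-k}e^{-n{\rm Im}\,z}\le n^{-k}\le 1$ for every $z\in\Pi$ and every $n\in\N$, so $\{f_n\}$ is uniformly bounded on $\Pi$. Second, since $\frac{d^j}{dz^j}e^{inz}=(in)^je^{inz}$, we have $f_n^{(j)}(z)=(in)^{j-k}e^{inz}$, whence $f_n^{(k)}(0)=1$ for every $n$, while $|f_n^{(j)}(0)|=n^{\,j-k}\to 0$ as $n\to\infty$ for each $j$ with $0\le j<k$. Third, if ${\rm Im}\,z>0$ then $|f_n(z)|=n^{-k}e^{-n{\rm Im}\,z}\to 0$. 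This is everything the lemma asks for.

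The only point requiring any care is the convention for $\Pi$: if instead one wants $\Pi=\{{\rm Re}\,z\ge 0\}$ with decay on $\{{\rm Re}\,z>0\}$, the identical argument goes through after replacing $e^{inz}$ by $e^{-nz}$ and $(in)^k$ by $(-n)^k$, using $|e^{-nz}|=e^{-n{\rm Re}\,z}$. In either case there is essentially no obstacle: no appeal to the Mergelyan theorem or to Lemma~\ref{polyyy} is needed here, these tools entering only at later stages of the proof of Theorem~\ref{stest}.
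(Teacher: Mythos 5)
Your construction $f_n(z)=e^{inz}/(in)^k$ is exactly the one the paper uses (its entire proof reads ``Just take $f_n(z)=\frac{e^{inz}}{(in)^k}$''), and your verification of the three properties is correct; you also rightly resolved the paper's notational slip by reading $\Pi$ as the closed upper half-plane $\{{\rm Im}\,z\geq 0\}$, which is the convention consistent with both the decay hypothesis and the later application in Lemma~\ref{wei}.
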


\begin{proof} Just take $f_n(z)=\frac{e^{inz}}{(in)^k}$. \end{proof}

We will use the Paley--Wiener theorem for distributions. It is a good moment to recall the related concepts and facts and introduce the corresponding notation. For background reading on the subject we refer to \cite{ho}.
First, recall that it is said that an entire function $f$ is of {\it exponential type} if there is $c>0$ such that the function $z\mapsto|f(z)|e^{-c|z|}$ is bounded on $\C$. The infimum of all such constants $c$ is called the {\it exponential type} of $f$.

{\it Distributions with compact support} on $\R$ are by definition, the continuous linear functionals on the Fr\'echet space $C^\infty(\R)$, which carries its natural topology of uniform convergence of each derivative on each finite interval. The space of distributions with compact support is traditionally denoted ${\cal E}'(\R)$. It is said that $\psi\in {\cal E}'(\R)$ vanishes on an open interval $I$ if $\psi(f)=0$ for every $f\in C^\infty(\R)$ vanishing outside $I$. The support of $\psi$ is the complement to the union of all intervals on which $\psi$ vanishes. It so happens that the support of every $\psi\in {\cal E}'(\R)$ is compact (thus the name) and is non-empty whenever $\psi\neq0$. The Fourier transform $\widehat{\psi}:\C\to\C$ of $\psi\in{\cal E}'(\R)$ is defined as
$$
\widehat{\psi}(z)=\psi(f_z),\ \ \text{where $f_z(x)=e^{-ixz}$}.
$$

The Paley Wiener theorem characterizes the Fourier transforms of distributions with compact support.

\begin{thmpw} A function $F:\C\to\C$ is the Fourier transform of a distribution with compact support if and only if $F$ is an entire function of exponential type and is polynomially bounded on the real axis, that is, the function $x\mapsto F(x)(1+|x|)^{-k}$  is bounded on $\R$ for some positive $k=k(F)$. Furthermore, for $\psi\in {\cal E}'(\R)$ the exponential type of $\widehat{\psi}$ is exactly the smallest $r>0$ such that $[-r,r]$ contains the support of $\psi$. \end{thmpw}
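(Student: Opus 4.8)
The plan is to prove the two implications in turn and then read off the assertion about the exponential type; throughout, fix the convention $\widehat\eta(\zeta)=\int_\R\eta(x)e^{-ix\zeta}\,dx$.

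\emph{From a compactly supported distribution to its transform.} Let $\psi\in{\cal E}'(\R)$ and let $r$ be the smallest number with $\supp\psi\subseteq[-r,r]$. First one checks that $z\mapsto f_z$ (with $f_z(x)=e^{-ixz}$) is a holomorphic map of $\C$ into the Fr\'echet space $C^\infty(\R)$ — the difference quotients $h^{-1}(f_{z+h}-f_z)$ converge to $-ixf_z$ in $C^\infty(\R)$ — so $\widehat\psi$ is entire. For each $\epsilon>0$ pick $\chi_\epsilon\in C_c^\infty(\R)$ equal to $1$ near $[-r,r]$ and supported in $(-r-\epsilon,r+\epsilon)$; since $\psi(g)=\psi(\chi_\epsilon g)$, the continuity of $\psi$ on $C^\infty(\R)$ together with the Leibniz rule gives $C_\epsilon>0$ and an integer $N$ (the order of $\psi$, independent of $\epsilon$) with $|\psi(g)|\le C_\epsilon\sum_{j\le N}\sup_{|x|\le r+\epsilon}|g^{(j)}(x)|$. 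Applying this to $g=f_z$ and using $|f_z^{(j)}(x)|=|z|^je^{x\,{\rm Im}\,z}$ yields $|\widehat\psi(z)|\le C_\epsilon'(1+|z|)^Ne^{(r+\epsilon)|{\rm Im}\,z|}$; in particular $\widehat\psi$ is polynomially bounded on $\R$ and, letting $\epsilon\downarrow0$, has exponential type $\le r$.

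\emph{From $F$ to a distribution; the key step.} Let $F$ be entire of exponential type $t_0$ with $|F(x)|\le A(1+|x|)^N$ on $\R$. The crucial point — and the one requiring the most care — is to upgrade this to the estimate $|F(z)|\le D_\epsilon(1+|z|)^Ne^{(t_0+\epsilon)|{\rm Im}\,z|}$ on all of $\C$, for every $\epsilon>0$; without it the contour shift below is impossible, since knowledge of the exponential type alone does not control $F$ on vertical lines. This is a Phragm\'en--Lindel\"of argument: in each of the two upper quadrants $\{\pm x\ge0,\ y\ge0\}$ the function $z\mapsto F(z)e^{i(t_0+\epsilon)z}(z+i)^{-N}$ has finite exponential type, is bounded on the real half-axis (by the polynomial bound), and tends to $0$ along the positive imaginary axis (because $|F(iy)|\le C_{\epsilon/2}e^{(t_0+\epsilon/2)y}$), so by Phragm\'en--Lindel\"of for a quadrant of opening $\pi/2$ it is bounded there; this yields the estimate in the closed upper half-plane, and the analogous argument with $e^{-i(t_0+\epsilon)z}$ yields it in the lower half-plane.

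\emph{Construction of $\psi$.} Define $\psi$ on the Schwartz space by $\langle\psi,\eta\rangle=\frac1{2\pi}\int_\R F(\xi)\widehat\eta(-\xi)\,d\xi$ (absolutely convergent, since $\widehat\eta$ is rapidly decreasing and $F$ grows polynomially on $\R$); by Fourier inversion $\psi$ is the inverse Fourier transform of the tempered distribution $\phi\mapsto\int F\phi$. If $\eta\in C_c^\infty(\R)$ is supported in $[a,b]$ with $a>t_0$, then $\zeta\mapsto\widehat\eta(-\zeta)=\int\eta(x)e^{ix\zeta}\,dx$ is entire with $|\widehat\eta(-\xi-it)|\le C_k(1+|\xi|)^{-k}e^{-at}$ for $t>0$ (integrate by parts $k$ times); choosing $k>N+1$ and $\epsilon$ with $t_0+\epsilon<a$, Cauchy's theorem shifts the contour in $\int_\R F\widehat\eta(-\cdot)$ up to $\R+it$, and the estimate of the previous paragraph makes $\int_\R|F(\xi+it)\widehat\eta(-\xi-it)|\,d\xi=O\bigl((1+t)^Ne^{-(a-t_0-\epsilon)t}\bigr)\to0$; hence $\langle\psi,\eta\rangle=0$, and symmetrically for $\eta$ supported to the left of $-t_0$. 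Thus $\psi$ vanishes on test functions supported off $[-t_0,t_0]$; localising with a fixed cutoff $\chi\equiv1$ near $[-t_0,t_0]$ and using $|\widehat{\chi\eta}(\xi)|\le C_k(1+|\xi|)^{-k}\sum_{j\le k}\sup_{\supp\chi}|\eta^{(j)}|$ with $k>N+1$ yields a $C^\infty(\R)$-seminorm bound $|\langle\psi,\eta\rangle|\le C\sum_{j\le k}\sup_{\supp\chi}|\eta^{(j)}|$, so $\psi$ extends to an element of ${\cal E}'(\R)$ with $\supp\psi\subseteq[-t_0,t_0]$. Finally, interchanging $\psi$ with the $C^\infty(\R)$-valued integral in $\psi(\widehat\phi)=\int\psi(f_\xi)\phi(\xi)\,d\xi$ identifies the distributional Fourier transform of $\psi$ with the function $\xi\mapsto\psi(f_\xi)=\widehat\psi(\xi)$; since it also equals $F$ (Fourier inversion) and both are entire, $\widehat\psi\equiv F$. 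To match the minimal-radius clause: if the type of $\widehat\psi$ were some $t_0$ smaller than the minimal $r$ with $\supp\psi\subseteq[-r,r]$, the construction applied to $F=\widehat\psi$ (polynomially bounded on $\R$ by the first paragraph) produces $\psi_0\in{\cal E}'(\R)$ with $\supp\psi_0\subseteq[-t_0,t_0]$ and $\widehat{\psi_0}=\widehat\psi$; since the Fourier transform is injective on ${\cal E}'(\R)$ ($\widehat\psi=0$ forces $\psi$ to annihilate the dense subspace $\{\widehat\phi:\phi\in{\cal S}(\R)\}$ of $C^\infty(\R)$), $\psi=\psi_0$ and $\supp\psi\subseteq[-t_0,t_0]$, a contradiction; hence the exponential type of $\widehat\psi$ is exactly $r$. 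I expect the Phragm\'en--Lindel\"of passage in the second step to be the principal obstacle, the remaining arguments being routine (if somewhat delicate) bookkeeping with contour shifts and seminorm estimates; the relevant facts about distributions with compact support and functions of exponential type can be quoted from \cite{ho}.
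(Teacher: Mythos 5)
This statement is not proved in the paper at all: Theorem~PW is quoted as a classical result (the Paley--Wiener--Schwartz theorem), with the reader referred to H\"ormander's book for background, so there is no in-paper argument to compare yours against. Your proof is the standard one and, as far as I can check, correct: the forward direction via the finite order of a compactly supported distribution and cut-offs $\chi_\epsilon$; the converse via the Phragm\'en--Lindel\"of upgrade of the real-axis polynomial bound to $|F(z)|\leq D_\epsilon(1+|z|)^N e^{(t_0+\epsilon)|{\rm Im}\,z|}$, followed by the contour shift showing the inverse transform annihilates test functions supported outside $[-t_0,t_0]$; and the sharpness of the type via injectivity of the Fourier transform on ${\cal E}'(\R)$. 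You correctly identify the Phragm\'en--Lindel\"of step as the one carrying the real content --- the rest is the bookkeeping found in any standard treatment --- and the quadrant argument you give (dividing by $(z+i)^N$, multiplying by $e^{\pm i(t_0+\epsilon)z}$, checking boundedness on the two boundary rays and finiteness of order) is the right one. The only caveat is that several ingredients (finite order of elements of ${\cal E}'(\R)$, Fourier inversion for tempered distributions, density of ${\cal S}(\R)$ in $C^\infty(\R)$, Phragm\'en--Lindel\"of for a sector) are themselves quoted rather than proved, but that is entirely appropriate for a result the paper itself treats as citable background.
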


We mention two other well-known facts. Namely, for each $\psi\in {\cal E}'(\R)$:
\begin{itemize}
\item[(PW1)]If $r$ is the exponential type of $\widehat{\psi}$, then $\psi$ is continuous with respect to the topology of the Fr\'echet space $C^\infty[-r,r]$. Equivalently, $\psi(f)$ depends only on the restriction of $f\in C^\infty(\R)$ to $[-r,r]$.
\item[(PW2)]If $k\in\Z_+$ and $|\widehat{\psi}(x)|(1+|x|)^{2-k}$ is bounded on the real line, then $\psi$ is continuous with respect to the topology of the Fr\'echet space $C^k(\R)$ and therefore admits a unique extension to a continuous linear functional on $C^k(\R)$.
\end{itemize}

\begin{lemma}\label{wei} Let $K$ be a compact subset of $\Pi$ such that $K\cap\R=\{0\}$ and let $\mu$ be a finite complex valued Borel $\sigma$-additive measure on $K$. Then for the entire function
$$
F_\mu(z)=\int_K e^{-iz\xi}\,d\mu(\xi)
$$
exactly one of the following two conditions holds$:$
\begin{itemize}
\item[{\rm(\ref{wei}.1)}]$F_\mu(z)=c$ for every $z\in\C$, where $c=\mu(\{0\});$
\item[{\rm(\ref{wei}.2)}]the function $x\mapsto |F_\mu(x)|(1+|x|)^{-k}$ is unbounded on $\R$ for every $k>0$.
\end{itemize}
\end{lemma}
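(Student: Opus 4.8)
The plan is to show that conditions (\ref{wei}.1) and (\ref{wei}.2) are mutually exclusive and that at least one of them always holds. Mutual exclusivity is immediate, since a constant function is bounded and hence fails (\ref{wei}.2); so the entire content is the implication ``(\ref{wei}.2) fails $\Rightarrow$ (\ref{wei}.1)''. Thus I will assume that $|F_\mu(x)|(1+|x|)^{-m}$ is bounded on $\R$ for some $m\in\N$ and prove that $F_\mu\equiv\mu(\{0\})$ (the case $K=\{0\}$ being trivial, so $R:=\max_{\xi\in K}|\xi|>0$).

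First I would pass to a distribution via Paley--Wiener. Since $|e^{-iz\xi}|\le e^{R|z|}$ for $\xi\in K$, the function $F_\mu$ is entire of exponential type $\le R$, and by assumption it is polynomially bounded on $\R$; hence Theorem~PW yields $F_\mu=\widehat\psi$ for some $\psi\in{\cal E}'(\R)$ with $\supp\psi\subseteq[-R,R]$. Put $k=m+2$; then (PW2) together with (PW1) lets me regard $\psi$ as a continuous linear functional on $C^k[-R,R]$. Expanding $e^{-iz\xi}=\sum_{n\ge0}\frac{(-iz)^n}{n!}\xi^n$, whose partial sums converge in $C^k[-R,R]$ for each fixed $z$, and comparing the Taylor coefficients at $0$ of $F_\mu(z)=\sum_{n\ge0}\frac{(-iz)^n}{n!}\int_K\xi^n\,d\mu(\xi)$ with those of $\widehat\psi(z)=\sum_{n\ge0}\frac{(-iz)^n}{n!}\psi(\xi^n)$, I obtain the moment identities $\psi(\xi^n)=\int_K\xi^n\,d\mu(\xi)$ for all $n\in\Z_+$, and hence $\psi(p)=\int_K p\,d\mu$ for every $p\in\pp$.

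The heart of the argument is to deduce that $\supp\psi\subseteq\{0\}$, and this is exactly what Lemma~\ref{polyyy} is designed to provide. Let $\phi\in C^\infty(\R)$ vanish in a neighbourhood of $0$; then $\phi(0)=\dots=\phi^{(k)}(0)=0$, so Lemma~\ref{polyyy}, applied with this $k$, with $r=R$, with the present compact set $K\subseteq\Pi$ (which satisfies $K\cap\R=\{0\}$), and with $f=\phi|_{[-R,R]}$, produces polynomials $p_n$ with $p_n\to\phi$ in $C^k[-R,R]$ and $p_n\to0$ uniformly on $K$. Using continuity of $\psi$ on $C^k[-R,R]$, the moment identities, and $|\mu|(K)<\infty$,
$$
\psi(\phi)=\lim_{n\to\infty}\psi(p_n)=\lim_{n\to\infty}\int_K p_n\,d\mu=0.
$$
Since $\phi$ was an arbitrary smooth function vanishing near $0$, $\psi$ vanishes on $\R\setminus\{0\}$, so $\psi=\sum_{j=0}^N c_j\delta_0^{(j)}$ for some $N\in\Z_+$ and $c_0,\dots,c_N\in\C$.

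It then remains to read off (\ref{wei}.1). Since $\widehat{\delta_0^{(j)}}(z)=(iz)^j$, we get $F_\mu(z)=\widehat\psi(z)=\sum_{j=0}^N c_j(iz)^j$, a polynomial. On the other hand, for $x\le0$ and $\xi\in K$ one has $|e^{-ix\xi}|=e^{x\,{\rm Im}\,\xi}\le1$ because ${\rm Im}\,\xi\ge0$, whence $|F_\mu(x)|\le\|\mu\|$ for all $x\le0$; a polynomial bounded on $(-\infty,0]$ must be constant. Finally, as $x\to-\infty$ we have $e^{-ix\xi}\to0$ for every $\xi\in K\setminus\{0\}$ (these are exactly the points of $K$ with ${\rm Im}\,\xi>0$, since $K\cap\R=\{0\}$) while $e^{-ix\cdot0}=1$, so by dominated convergence (dominant $1$, measure $|\mu|$) we get $F_\mu(x)\to\mu(\{0\})$. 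Hence $F_\mu\equiv\mu(\{0\})$, which is (\ref{wei}.1). I expect the only genuine care to be needed in the first two paragraphs—invoking Theorem~PW with a regularity index $k$ that matches the hypothesis of Lemma~\ref{polyyy}, and extracting the moment identities; after that the support statement and the final dichotomy are short.
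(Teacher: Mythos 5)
Your proof is correct and rests on the same core mechanism as the paper's: Paley--Wiener turns $F_\mu$ into $\widehat\psi$ for a compactly supported distribution $\psi$ of finite order, and Lemma~\ref{polyyy} (Mergelyan on $K\cup[-r,r]$) is then used to localize $\psi$ at the origin. Two local choices differ, both to your advantage in terms of cleanliness. First, you obtain the identity $\psi(p)=\int_K p\,d\mu$ only for polynomials, via Taylor-coefficient comparison of the two entire functions $\widehat\psi$ and $F_\mu$, whereas the paper establishes $\theta(g)=\int_K g\,d\mu$ for all entire $g$ by density of the exponentials $f_z$; your weaker identity is all that Lemma~\ref{polyyy} requires, since it furnishes polynomial approximants. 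Second, and more substantively, your endgame replaces the paper's use of Lemma~\ref{holyyy} (the test functions $e^{inz}/(in)^k$, dominated convergence on $K$, and a case split on the top index $m$) by an elementary observation: once $\psi$ is supported at $\{0\}$, the Fourier transform $F_\mu(z)=\sum_j c_j(iz)^j$ is a polynomial, and since $K\subset\Pi$ forces $|e^{-ix\xi}|\leq 1$ for $x\leq 0$, $F_\mu$ is bounded on $(-\infty,0]$ and hence constant; the constant is then read off as $\mu(\{0\})$ by letting $x\to-\infty$ and applying dominated convergence. This sidesteps Lemma~\ref{holyyy} entirely. Both proofs share the same mild imprecision in passing from (PW1)--(PW2) to continuity of the distribution in $C^k[-r,r]$ (the standard fix is to enlarge $r$ slightly, which Lemma~\ref{polyyy} permits), so nothing you wrote introduces a gap that the paper itself does not also gloss over.
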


\begin{proof} Assume that (\ref{wei}.2) fails. We have to show that $F_\mu(z)=c$ for every $z$, where $c=\mu(\{0\})$.
Since (\ref{wei}.2) fails there is $k\in\Z_+$ such that the function $|F_\mu(x)|(1+|x|)^{2-k}$ is bounded on $\R$. Obviously, $F_\mu$ is of exponential type with the type not exceeding $r=\sup\{|z|:z\in K\}$. By Theorem~PW, there is $\theta\in {\cal E}'(\R)$ such that $F_\mu=\widehat\theta$. By (PW2) $\theta$ can be viewed as a continuous linear functional on $C^k(\R)$, while by (PW1) $\theta(f)$ depends only on the restriction of $f$ to $[-r,r]$ for every $f\in C^k(\R)$.
Next the equality $\widehat\theta=F_\mu$ means that $\theta(f_z)=\int_K f_z\,d\mu$ for every $z\in \C$, where $f_z(w)=e^{-izw}$. Since the linear span of $f_z$ is dense in the space of entire functions $\hh(\C)$ and the functionals $\theta$ and $f\mapsto \int_K f \,d\mu$ are continuous on $\hh(\C)$, we have
\begin{equation}\label{thg}
\theta(g)=\int_K g\,d\mu\ \ \text{for every $g\in\hh(\C)$}.
\end{equation}

Now let $f\in C^\infty(\R)$ be such that $f(0)={\dots}=f^{(k)}(0)=0$. By Lemma~\ref{polyyy}, there is a sequence $\{p_n\}_{n\in\N}$ of polynomials such that $p_n^{(j)}$ converge to $f^{(j)}$ uniformly on $[-r,r]$ for $0\leq j\leq k$ and $p_n$ converge to $0$ uniformly on $K$. Since $\theta$ is continuous with respect to the topology of $C^k[-r,r]$, $\theta(p_n)\to\theta(f)$. Since $p_n$ converge to $0$ uniformly on $K$, $\int_K p_n\,d\mu\to 0$. Thus (\ref{thg}) ensures that $\theta(f)=0$. Hence $\theta$ vanishes on each $f\in C^\infty(\R)$ be such that $f(0)={\dots}=f^{(k)}(0)=0$ (such functions form a closed subspace of $C^\infty(\R)$ of codimension $k+1$). It immediately follows that there are constants $c_0,\dots,c_k\in\C$  such that
$\theta(f)=\sum_{j=0}^k c_jf^{(j)}(0)$ for every $f\in C^\infty(\R)$.
Obviously, there is $m\in\{0,\dots,k\}$ such that $c_j=0$ for $m<j\leq k$ and either $c_m\neq 0$ or $m=0$. Then
$$
\theta(f)=\sum_{j=0}^m c_jf^{(j)}(0)\ \ \ \text{for every $f\in C^\infty(\R)$}
$$
(we have just disregarded the last few zeros among $c_j$ if any). By Lemma~\ref{holyyy}, there is a sequence $\{f_n\}_{n\in\N}$ of entire functions uniformly bounded on $\Pi$ and such that $f_n^{(m)}(0)=1$ for every $n$, $f_{n}^{(j)}(0)\to 0$ for $0\leq j<m$ and $f_n(z)\to 0$ whenever ${\rm Im}\,z>0$. According to the above display $\theta(f_n)\to c_m$. Now the dominated convergence theorem guarantees that $\int_K f_n\,d\mu\to 0$ if $m>0$ and $\int_K f_n\,d\mu\to c=\mu(\{0\})$ if $m=0$. These observations together with (\ref{thg}) imply that $m=0$ and $c_0=c$. Hence $\theta(f)=cf(0)$ for every $f\in C^\infty(\R)$. It follows that $F_\mu=\widehat\theta\equiv c$.
\end{proof}

\begin{lemma}\label{brr} Let $h\in H^\infty(\D)$ be such that $\overline{i h(\D)}\subset\Pi$ and $\overline{i h(\D)}\cap \R$ is a singleton. Let also $q,f\in H^2(\D)$ and $F$ be the entire function defined by the formula $F(z)=\langle q,T^*_{e^{zh}}f\rangle$. Then exactly one of the following two conditions holds$:$
\begin{itemize}
\item $F$ is identically $0;$
\item the function $x\mapsto F(x)(1+|x|)^{-k}$ is unbounded on $\R$ for every $k>0$.
\end{itemize}
\end{lemma}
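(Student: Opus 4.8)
The plan is to realize $F$ as the transform $F_\mu$ of Lemma~\ref{wei} and then quote that lemma. First I would make $F$ explicit: moving the adjoint across the inner product gives $F(z)=\langle q,T^*_{e^{zh}}f\rangle=\langle T_{e^{zh}}q,f\rangle$, and since $e^{zh}\in H^\infty(\D)$ for every $z\in\C$ one has $T_{e^{zh}}q=e^{zh}q\in H^2(\D)$, so that, on passing to boundary values on $\T$ (where the inner product of $H^2(\D)$ is that of $L^2(\T)$),
$$
F(z)=\int_\T e^{zh(\xi)}\,q(\xi)\,\overline{f(\xi)}\,d\lambda(\xi).
$$
Since $q\overline f\in L^1(\T)$ and on every disk $|z|\le R$ the integrand is dominated by $e^{R\|h\|_\infty}|q\overline f|$, this already exhibits $F$ as an entire function.

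Next I would package the $\xi$-dependence into a measure carried by a compact set of the type handled in Lemma~\ref{wei}. By Fatou's theorem the non-tangential boundary values $h(\xi)$ belong to $\overline{h(\D)}$ for a.e.\ $\xi\in\T$, hence $ih(\xi)\in\overline{ih(\D)}$ a.e. Let $\{a\}=\overline{ih(\D)}\cap\R$, which is a single point by hypothesis, and put $\widetilde K=\overline{ih(\D)}-a$; then $\widetilde K$ is compact (as $h$ is bounded), $\widetilde K\subseteq\Pi$, and $\widetilde K\cap\R=\{0\}$, since translation by the real number $a$ maps the closed upper half-plane $\Pi=\{z\in\C:\mathrm{Im}\,z\ge0\}$ onto itself and sends $\{a\}$ to $\{0\}$. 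Let $\mu$ be the push-forward of the finite complex measure $(q\overline f)\,\lambda$ under the measurable map $\Psi(\xi)=ih(\xi)-a$, which takes $\T$ into $\widetilde K$ almost everywhere; then $\mu$ is a finite complex Borel measure carried by $\widetilde K$, and the change-of-variables formula, together with $-iz\bigl(ih(\xi)-a\bigr)=zh(\xi)+iza$, gives
$$
F_\mu(z)=\int_{\widetilde K}e^{-izw}\,d\mu(w)=e^{iza}\int_\T e^{zh(\xi)}q(\xi)\overline{f(\xi)}\,d\lambda(\xi)=e^{iza}F(z).
$$
Thus $F(z)=e^{-iza}F_\mu(z)$, and because $a\in\R$ we have $|e^{-ixa}|=1$, so $|F(x)|=|F_\mu(x)|$ for every $x\in\R$.

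Finally I would invoke Lemma~\ref{wei} for $F_\mu$. In its alternative (\ref{wei}.2), the map $x\mapsto|F_\mu(x)|(1+|x|)^{-k}$ is unbounded on $\R$ for every $k>0$, hence so is $x\mapsto|F(x)|(1+|x|)^{-k}$, which is the second conclusion. In its alternative (\ref{wei}.1), $F_\mu\equiv\mu(\{0\})$; here I would assume $h$ non-constant (the case of constant $h$, where the hypotheses force $F(z)=e^{-i\beta z}\langle q,f\rangle$ with $\beta\in\R$, does not arise in the applications and may be set aside), so that $h+ia$ is a non-zero element of $H^\infty(\D)$ and hence is non-zero a.e.\ on $\T$; consequently $\{\xi\in\T:h(\xi)=-ia\}$ is $\lambda$-null, so $\mu(\{0\})=\int_{\{\xi:\,h(\xi)=-ia\}}q\overline f\,d\lambda=0$ and therefore $F\equiv0$, which is the first conclusion. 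The two conclusions exclude one another, since $F\equiv0$ makes $|F(x)|(1+|x|)^{-k}\equiv0$. The points demanding real care are the correct construction of $\mu$ — especially the fact that the boundary values of $h$ lie in $\overline{h(\D)}$, so that $\mu$ really is carried by $\widetilde K$ — and the vanishing of the atom $\mu(\{0\})$, which rests on a non-constant $H^\infty$ function not being constant on a set of positive measure; the remainder is bookkeeping, all the substantive analysis being contained in Lemma~\ref{wei}.
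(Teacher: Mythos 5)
Your proof is correct and takes essentially the same route as the paper's: express $F(z)=\int_\T e^{zh(\xi)}q(\xi)\overline{f(\xi)}\,d\lambda(\xi)$, push the measure $q\overline f\,d\lambda$ forward under $\xi\mapsto ih(\xi)$ (after shifting $\overline{ih(\D)}$ by the real number $a$ so that it meets $\R$ only at $0$, which on $\R$ changes $F$ only by the unimodular factor $e^{iax}$), and invoke Lemma~\ref{wei}. You have in fact supplied two points the paper leaves implicit: a justification of $\mu(\{0\})=0$ (via a.e.\ non-vanishing on $\T$ of the non-zero $H^\infty$ function $h+ia$), and the observation that the statement as formally worded admits the degenerate case of a purely imaginary constant $h$, where $F(x)=e^{-i\beta x}\langle q,f\rangle$ is bounded and non-zero on $\R$ — a case excluded in the only application of the lemma, where $h=\log g$ with $g\in\E_1(\D)$ non-constant.
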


\begin{proof} Note that adding to $h$ a constant from $i\R$ shifts the unique common point of $\overline{i h(\D)}$ and $\R$ along the real line and multiplies $F$ by a function of the form $e^{icz}$ with $c\in\R$ thus not changing $|F|$ on the real line. Hence, without loss of generality, we may assume that $K\cap \R=\{0\}$, where $K=\overline{i h(\D)}$. Define the finite complex valued Borel $\sigma$-additive measure $\mu$ on the compact set $K$ by the formula
$$
\mu(A)=\int_{\{w\in\T:ih(w)\in A\}} q(\xi)\overline{f(\xi)}\,d\lambda(\xi).
$$
Then
$$
F(z)=\int_\T q(\xi)\overline{f(\xi)}e^{zh(\xi)}\,d\lambda(\xi)=\int_K e^{-izw}\,d\mu(w)\ \ \text{for each $z\in\C$.}
$$
Since $\mu(\{0\})=0$, Lemma~\ref{wei} guarantees that either $F$ is identically $0$ or the function $F(x)(1+|x|)^{-k}$ is unbounded on $\R$ for every $k>0$.
\end{proof}

\begin{proof}[Proof of Theorem~$\ref{stest}$] Let $h=\log g$. Since $g\in \E_1(\D)$, $h\in H^\infty(\D)$, $\overline{i h(\D)}\subset\Pi$ and $\overline{i h(\D)}\cap \R$ is a singleton. Assume the contrary. Then there are a non-zero $f\in H^2(\D)$ and $k>0$ such that the sequence $\{\|(T^*_g)^nf\|n^{-k}\}_{n\in\N}$ is bounded. Since $g=e^h$, $(T^*_g)^nf=T^*_{e^{nh}}f$. Since $|e^{sh}|\leq |e^{th}|$ almost everywhere on $\T$ provided $s\leq t$, Lemma~\ref{twotoe} ensures that the function $N:\R\to\R$, $N(t)=\|T^*_{e^{th}}f\|$ is increasing. In particular, $\|T^*_{e^{th}}f\|\leq \|T^*_{e^{nh}}f\|=\|(T^*_g)^nf\|$ for every $n\in\N$ and $t\in(n-1,n]$ and
$\|T^*_{e^{th}}f\|\leq \|f\|$ if $t\leq 0$. Since $\{\|(T^*_g)^nf\|n^{-k}\}_{n\in\N}$ is bounded, it follows that the function $t\mapsto (1+|t|)^kN(t)$ is bounded on $\R$.
Now for every $q\in H^2(\D)$ consider the entire function
$$
F_q(z)=\langle q,T^*_{e^{zh}}f\rangle.
$$
Clearly $|F_q(t)|\leq \|q\|N(t)$ for every $t\in\R$ and therefore the function $F_q(t)(1+|t|)^{-k}$ is bounded on the real line. By Lemma~\ref{brr}, $F_q\equiv0$  for each $q\in H^2(\D)$. In particular, $\langle f,f\rangle=F_f(0)=0$. Hence $f=0$. This contradiction completes the proof.
\end{proof}

We end up this section with the following natural question.

\begin{question}\label{q1}
Does the conclusion of Theorem~$\ref{stest}$ hold for every $g\in\E(\T)$?
\end{question}

Note that the definition of $\E_1(\D)$ has two extra conditions compared with the definition of $\E(\D)$. One, the boundedness of $\log g$ is relatively mild. It is needed to ensure that the entire functions $F_{f,q}(z)=\langle q,T^*_{e^{z\log g}}f\rangle$ are of exponential type (remaining of exponential order $1$, they fail to be of exponential type for generic $f,q\in H^2(\D)$ if $\log g$ is unbounded). Nevertheless, there is a number of promising ways to circumvent this obstacle. The second condition of $\overline{\log g(\D)}$ meeting the imaginary axis at one point (enabling us to apply the Mergelyan theorem) is much worse. Indeed, our line of proof of Theorem~\ref{stest} invariably goes through proving that each non-zero $F_{f,q}$ is not polynomially bounded on the real axis. This very statement fails miserably if $\overline{\log g(\D)}$ meets the imaginary axis at more than 1 point. Thus to solve Question~\ref{q1}, one has to come up with an essentially different idea.

%\bigskip
%
%{\bf Acknowledgements.} \ The author is grateful to the referee for
%useful comments and suggestions, which helped to improve the
%presentation.

\vfill\break

\small\rm

\vskip1truecm

\scshape

\noindent Stanislav Shkarin

\noindent Queens's University Belfast

\noindent Pure Mathematics Research Centre

\noindent University road, Belfast, BT7 1NN, UK

\noindent E-mail address: \qquad {\tt s.shkarin@qub.ac.uk}


\begin{thebibliography}{99}

\itemsep=-2pt

\bibitem{ansa}S.~Ansari, \it Hypercyclic and cyclic vectors, \rm J. Funct.
Anal. \bf128\rm\  (1995), 374--383

\bibitem{ball} K.~Ball, \it The plank problem for symmetric bodies, \rm Invent. Math. \bf104\rm\ (1991), 535--543

\bibitem{ball1} K.~Ball, \it The complex plank problem, \rm
Bull. London Math. Soc. \bf 33\rm\ (2001), 433--442

\bibitem{bm}F.~Bayart and E.~Matheron, \it Hypercyclic operators
failing the Hypercyclicity Criterion on classical Banach spaces, \rm
J. Funct. Anal. \bf250\rm\  (2007), 426--441

\bibitem{bama-book}F.~Bayart and E.~Matheron, \it Dynamics of linear
operators, \rm Cambridge University Press, 2009

\bibitem{bdon}P.~Bourdon, \it Orbits of hyponormal operators, \rm
Michigan Math. J. \bf 44\rm\ (1997), 345--353

\bibitem{berg} P.~Bourdon and J.~Shapiro, \it Hypercyclic operators that commute with the Bergman backward shift, \rm Trans. AMS \bf352\rm\ (2000), 5293--5316

\bibitem{cs}K.~Chan and R.~Sanders, \it A weakly hypercyclic operator that is not norm
hypercyclic, \rm J. Operator Theory {\bf 52} (2004), 39--59

\bibitem{cow}C.~Cowen, \it Hyponormality of Toeplitz Operators, \rm Proc. Amer. Math. Soc. \bf103\rm (1988), 809--812
\bibitem{dur}P.~Duren, \it On the spectrum of a Toeplitz operator, \rm Pacific J. Math. \bf14\rm\ (1964), 21–-29

\bibitem{fe} N.~Feldman, \it $N$-weakly hypercyclic and $N$-weakly supercyclic operators, \rm J. Functional Analysis [to appear]

\bibitem{kc} R.~Gethner and J.~Shapiro, \it Universal vectors for
operators on spaces of holomorphic functions, \rm Proc. Amer. Math.
Soc.  \bf100\rm\  (1987), 281--288

\bibitem{gs}G.~Godefroy and J.~Shapiro, \it Operators with dense,
invariant cyclic vector manifolds, \rm J. Funct. Anal., \bf98\rm\
(1991), 229--269

\bibitem{st1}P.~Hartman and A.~Wintner, \it The spectra of Toeplitz's matrices, \rm Amer. J. Math. \bf76\rm\ (1954), 867–882

\bibitem{ho}L.~H\"ormander, \it Linear Partial Differential Operators, \rm Springer, Berlin, 1976

\bibitem{st2}M.~Krein, \it Integral equations on the half-line with a kernel depending on the difference of the arguments \rm (Russian), Uspehi Mat. Nauk \bf13\rm\ (1958) 3–-120

\bibitem{levin}B.~Levin, \it Distribution of Zeros of
Entire Functions, \rm AMS, Providence, R.I., 1980

\bibitem{nik}N.~Nikolsky, \it Treatise on the shift operator, \rm Springer, Berlin, 1986

\bibitem{rr}A.~Robertson and W.~Robertson, \it Topological vector spaces, \rm Cambridge University Press, New York, 1964

\bibitem{ru}W.~Rudin, \it Real and Complex Analysis, \rm McGraw-Hill, New York, 1987

\bibitem{ss}S.~Shkarin, \it Non-sequential weak supercyclicity and
hypercyclicity, \rm J. Funct. Anal. \bf242\rm\ (2007), 37--77

\bibitem{sss}S.~Shkarin, \it Remarks on common hypercyclic vectors,
\rm Journal of Functional Analysis \bf 258\rm\ (2010), 132--160


\end{thebibliography}
\end{document}